\def\theglossary{\@restonecoltrue\if@twocolumn\@restonecolfalse\fi
\columnseprule\z@ \columnsep 35\p@
\let\@makessectionhead\indexsec
\@xp\section\@xp*\@xp{\glossaryname}%
\let\item\@idxitem
\parindent\z@  \parskip\z@\@plus.3\p@\relax
\footnotesize}
\def\glossaryname{Notation Index}
\definecolor{ceruleanblue}{rgb}{0.16, 0.32, 0.75}
\let\@wraptoccontribs\wraptoccontribs \makeatother
\renewcommand{\theenumi}{\roman{enumi}}
\renewcommand{\labelenumi}{(\theenumi)}
\newcommand\note[1]%
\newcounter{num}
\newenvironment{numerate}%
{\begin{list}{\hskip\labelsep\hskip\parindent(\alph{num})}%
{\usecounter{num}%
\setlength\leftmargin{0em}\setlength\topsep{0em}%
\setlength\parsep{0em}\setlength\partopsep{0em}%
\setlength\itemsep{0em}\setlength\labelwidth{0em}}}%
{\end{list}}
\numberwithin{equation}{subsection}
\newtheorem{theorem}[equation]{Theorem}
\newtheorem{lemma}[equation]{Lemma}
\newtheorem{proposition}[equation]{Proposition}
\newtheorem{corollary}[equation]{Corollary}
\theoremstyle{definition}
\newtheorem{definition}[equation]{Definition}
\newtheorem{example}[equation]{Example}
\newtheorem{remark}[equation]{Remark}
\newcommand\lie{\mathfrak}
\newcommand{\g}{\lie{g}} 
\newcommand{\h}{\lie{h}}
\newcommand{\nnn}{\lie{n}}
\newcommand{\ttt}{\lie{t}} 
\newcommand{\kk}{\lie{k}}
\newcommand{\z}{\lie{z}}
\newcommand\bb{\mathbb}
\newcommand\Z{\bb{Z}} 
\newcommand\Q{\bb{Q}}
\newcommand\R{\bb{R}} 
\newcommand\C{\bb{C}} 
\newcommand\T{\bb{T}}
\newcommand\ca{\mathscr}
\DeclareMathOperator\Aut{Aut}
\DeclareMathOperator\Ad{Ad}
\DeclareMathOperator\Alg{Alg}
\DeclareMathOperator\ann{ann}
\DeclareMathOperator\Bad{\text{\bf Ad}}
\DeclareMathOperator\codim{codim}
\DeclareMathOperator\coker{coker}
\DeclareMathOperator\Der{Der}
\DeclareMathOperator\Diff{Diff}
\DeclareMathOperator\Hom{Hom}
\DeclareMathOperator\Hol{Hol}
\DeclareMathOperator\hol{hol}
\DeclareMathOperator\id{id}
\DeclareMathOperator\Iso{Iso}
\DeclareMathOperator\Lie{Lie}
\DeclareMathOperator\Mon{Mon}
\DeclareMathOperator\pr{pr}
\DeclareMathOperator\rank{rank}
\DeclareMathOperator\Vect{Vect}
\DeclareMathOperator\Bvect{\text{\bf Vect}}
\newcommand\group[1]{{\text{\bf#1}}}
\newcommand\U{\group{U}}
\newcommand\B{\group{B}}
\newcommand\G{\group{G}}
\newcommand\abs[1]{\lvert#1\rvert}
\newcommand\inner[1]{\langle#1\rangle}
\newcommand\qu[1][\kern.3ex]{/\kern-.7ex/_{\kern-.4ex#1}}
\newcommand\bigqu[1][\,\,]{\big/\kern-.85ex\big/_{\!\!#1}}
\newcommand\powl{[\kern-.3ex[} \newcommand\powr{]\kern-.3ex]}
\newcommand\bigpowl{\bigl[\kern-.6ex\bigl[}
    \newcommand\bigpowr{\bigr]\kern-.6ex\bigr]}
\newcommand\inj{\hookrightarrow}
\newcommand\sur{\mathrel{\to\kern-1.8ex\to}}
\newcommand\longto{\longrightarrow} 
\newcommand\To{\Rightarrow}
\newcommand\Longto{\Longrightarrow}
\newcommand\longhookrightarrow{\lhook\joinrel\longrightarrow}
\newcommand\longinj{\longhookrightarrow}
\newcommand\longsur{\mathrel{\longrightarrow\kern-1.8ex\to}}
\newcommand{\n}{^{-1}}
\newcommand\eps{\varepsilon}
\newcommand\bu{{\scriptscriptstyle\bullet}}
\newcommand\bas{{\mathrm{bas}}}
\newcommand\loc{{\mathrm{loc}}}
\newcommand\mult{{\mathrm{mult}}}
\newcommand\red{{\mathrm{red}}}
\newcommand\Liegpd{\group{LieGpd}}
\newcommand\Stack{\group{Stack}}
\newcommand\Diffstack{\group{DiffStack}}
\newcommand\stack{\group}
\newcommand\X{\stack{X}} 
\newcommand\Y{\stack{Y}}
\newcommand\HH{\stack{H}}
\newcommand\KK{\stack{K}}
\newcommand\TT{\stack{T}}
\newcommand\stackmorphism{\boldsymbol}
\newcommand\bo{\boldsymbol}
\newcommand\bunit{{\stackmorphism1}}
\newcommand\bzero{{\stackmorphism0}}
\newcommand\ba{{\stackmorphism{a}}}
\newcommand\bm{{\stackmorphism{m}}}
\newcommand\bp{{\stackmorphism{p}}}
\newcommand\bi{{\stackmorphism{i}}}
\newcommand\balpha{{\stackmorphism{\alpha}}}
\newcommand\bbeta{{\stackmorphism{\beta}}}
\newcommand\bgamma{{\stackmorphism{\gamma}}}
\newcommand\bzeta{{\stackmorphism{\zeta}}}
\newcommand\blambda{{\stackmorphism{\lambda}}}
\newcommand\bmu{{\stackmorphism{\mu}}}
\newcommand\bxi{{\stackmorphism{\xi}}}
\newcommand\bphi{{\stackmorphism{\phi}}}
\newcommand\bchi{{\stackmorphism{\chi}}}
\newcommand\bpsi{{\stackmorphism{\psi}}}
\newcommand\bomega{{\stackmorphism{\omega}}}
\newcommand\Bomega{{\stackmorphism{\Omega}}}
\newcommand\F{{\ca{F}}}
\begin{document}


\title{Stacky Hamiltonian actions and symplectic reduction}

\author{Benjamin Hoffman}

\address{Department of Mathematics, Cornell University, Ithaca, NY
  14853-4201, USA}

\email{bsh68@cornell.edu}

\author{Reyer Sjamaar}

\address{Department of Mathematics, Cornell University, Ithaca, NY
  14853-4201, USA}

\email{sjamaar@math.cornell.edu}

\contrib[with an appendix by]{Chenchang Zhu}

\address{Mathematisches Institut, Georg-August-Universit\"at,
  Bunsenstra{\ss}e 3--5, D-37073 G\"ottingen, Germany}

\email{zhu@uni-math.gwdg.de}

\subjclass[2010]{53D20 (57R30, 58H05)}



\begin{abstract}
We introduce the notion of a Hamiltonian action of an \'etale Lie
group stack on an \'etale symplectic stack and establish versions of
the Kirwan convexity theorem, the Meyer-Marsden-Weinstein symplectic
reduction theorem, and the Duistermaat-Heckman theorem in this
context.
\end{abstract}


\maketitle

\tableofcontents


\section{Introduction}

The leaf space of a foliation on a smooth manifold can be interpreted
as a differentiable stack.  Many invariants of the foliation, such as
its K-theory and cyclic homology, depend only on this stack.  In this
paper we are concerned with transversely symplectic foliations and
with some properties of their associated stacks.

For instance, work of He~\cite{he;odd-dimensional},
Ishida~\cite{ishida;transverse-kaehler}, Ratiu and
Zung~\cite{ratiu-zung;presymplectic}, and Lin and
Sjamaar~\cite{lin-sjamaar;presymplectic} shows that a version of
Kirwan's convexity theorem~\cite{kirwan;convexity-III} for the moment
map in symplectic geometry holds for certain transversely symplectic
foliations.  Our first main result, Theorem~\ref{theorem;convex},
upgrades their results to a convexity theorem for Hamiltonian actions
of \'etale Lie group stacks on \'etale symplectic stacks.  To avoid
technicalities, let us here state our theorem in the case where the
Lie group stack is a stacky torus $\stack{T}$.  Our assertion is that
if $\stack{T}$ acts on a symplectic stack $\X$ and if the action
admits a moment map, then under a certain ``cleanness'' hypothesis the
image of the moment map is a convex polytope. This statement extends
the Atiyah-Guillemin-Sternberg convexity theorem, which holds in the
case where $\X$ is a symplectic manifold and $\stack{T}$ an ordinary
torus, but the stacky situation has an interesting new feature, namely
that the normal fan of the moment polytope is not necessarily
rational.  More precisely, the normal vectors to the polytope are
cocharacters of $\stack{T}$, and the cocharacter group of a stacky
torus is not a lattice, but a quasi-lattice in the Lie algebra.  We
call a pair consisting of a stacky torus and a convex polytope in the
dual of its Lie algebra a \emph{stacky polytope}.

The second main result of this paper is
Theorem~\ref{theorem;reduction}, which generalizes the
Meyer-Marsden-Weinstein symplectic reduction theorem to the setting of
group stack actions on symplectic stacks.  We give a necessary and
sufficient condition for the reduction of a symplectic stack by a
Hamiltonian action of an \'etale Lie group stack to be again a
symplectic stack.  The theorem holds under a regularity hypothesis on
the moment map, but we make no assumption on the compactness of the
group stack or the properness of the action.  This generalizes a
theorem of Lerman and
Malkin~\cite[Theorem~3.13]{lerman-malkin;deligne-mumford}, who
considered the case of a Hamiltonian action of a compact Lie group on
a separated symplectic stack.  In view of work of
Calaque~\cite{calaque;derived-symplectic-topological-field},
Pecharich~\cite{pecharich;derived-marsden-weinstein}, and
Safronov~\cite{safronov;quasi-hamiltonian-chern-simons} we expect that
in the absence of any regularity assumptions the reduction of a
symplectic stack by the action of a Lie group stack is a derived
symplectic stack.

Our third main result is Theorem~\ref{theorem;DH}, which is an
extension of the Duistermaat-Heckman theorem to Hamiltonian actions of
stacky tori. The Duistermaat-Heck\-man theorem has two parts: (1)~the
variation of the reduced symplectic form is linear, and (2)~the moment
map image of the Liouville measure is piecewise polynomial.  It is
only the first part that we generalize here, leaving the second part
for later.

These results require a basic theory of Hamiltonian actions of Lie
group stacks, which we outline in
Sections~\ref{section;lie-stack}--\ref{section;hamiltonian-groupoid}
and in Appendices~\ref{appendix;groupobjects}-\ref{appendix;zhu}.  We
include some elementary, but apparently new, material concerning
\'etale stacks and stacky Lie groups, such as the fact that the Lie
$2$-algebra of vector fields of a differentiable stack is equivalent
to a Lie algebra if the stack is \'etale
(Proposition~\ref{proposition;vector-stack}), a structure theorem for
Lie $2$-groups of compact type
(Proposition~\ref{proposition;compact-connected}), and a
strictification theorem for stacky actions
(Theorem~\ref{theorem;strictaction}).  Our starting point is the
theory developed by Lerman and
Malkin~\cite{lerman-malkin;deligne-mumford}, which we extend in two
respects: the stacks that we deal with are \'etale, but usually not
separated, and the groups that act on them are themselves \'etale
stacks.

What we call a symplectic stack is a \emph{$0$-shifted symplectic
  $1$-stack} in the terminology of Pantev et
al.~\cite{pantev-toen-vaquie-vezzosi;shifted-symplectic}, except that
our stacks are defined over the category of differentiable manifolds
instead of the category of algebraic schemes.  See Getzler's lecture
notes~\cite{getzler;differential-stacks} for an introduction to higher
symplectic stacks over manifolds and for an explanation of how
Weinstein's symplectic
groupoids~\cite{weinstein;symplectic-groupoids-poisson-manifolds} and
Xu's quasi-symplectic groupoids~\cite{xu;momentum-morita} are
presentations of $1$-shifted symplectic $1$-stacks.  We expect that
some of our results, especially the reduction theorem, can be extended
to higher symplectic stacks.

An illuminating example of stacky symplectic reduction is Prato's
construction of toric quasifolds~\cite{prato;non-rational-symplectic},
which predates many of these developments.  We present (a slight
extension of) her construction as a running example in order to show
that every simple stacky polytope is the moment polytope of what we
call a \emph{toric} symplectic stack.  Toric symplectic stacks are a
$C^\infty$ counterpart of the toric stacks of algebraic geometry, a
comprehensive treatment of which was given by Gerashchenko and
Satriano~\cite{geraschenko-satriano;toric}.  However, the two theories
are very different.  The correspondence between toric symplectic
manifolds and nonsingular complex projective toric varieties
established by Delzant~\cite{delzant;hamiltoniens-periodiques} breaks
down in the world of stacks, because $C^\infty$ stacky tori, which
include such objects as the quotient of a two-dimensional torus by a
dense line, are seldom algebraic.  Toric symplectic stacks are
classified by the first author in~\cite{hoffman;toric-stacks}.

Two further instances of stacky symplectic reduction are the space of
geodesics of a Riemannian manifold (see
Example~\ref{example;geodesic}) and the quotient of a contact manifold
by the Reeb flow (see Example~\ref{example;contact}).

We are grateful to Chenchang Zhu for her contribution to this project,
which appears in Appendix~\ref{appendix;zhu}.  We also thank the
referees for their careful reading and many useful suggestions.

\section{Notation and conventions}\label{section;notation}

All manifolds are required to be $C^\infty$ and second countable, but
not necessarily Hausdorff.  Manifolds and smooth maps form the
category $\group{Diff}$.
\glossary{Diff@$\group{Diff}$, category of manifolds}%
Lie groups are group objects in $\group{Diff}$, in other words are
required to have countably many components. The functor taking a Lie
group to its Lie algebra is denoted $\Lie$. The space of smooth global
sections of a vector bundle $E$ is denoted by $\Gamma(E)$.  
\glossary{Lie@$\Lie$, Lie functor}%
\glossary{Gamma@$\Gamma(E)$, space of smooth sections of $E$}%
The Lie algebra of vector fields on a manifold $X$ is denoted by
$\Vect(X)=\Gamma(TX)$.
\glossary{Vect@$\Vect(X)$, vector fields on $X$}%
Given a smooth map $f\colon X\to Y$, two vector fields $v\in\Vect(X)$
and $w\in\Vect(Y)$ are \emph{$f$-related} (notation: $v\sim_fw$) if
$T_xf(v_x)=w_{f(x)}$ for all $x\in X$.
\glossary{*@$\sim_\phi$, $\phi$-relatedness of vector fields or
  sections}%
A Lie groupoid with object manifold $X_0$ and arrow manifold $X_1$ is
denoted by $X_1\rightrightarrows X_0$ or $X_\bu$.  Morphisms of Lie
groupoids $\phi_\bu\colon X_\bu\to Y_\bu$ are also decorated with a
subscript ``$\bu$'', as are basic differential forms
$\zeta_\bu\in\Omega_\bas^\bu(X_\bu)$ and basic vector fields
$v_\bu\in\Vect_\bas(X_\bu)$.  We write weak (Morita) equivalence of
groupoids as $X_\bu\simeq Y_\bu$.  Stacks over $\group{Diff}$ are
written in boldface, $\X$, $\Y$, and so are their $1$-morphisms
$\bphi\colon\X\to\Y$, $2$-morphisms $\balpha\colon\bphi\To\bpsi$, and
differential forms $\bzeta\in\Omega^\bu(\X)$, etc.  The classifying
stack of a Lie groupoid $X_\bu$ is denoted by $\B X_\bu$.  We write
equivalences of stacks as $\X\simeq\Y$.  We denote by $\star$ a
terminal object in the $2$-category of stacks.
\glossary{*@$\simeq$, Morita equivalence of groupoids or equivalence
  of stacks}
\glossary{*@$\star$, terminal object in $\Stack$}

See also the notation index at the end.

\section{Hamiltonian actions on presymplectic manifolds}
\label{section;presymplectic}

\numberwithin{equation}{section}

This section is a brief exposition of the presymplectic convexity
theorem~\cite[Theorem~2.2]{lin-sjamaar;presymplectic}, which is the
prototype of our stacky convexity theorem,
Theorem~\ref{theorem;convex}.

A \emph{presymplectic manifold} is a Hausdorff manifold $X$ equipped
with a closed $2$-form $\omega$ of constant rank.  The kernel
$\ker(\omega)$ defines an involutive distribution on $X$.  The
corresponding foliation of $X$ is called the \emph{null foliation},
which we denote by $\F$.  For $x\in X$, we write $\F(x)$ for the leaf
of $\F$ containing $x$.
\glossary{F@$\F$, foliation}%
\glossary{Fx@$\F(x)$, $\F$-leaf of $x$}%
\glossary{omega@$\omega$, (pre)symplectic form on manifold}%

Consider a left action of a connected Lie group $G$ on a presymplectic
manifold $(X,\omega)$.  For $\xi\in\g=\Lie(G)$, denote by $\xi_X$ the
fundamental vector field of $\xi$ on $X$; then the assignment
$\xi\mapsto \xi_X$ is a Lie algebra anti-homomorphism.  Let
\glossary{xix@$\xi_X$, fundamental vector field on $G$-manifold $X$
  induced by $\xi\in\Lie(G)$}%
\[
\nnn(\F)=\{\,\xi\in\g\mid\text{$(\xi_X)_x\in T_x\F$ for all $x\in
  X$}\,\}.
\]
The subspace $\nnn(\F)\subseteq\g$ is an ideal in $\g$, which we call
the \emph{null ideal} of $\F$,
following~\cite{lin-sjamaar;presymplectic}.
\glossary{nF@$\nnn(\F)$, null ideal of action on foliated manifold}%
Let $N(\F)\subseteq G$ be the connected immersed Lie subgroup of $G$
with Lie algebra $\nnn(\F)$, which we will call the $\emph{null
  subgroup}$.  The action of $G$ on $(X,\omega)$ is \emph{clean} if
\[T_x(N(\F)\cdot x)=T_x(G\cdot x)\cap T_x\F\]
for all $x\in X$.  For a Lie subalgebra $\h$ of $\g$, let
\[\ann(\h)=\{\,\eta\in\g^*\mid\inner{\eta,\h}=0\,\}\cong(\g/\h)^*\]
\glossary{ann@$\ann$, annihilator of subspace}%
be the annihilator of $\h$, with
$\inner{{\cdot},{\cdot}}\colon\g^*\times\g\to\R$ being the natural
pairing.  The action of $G$ on $(X,\omega)$ is
\emph{Hamiltonian} if there is a \emph{moment map}, i.e.\ a map
$\mu\colon X\to\g^*$ that satisfies the following conditions:
\glossary{mu@$\mu$, moment map for Lie group action on (pre)symplectic
  manifold}%
\begin{enumerate}
\item\label{item;hamilton}
$d\mu^\xi=\iota_{\xi_X}\omega$ for all $\xi\in\g$, where
  $\mu^\xi(x)=\inner{\mu(x),\xi}$ denotes the component of $\mu$ along
  $\xi$;
\item\label{item;coadjoint}
$\mu$ intertwines the $G$ action on $X$ and the coadjoint action of
  $G$ on $\g^*$;
\item\label{item;null}
$\mu(X)\subseteq\ann(\nnn(\F))$.
\end{enumerate}
(Since $G$ is connected, these assumptions imply that $\omega$ is
$G$-invariant.)  The tuple $(X,\omega,G,\mu)$ is then called a
\emph{presymplectic Hamiltonian $G$-manifold}.  An \emph{isomorphism}
between two Hamiltonian $G$-manifolds $(X,\omega,G,\mu)$ and
$(X',\omega',G,\mu')$ is a $G$-equivariant diffeomorphism $\phi\colon
X\to X'$ which preserves the presymplectic structure and the moment
map.

The conditions~\eqref{item;hamilton}--\eqref{item;null} on the moment
map are not independent.
by~\cite[proposition~2.9.1]{lin-sjamaar;presymplectic}, if $G$ is
compact and if $\mu\colon X\to\g^*$ satisfies~\eqref{item;hamilton}
and~\eqref{item;coadjoint}, there exists $\lambda\in\g^*$ fixed by the
coadjoint action of $G$ such that $\mu+\lambda$
satisfies~\eqref{item;hamilton}--\eqref{item;null}.

\begin{example}\label{example1}
This example is drawn from~\cite{delzant;hamiltoniens-periodiques}
and~\cite{prato;non-rational-symplectic}.  Let $\T$ be the circle
$\R/\Z$, $G$ the $n$-dimensional torus $\T^n$, and $N\subseteq G$ an
immersed Lie subgroup.  Consider the Hamiltonian $G$-manifold
$(X,\omega,G,\mu)$, where
\glossary{T@$\T$, circle $\R/\Z$}%
\[
X=\C^n,\qquad\omega=\frac{1}{2\pi i}\sum_jdz_j\wedge
d\bar{z}_j,\qquad\mu(z)=\sum_j\abs{z_j}^2e^*_j+\lambda,
\]
and $G$ acts on $\C^n$ in the standard way.  Here $e^*_j$ is the dual
of the standard basis of $\R^n=\g$, and $\lambda\in\g^*$ is in the
open negative orthant.  Let $\iota\colon \lie{n}\to\g$ be the
inclusion of Lie algebras and $\iota^*\colon \g^*\to\lie{n}^*$ the
dual projection.  Then $(X_0,\omega_0,G,\mu_0)$ is a presymplectic
Hamiltonian $G$-manifold, where
\[
X_0=(\iota^*\circ\mu)^{-1}(0),\quad\omega_0=\omega|_{X_0},\quad
G=\T^n,\quad\mu_0=\mu|_{X_0}.
\]
Note that $\mu_0$ takes values in $\ann(\lie{n})$.  We will return to
this example throughout the text.
\end{example}

The cleanness condition is essential for the following to be true.

\begin{theorem}[Lin and Sjamaar~\cite{lin-sjamaar;presymplectic}]
\label{theorem;LiSj}
Let $(X,\omega,G,\mu)$ be a Hamiltonian presymplectic $G$-manifold,
where $X$ is connected, and $G$ is compact and connected.  Assume that
the $G$-action is clean, and the moment map $\mu\colon X\to\g^*$ is
proper.  Choose a maximal torus $T$ of $G$ and a closed Weyl chamber
$C$ in $\ttt^*$, where $\ttt=\Lie(T)$, and define
$\Delta(X)=\mu(X)\cap C$.
\glossary{DeltaX@$\Delta(X)$, moment body of $X$}%
\begin{enumerate}
\item
The fibres of $\mu$ are connected and $\mu\colon X\to\mu(X)$ is an
open map.
\item
$\Delta(X)$ is a closed convex polyhedral set.
\item
$\Delta(X)$ is rational if and only if the null subgroup $N(\F)$ of
  $G$ is closed.
\end{enumerate}
\end{theorem}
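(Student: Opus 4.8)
The plan is to transplant the Morse-theoretic proof of the Atiyah--Guillemin--Sternberg and Kirwan convexity theorems to the presymplectic setting, using the \emph{cleanness} hypothesis at every turn to compensate for the degeneracy of $\omega$. First I would reduce all three assertions to the case where $G$ is a torus. This is carried out by a presymplectic version of the symplectic cross-section theorem: for each face $\tau$ of the Weyl chamber $C$ with relative interior $\tau^\circ$, the coadjoint stabilizer $G_\tau$ of $\tau^\circ$ is connected, and the cross-section $Y_\tau=\mu^{-1}(\g^*_\tau)$, where $\g^*_\tau$ is the Guillemin--Sternberg slice transverse to the coadjoint orbits, is a locally closed $G_\tau$-invariant presymplectic submanifold on which $G_\tau$ acts cleanly with proper moment map. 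Because $\mu(X)\cap C$ is the union of the moment images of the cross-sections and the open face $C^\circ$ has stabilizer $G_{C^\circ}=T$, all three statements for $G$ reduce to the corresponding statements for the maximal torus $T$; I would therefore assume $G=T$ from now on.

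Second, I would pin down the local structure of $\mu$ by a presymplectic local normal form near an orbit $T\cdot x$. The cleanness identity $T_x(N(\F)\cdot x)=T_x(T\cdot x)\cap T_x\F$ is precisely what lets one split $T_xX$ into the null directions $T_x\F=\ker\omega_x$, the transverse orbit directions, and a symplectic transverse slice; in adapted coordinates $\mu$ then takes the model form $\mu(z)=\mu(x)+\sum_j\abs{z_j}^2\alpha_j$, where the $\alpha_j\in\ttt^*$ are the isotropy weights in the symplectic normal slice and the foliation directions contribute nothing. Consequently, near $\mu(x)$ the image $\mu(X)$ coincides with the polyhedral cone $\bigl(\mu(x)+\sum_j\R_{\ge0}\alpha_j\bigr)\cap\ann(\nnn(\F))$, which supplies both local convexity and the local polyhedral structure of $\Delta(X)$, and shows at the same time that $\mu\colon X\to\mu(X)$ is open.

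Third---and this is where the main difficulty lies---I would prove that the fibres of $\mu$ are connected. In the symplectic case this is Atiyah's induction on $\dim X$ using that each component $\mu^\xi$ is a Morse--Bott function whose critical submanifolds have even index and coindex, so that its regular level sets and sublevel sets stay connected. The obstacle in the presymplectic case is that $\mu^\xi$ is genuinely degenerate: its critical set is $\{\,x\mid(\xi_X)_x\in T_x\F\,\}$, which is large because $\xi_X$ may be everywhere tangent to the null foliation. Cleanness resolves this: it forces the flat directions of the Hessian of $\mu^\xi$ to be exactly $T_x\F$ together with the $N(\F)$-orbit, while the remaining transverse Hessian is nondegenerate with even-dimensional positive and negative eigenspaces, each weight $\alpha_j$ spanning a complex, hence even, normal plane. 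Thus $\mu^\xi$ is Morse--Bott transverse to the null foliation with even transverse indices, and a leafwise version of Atiyah's connectivity argument applies, yielding connected level and sublevel sets; properness of $\mu$ guarantees that the critical set is a finite union of compact submanifolds and that the gradient flow converges. Intersecting over a spanning set of $\xi\in\ttt$ then gives connectedness of the fibres of $\mu$ itself, proving~(i).

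Finally, connected fibres together with local convexity feed into the Local-to-Global Principle of Condevaux--Dazord--Molino (equivalently Hilgert--Neeb--Plank), which promotes local convexity to convexity of $\Delta(X)=\mu(X)\cap C$; properness makes the image closed and the local normal form makes it locally polyhedral, so $\Delta(X)$ is a closed convex polyhedral set, proving~(ii). For the rationality criterion~(iii) I would read the facet normals off the local model: by condition~\eqref{item;null} the polytope lies in $\ann(\nnn(\F))\cong(\ttt/\nnn(\F))^*$, and each facet normal is the image in $\ttt/\nnn(\F)$ of an isotropy weight, which belongs to the integral lattice $\Lambda=\ker(\exp\colon\ttt\to T)$. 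Hence every facet normal is rational exactly when the quotient $\ttt/\nnn(\F)$ inherits a lattice from $\Lambda$, that is, exactly when $\nnn(\F)$ is a rational subspace of $\ttt$; since $\nnn(\F)=\Lie(N(\F))$, this holds if and only if the null subgroup $N(\F)$ is closed in $G$, establishing the equivalence in~(iii).
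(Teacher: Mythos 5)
The paper does not prove this theorem: it is imported verbatim from Lin--Sjamaar \cite{lin-sjamaar;presymplectic}, so there is no internal proof to compare against. Measured against the strategy of that reference, your outline identifies the right architecture --- reduction to the torus via a presymplectic cross-section theorem, a clean local normal form giving local convexity, openness and local polyhedrality, the local-to-global principle of Condevaux--Dazord--Molino/Hilgert--Neeb--Plank, and the identification of facet normals with images of isotropy weights in $\ttt/\nnn(\F)$ for part~(iii). The one place where you diverge, and where your sketch is actually weakest, is the treatment of fibre connectedness. You propose a global Atiyah-style Morse--Bott argument for the components $\mu^\xi$, asserting that properness of $\mu$ makes the critical set of $\mu^\xi$ a finite union of compact submanifolds; this does not follow ($X$ is not assumed compact, and properness of $\mu$ controls only the intersection of the critical set with individual level sets, not the critical set globally), so the gradient-flow argument as stated has a gap. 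The cited proof sidesteps this entirely: the local-to-global principle, applied to a proper map that is locally fibre-connected and locally convex (both supplied by the clean local normal form), yields connectedness of the fibres, openness onto the image, and convexity simultaneously, so no separate global Morse theory is needed. If you replace your third step by the observation that the local normal form already gives local fibre connectedness and let the local-to-global principle deliver statement~(i), the outline becomes a faithful reconstruction of the actual proof. For part~(iii) you should also note that the equivalence between rationality of $\nnn(\F)$ and closedness of $N(\F)$ reduces to the central torus of $G$, since the semisimple part of the null ideal always integrates to a closed subgroup.
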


\section{Lie groupoids and differentiable stacks}
\label{section;groupoid-stack}

\numberwithin{equation}{subsection}

This section is a summary of definitions, conventions, and well-known
facts.  For more about Lie groupoids see e.g.\ Moerdijk and
Mr\v{c}un~\cite{moerdijk-mrcun;foliations-groupoids} or Crainic and
Moerdijk~\cite{crainic-moerdijk;foliation-cyclic}.  For the
relationship between Lie groupoids and differentiable stacks see
e.g.\ Behrend and Xu~\cite{behrend-xu;stacks-gerbes},
Blohmann~\cite{blohmann;stacky-lie-groups},
Carchedi~\cite{carchedi;topological-differentiable-stacks},%
~\cite{carchedi;etale-stacks},
Lerman~\cite{lerman;orbifolds-as-stacks},
Metzler~\cite{metzler;topological-smooth-stack},
Noohi~\cite{noohi;foundations-topological-stacks}, or
Villatoro~\cite{villatoro;stacks-poisson}.  See also the notation
index at the end.

\subsection{Lie groupoids}\label{subsection;lie-groupoid}

A Lie groupoid $X_\bu=(X_1\rightrightarrows X_0)$ has structure maps
$s$, $t$, $m$, $(\cdot)\n$, and $u$ which are called \emph{source,
  target, multiplication, inversion}, and the \emph{identity
  bisection}, respectively.  When two arrows $f$, $g\in X_1$ have
$s(f)=t(g)$, they are \emph{composable}.  We typically write $f\circ
g$ for the multiplication $m(f,g)$ of two composable arrows.  The
\emph{object manifold} $X_0$ and the \emph{arrow manifold} $X_1$ are
(not necessarily Hausdorff) manifolds, and the maps $s$ and $t$ are
required to be surjective submersions.  If all source fibres
$s^{-1}(x)$ are connected (resp.\ simply connected), then $X_\bu$ is
\emph{source-connected} (resp.\ \emph{source-simply connected}).  For
$x\in X_0$, the \emph{orbit} of $x$ is $X_\bu\cdot x=t(s^{-1}(x))$,
and the \emph{isotropy group} of $x$ is
\[\Iso(x)=\Iso_{X_\bu}(x)=s\n(x)\cap t\n(x).\]
It is known (see
e.g.~\cite[Theorem~5.4]{moerdijk-mrcun;foliations-groupoids}) that
$X_\bu\cdot x$ is an immersed submanifold of $X_0$, that $\Iso(x)$ is
a closed submanifold of $X_1$, and that $\Iso(x)$ is a Lie group.  The
set of orbits equipped with the quotient topology is the \emph{orbit
  space} or \emph{coarse quotient space} $X_0/X_1$ of the Lie
groupoid.
\glossary{X*@$X_\bu$, Lie groupoid}%
\glossary{X1X0@$X_1\rightrightarrows X_0$, Lie groupoid}%
\glossary{X0@$X_0$, objects of $X_\bu$}%
\glossary{X1@$X_1$, arrows of $X_\bu$}%
\glossary{Isox@$\Iso_{X_\bu}(x)$, $X_\bu$-isotropy group of $x$}%
\glossary{X*dotx@$X_\bu\cdot x$, $X_\bu$-orbit of $x$}%
\glossary{X0/X1@$X_0/X_1$, orbit space of $X_\bu$}%
\glossary{s@$s$, source map of groupoid}%
\glossary{t@$t$, target map of groupoid}%
\glossary{m@$m$, multiplication map of groupoid}%
\glossary{u@$u$, identity bisection of groupoid}%
\glossary{*@$(\cdot)\n$, inversion law of groupoid or $2$-group}%

If a Lie group $G$ acts on a manifold $X$, we denote the action
groupoid by $G\ltimes X\rightrightarrows X$.  We sometimes denote a
Lie groupoid by its space of arrows; for instance in this notation the
action groupoid is $G\ltimes X$.  If $X$ is a smooth manifold, we
consider it as the identity Lie groupoid $X\rightrightarrows X$.  For
any Lie groupoid $X_\bu$ we have a natural inclusion $X_0\to X_\bu$ of
(the identity groupoid of) $X_0$ into $X_\bu$.
\glossary{G*X@$G\ltimes X$, action groupoid of $G$-action on manifold
  $X$}%

\begin{definition}\label{definition;Liealgebroid} 
The \emph{Lie algebroid} of a Lie groupoid $X_\bu$ is the vector
bundle $\Alg(X_\bu)$ over $X_0$ given by
\[\Alg(X_\bu)=\{\,w\in TX_1|_{u(X_0)}\mid Ts(w)=0\,\}.\]
The \emph{anchor map} is the vector bundle morphism $\rho\colon
\Alg(X_\bu)\to T X_0$ given by $\rho=Tt|_{\Alg(X_\bu)}$.
\glossary{Alg@$\Alg(X_\bu)$, Lie algebroid of $X_\bu$}%
\end{definition}

Sections of the Lie algebroid extend uniquely to right-invariant
vector fields on $X_1$, and so the space of sections of $\Alg(X_\bu)$
carries a natural Lie bracket.

\begin{definition}\label{definition;categoryofgroupoids}
A \emph{morphism of Lie groupoids} $\phi_\bu\colon X_\bu\to Y_\bu$ is
a smooth functor, i.e.\ a morphism of groupoids which is smooth on the
manifolds of objects $\phi_0\colon X_0\to Y_0$ and on the manifolds of
arrows $\phi_1\colon X_1\to Y_1$.  For two morphisms $\phi_\bu$,
$\psi_\bu\colon X_\bu\to Y_\bu$ of Lie groupoids, a
\emph{$2$-morphism} or \emph{natural transformation}
$\gamma\colon\phi_\bu\To\psi_\bu$ is a smooth map $\gamma\colon X_0\to
Y_1$ with the property that for each $x\in X_0$, $\gamma(x)$ is an
arrow from $\phi_0(x)$ to $\psi_0(x)$ in $Y_\bu$, and for every arrow
$f\colon x_1\to x_2$ in $X_\bu$, the following diagram commutes in
$Y_\bu$:
\[
\begin{tikzcd}[row sep=large]
\phi_0(x_1)\ar[r,"\gamma(x_1)"]\ar[d,"\phi_1(f)"']&
\psi_0(x_1)\ar[d,"\psi_1(f)"]
\\
\phi_0(x_2)\ar[r,"\gamma(x_2)"]&\psi_0(x_2)
\end{tikzcd}
\]
The $2$-category of Lie groupoids is denoted $\Liegpd$.
\glossary{Liegpd@$\Liegpd$, $2$-category of Lie groupoids}%
\end{definition}

A natural transformation of Lie groupoids is automatically a natural
isomorphism.

\begin{definition}\label{definition;moritaequivalence}
Let $\phi_\bu\colon X_\bu\to Y_\bu$ be a morphism of Lie groupoids.
Then $\phi_\bu$ is \emph{essentially surjective} if the map
$t\circ\pr_1\colon Y_1\times_{Y_0}X_0\to Y_0$ which sends $(g,x)$ to
$t(g)$ is a surjective submersion.  Here
$Y_1\times_{Y_0}X_0=Y_1\times_{s,Y_0,\phi_0}X_0$ means as usual the
fibred product of $Y_1$ and $X_0$, which consists of all $(g,x)\in
Y_1\times X_0$ satisfying $s(g)=\phi_0(x)$.  The morphism $\phi_\bu$
is \emph{fully faithful} if the square
\[
\begin{tikzcd}[row sep=large]
X_1\ar[r,"\phi_1"]\ar[d,"{(s,t)}"']&Y_1\ar[d,"{(s,t)}"]\\
X_0\times X_0\ar[r,"\phi_0\times\phi_0"]&Y_0\times Y_0
\end{tikzcd}
\]
is a fibred product of manifolds.  If $\phi_\bu$ is both essentially
surjective and fully faithful, we say that $\phi_\bu$ is a
\emph{Morita morphism} or \emph{weak equivalence}.  If there is a
zigzag of Morita morphisms $X_\bu\to Y_\bu$ and $X_\bu\to Z_\bu$, then
$Y_\bu$ and $Z_\bu$ are \emph{Morita equivalent}.
\glossary{*@$\simeq$, Morita equivalence of groupoids or equivalence
  of stacks}%
\end{definition}

\begin{definition}\label{definition;pullbackgroupoid}
Given a Lie groupoid $Y_\bu$ and a smooth map $\phi_0\colon X\to Y_0$,
the \emph{pullback groupoid} $X_\bu=\phi_0^*(Y_\bu)$ is defined by
$X_0=X$ and
\begin{align*}
X_1&=X\times_{\phi_0,Y_0,s} Y_1\times_{t,Y_0,\phi_0}X\\
&\hphantom{:}=\{\,(x,g,y)\in X\times Y_1\times
X\mid\phi_0(x)=s(g)\text{ and }t(g)=\phi_0(y)\,\}.
\end{align*}
\glossary{phiX*@$\phi_0^*X_\bu$, pullback groupoid}%
\end{definition}

The pullback groupoid is a Lie groupoid whenever
$(\phi_0,\phi_0)\colon X\times X\to Y_0\times Y_0$ is transverse to
$(s,t)\colon Y_1\to Y_0\times Y_0$.  The map $\phi_0$ then lifts to a
Lie groupoid homomorphism $\phi_\bu\colon\phi_0^*(Y_\bu)\to Y_\bu$,
which is fully faithful, and which is essentially surjective if and
only if $t\circ\pr_1\colon Y_1\times_{Y_0} X\to Y_0$ is surjective.

\begin{lemma}\phantomsection\label{lemma;hausdorff}
\begin{enumerate}
\item\label{item;hausdorff}
For every Lie groupoid $Y_\bu$ there is a Morita morphism $X_\bu\to
Y_\bu$ from a Lie groupoid $X_\bu$ with Hausdorff object manifold
$X_0$.
\item\label{item;source-hausdorff}
Let $X_\bu$ be a Lie groupoid with Hausdorff object manifold $X_0$.
Then for every $x\in X_0$ the source fibre $s^{-1}(x)$ is Hausdorff.
\end{enumerate}
\end{lemma}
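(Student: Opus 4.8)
The two parts are essentially independent, and I would treat them separately.

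For part~\ref{item;hausdorff}, the plan is to realize $X_\bu$ as a pullback groupoid along a Hausdorff cover of $Y_0$. Since $Y_0$ is a manifold it admits an open cover $\{U_i\}$ by chart domains, each of which is Hausdorff; I set $X=\coprod_i U_i$, which is then a Hausdorff manifold, and let $\phi_0\colon X\to Y_0$ be the tautological map given on each $U_i$ by the inclusion. This $\phi_0$ is a surjective local diffeomorphism, in particular a surjective submersion. I would then form the pullback groupoid $X_\bu=\phi_0^*(Y_\bu)$ of Definition~\ref{definition;pullbackgroupoid}. Because $\phi_0$ is a submersion, so is $\phi_0\times\phi_0$, and hence $(\phi_0,\phi_0)$ is automatically transverse to $(s,t)$; thus $X_\bu$ is a Lie groupoid and the canonical lift $\phi_\bu\colon X_\bu\to Y_\bu$ is fully faithful. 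It then remains only to check that $\phi_\bu$ is essentially surjective, which by the criterion recorded after Definition~\ref{definition;pullbackgroupoid} reduces to surjectivity of $t\circ\pr_1\colon Y_1\times_{Y_0}X\to Y_0$; given $y\in Y_0$ I would pick $x\in X$ with $\phi_0(x)=y$ and note that $(u(y),x)$ maps to $y$. Since $X_0=X$ is Hausdorff by construction, this produces the required Morita morphism.

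For part~\ref{item;source-hausdorff}, the plan is to exhibit the submanifold $s\n(x)\subseteq X_1$ as the total space of a principal bundle over a Hausdorff base with Hausdorff structure group. First I would observe that $\Iso(x)$, being a Lie group, is Hausdorff: any manifold is $T_1$, so $\{e\}$ is closed, and a topological group whose identity is a closed point is Hausdorff. Next, the orbit $O=X_\bu\cdot x$ is an immersed submanifold of the Hausdorff manifold $X_0$, so its intrinsic manifold topology, being finer than the (Hausdorff) subspace topology, is again Hausdorff. The target map restricts to a surjective submersion $t\colon s\n(x)\to O$ on which $\Iso(x)$ acts freely by right multiplication, transitively on the fibres; choosing local sections of this submersion yields local trivialisations $U\times\Iso(x)\iso t\n(U)$, exhibiting $t\colon s\n(x)\to O$ as a principal $\Iso(x)$-bundle. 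Finally I would separate two points of $s\n(x)$: if they have distinct targets I separate them by pulling back disjoint neighbourhoods in $O$, and if they lie in a common fibre I separate them inside a single trivialisation $U\times\Iso(x)$, which is Hausdorff because both factors are.

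The main obstacle is in the second part, namely the passage from the local, manifestly Hausdorff picture to a global separation statement: the ambient arrow manifold $X_1$ may be non-Hausdorff, so one cannot separate two arrows of $s\n(x)$ merely by separating them inside the Hausdorff submanifold $\Iso(x)$ or inside some subspace topology. The principal-bundle description is exactly what makes the argument rigorous, since local triviality encloses any pair of points in one genuinely Hausdorff chart $U\times\Iso(x)$; the care needed lies in verifying that $O$ is Hausdorff in its intrinsic (not subspace) topology and that the structure group $\Iso(x)$ is Hausdorff, both of which I would establish as above.
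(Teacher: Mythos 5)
Your proof is correct and follows essentially the same route as the paper: part (i) via the pullback groupoid along the disjoint union of Hausdorff charts, and part (ii) by exhibiting $s\n(x)$ as a locally trivial principal $\Iso(x)$-bundle over the orbit $X_\bu\cdot x$, with both base and structure group Hausdorff. The only difference is that the paper simply cites \cite[Theorem~5.4(iv)]{moerdijk-mrcun;foliations-groupoids} for the principal bundle structure, whereas you reconstruct it; your verifications (transversality of $(\phi_0,\phi_0)$ to $(s,t)$, Hausdorffness of the immersed orbit in its intrinsic topology, and the two-case separation argument) are all sound.
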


\begin{proof}
\eqref{item;hausdorff}~Choose any surjective \'etale map $\phi_0\colon
X_0\to Y_0$ from a Hausdorff manifold $X_0$ to $Y_0$ (e.g.\ $X_0$ is
the disjoint union of charts in a countable atlas of $Y_0$); then
$X_\bu=\phi_0^*Y_\bu$ is Morita equivalent to $Y_\bu$ and has object
manifold $X_0$.

\eqref{item;source-hausdorff}~Let $x\in X_0$ and let $t_x$ be the
restriction of the target map to the source fibre $s^{-1}(x)$.
According
to~\cite[Theorem~5.4(iv)]{moerdijk-mrcun;foliations-groupoids},
$t_x\colon s^{-1}(x)\to X_\bu\cdot x$ is a locally trivial principal
bundle with structure group $\Iso(x)$.  The orbit $X_\bu\cdot
x\subseteq X_0$ is Hausdorff and so is the Lie group $\Iso(x)$, and
therefore the total space $s^{-1}(x)$ is Hausdorff.
\end{proof}

\begin{definition}\label{definition;fibreproductLiegpd} 
Let $\phi_\bu\colon X_\bu\to Z_\bu$ and $\psi_\bu\colon Y_\bu\to
Z_\bu$ be Lie groupoid morphisms.  The \emph{weak fibred product} is
the topological groupoid $X_\bu\times_{Z_\bu}^{(w)} Y_\bu$ whose space
of objects is
\[
X_0\times_{Z_0}Z_1\times_{Z_0}Y_0=\{\,(x,k,y)\in X_0\times Z_1\times
Y_0\mid\phi_0(x)=s(k),~\psi_0(y)=t(k)\,\},
\]
whose space of arrows is
\[
X_1\times_{Z_0}Z_1\times_{Z_0}Y_1=\{\,(f,k,g)\in X_1\times Z_1\times
Y_1\mid\phi_0(s(f))=s(k),~\psi_0(s(g))=t(k)\,\},
\]
and whose groupoid structure maps are as
in~\cite[\S\,5.3]{moerdijk-mrcun;foliations-groupoids}.
\glossary{X*Z*Y*@$X_\bu\times_{Z_\bu}^{(w)} Y_\bu$, weak fibred
  product}%
\end{definition}

If either $\phi_0$ or $\psi_0$ is a submersion, the weak fibred
product is a Lie groupoid, and it is a weak pullback in $\Liegpd$.

\subsection{Differentiable stacks}\label{subsection;stack}

We consider stacks over the site $\group{Diff}$ of ($C^\infty$ and
second countable, but not necessarily Hausdorff) manifolds, equipped
with the open cover Grothendieck topology.  

Stacks and their $1$-morphisms and $2$-morphisms form a strict
$(2,1)$-category $\Stack$.  An \emph{atlas} of a stack $\X$ is a
representable epimorphism $X\to\X$ from a manifold $X$ to $\X$.
A \emph{differentiable stack} is a stack that admits an atlas.  Every
differentiable stack $\X$ admits an atlas $X\to\X$ with $X$ Hausdorff
(namely starting from an arbitrary atlas $Y\to\X$, let $X$ be the
disjoint union of all charts in a countable atlas of $Y$).  We denote
the full subcategory of $\Stack$ formed by all differentiable stacks
by $\Diffstack$.
\glossary{Stack@$\Stack$, $2$-category of stacks over $\group{Diff}$}
\glossary{Xx@$\X$, stack over $\group{Diff}$}%
\glossary{Diffstack@$\Diffstack$, $2$-category of differentiable stacks}%

The \emph{classifying stack} $\B X_\bu$ of a Lie groupoid $X_\bu$ is
the stack of right torsors (right principal bundles) of $X_\bu$.
The \emph{classifying functor} is the $2$-functor
$\B\colon\Liegpd\to\Diffstack$ that takes $X_\bu$ to $\B X_\bu$.
(See~\cite[\S\,2]{blohmann;stacky-lie-groups},
\cite[\S\,3]{metzler;topological-smooth-stack},
\cite[\S\,I.2]{carchedi;topological-differentiable-stacks},
or~\cite[\S\,2]{carchedi;etale-stacks}.)  A \emph{presentation} of a
stack $\X$ is an equivalence $\B X_\bu\simeq\X$, where $X_\bu$ is a
Lie groupoid.
\glossary{B@$\B$, classifying functor}%

For a Lie groupoid $X_\bu$, the natural inclusion of Lie groupoids
$X_0\to X_\bu$ induces a morphism of stacks $X_0\to\B X_\bu$, which is
an atlas for $\B X_\bu$. Conversely, if $\X$ is a stack and
$\bphi\colon X_0\to\X$ is an atlas, then there exists an equivalence
$\B X_\bu\simeq\X$ such that $\bphi$ is $2$-isomorphic to the
composition
$\begin{tikzcd}[cramped,sep=small]X_0\ar[r]&\B
  X_\bu\ar[r,"\simeq"]&\X\end{tikzcd}$,
where $X_\bu$ is the Lie groupoid $X_0\times_\X X_0\rightrightarrows
X_0$.  (See
e.g.\ \cite[\S\,I.2.7]{carchedi;topological-differentiable-stacks}
or~\cite[\S\,3.2]{noohi;foundations-topological-stacks}.)  Thus a
stack is differentiable if and only if it admits a presentation.

The functor $\B$ takes essentially surjective Lie groupoid morphisms
to stack epimorphisms, and it takes fully faithful Lie groupoid
morphisms to stack monomorphisms. Thus $\B$ takes Morita morphisms to
equivalences of stacks. By the universal property of $2$-localization
(see~\cite[\S\,2]{pronk;etendues-fractions}), $\B$ extends to a
$2$-functor
\begin{equation}\label{equation;groupoid-stack}
\B^\loc\colon\Liegpd\bigl[\ca{M}\n\bigr]\stackrel\simeq\longto\Diffstack,
\glossary{LiegpdM@$\Liegpd\bigl[\ca{M}\n\bigr]$, $2$-localization of
  Lie groupoids at Morita morphisms}%
\end{equation}
from the bicategory of Lie groupoids, localized at the class $\ca{M}$
of all Morita morphisms, to the $2$-category $\Diffstack$.  The
functor $\B^\loc$ is an equivalence of bicategories; a result of this
type in a topological context was proved in
\cite{pronk;etendues-fractions} and a proof in our specific context is
given in \cite[Theorem 1.3.27]{villatoro;stacks-poisson}.  The
equivalence~\eqref{equation;groupoid-stack} implies that for all Lie
groupoids $X_\bu$ and $Y_\bu$ we have an equivalence of categories
$\B^\loc\colon\Hom(X_\bu,Y_\bu)\to\Hom(\B X_\bu,\B Y_\bu)$, where the
first Hom is taken in $\Liegpd[\ca{M}\n]$.

\begin{lemma}[Behrend-Xu {\cite[Theorem~2.2]{behrend-xu;stacks-gerbes}}]
\label{lemma;moritaequivandstacks} 
Two Lie groupoids are Morita equivalent if and only if their
classifying stacks are equivalent.
\end{lemma}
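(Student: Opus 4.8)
The plan is to read off both implications from the properties of the classifying functor collected in the paragraph preceding the lemma, above all from the fact that $\B^\loc$ in~\eqref{equation;groupoid-stack} is an equivalence of bicategories. I would dispose of the easy implication first. Suppose $X_\bu$ and $Y_\bu$ are Morita equivalent, so that by Definition~\ref{definition;moritaequivalence} there is a span of Morita morphisms $X_\bu\xleftarrow{w_\bu}Z_\bu\xrightarrow{f_\bu}Y_\bu$. Since $\B$ sends Morita morphisms to equivalences of stacks, applying $\B$ yields a span of equivalences $\B X_\bu\xleftarrow{\simeq}\B Z_\bu\xrightarrow{\simeq}\B Y_\bu$, and composing one of these with a quasi-inverse of the other gives $\B X_\bu\simeq\B Y_\bu$.

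For the converse I would exploit that an equivalence of bicategories, being essentially surjective on objects and an equivalence on each hom-category, reflects equivalences of objects. Concretely, assume $\B X_\bu\simeq\B Y_\bu$. Because $\B^\loc$ agrees with $\B$ on objects and is an equivalence of bicategories, the objects $X_\bu$ and $Y_\bu$ must already be equivalent in the localized bicategory $\Liegpd[\ca{M}\n]$. It then remains to unwind what an equivalence in a bicategory of fractions amounts to: a $1$-morphism $X_\bu\to Y_\bu$ in $\Liegpd[\ca{M}\n]$ is represented by a span $X_\bu\xleftarrow{w_\bu}Z_\bu\xrightarrow{f_\bu}Y_\bu$ with $w_\bu\in\ca{M}$, and such a span is an equivalence precisely when its second leg $f_\bu$ also lies in $\ca{M}$. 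Granting this, the equivalence between $X_\bu$ and $Y_\bu$ is witnessed by a span of Morita morphisms, which is exactly the statement that $X_\bu$ and $Y_\bu$ are Morita equivalent.

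The main obstacle is the last step of the converse, namely the characterization of equivalences in $\Liegpd[\ca{M}\n]$ as spans whose \emph{both} legs are Morita morphisms; this is not formal from the universal property alone. It rests on the calculus of fractions for the class $\ca{M}$ — that $\ca{M}$ contains the identities, is closed under composition, and satisfies a two-out-of-three property compatible with the weak pullbacks of Definition~\ref{definition;fibreproductLiegpd} — which is established in Pronk's treatment~\cite{pronk;etendues-fractions} and, in the present groupoid setting, in~\cite{villatoro;stacks-poisson}, so I would cite these rather than reprove them. An alternative, more hands-on route that sidesteps the abstract localization is to choose Hausdorff atlases $X_0\to\B X_\bu$ and $Y_0\to\B Y_\bu$, transport them through the given equivalence into a single stack $\X$, and form the manifold $Z=X_0\times_\X Y_0$; the two projections are surjective submersions, and they exhibit the pullback groupoids of $X_\bu$ along $Z\to X_0$ and of $Y_\bu$ along $Z\to Y_0$ as one and the same groupoid, producing the desired span of Morita morphisms directly.
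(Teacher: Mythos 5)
The paper offers no proof of this lemma: it is quoted verbatim from Behrend--Xu \cite[Theorem~2.2]{behrend-xu;stacks-gerbes}, so there is nothing internal to compare against. Your argument is essentially sound, and in fact your ``alternative, more hands-on route'' is the one that the cited source (and most treatments) actually use: given an equivalence $\B X_\bu\simeq\B Y_\bu=:\X$, form $Z=X_0\times_\X Y_0$, observe that the two projections are surjective submersions (being pullbacks of atlases along atlases), and identify both pullback groupoids with $Z\times_\X Z\rightrightarrows Z$ using the canonical isomorphisms $X_1\cong X_0\times_\X X_0$ and $Y_1\cong Y_0\times_\X Y_0$. This produces the span of Morita morphisms directly and is self-contained given the facts about atlases recorded in \S\,\ref{subsection;stack}.

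Your primary route through $\B^\loc$ is also legitimate but, as you yourself note, it quietly relies on more than the universal property of localization: knowing that $X_\bu$ and $Y_\bu$ are equivalent in $\Liegpd\bigl[\ca{M}\n\bigr]$ only tells you that they are connected by a span $X_\bu\leftarrow Z_\bu\to Y_\bu$ whose left leg is in $\ca{M}$ and whose right leg is \emph{inverted by} the localization. To conclude that the right leg is itself a Morita morphism you need $\ca{M}$ to be saturated --- in particular that a groupoid morphism inducing an equivalence of classifying stacks is automatically essentially surjective and fully faithful in the smooth sense (with the submersion condition of Definition~\ref{definition;moritaequivalence}). That is true, but it is roughly equivalent in difficulty to the lemma being proved, so deferring it to \cite{pronk;etendues-fractions} and \cite{villatoro;stacks-poisson} is the honest move; the fibred-product argument avoids the issue entirely and is the one I would keep.
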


A convenient model for the bicategory $\Liegpd\bigl[\ca{M}\n\bigr]$ is
the bicategory of Lie groupoids where the $1$-morphisms are bibundles
and the $2$-morphisms are isomorphisms of bibundles as
in~\cite{blohmann;stacky-lie-groups}
or~\cite{lerman;orbifolds-as-stacks}.

The classifying functor has the following useful property.

\begin{lemma}\label{lemma;preservelimits}
The functor $\B$ preserves weak pullbacks.
\end{lemma}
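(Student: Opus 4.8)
The plan is to produce a canonical comparison $1$-morphism from $\B$ of the weak fibred product to the $2$-fibre product of the classifying stacks, and then to verify that it is an equivalence by exhibiting an atlas whose associated groupoid is the weak fibred product itself. Write $W_\bu=X_\bu\times^{(w)}_{Z_\bu}Y_\bu$ for the groupoid of Definition~\ref{definition;fibreproductLiegpd}, and let $\stack{P}=\B X_\bu\times_{\B Z_\bu}\B Y_\bu$ denote the $2$-fibre product (weak pullback) in $\Stack$, which always exists. The projections $p_\bu\colon W_\bu\to X_\bu$ and $q_\bu\colon W_\bu\to Y_\bu$, together with the tautological natural transformation $\phi_\bu p_\bu\To\psi_\bu q_\bu$ encoded by the $Z_1$-component $k$ of an object $(x,k,y)$, are carried by $\B$ to a cone over $\B X_\bu\to\B Z_\bu\leftarrow\B Y_\bu$, hence induce a canonical $1$-morphism $\btheta\colon\B W_\bu\to\stack{P}$. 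It remains to show that $\btheta$ is an equivalence.

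First I would identify the object manifold of the presenting groupoid. Let $X_0\to\B X_\bu$, $Y_0\to\B Y_\bu$, $Z_0\to\B Z_\bu$ be the canonical atlases, for which $X_0\times_{\B X_\bu}X_0\cong X_1$, and likewise $Y_0\times_{\B Y_\bu}Y_0\cong Y_1$ and $Z_0\times_{\B Z_\bu}Z_0\cong Z_1$. The composite $X_0\to\B X_\bu\to\B Z_\bu$ factors through $Z_0$ via $\phi_0$, and similarly for $Y_0$ via $\psi_0$, so the standard manipulation of $2$-fibre products together with $Z_0\times_{\B Z_\bu}Z_0\cong Z_1$ gives
\[
X_0\times_{\B Z_\bu}Y_0\;\cong\;X_0\times_{\phi_0,Z_0,s}Z_1\times_{t,Z_0,\psi_0}Y_0\;=\;W_0,
\]
which is exactly the object manifold of Definition~\ref{definition;fibreproductLiegpd}; it is representable by a manifold precisely under the transversality guaranteed by the submersion hypothesis on $\phi_0$ or $\psi_0$. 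Since representable epimorphisms are stable under base change and composition, pulling the atlas $X_0\to\B X_\bu$ back to $\stack{P}$ and then pulling the atlas $Y_0\to\B Y_\bu$ back along the result exhibits $W_0=X_0\times_{\B Z_\bu}Y_0\to\stack{P}$ as an atlas of $\stack{P}$.

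Next I would compute the associated groupoid of this atlas. Repeating the fibre-product bookkeeping, an arrow of $\stack{P}$ between the images of $(x,k,y)$ and $(x',k',y')$ is a pair $(f,g)\in X_1\times Y_1$ with $f\colon x\to x'$, $g\colon y\to y'$, subject to the compatibility $k'\circ\phi_1(f)=\psi_1(g)\circ k$ in $Z_\bu$; eliminating the (then determined) $k'$ identifies
\[
W_0\times_{\stack{P}}W_0\;\cong\;X_1\times_{Z_0}Z_1\times_{Z_0}Y_1\;=\;W_1
\]
compatibly with source, target, and multiplication, so that $W_0\times_{\stack{P}}W_0\rightrightarrows W_0$ is isomorphic to $W_\bu$. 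By the characterisation of presentations recalled before Lemma~\ref{lemma;moritaequivandstacks} (an atlas $W_0\to\stack{P}$ yields $\stack{P}\simeq\B(W_0\times_{\stack{P}}W_0\rightrightarrows W_0)$), we conclude $\stack{P}\simeq\B W_\bu$, and one checks this equivalence is $2$-isomorphic to $\btheta$.

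The main obstacle is the careful handling of the iterated $2$-categorical fibre products: verifying representability of $W_0$ and of the comparison maps (which is where the submersion hypothesis enters), and checking that the identifications of $W_0$ and of $W_0\times_{\stack{P}}W_0$ are natural in the groupoid structure maps rather than merely bijective on points. As an independent consistency check, the conclusion also follows abstractly, since $\B^\loc$ in~\eqref{equation;groupoid-stack} is an equivalence of bicategories and hence preserves bilimits, provided one first knows that the weak fibred product remains a weak pullback after localising $\Liegpd$ at $\ca{M}$.
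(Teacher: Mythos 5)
Your argument is correct, but it takes a different route from the paper: the paper does not prove the lemma at all, deferring instead to Noohi's treatment of the analogous statement for topological stacks (\cite[\S\,9]{noohi;foundations-topological-stacks}) with the remark that the argument carries over to $\group{Diff}$. What you have written is essentially the self-contained verification that the citation elides: you build the comparison morphism $\btheta\colon\B W_\bu\to\stack{P}$, check that $W_0\cong X_0\times_{\B Z_\bu}Y_0$ is an atlas of $\stack{P}$ by stability of representable submersive epimorphisms under base change and composition, and identify $W_0\times_{\stack{P}}W_0$ with $W_1$ compatibly with the structure maps. The computations are right (in particular, eliminating the determined $k'=\psi_1(g)\circ k\circ\phi_1(f)\n$ to land on $X_1\times_{Z_0}Z_1\times_{Z_0}Y_1$ is exactly the Moerdijk--Mr\v{c}un description of the arrows), and you correctly locate where the submersion hypothesis on $\phi_0$ or $\psi_0$ is needed for $W_\bu$ to be a Lie groupoid in the first place. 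What your approach buys is an explicit presenting groupoid for the $2$-fibre product and transparency about the transversality hypotheses, which is genuinely useful in the smooth setting; what the paper's approach buys is brevity and the delegation of the $2$-categorical bookkeeping (naturality of the identifications, the universal property of the weak pullback) to a reference where it is checked in detail. Your closing ``consistency check'' via the bicategorical equivalence $\B^\loc$ is fine as a sanity check but, as you note, is not independent: one must first know that $W_\bu$ remains a weak pullback after localizing at $\ca{M}$, which is most easily seen from the direct argument you already gave.
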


\begin{proof}
This is shown in \cite[\S\,9]{noohi;foundations-topological-stacks}
for topological stacks, but the result carries over directly to
differentiable stacks.  See also
\cite[Theorem~12.6]{noohi;foundations-topological-stacks} for a
comparison of their quotient stack functor to our classifying stack
functor.
\end{proof}


\section{Foliation groupoids and \'etale stacks}
\label{section;foliation-etale}

In this section we review basic facts about foliation groupoids and
their classifying stacks, and draw some elementary consequences.  The
sources include Crainic and
Moerdijk~\cite{crainic-moerdijk;foliation-cyclic},
Hepworth~\cite{hepworth;vector-flow-stack}, and Berwick-Evans and
Lerman~\cite{berwick-evans-lerman;lie-2-algebras-vector}.  Our notion
of basic vector fields is adapted from Lerman and
Malkin~\cite{lerman-malkin;deligne-mumford}.  We show that the notions
of basic differential forms and basic vector fields are Morita
invariant (Proposition~\ref{proposition;vector-form-morita}) and that
the Lie $2$-algebra of vector fields of a stack is equivalent to a Lie
algebra if the stack is \'etale
(Proposition~\ref{proposition;vector-stack}).

\subsection{Foliation groupoids}\label{subsection;foliation}

A \emph{foliation groupoid} is a Lie groupoid
$X_\bu=(X_1\rightrightarrows X_0)$ whose object manifold $X_0$ is
Hausdorff and whose isotropy groups $\Iso(x)$ are discrete for all
$x\in X_0$.  An \emph{\'etale groupoid} is a Lie groupoid $X_\bu$
whose object manifold $X_0$ is Hausdorff and whose source map $s$ is
\'etale (i.e.\ a local diffeomorphism).

Clearly, \'etale groupoids are foliation groupoids.  If $X_\bu$ is a
source-connected foliation groupoid, then the orbits form a (constant
rank) foliation $\F=\F_{X_\bu}$ of $X_0$, the anchor map
$\rho\colon\Alg(X_\bu)\to TX_0$ is injective, and the image of $\rho$
is the tangent bundle $T\F$ of the foliation. There is no loss of
generality in the assumption that $X_0$ is Hausdorff; see
Lemma~\ref{lemma;hausdorff}.  If $X_\bu$ is a foliation groupoid,
every Lie groupoid with Hausdorff object manifold which is Morita
equivalent to $X_\bu$ is also a foliation groupoid.
\glossary{TF@$T\F$, tangent bundle of foliation $\F$}%

Let $X$ be a Hausdorff manifold equipped with a (regular) foliation
$\F$ and let $T\F$ be the tangent bundle of the foliation.  A Lie
groupoid $X_\bu$ over $X_0=X$ with the property that the anchor map
$\rho\colon\Alg(X_\bu)\to TX$ is injective and has image equal to
$T\F$ is said to \emph{integrate} the foliation $\F$.

The integrations of a foliation $\F$ form a category, the objects of
which are pairs $(X_\bu,\psi)$, where $X_\bu$ is a Lie groupoid
integrating $\F$ and $\psi\colon\Alg(X_\bu)\to T\F$ is an isomorphism
of Lie algebroids, and the arrows
$\phi_\bu\colon(X_\bu,\psi)\to(X_\bu',\psi')$ of which are morphisms
$\phi_\bu\colon X_\bu\to X_\bu'$ that respect the maps $\psi$ and
$\psi'$.

Given a foliated Hausdorff manifold $(X,\F)$, there are two important
source-connected foliation groupoids called the \emph{monodromy
  groupoid} $\Mon(X,\F)$, and the \emph{holonomy groupoid}
$\Hol(X,\F)$, both of which integrate $\F$.  (See
e.g.~\cite[\S\,5.2]{moerdijk-mrcun;foliations-groupoids}.)  There is a
Lie groupoid morphism $\hol\colon\Mon(X,\F)\to\Hol(X,\F)$ which is the
identity map on the object manifold $X$ and sends an arrow in
$\Mon(X,\F)$ to its holonomy action.  The following theorem says that
the category of source-connected integrations of $(X,\F)$ is a
preorder with the monodromy groupoid as a greatest element and the
holonomy groupoid as a least element.
\glossary{Mon@$\Mon(X,\F)$, monodromy groupoid}%
\glossary{Hol@$\Hol(X,\F)$, holonomy groupoid}%

\begin{theorem}%
[Crainic and
  Moerdijk~{\cite[Proposition~1]{crainic-moerdijk;foliation-cyclic}}]
\label{theorem;monhol}
Let $(X,\F)$ be a foliated Hausdorff manifold.  For every
source-connected Lie groupoid $X_\bu=(X_1\rightrightarrows X_0)$ over
$X_0=X$ integrating $\F$, there is a natural factorization of the
holonomy morphism $\Mon(X,\F)\to\Hol(X,\F)$ into morphisms of Lie
groupoids over $X$,
\[
\begin{tikzcd}
\Mon(X,\F)\ar[r,"\psi_{X_\bu}"]&X_\bu\ar[r,"\hol_{X_\bu}"]&\Hol(X,\F).
\end{tikzcd}
\]
The maps $\psi_{X_\bu}$ and $\hol_{X_\bu}$ are \'etale and surjective
on the manifolds of arrows, and $X_\bu$ is source-simply connected if
and only if $\psi_{X_\bu}$ is an isomorphism.
\end{theorem}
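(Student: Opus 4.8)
The plan is to construct the two morphisms separately, exploiting that $\Mon(X,\F)$ and $\Hol(X,\F)$ are the extremal source-connected integrations of $\F$. Throughout write $L_x$ for the leaf through $x$ and recall that for any source-connected foliation groupoid $X_\bu$ integrating $\F$ the target map restricts to a covering projection $t_x\colon s\n(x)\to L_x$ with discrete deck group $\Iso(x)$; concretely $s\n(x)\cong\tilde L_x/K_{X_\bu}$, where $\tilde L_x$ is the universal cover of $L_x$ and $K_{X_\bu}=\ker\bigl(\pi_1(L_x,x)\to\Iso(x)\bigr)$. In particular the source fibre of $\Mon(X,\F)$ over $x$ is $\tilde L_x$ itself, so $\Mon(X,\F)$ is source-simply connected.

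First I would construct $\psi_{X_\bu}$. Since $\Mon(X,\F)$ is source-simply connected and both $\Mon(X,\F)$ and $X_\bu$ carry canonically isomorphic Lie algebroids (both equal to $T\F$ with the bracket of leafwise vector fields), the integrability of algebroid morphisms out of a source-simply connected groupoid (Lie's second theorem) integrates the identity morphism $T\F\to\Alg(X_\bu)$ to a unique morphism $\psi_{X_\bu}\colon\Mon(X,\F)\to X_\bu$ over $X$. Concretely $\psi_{X_\bu}$ sends the homotopy class of a leafwise path to the arrow obtained by developing that path in $X_\bu$, and on source fibres it is the covering map $\tilde L_x\to\tilde L_x/K_{X_\bu}$. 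Being a morphism of coverings of $L_x$ it is \'etale, and it is surjective because $s\n(x)$ is connected; as this holds fibrewise and smoothly over $X_0$, the map $\psi_{X_\bu}$ is \'etale and surjective on arrows.

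Next I would construct $\hol_{X_\bu}\colon X_\bu\to\Hol(X,\F)$ directly from the transverse geometry. Given an arrow $g\colon x\to y$ of $X_\bu$, I use that $X_\bu$ integrates $\F$ to assign to $g$ a germ of holonomy diffeomorphism between transversals at $x$ and at $y$: arrows near $g$ sweep out the nearby leaves, so restricting their sources and targets to chosen transversals produces the holonomy germ. One checks this assignment is a smooth functor that is the identity on objects, giving $\hol_{X_\bu}$. Because the holonomy germ of $g$ depends only on $g$, the holonomy of a leafwise loop factors through $\Iso(x)$; equivalently, every leafwise loop that becomes trivial in the isotropy of $X_\bu$ has trivial holonomy, i.e.\ $K_{X_\bu}\subseteq K_{\Hol(X,\F)}$. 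This both makes $\hol_{X_\bu}$ descend compatibly and gives $\hol=\hol_{X_\bu}\circ\psi_{X_\bu}$, since the two sides compute the same holonomy of a path when pulled back to $\Mon(X,\F)$. The same source-fibre covering argument shows $\hol_{X_\bu}$ is \'etale and surjective on arrows.

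Finally, the source-simply-connected criterion is immediate from the fibre description: $\psi_{X_\bu}$ is an isomorphism if and only if each covering $\tilde L_x\to s\n(x)$ is a diffeomorphism, that is, if and only if every source fibre $s\n(x)$ is simply connected, that is, if and only if $X_\bu$ is source-simply connected. I expect the main obstacle to be the construction of $\hol_{X_\bu}$ for a general integration: unlike $\psi_{X_\bu}$ it cannot be produced by Lie's second theorem, since $X_\bu$ need not be source-simply connected, so one must build the holonomy morphism by hand and verify it is well defined and smooth — equivalently, that every source-connected integration of $\F$ dominates the holonomy groupoid. Naturality of the factorization in $X_\bu$ then follows from the uniqueness in Lie's second theorem together with the intrinsic definition of holonomy.
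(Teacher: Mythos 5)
The paper does not actually prove this statement; it is imported verbatim from Crainic--Moerdijk, so the only meaningful comparison is with the standard proof in the literature. Your architecture matches it: $\psi_{X_\bu}$ by Lie's second theorem applied to the identity morphism of the algebroid $T\F$, the fibrewise description $s\n(x)\cong\tilde L_x/K_{X_\bu}$ giving \'etaleness, surjectivity, and the source-simply-connected criterion, and a separately constructed $\hol_{X_\bu}$. Those parts of your argument are sound, and you correctly isolate the construction of $\hol_{X_\bu}$ as the crux.

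The gap is that the crux is then asserted rather than proved, and the assertion is used circularly. You define $\hol_{X_\bu}(g)$ as the transverse germ induced by ``arrows near $g$'' (equivalently, by a local bisection through $g$) and say ``one checks this assignment is a smooth functor''; but for a non-\'etale integration the local bisection through $g$ is highly non-unique, and the independence of the germ from that choice --- equivalently, the triviality of the transverse germ of any bisection through a unit $u(y)$ --- is precisely the nontrivial content here. A naive continuity argument does not settle it, because the graph of the candidate germ and the graph of the plaque projection are merely two continuous graphs through the same point inside the leaf relation on a transversal, and that relation need not be a discrete union of graphs (for the Kronecker foliation the trace of each leaf on a transversal is dense). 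Your next sentence, ``Because the holonomy germ of $g$ depends only on $g$, the holonomy of a leafwise loop factors through $\Iso(x)$, \dots\ i.e.\ $K_{X_\bu}\subseteq K_{\Hol(X,\F)}$,'' therefore assumes exactly what needs proving: the inclusion $K_{X_\bu}\subseteq K_{\Hol(X,\F)}$ \emph{is} the well-definedness statement, not a consequence of it. To close the gap one needs an additional device, e.g.: (a) use the already-constructed \'etale surjection $\psi_{X_\bu}$ to identify a neighbourhood of the unit section of $X_1$ with one in $\Mon(X,\F)_1$, where arrows are classes of short leafwise paths and the transverse effect is manifestly the plaque projection, then propagate along a path in the source fibre from $u(x)$ to $g$ to show that $\hol_{X_\bu}(g):=\hol(t\circ\tilde\gamma)$ is independent of the chosen path $\tilde\gamma$; or (b) restrict to a complete transversal, where the pullback groupoid is \'etale, the local bisection through an arrow is unique, and the effect functor is canonical, and then transport back. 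Without one of these steps the factorization $\hol=\hol_{X_\bu}\circ\psi_{X_\bu}$ is not established.
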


\begin{definition}
\label{definition;Liegroupbundle}
A \emph{Lie group bundle} is a Lie groupoid where every arrow $f$ has
$s(f)=t(f)$.  Let $X_\bu$ and $X_\bu'$ be Lie groupoids with the same
object manifold $X_0=X_0'$, and let $\psi_\bu\colon X_\bu\to X_\bu'$
be a Lie groupoid morphism which is the identity on $X_0$.  The
\emph{kernel} of $\psi_\bu$ is
\[
\ker(\psi_\bu)=\{\,f\in X_1\mid\psi_1(f)=u(s(f))=u(t(f))\,\}.
\]
\end{definition}

If $\psi_1$ is transverse to the identity bisection of $X_\bu'$, then
$\ker(\psi_\bu)$ is a Lie group bundle over $X_0$.  For instance, the
kernels of $\psi_{X_\bu}$ and $\hol_{X_\bu}$ in
Theorem~\ref{theorem;monhol} are Lie group bundles.

\begin{definition}
\label{definition;completetransversal} 
Let $(X,\F)$ be a foliated Hausdorff manifold.  A smooth map
$\phi\colon Y\to X$ is \emph{transverse to $\F$} if it is transverse
to each leaf of $\F$.  A transverse map $\phi\colon Y\to X$ is
\emph{complete} if $\phi(Y)$ intersects each leaf of $\F$ at least
once.  
\end{definition}

This extends the usual notion of a complete transversal, where $\phi$
is an injective immersion and $\dim Y=\codim\F$.

\begin{lemma}\label{lemma;transversal}
Let $X_\bu$ be a foliation groupoid integrating a foliation $\F$ on
$X_0$ and let $\phi_0\colon Y_0\to X_0$ be transverse to $\F$.  Then
the pullback groupoid $Y_\bu=\phi_0^*(X_\bu)$ is a foliation groupoid
which integrates the foliation $\phi_0^*\F$, and the induced morphism
$\phi_\bu\colon Y_\bu\to X_\bu$ is fully faithful.  If $\phi_0$ is
complete, then $\phi_\bu$ is a Morita morphism.
\end{lemma}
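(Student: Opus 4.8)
The plan is to handle the four assertions in turn, the point being that fully faithfulness is essentially built into the construction, while the remaining three use the transversality hypothesis in an essential way. Throughout I write $\rho_X\colon\Alg(X_\bu)\to TX_0$ and $\rho_Y\colon\Alg(Y_\bu)\to TY_0$ for the anchors of Definition~\ref{definition;Liealgebroid}, and recall that $X_\bu$ integrating $\F$ means precisely that $\rho_X$ is injective with image $T\F$.

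First I would check that $Y_\bu=\phi_0^*(X_\bu)$ is a genuine Lie groupoid, i.e.\ that $(\phi_0,\phi_0)\colon Y_0\times Y_0\to X_0\times X_0$ is transverse to $(s,t)\colon X_1\to X_0\times X_0$, as required after Definition~\ref{definition;pullbackgroupoid}. The key observation is that at an arrow $g\colon x\to x'$ the image of $T_g(s,t)$ contains both $T_x\F\times\{0\}$ and $\{0\}\times T_{x'}\F$—because $Ts$ on the target fibre and $Tt$ on the source fibre have image $T\F=\im\rho_X$—and surjects onto each factor separately since $s$ and $t$ are submersions. Given any $(a,b)\in T_xX_0\times T_{x'}X_0$, transversality of $\phi_0$ to $\F$ lets me split $a=T\phi_0(\dot y)+c$ with $c\in T_x\F$, so that $(a,0)=(T\phi_0(\dot y),0)+(c,0)$ lies in $\im(T\phi_0\times T\phi_0)+\im T(s,t)$; treating $(0,b)$ symmetrically and adding yields the required transversality. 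Fully faithfulness is then immediate: by construction $Y_1=(Y_0\times Y_0)\times_{X_0\times X_0}X_1$, which is exactly the fibre-product square of Definition~\ref{definition;moritaequivalence}.

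Next I would verify the foliation-groupoid and integration claims by a direct computation of the anchor. Unwinding the fibre product, an element of $\Alg(Y_\bu)_y$ is a vector $(\dot g,\dot y')$ with $\dot g\in\Alg(X_\bu)_{\phi_0(y)}$ and $\rho_X(\dot g)=T\phi_0(\dot y')$, and $\rho_Y(\dot g,\dot y')=\dot y'$. Injectivity of $\rho_Y$ follows from that of $\rho_X$: if $\dot y'=0$ then $\rho_X(\dot g)=0$ forces $\dot g=0$. Since $\rho_X$ restricts to an isomorphism onto $T\F$, the image of $\rho_Y$ consists of exactly those $\dot y'$ with $T\phi_0(\dot y')\in T\F$, that is $T(\phi_0^*\F)=(T\phi_0)^{-1}(T\F)$; thus $Y_\bu$ integrates $\phi_0^*\F$. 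Finally $\Iso_{Y_\bu}(y)\cong\Iso_{X_\bu}(\phi_0(y))$ is discrete, so (with $Y_0$ Hausdorff) $Y_\bu$ is a foliation groupoid.

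For the last assertion it remains, since $\phi_\bu$ is already fully faithful, to prove essential surjectivity when $\phi_0$ is complete; by the remark following Definition~\ref{definition;pullbackgroupoid} this amounts to showing that $t\circ\pr_1\colon X_1\times_{X_0}Y_0\to X_0$, $(g,y)\mapsto t(g)$, is a surjective submersion. Surjectivity is exactly completeness in the sense of Definition~\ref{definition;completetransversal}: a point $x\in X_0$ is hit iff the leaf through $x$ meets $\phi_0(Y_0)$. The submersion property is the same kind of argument as the Lie-groupoid check: given $b\in T_{x'}X_0$, lift it to $\dot g_0\in T_gX_1$ with $Tt(\dot g_0)=b$, then correct $\dot g_0$ by a vector in $\ker Tt$ (whose $Ts$-image is all of $T_x\F$) so that $Ts(\dot g)$ lands in $\im T\phi_0$, which is possible precisely because $\phi_0$ is transverse to $\F$. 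I expect this submersion/transversality bookkeeping—appearing both in the Lie-groupoid verification and in essential surjectivity—to be the only real obstacle; in both places the content is that the source and target fibres of $X_\bu$ sweep out exactly the foliation directions $T\F$, which complement $\im T\phi_0$ by transversality, whereas fully faithfulness, discreteness of isotropy, and the anchor computation are all formal once the construction is unwound.
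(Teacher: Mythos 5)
Your proposal is correct and follows the same route as the paper's (much more condensed) sketch: transversality of $\phi_0$ to $\F$ gives transversality of $(\phi_0,\phi_0)$ to $(s,t)$ so that the pullback groupoid exists, full faithfulness is built into the fibre-product construction, and completeness gives essential surjectivity; your anchor computation identifying $\Alg(Y_\bu)$ with $(T\phi_0)^{-1}(T\F)$ is exactly the content behind the paper's appeal to the classical transversal case of Crainic--Moerdijk. The only nitpick is that in the surjectivity step "$x$ is hit iff the leaf through $x$ meets $\phi_0(Y_0)$" should be an "if" (the orbit may be larger than the leaf), but that is the direction you actually use.
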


\begin{proof}[Sketch of proof]
This is well-known when $\phi_0$ is a transversal in the usual sense,
in which case $\phi_0^*\F$ is zero-dimensional and $Y_\bu$ is \'etale;
see e.g.\ Crainic and
Moerdijk~\cite[Lemma~2]{crainic-moerdijk;foliation-cyclic}.  The
general case is proved in a similar way: since $\phi_0$ is transverse
to $\F$, $(\phi_0,\phi_0)$ is transverse to $(s,t)$, so $Y_\bu$ is a
Lie groupoid, $\phi_1\colon Y_1\to X_1$ given by $\phi_1(x,f,y)=f$ is
smooth, $\phi_\bu$ is fully faithful and, if $\phi_0$ is complete,
essentially surjective.
\end{proof}

\subsection{The Bott connection}\label{subsection;bott}

Let $(X,\F)$ be a foliated Hausdorff manifold and let $N\F=TX/T\F\to
X$ be the normal bundle of the foliation.  The vector fields tangent
to $\F$ form a Lie subalgebra $\Gamma(T\F)$ of $\Vect(X)$.  Since $X$
is Hausdorff, we have $\Gamma(N\F)\cong\Vect(X)/\Gamma(T\F)$, so $N\F$
is a $\Gamma(T\F)$-module.  Let us write $\nabla_wv$ for the action of
a section $w\in\Gamma(T\F)$ on a section $v\in\Gamma(N\F)$.  The
operation
\begin{equation}\label{equation;bott-connection}
\nabla\colon\Gamma(T\F)\times\Gamma(N\F)\longto\Gamma(N\F)
\glossary{NF@$N\F$, normal bundle of foliation $\F$}%
\glossary{*@$\nabla$, Bott connection of foliation}%
\end{equation}
is the \emph{Bott connection} or \emph{partial connection} of $\F$.  A
section $v\in\Gamma(N\F)$ is \emph{$\Gamma(T\F)$-invariant} if
$\nabla_wv=0$ for all $w\in\Gamma(T\F)$.  We denote by
\begin{equation}\label{equation;lie-foliation}
\Vect_0(X,\F)=\Gamma(N\F)^{\Gamma(T\F)}
\glossary{Vect0@$\Vect_0(X,\F)$, transverse vector fields of foliation
  $\F$}%
\end{equation}
the space of all $\Gamma(T\F)$-invariant sections of $N\F$.  Let
$\lie{N}$ be the normalizer of the Lie subalgebra $\Gamma(T\F)$ of
$\Vect(X)$.  Elements of $\lie{N}$ are vector fields $v$ satisfying
$[v,w]\in\Gamma(T\F)$ for all $w\in\Gamma(T\F)$, in other words, whose
flow maps integral manifolds of $\F$ to integral manifolds of $\F$.
The natural map $\lie{N}\to\Vect(X)\to\Gamma(N\F)$ has image equal to
$\Vect_0(X,\F)$ and kernel equal to $\Gamma(T\F)$, so
$\Vect_0(X,\F)\cong\lie{N}/\Gamma(T\F)$ is naturally a Lie algebra.
If the space of leaves $X/\F$ has a manifold structure making the
quotient map $X\to X/\F$ a submersion, then
$\Vect_0(X,\F)\cong\Vect(X/\F)$.

\begin{remark}
If $X$ is a non-Hausdorff manifold equipped with a foliation $\F$, we
may not have $\Gamma(N\F)\cong\Vect(X)/\Gamma(T\F)$, but we can still
define a Bott connection in the following manner.  Let
$\ca{C}^\infty(E)$ denote the sheaf of smooth sections of a vector
bundle $E$ over $X$.  Then $\ca{C}^\infty(N\F)$ is the sheaf
associated to the presheaf $U\mapsto\Vect(U)/\Gamma(T\F|_U)$, and
$\Gamma(N\F)$ is the space of global sections of $\ca{C}^\infty(N\F)$.
For each open $U\subseteq X$ we have an operation
\[
\nabla_U\colon\Gamma(T\F|_U)\times\Vect(U)/\Gamma(T\F|_U)\longto
\Vect(U)/\Gamma(T\F|_U),
\]
which is a morphism of presheaves.  This presheaf morphism extends to
a morphism of sheaves
$\nabla\colon\ca{C}^\infty(T\F)\times\ca{C}^\infty(N\F)\to
\ca{C}^\infty(N\F)$.  On global sections this gives the Bott
connection~\eqref{equation;bott-connection}.  We define the space
$\Vect_0(X,\F)$ as in~\eqref{equation;lie-foliation}; it carries a
natural Lie bracket just as in the Hausdorff case.
\end{remark}

\begin{definition}\label{definition;foliate}
Let $(X,\F)$ and $(X',\F')$ be foliated manifolds.  A smooth map
$\phi\colon X\to X'$ is \emph{foliate} if the image of each leaf
$\F(x)$ of $\F$ is contained in the leaf $\F'(\phi(x))$ of $\F'$.
\end{definition}

The tangent map $T\phi\colon TX\to TX'$ of a foliate map
$\phi\colon(X,\F)\to(X',\F')$ descends to a map $\phi_*\colon N\F\to
N\F'$.  We say that sections $v\in\Gamma(N\F)$ and $v'\in\Gamma(N\F')$
are \emph{$\phi$-related}, and we write
\begin{equation}\label{equation;foliate-related}
v\sim_\phi v',
\glossary{*@$\sim_\phi$, $\phi$-relatedness of vector fields or
  sections}%
\end{equation}
if $\phi_*(v(x))=v'(\phi(x))$ for all $x\in X$.  The Bott connection
has the following naturality property with respect to foliate maps:
\begin{equation}\label{equation;bott-natural}
\nabla_wv\sim_\phi\nabla_{w'}v'
\end{equation}
for all $v\in\Gamma(N\F)$, $w\in\Gamma(T\F)$, $v'\in\Gamma(N\F')$, and
$w'\in\Gamma(T\F')$ satisfying $v\sim_\phi v'$ and $w\sim_\phi w'$.

\subsection{Basic vector fields and forms}\label{subsection;basic}

Let $X_\bu=(X_1\rightrightarrows X_0)$ be a foliation groupoid
integrating a foliation $\F_0=\F_0(X_\bu)$ of $X_0$.  Let
$N_0=N_0(X_\bu)=TX_0/T\F_0$ be the normal bundle of the foliation.
The leaves of $\F_0$ are the connected components of the orbits
$X_\bu\cdot x$, and for all $f\in X_1$ with $s(f)=x$ we have
\[T_x\F_0\cong\ker(T_fs)/(\ker(T_fs)\cap\ker(T_ft)).\]
There is a left action of the product groupoid $X_\bu\times X_\bu$ on
the arrow manifold~$X_1$
\[(X_1\times X_1)\times_{(s,s),X_0\times X_0,(t,s)}X_1\to X_1\]
given by the formula $(g,h)\cdot f=g\circ f\circ h^{-1}$.  The tangent
space to the orbit $(X_\bu\times X_\bu)\cdot f$ is
$\ker(T_fs)+\ker(T_ft)$, which is of constant dimension.  So the
components of the $X_\bu\times X_\bu$-orbits define a foliation
$\F_1=\F_1(X_\bu)$ of $X_1$ with normal bundle equal to
\[N_1=N_1(X_\bu)=TX_1/T\F_1=TX_1/(\ker(Ts)+\ker(Tt)).\]
The source map $s\colon X_1\to X_0$ is a foliate map and therefore
descends to a vector bundle map
\begin{equation}\label{equation;source-star}
s_*\colon N_1\longto N_0.
\end{equation}

\begin{lemma} \label{lemma;isomorphismpullbackbundles} 
The map~\eqref{equation;source-star} induces an isomorphism $N_1\cong
s^*N_0$.  Therefore we have a well-defined pullback map
\[
s^*\colon\Gamma(N_0)\longto\Gamma(N_1),
\]
which restricts to a Lie algebra homomorphism
$s^*\colon\Vect_0(X_0,\F_0)\to\Vect_0(X_1,\F_1)$.  Similarly, the
target map induces an isomorphism $N_1\cong t^*N_0$ and pullback maps
\[
t^*\colon\Gamma(N_0)\longto\Gamma(N_1),\qquad
t^*\colon\Vect_0(X_0,\F_0)\longto\Vect_0(X_1,\F_1).
\] 
\end{lemma}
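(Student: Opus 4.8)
The plan is to first upgrade the bundle map $s_*$ of~\eqref{equation;source-star} to a fibrewise isomorphism, and then to transport the invariance and the bracket along it. Working over a point $f\in X_1$, the map in question is induced by $T_fs\colon T_fX_1\to T_{s(f)}X_0$ and sends $(N_1)_f=T_fX_1/(\ker T_fs+\ker T_ft)$ to $(N_0)_{s(f)}=T_{s(f)}X_0/T_{s(f)}\F_0$. The essential observation is that the restriction of $s$ to a target fibre $t\n(y)$ is a surjective submersion onto the leaf through $y$ (by the principal bundle structure of the fibres, as in the proof of Lemma~\ref{lemma;hausdorff}), so that $T_fs(\ker T_ft)=T_{s(f)}\F_0$ exactly. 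With this identity in hand the map is well defined (the image of $\ker T_ft$ lands in $T_{s(f)}\F_0$), surjective (since $s$ is a submersion, $T_fs$ is onto), and injective: if $T_fs(v)\in T_{s(f)}\F_0=T_fs(\ker T_ft)$ then $v$ differs from an element of $\ker T_ft$ by an element of $\ker T_fs$, so $v\in\ker T_fs+\ker T_ft=T_f\F_1$. A smooth bundle map that is a fibrewise isomorphism is a vector bundle isomorphism, giving $N_1\cong s^*N_0$; the pullback $s^*\colon\Gamma(N_0)\to\Gamma(N_1)$ then sends $v'$ to the unique section $s^*v'$ characterised by $s^*v'\sim_s v'$.

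Next I would show $s^*$ preserves transverse invariance. Because the defining condition $\nabla_wv=0$ is local on $X_1$ and $C^\infty(X_1)$-linear in $w$, it suffices to test it on vector fields that locally span $T\F_1=\ker Ts+\ker Tt$. I would use two families: sections of $\ker Ts$, which satisfy $w\sim_s 0$; and local lifts through $\ker Tt$ of a frame of $T\F_0$, which exist because $s_*\colon\ker Tt\to s^*T\F_0$ is a surjective bundle map, and which satisfy $w\sim_s w'$ with $w'\in\Gamma(T\F_0)$. Together these span $T\F_1$. In either case the naturality of the Bott connection~\eqref{equation;bott-natural} gives $\nabla_w(s^*v')\sim_s\nabla_{w'}v'$, and the right-hand side vanishes when $v'\in\Vect_0(X_0,\F_0)$ (with $w'=0$ in the first case). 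Since $s_*$ is fibrewise injective, $\nabla_w(s^*v')=0$, so $s^*v'\in\Vect_0(X_1,\F_1)$.

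Finally, for the homomorphism property I would argue locally on leaf spaces. In a foliated chart, $\F_0$ and $\F_1$ are the fibres of submersions onto local leaf spaces $B_0$ and $B_1$, under which $\Vect_0$ is identified with the vector fields on the leaf space; the fibrewise isomorphism $N_1\cong s^*N_0$ says precisely that $s$ descends to a local diffeomorphism $\bar s\colon B_1\to B_0$, and $s^*$ becomes $\bar s^*$, which is a Lie algebra homomorphism because pullback by a diffeomorphism is. (Equivalently, one checks that the transverse bracket is natural under $s$-relatedness, so that both $[s^*u',s^*v']$ and $s^*[u',v']$ are $s$-related to $[u',v']$ and hence coincide by uniqueness of the related lift.) The statements for $t$ then follow from those for $s$ by applying the inversion $\iota\colon X_1\to X_1$, $f\mapsto f\n$: it satisfies $t=s\circ\iota$ and preserves $\F_1$, since it intertwines the $X_\bu\times X_\bu$-action with the coordinate swap, so $\iota_*\colon N_1\to N_1$ is an isomorphism and $t_*=s_*\circ\iota_*$.

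I expect the main obstacle to be the passage from the fibrewise isomorphism to the global statements in the non-Hausdorff setting, since $X_1$ need not be Hausdorff even though $X_0$ is. This is why I phrase both the invariance and the bracket arguments so that they only ever invoke the Bott connection and the Lie bracket locally, on foliated charts where $X_1$ is Euclidean, and reduce everything else to the fibrewise injectivity of $s_*$ together with the naturality property~\eqref{equation;bott-natural} that is already available.
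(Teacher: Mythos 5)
Your proposal is correct and follows essentially the same route as the paper: the fibrewise isomorphism comes from the same linear-algebra identification of $(N_1)_f$ with $(N_0)_{s(f)}$ (your direct kernel/image check is the paper's chain of quotient isomorphisms unwound), and the transfer of invariance and of the bracket rests on the naturality property~\eqref{equation;bott-natural} of the Bott connection together with a local leaf-space/relatedness argument, exactly as the paper does here and again in the proof of Proposition~\ref{proposition;vector-form-morita}. You merely spell out the invariance- and bracket-preservation steps that the paper compresses into one sentence, and you handle $t$ by composing with the inversion rather than by repeating the argument, which is a harmless variation.
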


\begin{proof}

For $f\in X_1$ and $x=s(f)$ we have
\begin{align*}
(N_1)_f&=T_fX_1/(\ker(T_fs)+\ker(T_ft))\\
&\cong\bigl(T_fX_1/\ker(T_fs)\bigr)\big/
  \bigl((\ker(T_fs)+\ker(T_ft))/\ker(T_fs)\bigr)\\
&\cong\bigl(T_fX_1/\ker(T_fs)\bigr)\big/
  \bigl(\ker(T_fs)/(\ker(T_fs)\cap\ker(T_ft))\bigr)\\
&\cong T_xX_0/T_x\F_0\\
&=(N_0)_x,
\end{align*}
which proves the first statement.  The existence of the pullback map
follows immediately from this, and the naturality property
\eqref{equation;bott-natural} of the Bott connection implies that the
induced map $\Vect_0(X_0,\F_0)\to\Vect_0(X_1,\F_1)$ preserves the Lie
bracket.  The proof of the last statement is analogous.
\end{proof}

Because of Lemma~\ref{lemma;isomorphismpullbackbundles} the following
definition makes sense.

\begin{definition}
\label{definition;basicvectorfield}
A \emph{basic vector field} on a foliation groupoid $X_\bu$ is an
element of
\begin{align*}
\Vect_\bas(X_\bu)&=\{\,v_\bu=(v_0,v_1)\in\Gamma(N_0)\times\Gamma(N_1)\mid
s^*v_0=v_1=t^* v_0\,\}\\
&\cong\{\,v\in\Gamma(N_0)\mid s^*v=t^* v\,\}.
\glossary{Vectbas@$\Vect_\bas(X_\bu)$, basic vector fields on Lie
  groupoid $X_\bu$}%
\end{align*}
\end{definition}

Thus a basic vector field is not a vector field, but a pair of
equivalence classes of vector fields.  Although a basic vector field
$v_\bu=(v_0,v_1)$ is determined by its first component $v_0$, we
prefer, mainly for notational consistency, to think of it as a pair of
sections.

\begin{definition}
\label{definition;basicform} 
A \emph{basic differential form} on a foliation groupoid $X_\bu$ is a
pair of differential forms
$\zeta_\bu=(\zeta_0,\zeta_1)\in\Omega^\bu(X_0)\times\Omega^\bu(X_1)$
satisfying $s^*\zeta_0=\zeta_1=t^*\zeta_0$.  The set of basic
differential forms on $X_\bu$ is the \emph{basic de Rham complex}
\begin{align*}
\Omega_\bas^\bu(X_\bu)&=
\{\,\zeta_\bu=(\zeta_0,\zeta_1)\in\Omega^\bu(X_0)\times\Omega^\bu(X_1)\mid
s^*\zeta_0 = \zeta_1 =t^*\zeta_0\,\}\\
&\cong\{\,\zeta\in\Omega^\bu(X_0)\mid s^*\zeta=t^*\zeta\,\}.
\glossary{Oomegabas@$\Omega_\bas(X_\bu)$, basic forms on $X_\bu$}%
\end{align*}
\end{definition}

Again, although a basic form $\zeta_\bu=(\zeta_0,\zeta_1)$ is
determined by its first component $\zeta_0$, we prefer to think of it
as a pair of forms.  Clearly $\Omega_\bas^\bu(X_\bu)$ is a subcomplex
of $\Omega^\bu(X_0)\times\Omega^\bu(X_1)$.  On the basic de Rham
complex of a foliation groupoid we have the contraction operators and
Lie derivatives
\begin{equation}\label{equation;contraction-lie}
\begin{aligned}
\iota&\colon\Vect_\bas(X_\bu)\times\Omega_\bas^\bu(X_\bu)\longto
\Omega_\bas^{\bu-1}(X_\bu),\\
\ca{L}&\colon\Vect_\bas(X_\bu)\times\Omega_\bas^\bu(X_\bu)\longto
\Omega_\bas^\bu(X_\bu)
\end{aligned}
\end{equation}
defined by
\begin{equation}\label{equation;cartan}
\iota_{v_\bu}\zeta_\bu=
(\iota_{\tilde{v}_0}\zeta_0,\iota_{\tilde{v}_1}\zeta_1),\qquad
\ca{L}_{v_\bu}\zeta_\bu=
(\ca{L}_{\tilde{v}_0}\zeta_0,\ca{L}_{\tilde{v}_1}\zeta_1)
\end{equation}
for basic forms $\zeta_\bu=(\zeta_0,\zeta_1)$ and basic vector fields
$v_\bu=(v_0,v_1)$, where
$(\tilde{v}_0,\tilde{v}_1)\in\Vect(X_0)\times\Vect(X_1)$ is a
representative of $v$.  The contraction operators $\iota_{v_\bu}$ are
well-defined, i.e.\ independent of the choice of representatives
$(\tilde{v}_0,\tilde{v}_1)$.  To see this, we must show that for all
$\zeta\in\Omega^\bu(X_0)$ satisfying $s^*\zeta=t^*\zeta$ and for all
tangent vectors $v\in T_x\F$ we have $\iota_v\zeta=0$.  There exist
$f\in s^{-1}(x)$ and $w\in\ker(T_fs)$ such that $v=T_ft(w)$, so
\[
t^*\iota_v\zeta=\iota_wt^*\zeta=\iota_ws^*\zeta=
s^*\iota_{T_fs(w)}\zeta=0,
\]
and therefore $\iota_v\zeta=0$.  Similarly, the operators
$\ca{L}_{v_\bu}$ are well-defined.

\begin{definition}
\label{definition;horizontalforms} 
Let $X_\bu$ be a foliation groupoid.  A differential form
$\zeta\in\Omega^k(X_0)$ is \emph{horizontal} if
$\iota_{\rho(\sigma)}\zeta=0$ for all sections $\sigma$ of the Lie
algebroid $\Alg(X_\bu)$, and \emph{infinitesimally invariant} if
$\ca{L}_{\rho(\sigma)}\zeta=0$ for all sections $\sigma$ of
$\Alg(X_\bu)$.  We denote by $\Omega_0^k(X_0,\F_0)$ the set of all
$k$-forms on $X_0$ which are horizontal and infinitesimally invariant.
\glossary{Oomega0@$\Omega_0(X_\bu)$, infinitesimally basic forms on
  $X_\bu$}%
\end{definition}

The notions in Definition~\ref{definition;horizontalforms} depend only
on the Lie algebroid of $X_\bu$, i.e.\ on the foliation $\F_0$.  The
notions of horizontal, basic, and invariant forms are well-known in
the context of Lie group actions, for which the second part of the
next result is standard.

\begin{lemma}\label{lemma;bas-inf}
Let $X_\bu$ be a foliation groupoid.
\begin{enumerate}
\item\label{item;bas-vector}
Under the identification
$\Vect_\bas(X_\bu)\cong\{\,v\in\Gamma(N_0)\mid s^*v=t^* v\,\}$, the
set of basic vector fields $\Vect_\bas(X_\bu)$ is a Lie subalgebra of
the Lie algebra $\Vect_0(X_0,\F_0)$ defined
in~\eqref{equation;lie-foliation}.
\item\label{item;bashorinf}
Under the identification
$\Omega_\bas^\bu(X_\bu)\cong\{\,\zeta\in\Omega^\bu(X_0)\mid
s^*\zeta=t^*\zeta\,\}$, the basic complex $\Omega^\bu_\bas(X_\bu)$ is
a subcomplex of the complex $\Omega^\bu_0(X_0,\F_0)$ of
Definition~\ref{definition;horizontalforms}.
\end{enumerate}
Both inclusions are equalities if $X_\bu$ is source-connected.
\end{lemma}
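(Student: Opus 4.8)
The plan is to establish the two inclusions first, for an arbitrary foliation groupoid, and then the two equalities under the source-connectedness hypothesis, treating the vector-field case in part~\eqref{item;bas-vector} and the form case in part~\eqref{item;bashorinf} in parallel. The common tool is the right-invariant vector field $\overrightarrow{\sigma}\in\Vect(X_1)$ attached to a section $\sigma$ of $\Alg(X_\bu)$: it is tangent to the source fibres, hence lies in $\Gamma(T\F_1)$, and it satisfies $\overrightarrow{\sigma}\sim_s 0$ and $\overrightarrow{\sigma}\sim_t\rho(\sigma)$. Since $X_\bu$ is a foliation groupoid the anchor $\rho$ is injective with image $T\F_0$, hence restricts to a vector bundle isomorphism $\Alg(X_\bu)\cong T\F_0$; so every $w\in\Gamma(T\F_0)$ is $\rho(\sigma)$ for a unique $\sigma$, and this reduces all invariance statements on $X_0$ to statements about the fields $\rho(\sigma)$.

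For the inclusion in part~\eqref{item;bashorinf}, let $\zeta\in\Omega^\bu(X_0)$ satisfy $s^*\zeta=t^*\zeta$. Horizontality, $\iota_{\rho(\sigma)}\zeta=0$, is exactly the computation already used to show that the contraction operators in~\eqref{equation;contraction-lie} are well defined, applied to $\rho(\sigma)_x\in T_x\F_0$. For infinitesimal invariance I would use relatedness of $\overrightarrow\sigma$: from $\overrightarrow\sigma\sim_s 0$ one gets $\ca{L}_{\overrightarrow\sigma}s^*\zeta=s^*\ca{L}_0\zeta=0$, while $\overrightarrow\sigma\sim_t\rho(\sigma)$ gives $\ca{L}_{\overrightarrow\sigma}t^*\zeta=t^*\ca{L}_{\rho(\sigma)}\zeta$; comparing the two via $s^*\zeta=t^*\zeta$ and using that $t^*$ is injective ($t$ being a submersion) yields $\ca{L}_{\rho(\sigma)}\zeta=0$, so $\zeta\in\Omega^\bu_0(X_0,\F_0)$. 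Since $s^*$ and $t^*$ commute with $d$, the subspace $\Omega^\bu_\bas(X_\bu)$ is closed under $d$ and hence a subcomplex. For the inclusion in part~\eqref{item;bas-vector}, let $v\in\Gamma(N_0)$ satisfy $s^*v=t^*v$. The naturality~\eqref{equation;bott-natural} of the Bott connection along $s$ (with $\overrightarrow\sigma\sim_s 0$, $s^*v\sim_s v$) gives $\nabla^{X_1}_{\overrightarrow\sigma}(s^*v)\sim_s 0$, hence $=0$ because $s_*$ is a fibrewise isomorphism; along $t$ (with $\overrightarrow\sigma\sim_t\rho(\sigma)$, $t^*v\sim_t v$) it gives $\nabla^{X_1}_{\overrightarrow\sigma}(t^*v)\sim_t\nabla_{\rho(\sigma)}v$. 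Since $s^*v=t^*v$ we have $\nabla^{X_1}_{\overrightarrow\sigma}(t^*v)=\nabla^{X_1}_{\overrightarrow\sigma}(s^*v)=0$, so $\nabla_{\rho(\sigma)}v$ is $t$-related to $0$, and as $t$ is surjective, $\nabla_{\rho(\sigma)}v=0$ for every $\sigma$; thus $v\in\Vect_0(X_0,\F_0)$. That $\Vect_\bas(X_\bu)$ is a \emph{Lie} subalgebra then follows formally: it is the equalizer of the two Lie algebra homomorphisms $s^*,t^*\colon\Vect_0(X_0,\F_0)\to\Vect_0(X_1,\F_1)$ of Lemma~\ref{lemma;isomorphismpullbackbundles}.

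For the equalities, assume $X_\bu$ source-connected and set $\alpha=s^*\zeta-t^*\zeta$ (resp.\ $\beta=s^*v-t^*v$) on $X_1$; the goal is to show it vanishes. Two facts drive the argument. First, $\alpha$ and $\beta$ vanish along the unit section $u(X_0)$: for $\beta$ this is immediate from $s\circ u=t\circ u=\id_{X_0}$ together with the fact that $s_*,t_*$ are fibrewise isomorphisms (both $(s^*v)_{u(x)}$ and $(t^*v)_{u(x)}$ are represented by $Tu(\tilde v_x)$), and for $\alpha$ it follows by splitting $T_{u(x)}X_1=Tu(T_xX_0)\oplus\Alg_x$ and invoking horizontality to kill the $\Alg_x$-contributions. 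Second, $\alpha$ and $\beta$ are invariant under the flow $\Phi_r$ of each $\overrightarrow\sigma$: for $\alpha$ one uses $s\circ\Phi_r=s$, $t\circ\Phi_r=\phi_r\circ t$ (where $\phi_r$ is the flow of $\rho(\sigma)$) and $\phi_r^*\zeta=\zeta$, which holds by the infinitesimal invariance $\ca{L}_{\rho(\sigma)}\zeta=0$ established above; for $\beta$ one shows $\nabla^{X_1}_{\overrightarrow\sigma}\beta=0$ exactly as in the inclusion step, now also using $\nabla_{\rho(\sigma)}v=0$, which says $\beta$ is parallel along $\overrightarrow\sigma$ and hence $\Phi_r$-invariant on $N_1$. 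Because the right-invariant fields span $\ker Ts=T(s^{-1}(x))$ at every point and the source fibres are connected, every point of $s^{-1}(x)$ is reachable from $u(x)$ by finitely many flow segments, and flow-invariance propagates the vanishing on $u(X_0)$ to all of $X_1$, giving $s^*\zeta=t^*\zeta$ and $s^*v=t^*v$.

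The hard part will be this last reachability step: one must know that the local flows of the right-invariant vector fields sweep out each connected source fibre starting from the identity arrow, so that $\Phi_r$-invariance forces $\alpha$ and $\beta$ to vanish everywhere. This is an orbit-theorem type statement, made clean here by the observation that the right-invariant fields span the \emph{entire} tangent bundle of the source fibre, not merely a bracket-generating subdistribution; care is needed only because $\rho(\sigma)$, and hence $\Phi_r$, may be incomplete, so one argues with local flows and an open-and-closed subset of the source fibre. A secondary point requiring care is the precise translation between $\nabla^{X_1}_{\overrightarrow\sigma}\beta=0$ and $\Phi_r$-invariance of the normal-bundle section $\beta$, i.e.\ that the Bott connection computes the infinitesimal action of the leafwise flow on $N_1$.
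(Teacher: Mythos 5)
Your proof is correct and follows essentially the same route as the paper's: the relatedness relations $\sigma_R\sim_s0$, $\sigma_R\sim_t\rho(\sigma)$ for right-invariant vector fields, naturality of the Bott connection (resp.\ the Lie derivative identities) for the inclusions, and, for the converse, vanishing at the unit bisection propagated along the connected Hausdorff source fibres using that the $\sigma_R$ span $\ker(Ts)$. The only difference is expository: you unpack the paper's terse ``span $\ker(Ts)$, source fibre connected, hence equal everywhere'' step into an explicit local-flow-invariance and reachability argument, which is exactly what that step implicitly relies on.
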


\begin{proof}
\eqref{item;bas-vector}~Let $v_\bu=(v_0,v_1)\in\Vect_\bas(X_\bu)$.
Let $\sigma$ be a section of the Lie algebroid $\Alg(X_\bu)$, which
has anchor map $\rho=Tt|_{\Alg(X_\bu)}$.  Let $\sigma_R$ denote the
right-invariant vector field on $X_1$ induced by $\sigma$.  Then
$\sigma_R$ is $s$-related to the zero vector field on $X_0$ because
$\sigma_R$ is tangent to the source fibres, and $\sigma_R$ is
$t$-related to the vector field $\rho(\sigma)\in\Gamma(T\F)$ on $X_0$.
In other words,
\begin{equation}\label{equation;related}
\sigma_R\sim_s0,\qquad\sigma_R\sim_t\rho(\sigma).
\end{equation}
Since $v_1=s^*v_0=t^*v_0$, the naturality of the Bott
connection~\eqref{equation;bott-natural} yields
\[
t^*\nabla_{\rho(\sigma)}v_0=\nabla_{\sigma_R}t^*v_0=
\nabla_{\sigma_R}v_1=\nabla_{\sigma_R}s^*v_0=0,
\]
and hence $\nabla_{\rho(\sigma)}v_0=0$ for all $\sigma$, which is to
say that $v_0$ is $\Gamma(T\F)$-invariant.  This shows that
$\Vect_\bas(X_\bu)$ is contained in $\Vect_0(X_0,\F_0)$.  That
$\Vect_\bas(X_\bu)$ is a Lie subalgebra follows from the fact that
$s^*$ and $t^*$ are Lie algebra homomorphisms.  Conversely, suppose
$\nabla_{\rho(\sigma)}v_0=0$ for all $\sigma\in\Gamma(\Alg(X_\bu))$
and that $X_\bu$ is source-connected.  From~\eqref{equation;related}
we obtain $\nabla_{\sigma_R}s^*v_0=0$ and
$\nabla_{\sigma_R}t^*v_0=t^*\nabla_{\rho(\sigma)}v_0=0$ for all
$\sigma$.  For $x\in X_0$ we have $(s^*v_0)_{u(x)}=(t^*v_0)_{u(x)}$.
The vector fields $\sigma_R$ span the subbundle $\ker(Ts)$ of $TX_1$.
The source fibre $s^{-1}(x)$ is Hausdorff (by
Lemma~\ref{lemma;hausdorff}, since $X_0$ is Hausdorff by definition)
and connected, so we have $(s^*v_0)_f=(t^*v_0)_f$ for every $f\in
s^{-1}(x)$.  This shows that $\Vect_0(X_0,\F_0)$ is contained in
$\Vect_\bas(X_\bu)$.

\eqref{item;bashorinf}~Let $\zeta\in\Omega^k(X_0)$ be a differential
form.  It follows from~\eqref{equation;related} that
\begin{equation}\label{equation;lie}
\ca{L}_{\sigma_R}s^*\zeta=0,\qquad\ca{L}_{\sigma_R}
t^*\zeta=t^*\ca{L}_{\rho(\sigma)}\zeta
\end{equation}
for all sections $\sigma$ of the Lie algebroid $\Alg(X_\bu)$.  At the
identity bisection $ u(X_0)\subseteq X_1$ the tangent bundle of $X_1$
is a direct sum $TX_1|_{ u(X_0)}=\Alg(X_\bu)\oplus u^*TX_0$.  Let
$x\in X_0$.  For $\sigma_1$, $\sigma_2,\dots$,
$\sigma_k\in\Alg(X_\bu)_{ u(x)}$ and for $w_1$, $w_2,\dots$, $w_k\in
T_xX_0$ we have
\begin{equation}\label{equation;source-target}
\begin{split}
(s^*\zeta)_{ u(x)}&(\sigma_1+ u_*w_1,\sigma_2+ u_*w_2,\dots,\sigma_k+
  u_*w_k)\\
&=\zeta_x(s_*\sigma_1+w_1,s_*\sigma_2+w_2,\dots,s_*\sigma_k+w_k)\\
&=\zeta_x(w_1,w_2,\dots,w_k),\\
(t^*\zeta)_{u(x)}&(\sigma_1+ u_*w_1,\sigma_2+ u_*w_2,\dots,\sigma_k+
  u_*w_k)\\
&=\zeta_x(t_*\sigma_1+w_1,t_*\sigma_2+w_2,\dots,t_*\sigma_k+w_k)\\
&=\zeta_x\bigl(\rho(\sigma_1)+
  w_1,\rho(\sigma_2)+w_2,\dots,\rho(\sigma_k)+w_k\bigr).
\end{split}
\end{equation}
Now assume $s^*\zeta=t^*\zeta$.  Then $\ca{L}_{\rho(\sigma)}\zeta=0$
by~\eqref{equation;lie} and $\iota_{\rho(\sigma)}\zeta=0$
by~\eqref{equation;source-target}, so
$\zeta\in\Omega_0^\bu(X_0,\F_0)$.  Finally, suppose that
$\zeta\in\Omega^\bu_0(X_0,\F_0)$ and that $X_\bu$ is source-connected.
From horizontality and from~\eqref{equation;source-target} we obtain
that $(s^*\zeta)_{u(x)}=(t^*\zeta)_{u(x)}$ for every $x\in X_0$.  From
invariance and from~\eqref{equation;lie} we obtain that
$\ca{L}_{\sigma_R} s^*\zeta=\ca{L}_{\sigma_R}t^*\zeta=0$.  The vector
fields $\sigma_R$ span the subbundle $\ker(Ts)$ of $TX_1$.  The source
fibre $s^{-1}(x)$ is Hausdorff and connected, so we have
$(s^*\zeta)_f=(t^*\zeta)_f$ for every $f\in s^{-1}(x)$.
\end{proof}

The next result states that the notions of basic vector fields and
basic differential forms are Morita invariant.

\begin{proposition}\label{proposition;vector-form-morita}
Let $X_\bu$ and $Y_\bu$ be foliation groupoids.  A Morita morphism
$\phi_\bu\colon X_\bu\to Y_\bu$ induces
\begin{enumerate}
\item\label{item;vector-morita}
an isomorphism $\phi_\bu^*\colon\Vect_\bas(Y_\bu)\stackrel\cong\longto
\Vect_\bas(X_\bu)$ of Lie algebras, and
\item\label{item;form-morita}
an isomorphism
$\phi_\bu^*\colon\Omega_\bas^\bu(Y_\bu)\stackrel\cong\longto
\Omega_\bas^\bu(X_\bu)$ of complexes.
\end{enumerate}
If $\phi_\bu$ and $\chi_\bu$ are naturally isomorphic Morita
morphisms, then $\phi_\bu^*=\chi_\bu^*$.
\end{proposition}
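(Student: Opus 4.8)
The plan is to construct the pullback $\phi_\bu^*$ explicitly from the data of $\phi_\bu$ and to prove bijectivity using the two defining features of a Morita morphism separately: full faithfulness to set up the pullback, and essential surjectivity to invert it. First I would record the geometric consequences. Because $\phi_\bu$ is fully faithful, the square in Definition~\ref{definition;moritaequivalence} exhibits $X_\bu$ as the pullback groupoid $\phi_0^*(Y_\bu)$; in particular $(\phi_0,\phi_0)$ is transverse to $(s,t)$, so by (the proof of) Lemma~\ref{lemma;transversal} the map $\phi_0\colon X_0\to Y_0$ is foliate and transverse to $\F_0^Y$ with $\phi_0^*\F_0^Y=\F_0^X$. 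Transversality makes the induced bundle map $(\phi_0)_*\colon N_0(X_\bu)\to\phi_0^*N_0(Y_\bu)$ fibrewise surjective, while $\phi_0^*\F_0^Y=\F_0^X$ forces its kernel to vanish; as both bundles have rank $\codim\F$, it is an isomorphism. Essential surjectivity says exactly that $\phi_0$ is complete in the sense of Definition~\ref{definition;completetransversal}, i.e.\ $\phi_0(X_0)$ meets every leaf of $\F_0^Y$.

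With the isomorphism $(\phi_0)_*$ available I can define the two pullbacks. For forms, $\phi_\bu^*\zeta_\bu=(\phi_0^*\zeta_0,\phi_1^*\zeta_1)$ is ordinary pullback of differential forms; the morphism identities $s\phi_1=\phi_0 s$ and $t\phi_1=\phi_0 t$ and functoriality of pullback give $s^*\phi_0^*\zeta_0=\phi_1^*s^*\zeta_0=\phi_1^*t^*\zeta_0=t^*\phi_0^*\zeta_0$, so basic forms are carried to basic forms, and the map is a morphism of complexes since pullback commutes with $d$. For vector fields I would use $(\phi_0)_*$ to define the transverse pullback $\phi_0^*\colon\Gamma(N_0(Y_\bu))\to\Gamma(N_0(X_\bu))$ by the relatedness $\phi_0^*v\sim_{\phi_0}v$, i.e.\ $(\phi_0)_*(\phi_0^*v)(x)=v(\phi_0(x))$. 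The same functoriality argument shows $\phi_\bu^*$ preserves the condition $s^*v_0=t^*v_0$, while the naturality~\eqref{equation;bott-natural} of the Bott connection shows $\phi_0^*v$ is $\Gamma(T\F_0^X)$-invariant, hence lands in $\Vect_\bas(X_\bu)\subseteq\Vect_0(X_0,\F_0^X)$; that $\phi_0^*$ respects the bracket follows because $\phi_0$-relatedness of transverse vector fields preserves it.

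The heart of the matter is bijectivity, where completeness is essential. Injectivity is quick: if $\phi_0^*v_0=0$ (resp.\ $\phi_0^*\zeta_0=0$) then $v_0$ (resp.\ $\zeta_0$) vanishes along $\phi_0(X_0)$; being basic it is $\Gamma(T\F)$-invariant, hence Bott-parallel along each connected leaf, so it vanishes on every leaf that $\phi_0(X_0)$ meets, which by completeness is every leaf. For surjectivity I would construct the inverse by descent: given $v\in\Vect_\bas(X_\bu)$, set $v'(\phi_0(x))=(\phi_0)_*v(x)$ on $\phi_0(X_0)$. The identity-type arrows $(x,x',u(\phi_0(x)))$ of $\phi_0^*(Y_\bu)$ together with the basic identity $s^*v_0=t^*v_0$ make this well defined on the fibres of $\phi_0$, and essential surjectivity of $t\circ\pr_1\colon Y_1\times_{Y_0}X_0\to Y_0$ supplies local sections expressing $v'$ as a pushforward, yielding smoothness. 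One then extends $v'$ to the saturation of $\phi_0(X_0)$, all of $Y_0$ by completeness, by Bott-parallel transport, the full strength of $s^*v_0=t^*v_0$ over $X_\bu$ (holonomy invariance, not merely leafwise invariance) guaranteeing that the extension is single-valued and $Y_\bu$-basic; the identical construction with ordinary pullback handles forms. I expect this extension-and-consistency step, not the preceding formal manipulations, to be the main obstacle.

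Finally, for dependence only on the $2$-isomorphism class, a $2$-morphism $\gamma\colon\phi_\bu\To\chi_\bu$ is a smooth map $\gamma\colon X_0\to Y_1$ with $s\circ\gamma=\phi_0$ and $t\circ\gamma=\chi_0$. For forms, pulling the basic identity $s^*\zeta_0=t^*\zeta_0$ back along $\gamma$ gives $\phi_0^*\zeta_0=(s\gamma)^*\zeta_0=\gamma^*s^*\zeta_0=\gamma^*t^*\zeta_0=(t\gamma)^*\zeta_0=\chi_0^*\zeta_0$; the same computation with the transverse pullback (here $\gamma$ is foliate and $\gamma_*$ is the normal isomorphism determined by $s_*\gamma_*=(\phi_0)_*$, $t_*\gamma_*=(\chi_0)_*$) gives $\phi_0^*v_0=\chi_0^*v_0$. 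Hence $\phi_\bu^*=\chi_\bu^*$ in both cases.
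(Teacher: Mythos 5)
Your route is genuinely different from the paper's (which reduces a general Morita morphism to the case of a surjective submersion on objects by factoring through $Z_0=Y_1\times_{Y_0}X_0$, and handles that case by gluing local weak inverses), and most of its formal skeleton is sound: the fibrewise isomorphism $N_0(X_\bu)_x\cong N_0(Y_\bu)_{\phi_0(x)}$ via transversality, the definition of the pullbacks, the bracket computation in foliation charts, and the $2$-isomorphism argument at the end all match or cleanly shortcut what the paper does. But two of your justifications are wrong as stated. First, transversality of $(\phi_0,\phi_0)$ to $(s,t)$, and of $\phi_0$ to $\F_0^Y$, is \emph{not} a consequence of full faithfulness: a square can be a fibred product of manifolds without the maps being transverse (the inclusion of a point of a positive-dimensional leaf is fully faithful). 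What actually gives transversality is the submersion condition in essential surjectivity: differentiating $t\circ\pr_1\colon Y_1\times_{Y_0}X_0\to Y_0$ at an identity arrow $(u(\phi_0(x)),x)$ yields exactly $T_{\phi_0(x)}Y_0=T_{\phi_0(x)}\F_0^Y+\im(T_x\phi_0)$. The conclusion you need is true for Morita morphisms, but you have derived it from the wrong hypothesis.

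Second, and more seriously, essential surjectivity does \emph{not} say that $\phi_0$ is complete in the sense of Definition~\ref{definition;completetransversal}: it guarantees that $\phi_0(X_0)$ meets every \emph{orbit}, not every \emph{leaf} (= connected component of an orbit). For the groupoid $\Z/2\ltimes\{1,2\}$ with the swap action and $\phi_0$ the inclusion of $\{1\}$, the morphism $\phi_\bu$ is Morita but the leaf $\{2\}$ misses the image. Consequently your injectivity argument ("Bott-parallel along each connected leaf, so it vanishes on every leaf that $\phi_0(X_0)$ meets, which by completeness is every leaf") and your extension step ("to the saturation of $\phi_0(X_0)$, all of $Y_0$ by completeness, by Bott-parallel transport") both break down whenever orbits are disconnected, i.e.\ for non-source-connected groupoids, which the proposition must cover. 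The correct mechanism is not leafwise Bott transport but transport along \emph{arrows}: the condition $s^*v_0=t^*v_0$ on $Y_1$ identifies $v_0(y)$ with $v_0(y')$ across any arrow $g\colon y\to y'$ via $N_0(Y_\bu)_y\cong N_1(Y_\bu)_g\cong N_0(Y_\bu)_{y'}$, and essential surjectivity then reaches every point of $Y_0$ from $\phi_0(X_0)$; for surjectivity you should define the extension by choosing local sections of $t\circ\pr_1$ and transporting along the resulting arrows, with single-valuedness coming from full faithfulness (which lifts any arrow of $Y_\bu$ between two image points to an arrow of $X_\bu$, where $s^*v_0=t^*v_0$ applies). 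You gesture at "holonomy invariance, not merely leafwise invariance," but without replacing leaves/completeness by orbits/arrows throughout, the step you yourself flag as the main obstacle does not go through.
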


\begin{proof}
We will prove~\eqref{item;vector-morita}; the proof
of~\eqref{item;form-morita} is similar.  Let $\F_0(X_\bu)$ be the
foliation of $X_0$ and $N_0(X_\bu)=TX_0/T\F_0(X_\bu)$ its normal
bundle.  Let $\F_1(X_\bu)$ be the foliation of $X_1$ and
$N_1(X_\bu)=TX_1/(\ker(Ts)+\ker(Tt))$ its normal bundle.  Similarly we
have foliations $\F_0(Y_\bu)$ of $Y_0$, $\F_1(Y_\bu)$ of $Y_1$, and
normal bundles $N_0(Y_\bu)$ over $Y_0$ and $N_1(Y_\bu)$ over $Y_1$.
The tangent maps $T\phi_0\colon TX_0\to TY_0$ and $T\phi_1\colon
TX_1\to TY_1$ descend to vector bundle maps
\[
(\phi_0)_*\colon N_0(X_\bu)\longto N_0(Y_\bu),\qquad(\phi_1)_*\colon
N_1(X_\bu)\longto N_1(Y_\bu).
\]
Let $\chi_\bu\colon X_\bu\to Y_\bu$ be another Morita morphism and
$\gamma\colon\phi\To\chi$ a $2$-morphism.  Then $s\circ\gamma=\phi_0$
and $t\circ\gamma=\chi_0$, and for all arrows $f\in X_1$ we have
\[\gamma(x')=\chi(f)\circ\gamma(x)\circ\phi(f)^{-1},\]
where $x=s(f)$ and $x'=t(f)$.  This shows that $\gamma$ maps the orbit
$X_\bu\cdot x$ to the orbit $(Y_\bu\times Y_\bu)\cdot\gamma(x)$, so
$\gamma\colon(X_0,\F_0(X_\bu))\to(Y_1,\F_1(Y_\bu))$ is a foliate map.
Therefore $\gamma$ induces a vector bundle map $\gamma_*\colon
N_0(X_\bu)\to N_1(Y_\bu)$, which makes the diagram
\begin{equation}\label{equation;normal-2-morphism}
\begin{tikzcd}[row sep=large]
N_0(X_\bu)_x\ar[r,"(\phi_0)_*"]\ar[d,"(\chi_0)_*"']\ar[dr,"\gamma_*"]&
N_0(Y_\bu)_{\phi_0(x)}\\
N_0(Y_\bu)_{\chi_0(x)}&N_1(Y_\bu)_{\gamma(x)}\ar[l,"t_*"',"\cong"]
\ar[u,"s_*"',"\cong"]
\end{tikzcd}
\end{equation}
commute for all $x\in X_0$.  We will use this to establish the
following four facts:
\begin{numerate}
\item\label{item;fibres}
$(\phi_0)_*\colon N_0(X_\bu)_x\to N_0(Y_\bu)_{\phi_0(x)}$ is an
  isomorphism for all $x\in X_0$;
\item\label{item;pullback}
the squares
\[
\begin{tikzcd}[row sep=large]
N_0(X_\bu)\ar[r,"(\phi_0)_*"]\ar[d]&N_0(Y_\bu)\ar[d]\\
X_0\ar[r,"\phi_0"]&Y_0
\end{tikzcd}
\qquad\qquad
\begin{tikzcd}[row sep=large]
N_1(X_\bu)\ar[r,"(\phi_1)_*"]\ar[d]&N_1(Y_\bu)\ar[d]\\
X_1\ar[r,"\phi_1"]&Y_1
\end{tikzcd}
\]
are cartesian, $\phi_\bu$ induces pullback maps
\begin{align*}
\phi^*_0\colon\Gamma(N_0(Y_\bu))\longto\Gamma(N_0(X_\bu)),\qquad
\phi^*_1\colon\Gamma(N_1(Y_\bu))\longto\Gamma(N_1(X_\bu)),
\end{align*}
and a Lie algebra homomorphism
$\phi_\bu^*\colon\Vect_\bas(Y_\bu)\longto\Vect_\bas(X_\bu)$;
\item\label{item;2-morphism}
$\phi_\bu^*=\chi_\bu^*$;
\item\label{item;isomorphism}
$\phi_\bu^*$ is an isomorphism.
\end{numerate}
We assert that fact~\eqref{item;fibres} implies
fact~\eqref{item;pullback}.  Indeed, it follows from
fact~\eqref{item;fibres} that the square on the left
in~\eqref{item;pullback} is cartesian.  The map $(\phi_1)_*$ is
obtained by lifting $(\phi_0)_*$ via $s_*$, and therefore the right
square is cartesian as well.  This means we have isomorphisms
$N_0(X_\bu)\cong\phi_0^*N_0(Y_\bu)$ and
$N_1(X_\bu)\cong\phi_1^*N_1(Y_\bu)$, and the existence of the pullback
maps $\phi_0^*$, $\phi_1^*$, $\phi_\bu^*$ is a formal consequence of
this.  To check that $\phi_\bu^*$ is a Lie algebra homomorphism, it is
enough to show that for all basic vector fields $u_\bu$,
$v_\bu\in\Vect_\bas(Y_\bu)$ we have
\begin{equation}\label{equation;phirelated}
[\phi_0^* u_0,\phi_0^*v_0]\sim_{\phi_0}[u_0,v_0]
\end{equation}
in the sense of~\eqref{equation;foliate-related}.  To prove this, we
may work in foliation charts and assume that the leaf spaces
$\bar{X}=X_0/\F_0(X_\bu)$ and $\bar{Y}=Y_0/\F_0(Y_\bu)$ are both
smooth manifolds, and that $\phi_0$ descends to a smooth map
$\bar{\phi}\colon\bar{X}\to\bar{Y}$.  By Lemma~\ref{lemma;bas-inf} we
have an inclusion
\[
\Vect_\bas(X_\bu)\subseteq\Vect_0(X_0,\F_0(X_\bu))\cong\Vect(\bar{X})
\]
and a similar inclusion for $Y_\bu$.  The
identity~\eqref{equation;phirelated} now follows from the naturality
of the Lie bracket on $\Vect(\bar{X})$ and $\Vect(\bar{Y})$ with
respect to $\bar{\phi}$.  Thus fact~\eqref{item;pullback} follows from
fact~\eqref{item;fibres}.  Fact~\eqref{item;2-morphism} is proved as
follows: if $v\in\Gamma(N_0(Y_\bu))$ satisfies $s^*v=t^*v$, then
\[\phi_0^*v=\gamma^*s^*v=\gamma^*t^*v=\chi_0^*v,\]
so $\phi_\bu^*=\chi_\bu^*$.  It remains to prove~\eqref{item;fibres}
and~\eqref{item;isomorphism}.  We consider three cases.

\emph{Case}~1.  Suppose $\phi_\bu$ has a weak inverse, i.e.\ a triple
$(\psi_\bu,\delta,\eps)$ consisting of a Morita morphism
$\psi_\bu\colon Y_\bu\to X_\bu$ and $2$-morphisms
$\delta\colon\psi_\bu\circ\phi_\bu\To\id_{X_\bu}$ and
$\eps\colon\phi_\bu\circ\psi_\bu\To\id_{Y_\bu}$.
Applying~\eqref{equation;normal-2-morphism} to the morphisms
$\psi_\bu\circ\phi_\bu$ and $\id_{X_\bu}$ shows that
\[(\psi_0)_*\circ(\phi_0)_*\colon(N_0)_x\to(N_0)_{\psi(\phi(x))}\]
is an isomorphism for all $x\in X_0$.  Similarly,
$(\phi_0)_*\circ(\psi_0)_*\colon(N_0)_y\to(N_0)_{\phi(\psi(y))}$ is an
isomorphism for all $y\in Y_0$.  Hence
$(\phi_0)_*\colon(N_0)_x\to(N_0)_{\phi(x)}$ is an isomorphism for all
$x$, which establishes~\eqref{item;fibres}.  It follows from
fact~\eqref{item;2-morphism} that $\psi_\bu^*\phi_\bu^*=\id$ and
$\phi_\bu^*\psi_\bu^*=\id$, so $\phi_\bu^*$ is an isomorphism, which
proves~\eqref{item;isomorphism}.

\emph{Case}~2.  Suppose $\phi_0\colon X_0\to Y_0$ is a surjective
submersion.  Consider an open subset $V$ of $Y_0$ over which there
exists a smooth right inverse $\psi_0\colon V\to X_0$ of $\phi_0$.
Let $Y_\bu|_V$ be the restriction of $Y_\bu$ to $V$, which has object
manifold $V$ and arrow manifold $Y_1|_V=s^{-1}(V)\cap t^{-1}(V)$.  Let
$U=\phi_0^{-1}(V)$ and let $X_\bu|_U$ be the restriction of $X_\bu$ to
$U$.  Then $\phi_\bu$ restricts to a Morita morphism $X_\bu|_U\to
Y_\bu|_V$, which we also denote by $\phi_\bu$.  By full faithfulness
of $\phi_\bu$, the map $\psi_0$ lifts uniquely to a morphism
$\psi_\bu=(\psi_0,\psi_1)\colon Y_\bu|_V\to X_\bu|_U$ satisfying
$\phi_\bu\circ\psi_\bu=\id_{Y_\bu|_V}$.  Full faithfulness also gives
us a map $\gamma$ completing the following commutative diagram:
\[
\begin{tikzcd}
U\ar[drr,bend left,"u\circ\phi_0"]\ar[ddr,bend
  right,"{(\psi_0\circ\phi_0,\id_U)}"']\ar[dr,dotted,"\gamma"]&&\\
&X_1|_U\ar[r,"\phi_1"]\ar[d]&Y_1|_V\ar[d]\\
&U\times U\ar[r,"\phi_0\times\phi_0"]&V\times V
\end{tikzcd}
\]
This means that for each $x\in U$, $\phi_1(\gamma(x))$ is the identity
arrow at $\phi_0(x)$.  Again by full faithfulness, this implies that
$\gamma(x)$ is an arrow from $\psi_0(\phi_0(x))$ to $x$ satisfying
$f\circ\gamma(x)=\gamma(x')\circ\psi_1(\phi_1(f))$ for all arrows
$f\in X_1$ with $s(f)=x\in U$, $t(f)=x'\in U$.  In other words,
$\gamma$ is a $2$-morphism $\psi\circ\phi\To\id_{X_\bu|_U}$.  By
case~1, $\phi_\bu$ induces isomorphisms $N_0(X_\bu)_x\cong
N_0(Y_\bu)_{\phi_0(x)}$ for all $x\in U$, and
\begin{equation}\label{equation;open-iso}
\begin{tikzcd}
\phi_\bu^*\colon\Vect_\bas(Y_\bu|_V)\ar[r,"\cong"]&\Vect_\bas(X_\bu|_U),
\end{tikzcd}
\end{equation}
which are independent of the section $\psi_0$.  Now choose a covering
$\lie{V}$ of $Y_0$ consisting of open sets $V$ over which $\phi_0$
admits a right inverse.  The sets $U=\phi^{-1}(V)$ cover $X_0$, so we
get $N_0(X_\bu)_x\cong N_0(Y_\bu)_{\phi_0(x)}$ for all $x\in X_0$,
which proves fact~\eqref{item;fibres}.  Since $\phi_0$ is surjective,
the pullback map
$\phi_0^*\colon\Gamma(N_0(Y_\bu))\to\Gamma(N_0(X_\bu))$ is injective,
so in particular
$\phi_\bu^*\colon\Vect_\bas(Y_\bu)\to\Vect_\bas(X_\bu)$ is injective.
To show surjectivity of $\phi_\bu^*$, let $v\in\Gamma(N_0(X_\bu))$ be
a section satisfying $s^*v=t^*v$.  By~\eqref{equation;open-iso} there
is for each $V\in\lie{V}$ a unique section $w_V$ of $N_0(X_\bu)$ over
$V$ satisfying $s_V^*w_V=t_V^*w_V$ and $\phi_0^*w_V=v|_U$.  By
injectivity of $\phi_0^*$ we have $w_V=w_{V'}$ on $V\cap V'$ for all
$V$, $V'\in\lie{V}$.  Therefore the $w_V$ glue together to a global
section $w$ of $N_0(Y_\bu)$ with $\phi_0^*w=v$.  Since
\[
\phi_0^*s^*w=s^*\phi_0^*w=s^*v=t^*v=t^*\phi_0^*w=\phi_0^*t^*w,
\]
we have $s^*w=t^*w$, so $\phi_\bu^*$ is surjective, which establishes
fact~\eqref{item;isomorphism}.

\emph{Case}~3.  Given a general Morita morphism $\phi_\bu\colon
X_\bu\to Y_\bu$, essential surjectivity gives us a surjective
submersion
\[\tau_0=t\circ\pr_1\colon Z_0=Y_1\times_{Y_0}X_0\longto Y_0.\]
Let $Z_\bu=\tau_0^*Y_\bu$; then the canonical morphism $\tau_\bu\colon
Z_\bu\to Y_\bu$ is Morita.  The projection onto the second factor
$\pi_0=\pr_2\colon Z_0\to X_0$ is also a surjective submersion, and
lifts to a Morita morphism $\pi_\bu\colon Z_\bu\to X_\bu$.  We do
\emph{not} have $\phi_\bu\circ\pi_\bu=\tau_\bu$, but the map
$\gamma=\pr_1\colon Z_0\to Y_1$ defines a $2$-morphism
$\gamma\colon\phi_\bu\circ\pi_\bu\To\tau_\bu$.  By case~2, the
morphisms $\pi_\bu$ and $\tau_\bu$ induce isomorphisms
\[
(\pi_0)_*\colon N_0(Z_\bu)_z\longto N_0(X_\bu)_{\pi_0(z)}\quad\text{
  and }\quad(\tau_0)_\bu\colon N_0(Z_\bu)_z\longto
N_0(Y_\bu)_{\tau_0(z)}
\]
for all $z\in Z_0$.  Applying~\eqref{equation;normal-2-morphism} to
the morphisms $\phi_\bu\circ\pi_\bu$ and $\tau_\bu$ shows that
$(\phi_0)_*\circ(\pi_0)_*\colon N_0(Z_\bu)_z\to
N_0(Y_\bu)_{\phi_0(\pi_0(z))}$ is an isomorphism for all $z\in Z_0$.
Hence $(\phi_0)_*\colon N_0(X_\bu)_x\to N_0(Y_\bu)_{\phi_0(x)}$ is an
isomorphism for all $x\in X_0$, which proves~\eqref{item;fibres}.
Fact~\eqref{item;2-morphism} shows that
$\pi_\bu^*\circ\phi_\bu^*=\tau_\bu^*$.  The maps $\pi_\bu^*$ and
$\tau_\bu^*$ are isomorphisms by case~2, so $\phi_\bu^*$ is an
isomorphism as well, which proves~\eqref{item;isomorphism}.
\end{proof}

\begin{remark}
If we extend the definitions of $\Omega_\bas^\bu(X_\bu)$ and
$\Omega_0^\bu(X_\bu)$ verbatim to arbitrary Lie groupoids $X_\bu$,
Lemma~\ref{lemma;bas-inf}\eqref{item;bashorinf} and
Proposition~\ref{proposition;vector-form-morita}\eqref{item;form-morita}
still hold.  We omit the proof of this fact, as we have no use for it
in this paper.
\end{remark}

\subsection{Basic versus multiplicative vector fields}
\label{subsection;basic-multiplicative}

Let $X_\bu$ be a Lie groupoid.  Applying the tangent functor $T$ to
all the structure maps of $X_\bu$ gives a Lie groupoid $TX_\bu$ called
the \emph{tangent groupoid}, which is equipped with an obvious
morphism $p_\bu\colon TX_\bu\to X_\bu$.

Mackenzie and Xu~\cite{mackenzie-xu;lifting-multiplicative} defined a
\emph{multiplicative vector field} on $X_\bu$ to be a Lie groupoid
morphism $v_\bu\colon X_\bu\to TX_\bu$ satisfying $p_\bu\circ
v_\bu=\id_{X_\bu}$.  We will denote the set of multiplicative vector
fields by $\lie{M}(X_\bu)$.

By~\cite[Proposition~3.5]{mackenzie-xu;lifting-multiplicative} a
multiplicative vector field is the same as a pair of vector fields
$v_0\in\Vect(X_0)$, $v_1\in\Vect(X_1)$ whose flows form a (local)
one-parameter group of Lie groupoid automorphisms of $X_\bu$.  It
follows that $\lie{M}(X_\bu)$ is a Lie subalgebra of
$\Vect(X_0)\times\Vect(X_1)$.

By~\cite[Example~3.4]{mackenzie-xu;lifting-multiplicative}, for each
section $\sigma$ of the Lie algebroid
$\lie{A}(X_\bu)=\Gamma(\Alg(X_\bu))$ the pair
$\partial(\sigma)=(\rho(\sigma),\sigma_L+\sigma_R)$ is a
multiplicative vector field.  This defines a Lie algebra homomorphism
\[\partial\colon\lie{A}(X_\bu)\longto\lie{M}(X_\bu).\]
Differentiating the conjugation action of $X_\bu$ on $X_1$ gives a Lie
algebra action $\lie{M}(X_\bu)\to\Der(\lie{A}(X_\bu))$, which makes
the pair of Lie algebras $\lie{M}(X_\bu)$, $\lie{A}(X_\bu)$ a crossed
module of Lie algebras.  The associated (strict) Lie $2$-algebra
\[
\begin{tikzcd}
\Vect_\mult(X_\bu)=\bigl(\lie{A}(X_\bu)\rtimes\lie{M}(X_\bu) \ar[r,
  yshift=0.5ex]\ar[r, yshift=-0.5ex]&\lie{M}(X_\bu)\bigr)
\end{tikzcd}
\]
is the \emph{Lie $2$-algebra of multiplicative vector fields} of
$X_\bu$; see Berwick-Evans and
Lerman~\cite[\S\,2]{berwick-evans-lerman;lie-2-algebras-vector}.  We
assert that for a foliation groupoid this Lie $2$-algebra is
equivalent to the Lie algebra of basic vector fields
(Definition~\ref{definition;basicvectorfield}).
\glossary{Vectmult@$\Vect_\mult(X_\bu)$, multiplicative vector fields
  on Lie groupoid $X_\bu$}%

\begin{proposition}\label{proposition;multiplicative-basic}
Let $X_\bu$ be a foliation groupoid.  The natural Lie algebra
homomorphism $\nu=\nu_{X_\bu}\colon\lie{M}(X_\bu)\to\Vect_\bas(X_\bu)$
induces an isomorphism
\[\lie{M}(X_\bu)/\partial\lie{A}(X_\bu)\cong\Vect_\bas(X_\bu).\]
Thus the Lie $2$-algebra of multiplicative vector fields
$\Vect_\mult(X_\bu)$ is equivalent to the Lie algebra of basic vector
fields $\Vect_\bas(X_\bu)$.
\end{proposition}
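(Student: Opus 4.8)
The plan is to establish the displayed isomorphism by showing that $\nu$ is surjective with kernel exactly $\partial\lie{A}(X_\bu)$, and then to deduce the Lie $2$-algebra statement from the injectivity of $\partial$. Two structural features of a foliation groupoid do all the work. First, since the isotropy groups are discrete, $t$ restricts to a local diffeomorphism from each source fibre $s^{-1}(x)$ onto the orbit through $x$; consequently $\ker(T_fs)\cap\ker(T_ft)=0$ and the map $(Ts,Tt)_f\colon T_fX_1\to T_{s(f)}X_0\oplus T_{t(f)}X_0$ is injective for every arrow $f$. Second, the anchor $\rho\colon\Alg(X_\bu)\to T\F_0$ is a vector bundle isomorphism. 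I will use both facts repeatedly.

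I first record that $\nu$ lands in $\Vect_\bas(X_\bu)$. A multiplicative vector field $(v_0,v_1)$ has a flow by Lie groupoid automorphisms, so $v_1\sim_s v_0$ and $v_1\sim_t v_0$; passing to the normal bundles via Lemma~\ref{lemma;isomorphismpullbackbundles} gives $s^*\bar v_0=\bar v_1=t^*\bar v_0$, that is, $\nu(v_0,v_1)=\bar v_0\in\Vect_\bas(X_\bu)$. For the kernel, the inclusion $\partial\lie{A}(X_\bu)\subseteq\ker\nu$ is immediate, since $\rho(\sigma)\in\Gamma(T\F_0)$ while $\sigma_L+\sigma_R\in\Gamma(\ker Ts+\ker Tt)=\Gamma(T\F_1)$, so both components of $\partial(\sigma)$ project to zero. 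Conversely, given $(v_0,v_1)\in\ker\nu$, the class of $v_0$ vanishes in $N_0$, so $v_0\in\Gamma(T\F_0)$ and I may set $\sigma=\rho^{-1}(v_0)$. Then $(v_0,v_1)-\partial(\sigma)=(0,w)$ is again multiplicative, whence $w\sim_s0$ and $w\sim_t0$, i.e.\ $w\in\Gamma(\ker Ts\cap\ker Tt)=0$. Thus $(v_0,v_1)=\partial(\sigma)$ and $\ker\nu=\partial\lie{A}(X_\bu)$.

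Surjectivity is the substantive step. Given a basic vector field represented by $v\in\Gamma(N_0)$ with $s^*v=t^*v$, I choose any lift $v_0\in\Vect(X_0)$ of $v$ and define $v_1$ by declaring $v_1(f)$ to be the unique vector with $Ts(v_1(f))=v_0(s(f))$ and $Tt(v_1(f))=v_0(t(f))$; uniqueness is the injectivity of $(Ts,Tt)_f$. For existence I observe that, because $t$ restricts to a local diffeomorphism of source fibres onto orbits, the image of $(Ts,Tt)_f$ is exactly the set of pairs $(a,b)$ whose normal classes satisfy $\bar b=t_*s_*^{-1}\bar a$; the identity $s^*v=t^*v$ says precisely that $(v_0(s(f)),v_0(t(f)))$ lies in this set for \emph{every} lift $v_0$, so the preimage exists. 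Smoothness of $v_1$ follows because $(Ts,Tt)$ has constant rank, so its image is a subbundle onto which $(Ts,Tt)$ is a bundle isomorphism, and $v_1$ is the smooth preimage of the smooth section $(v_0\circ s,v_0\circ t)$. It remains to check that $(v_0,v_1)$ is multiplicative, i.e.\ compatible with $u$, $m$, and the inversion. In each case the candidate value ($Tu(v_0(x))$, $Tm(v_1(f),v_1(g))$, or $Ti(v_1(f))$) is seen to satisfy the two defining $s$- and $t$-relations for the relevant $v_1$-value, so uniqueness forces equality. Hence $(v_0,v_1)\in\lie{M}(X_\bu)$ and $\nu(v_0,v_1)=v$, which together with the kernel computation yields $\lie{M}(X_\bu)/\partial\lie{A}(X_\bu)\cong\Vect_\bas(X_\bu)$.

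Finally, for the Lie $2$-algebra statement I note that $\partial$ is injective: $\partial(\sigma)=0$ forces $\rho(\sigma)=0$, hence $\sigma=0$ since $\rho$ is injective. The strict Lie $2$-algebra given by the crossed module $\partial\colon\lie{A}(X_\bu)\to\lie{M}(X_\bu)$ has $\pi_1=\ker\partial$ and $\pi_0=\coker\partial$; injectivity of $\partial$ makes $\pi_1=0$, so the crossed module is equivalent as a Lie $2$-algebra to the discrete one on $\coker\partial=\lie{M}(X_\bu)/\partial\lie{A}(X_\bu)$, which the preceding paragraph identifies with $\Vect_\bas(X_\bu)$. I expect the main obstacle to be the surjectivity step—specifically, verifying that the pointwise-defined $v_1$ is smooth and genuinely multiplicative—while the kernel computation and the final formal deduction are routine once the two structural facts about foliation groupoids are in hand.
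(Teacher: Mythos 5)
Your proof is correct, but it takes a genuinely different route from the paper's. The paper first disposes of the \'etale case (where $\Alg(X_\bu)=0$ and every basic vector field is visibly multiplicative via $v_1=s^*v_0$), and then transports the result to a general foliation groupoid by Morita invariance: it invokes Proposition~\ref{proposition;vector-form-morita} on the basic side and the linking-groupoid construction together with Lemma~4.14 of Berwick-Evans--Lerman on the multiplicative side to identify $\lie{M}(X_\bu)/\partial\lie{A}(X_\bu)$ with $\lie{M}(Y_\bu)$ for an \'etale replacement $Y_\bu$. You instead argue directly on $X_\bu$ itself: the two structural inputs you isolate --- that $\ker(T_fs)\cap\ker(T_ft)=0$ (equivalently $(Ts,Tt)_f$ is injective) and that $\rho\colon\Alg(X_\bu)\to T\F_0$ is an isomorphism --- let you compute $\ker\nu=\partial\lie{A}(X_\bu)$ by subtracting $\partial(\rho^{-1}(v_0))$, and let you build a multiplicative lift of any basic field by solving $(Ts,Tt)(v_1(f))=(v_0(s(f)),v_0(t(f)))$, with existence guaranteed by the identification of the image of $(Ts,Tt)_f$ via $s^*v=t^*v$ and with functoriality forced by uniqueness of the solution. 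This is in effect the paper's \'etale-case argument promoted to arbitrary foliation groupoids, and it buys self-containedness: you avoid the external citation and the linking-groupoid machinery entirely, at the modest cost of the smoothness/constant-rank verification for $v_1$, which you handle correctly. The kernel computation and the final reduction of the Lie $2$-algebra statement to injectivity of $\partial$ agree with the paper.
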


\begin{proof}
Let $\F_0$, resp.\ $\F_1$, be the foliation of $X_0$, resp.\ $X_1$,
induced by $X_\bu$, and let $v_\bu=(v_0,v_1)$ be a multiplicative
vector field.  Let
$\bar{v}_\bu=(\bar{v}_0,\bar{v}_1)\in\Gamma(N\F_0)\times\Gamma(N\F_1)$
be the pair of sections of the normal bundles determined by $v_\bu$.
The flow of $v_\bu$ acts by groupoid automorphisms, so the flow of
$v_0$ is foliate (Definition~\ref{definition;foliate}).  Hence $v_0$
is in the normalizer of $\Gamma(T\F_0)$ in $\Vect(X_0)$, that is to
say $\bar{v}_\bu$ is a basic vector field.  This defines the natural
homomorphism $\nu\colon\lie{M}(X_\bu)\to\Vect_\bas(X_\bu)$.  The
anchor $\rho\colon\Alg(X_\bu)\to TX_0$ is injective and its image is
$T\F_0$.  In particular $\partial$ is injective, so the Lie
$2$-algebra $\Vect_\mult(X_\bu)$ is equivalent to the quotient Lie
algebra $\lie{M}(X_\bu)/\partial\lie{A}(X_\bu)$.  Moreover $\nu$
descends to a homomorphism
\[
\bar\nu\colon\lie{M}(X_\bu)/\partial\lie{A}(X_\bu)\longto
\Vect_\bas(X_\bu).
\]  
We must show that $\bar\nu$ is an isomorphism.  First suppose $X_\bu$
is \'etale.  Then $\Alg(X_\bu)=0$, the foliations $\F_0$ and $\F_1$
are discrete, and $\nu$ is injective.  If $v_0\in\Vect(X_0)$ satisfies
$s^*v_0=t^*v_0$, then the vector field $(v_1,v_1)$ on $X_1\times X_1$,
where $v_1=s^*v_0\in\Vect(X_1)$, is tangent to the submanifold
$X_2=X_1\times_{s,X_0,t}X_1$ of $X_1\times X_1$.  So $v_1$ restricts
to a vector field $v_2$ on $X_2$, and we have $v_2=m^*v_1$, where
$m\colon X_2\to X_1$ is the multiplication map.  In other words, every
basic vector field is multiplicative and we have equality
\[
\lie{M}(X_\bu)/\partial\lie{A}(X_\bu)=\lie{M}(X_\bu)=\Vect_\bas(X_\bu).
\]
For an arbitrary foliation groupoid $X_\bu$, pick a Morita morphism
$\phi_\bu\colon Y_\bu\to X_\bu$ from an \'etale groupoid $Y_\bu$.  By
Proposition~\ref{proposition;vector-form-morita}%
\eqref{item;vector-morita} $\phi_\bu$ induces an isomorphism
$\phi_\bu^*\colon\Vect_\bas(X_\bu)\to\Vect_\bas(Y_\bu)$.  We let
$Z=Y_0\times_{\phi_0,X_0,s}X_1$ and as
in~\cite[Theorem~4.4]{berwick-evans-lerman;lie-2-algebras-vector} we
form the linking groupoid $L_\bu$ with $L_0=Y_0\coprod X_0$ and
$L_1=Y_1\coprod Z\coprod Z^{-1}\coprod X_1$.  We have obvious open
embeddings $\iota_{X_\bu}\colon X_\bu\inj L_\bu$ and
$\iota_{Y_\bu}\colon Y_\bu\inj L_\bu$, both of which are essentially
surjective.  The triangle
\[\begin{tikzcd}
Y_\bu\arrow[dr,"\iota_{Y_\bu}"']\arrow[rr,"\phi_\bu"]&&X_\bu
\ar[dl,"\iota_{X_\bu}"]\\
&L_\bu&
\end{tikzcd}\]
is $2$-commutative: the map $\gamma\colon Y_0\to L_1$ given by
$\gamma(y)=(\id_{\phi(y)},y)\in Z\inj L_1$ is a $2$-morphism
$\gamma\colon\iota_{Y_\bu}\To\iota_{X_\bu}\circ\phi_\bu$.  It follows
from~\cite[Lemma~4.14]{berwick-evans-lerman;lie-2-algebras-vector}
that $\phi_\bu$ induces an isomorphism
\[
\phi_\bu^*=\iota_{Y_\bu}^*\circ(\iota_X^*)^{-1}\colon
\lie{M}(X_\bu)/\partial\lie{A}(X_\bu)\to\lie{M}(Y_\bu).
\]
The square
\[
\begin{tikzcd}[row sep=large]
\lie{M}(X_\bu)/\partial\lie{A}(X_\bu)\ar[r,"\bar\nu_{X_\bu}"]
\ar[d,"\phi_\bu^*"',"\cong"]&\Vect_\bas(X_\bu)
\ar[d,"\phi_\bu^*","\cong"']\\
\lie{M}(Y_\bu)\ar[r,"\bar\nu_{Y_\bu}","="']&\Vect_\bas(Y_\bu)
\end{tikzcd}
\]
commutes, so $\bar\nu_{X_\bu}$ is an isomorphism.
\end{proof}

\subsection{\texorpdfstring{$0$-Symplectic groupoids}{0-symplectic}}
\label{subsection;symplectic-groupoid}

Let $X_\bu$ be a foliation groupoid with foliation $(X_0,\F)$ and let
$\omega_\bu=(\omega_0,\omega_1)\in\Omega_\bas^2(X_\bu)$ be a basic
$2$-form.  We call $\omega_\bu$ \emph{nondegenerate} if
$\ker(\omega_0)=T\F$, or equivalently,
$\ker(\omega_1)=\ker(Ts)+\ker(Tt)$.  We say $\omega_\bu$ is
\emph{$0$-symplectic}, and the pair $(X_\bu,\omega_\bu)$ is a
\emph{$0$-symplectic Lie groupoid}, if $d\omega_\bu=0$ and
$\omega_\bu$ is nondegenerate.
\glossary{X*omega@$(X_\bu,\omega_\bu)$, $0$-symplectic groupoid}%
\glossary{omega*@$\omega_\bu$, $0$-symplectic form}%

We have adopted the term ``$0$-symplectic'' to avoid confusion with
Weinstein's notion of a symplectic
groupoid~\cite{weinstein;symplectic-groupoids-poisson-manifolds}.  The
nondegeneracy of $\omega_\bu$ can be restated as the vertical map
\[
\begin{tikzcd}
T\F\ar[r]\ar[d]&TX_0\ar[r]\ar[d,"\omega_0^\flat"]&0\ar[d]\\
0\ar[r]&T^*X_0\ar[r]&(T\F)^*
\end{tikzcd}
\]
from the tangent to the cotangent complex of $X_\bu$ being a
quasi-isomorphism.  The notion of a $0$-symplectic form is analogous
to that of a $0$-shifted symplectic structure in the sense
of~\cite[Definition~0.2]{pantev-toen-vaquie-vezzosi;shifted-symplectic}
(whereas the form on a Weinstein symplectic groupoid is analogous to a
$1$-shifted symplectic structure).  Nondegeneracy implies that
contraction with a $0$-symplectic form $\omega_\bu$ induces a linear
isomorphism
\[\begin{tikzcd}
\omega_\bu^\flat\colon\Vect_\bas(X_\bu)\ar[r,"\cong"]&
\Omega_\bas^1(X_\bu).
\end{tikzcd}\]

\begin{remark}
Conversely, if a form $\omega_\bu\in\Omega^2_\bas(X_\bu)$ induces a
linear isomorphism $\Vect_\bas(X_\bu)\cong\Omega_\bas^1(X_\bu)$, it is
not always the case that $\omega_\bu$ is nondegenerate.  For example,
consider the action groupoid $H\ltimes\R^2$, where
$H=\Z/2\Z\ltimes\Q^2$ is the semidirect product of the group of half
rotations of $\R^2$ around the origin, and translation by $\Q^2$.
Then $\Vect_\bas(X_\bu)=\Omega_\bas^1(X_\bu)=0$, so $\omega_\bu=0$
induces an isomorphism $\Vect_\bas(X_\bu)\cong\Omega_\bas^1(X_\bu)$,
but is not a $0$-symplectic form.
\end{remark}

\begin{proposition}\phantomsection\label{proposition;symgpd}
\begin{enumerate}
\item\label{item;tangent-algebroid}
Let $(X_\bu,\omega_\bu)$ be a $0$-symplectic groupoid with foliation
$\ca{F}$.  Then $(X_0,\omega_0)$ is a presymplectic manifold with
$\ker(\omega_0)=T\ca{F}$.
\item\label{item;integrate-algebroid}
Let $(X,\omega)$ be a presymplectic manifold with null foliation
$\ca{F}$.  Let $X_\bu$ be a source-connected foliation groupoid with
Lie algebroid equal to $T\ca{F}$.  Then
$\omega_\bu=(\omega,s^*\omega)$ is $X_\bu$-basic and hence defines a
$0$-symplectic structure on $X_\bu$.
\end{enumerate}
\end{proposition}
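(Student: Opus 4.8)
The plan is to treat the two parts in turn. Part~\eqref{item;tangent-algebroid} is an immediate unwinding of the definition of a $0$-symplectic form, whereas for part~\eqref{item;integrate-algebroid} the one substantive point is to verify basicness, for which I would invoke the source-connected case of Lemma~\ref{lemma;bas-inf}.

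For part~\eqref{item;tangent-algebroid}, recall that $X_0$ is Hausdorff because $X_\bu$ is a foliation groupoid, and that $\omega_0$ is a genuine $2$-form on $X_0$ because $\omega_\bu$ is basic. The hypothesis $d\omega_\bu=0$ yields $d\omega_0=0$, so $\omega_0$ is closed. Nondegeneracy of $\omega_\bu$ is by definition the statement $\ker(\omega_0)=T\F$; since $T\F$ has constant rank, so does $\omega_0$, and hence $(X_0,\omega_0)$ is presymplectic. Its null foliation is the one cut out by $\ker(\omega_0)=T\F$, namely $\F$ itself.

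For part~\eqref{item;integrate-algebroid}, I would first check that $\omega_\bu=(\omega,s^*\omega)$ is basic; the defining condition $s^*\omega=\omega_1$ is built in, so what remains is the identity $s^*\omega=t^*\omega$. By Lemma~\ref{lemma;bas-inf}\eqref{item;bashorinf}, and because $X_\bu$ is source-connected, the $2$-forms $\zeta$ with $s^*\zeta=t^*\zeta$ are precisely the elements of the horizontal, infinitesimally invariant space $\Omega_0^2(X_0,\F)$, so it suffices to show $\omega$ lies there. Horizontality is immediate: for any section $\sigma$ of $\Alg(X_\bu)$ we have $\rho(\sigma)\in\Gamma(T\F)=\Gamma(\ker\omega)$, hence $\iota_{\rho(\sigma)}\omega=0$. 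Infinitesimal invariance then follows from the Cartan formula $\ca{L}_{\rho(\sigma)}\omega=d\,\iota_{\rho(\sigma)}\omega+\iota_{\rho(\sigma)}\,d\omega$, both summands vanishing since $\omega$ is horizontal and closed.

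Once basicness is established, the $0$-symplectic conditions are formal: closedness follows from $d\omega_\bu=(d\omega,s^*d\omega)=0$, and nondegeneracy holds because $\ker(\omega_0)=\ker(\omega)=T\F$, the foliation $\F$ being precisely the one integrated by $X_\bu$. I expect the only step needing genuine care to be the passage from the global identity $s^*\omega=t^*\omega$ to the pointwise and infinitesimal conditions; the source-connectedness hypothesis is exactly what upgrades the inclusion of Lemma~\ref{lemma;bas-inf}\eqref{item;bashorinf} to an equality and thereby makes this reduction valid.
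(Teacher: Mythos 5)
Your proposal is correct and follows essentially the same route as the paper: part~(i) is an unwinding of the definitions, and part~(ii) establishes that $\omega$ is horizontal, deduces infinitesimal invariance from closedness via the Cartan formula, and then invokes the source-connected case of Lemma~\ref{lemma;bas-inf}\eqref{item;bashorinf} to conclude basicness. The paper's proof is just a terser version of the same argument.
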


\begin{proof}
\eqref{item;tangent-algebroid}~This follows immediately from the
definition of presymplectic (Section~\ref{section;presymplectic}) and
$0$-symplectic.

\eqref{item;integrate-algebroid}~The form $\omega$ is horizontal with
respect to $\ca{F}$.  Since it is closed, it is also infinitesimally
invariant.  By Lemma~\ref{lemma;bas-inf}\eqref{item;bashorinf} it is
basic on $X_\bu$.
\end{proof}

\begin{example}\label{example2}
We revisit our Example~\ref{example1}.  There are many foliation
groupoids $X_\bu$ that integrate $\ker(\omega_0)$.  For instance, we
can take $\tilde{N}\to N$ to be any \'etale Lie group homomorphism,
let $\tilde{N}$ act on $X_0$ through this homomorphism, and take
$X_\bu$ to be the action groupoid $\tilde{N}\ltimes X_0$.  This
groupoid is not source-connected unless $\tilde{N}$ is connected.
Nevertheless, the presymplectic form $\omega_0$ is basic with respect
to the $\tilde{N}$-action, so $X_\bu$ is $0$-symplectic with
$0$-symplectic form $(\omega_0,s^*\omega_0)\in\Omega_\bas^2(X_\bu)$.
Possible choices of $\tilde{N}$ are $\tilde{N}=N$, or
$\tilde{N}=\Lie(N)$, the universal cover of the identity component of
$N$.  Another alternative is the surjective simply connected covering
group $\tilde{N}=\pi^{-1}(N)$, where $\pi\colon\R^n\to\T^n$ is the
projection.
\end{example}

\subsection{\'Etale stacks}\label{subsection;etale}

An atlas $X\to \X$ of a differentiable stack is \emph{\'etale} if, for
every morphism $M\to\X$ from a smooth manifold $M$, the pullback
$M\times_\X X \to M$ is \'etale, i.e.\ a local diffeomorphism.  An
\emph{\'etale stack} is a stack that admits an \'etale atlas.

\begin{lemma}\label{lemma;equivtoetale}
Let $\X$ be a differentiable stack. The following are equivalent:
\begin{enumerate}
\item\label{item;etale1}
$\X$ is an \'etale stack;
\item\label{item;etale2}
$\X\simeq\B X_\bu$ for some foliation groupoid $X_\bu$;
\item\label{item;etale3}
$\X\simeq\B X_\bu$ for some \'etale groupoid $X_\bu$.
\end{enumerate}
\end{lemma}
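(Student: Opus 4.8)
The plan is to prove the implications \eqref{item;etale3}$\Rightarrow$\eqref{item;etale2}, \eqref{item;etale2}$\Rightarrow$\eqref{item;etale3}, \eqref{item;etale3}$\Rightarrow$\eqref{item;etale1}, and \eqref{item;etale1}$\Rightarrow$\eqref{item;etale3}, which together yield all the stated equivalences. The first is immediate, since every \'etale groupoid is a foliation groupoid (\S\ref{subsection;foliation}). For \eqref{item;etale2}$\Rightarrow$\eqref{item;etale3}, suppose $\X\simeq\B X_\bu$ with $X_\bu$ a foliation groupoid. As the isotropy groups are discrete, the anchor of $\Alg(X_\bu)$ is injective and $X_\bu$ integrates the regular foliation $\F$ of $X_0$ cut out by its image. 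I would then build a complete transversal in the sense of Definition~\ref{definition;completetransversal}: cover the Hausdorff, second countable manifold $X_0$ by countably many foliation charts and let $Y_0$ be the disjoint union of one transverse slice of dimension $\codim\F$ from each chart, with $\phi_0\colon Y_0\to X_0$ the inclusion. Then $\phi_0$ is transverse to $\F$ and complete, so by Lemma~\ref{lemma;transversal} the pullback groupoid $Y_\bu=\phi_0^*(X_\bu)$ is a foliation groupoid integrating the zero-dimensional foliation $\phi_0^*\F$, hence an \'etale groupoid (with $Y_0$ Hausdorff), and $\phi_\bu\colon Y_\bu\to X_\bu$ is a Morita morphism. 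By Lemma~\ref{lemma;moritaequivandstacks}, $\B Y_\bu\simeq\B X_\bu\simeq\X$.

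For \eqref{item;etale3}$\Rightarrow$\eqref{item;etale1}, suppose $\X\simeq\B X_\bu$ with $X_\bu$ \'etale, and take the canonical atlas $X_0\to\B X_\bu$. Given any morphism $M\to\X$, the fibre product $M\times_\X X_0$ is the total space $P$ of the $X_\bu$-torsor classified by $M\to\B X_\bu$, and the projection $P\to M$ is a surjective submersion. Its fibres are principal homogeneous spaces for $X_\bu$, hence modelled on the source fibres $s^{-1}(x)$, which are discrete because $s$ is \'etale. A surjective submersion with discrete fibres is equidimensional and therefore a local diffeomorphism, so $M\times_\X X_0\to M$ is \'etale; thus $X_0\to\X$ is an \'etale atlas.

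For \eqref{item;etale1}$\Rightarrow$\eqref{item;etale3}, let $X\to\X$ be an \'etale atlas. Replacing $X$ by the disjoint union of the charts of a countable atlas, I may assume $X$ is Hausdorff, the resulting map still being an \'etale atlas since \'etale maps are stable under base change and composition. Applying the \'etale-atlas condition with $M=X$ shows that the projection $X\times_\X X\to X$ is \'etale, so both the source and target of the presenting groupoid $X_\bu=(X\times_\X X\rightrightarrows X)$ are \'etale; as $X$ is Hausdorff, $X_\bu$ is an \'etale groupoid with $\B X_\bu\simeq\X$ (\S\ref{subsection;stack}).

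The main obstacle is the implication \eqref{item;etale3}$\Rightarrow$\eqref{item;etale1}: the definition of \'etale atlas quantifies over \emph{all} morphisms $M\to\X$, so one cannot merely inspect the structure maps of a presenting groupoid but must control the arbitrary fibre product $M\times_\X X_0$. The torsor description of this fibre product, combined with the observation that an equidimensional surjective submersion is a local diffeomorphism, is what makes the argument go through; the other three implications are comparatively routine given Lemmas~\ref{lemma;transversal} and~\ref{lemma;moritaequivandstacks}.
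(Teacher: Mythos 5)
Your proposal is correct and follows essentially the same route as the paper: the equivalence of \eqref{item;etale2} and \eqref{item;etale3} via a complete transversal together with Lemmas~\ref{lemma;transversal} and~\ref{lemma;moritaequivandstacks}, and the equivalence of \eqref{item;etale1} and \eqref{item;etale3} by forming $X\times_\X X\rightrightarrows X$ in one direction and pulling back the atlas along an arbitrary $M\to\X$ in the other. Your torsor phrasing of the implication \eqref{item;etale3}$\Rightarrow$\eqref{item;etale1} is just a restatement of the paper's observation that the pulled-back groupoid $X_1\times_\X M\rightrightarrows X_0\times_\X M$ presenting $M$ is again \'etale, both reducing to the fact that a surjective submersion with discrete fibres is a local diffeomorphism.
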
  

\begin{proof}[Sketch of proof]
\eqref{item;etale1}$\iff$\eqref{item;etale3}:~Suppose $\X$ is \'etale.
Let $X \to \X$ be a Hausdorff \'etale atlas of $\X$ and let $X_\bu$ be
the Lie groupoid $X_\bu=( X\times_\X X\rightrightarrows X)$.  Then
$X_\bu$ is an \'etale groupoid by the definition of \'etale atlas, and
$\X \simeq \B X_\bu$.  Conversely, assume $X_1\rightrightarrows X_0$
is an \'etale groupoid, let $f\colon M\to \X$ be a map from a smooth
manifold $M$, and consider the diagram
\[X_1\rightrightarrows X_0 \to \X.\]
Forming the pullback over $f$ gives the diagram
\[X_1\times_\X M \rightrightarrows X_0\times_\X M \to M;\]
that is to say a presentation for $M$ by the Lie groupoid
$X_1\times_\X M\rightrightarrows X_0\times_\X M$.  This Lie groupoid
is also \'etale, and so the map $X_0\times_\X M\to M$ is \'etale, as
desired.

\eqref{item;etale2}$\iff$\eqref{item;etale3}:~This follows from
Lemma~\ref{lemma;moritaequivandstacks} and the fact that every
foliation groupoid $X_\bu$ is Morita equivalent to an \'etale groupoid
$Y_\bu$ (namely, take a complete transversal $\phi\colon Y_0\to X_0$
to the foliation of $X_0$, and let $Y_\bu=\phi^*X_\bu$ be the pullback
groupoid).
\end{proof}

\subsection{Vector fields on \'etale stacks}
\label{subsection;vector-etale}

We now recall the definition of vector fields on stacks
from~\cite{hepworth;vector-flow-stack} and show that equivalence
classes of vector fields on an \'etale stack correspond to basic
vector fields on a presenting Lie groupoid.

In~\cite[\S\,3]{hepworth;vector-flow-stack}, Hepworth constructs the
\emph{tangent stack functor} $T$, which is a lax endofunctor of the
$2$-category $\Stack$, and a transformation $\bo{p}\colon T\To\id$
from the tangent stack functor to the identity functor, which is
unique up to modification.  For a differentiable stack $\X$, we will
often denote the associated morphism $\bo{p}_\X\colon T\X\to\X$ simply
by $\bo{p}$.  By~\cite[Theorem~3.11]{hepworth;vector-flow-stack}, we
have an equivalence
\begin{equation}\label{equation;tanstackequiv}
\B TX_\bu\simeq T\B X_\bu
\end{equation}
which is natural in $X_\bu$, and a $2$-isomorphism $\bo{p}\cong\B
p_\bu$, where $p_\bu\colon TX_\bu\to X_\bu$ is the tangent groupoid
projection.
%
%

\begin{definition}[{\cite[\S\,4]{hepworth;vector-flow-stack}}]
\label{definition;vectorfield}
A \emph{vector field} on a stack $\X$ is a pair $(\bo{v},\balpha)$
consisting of a stack morphism $\bo{v}\colon\X\to T\X$ and a
$2$-isomorphism $\balpha\colon\bo{p}\circ\bo{v}\cong\id_\X$ to the
identity.  An \emph{equivalence} of vector fields $(\bo{v},\balpha)$
and $(\bo{w},\bbeta)$ is a $2$-arrow $\blambda\colon\bo{v}\To\bo{w}$
satisfying $\balpha=\bbeta\circ(\id_{\bo{p}}*\blambda)$.
\end{definition}

Vector fields on $\X$ and their equivalences form a groupoid
$\Bvect(\X)$.  We denote by $\Vect(\X)$ the set of equivalence classes
of the groupoid $\Bvect(\X)$, and we view $\Vect(\X)$ as a groupoid
with only identity arrows.  An equivalence of stacks
$\bphi\colon\X\to\Y$ induces an equivalence of groupoids
$\bphi^*\colon\Bvect(\Y)\to\Bvect(\X)$.
(See~\cite[\S\,4]{hepworth;vector-flow-stack}.)
\glossary{VectXx@$\Vect(\X)$, equivalence classes of vector fields on
  stack $\X$}%
\glossary{VectXX@$\Bvect(\X)$, groupoid of vector fields on
  stack $\X$}%

A multiplicative vector field $v_\bu\colon X_\bu\to TX_\bu$ on a Lie
groupoid $X_\bu$ gives rise to a stack morphism $\B v_\bu\colon\B
X_\bu\to\B TX_\bu\simeq T\B X_\bu$.  Hepworth shows that $\B v_\bu$
determines a vector field $(\B v_\bu,\balpha_{v_\bu})$ on $\X$ and
that the assignment $v_\bu\mapsto(\B v_\bu,\balpha_{v_\bu})$ defines a
functor
\[
\Vect_\mult(X_\bu)\longto\Bvect(\B X_\bu).
\]
%
If we restrict our attention to Lie groupoids where the object
manifold $X_0$ is Hausdorff, then by~\cite[Theorem~4.11,~Remark~5.4]%
{berwick-evans-lerman;lie-2-algebras-vector} this functor is an
equivalence of categories and is natural with respect to Morita
morphisms.
(\cite[Theorem~4.11]{berwick-evans-lerman;lie-2-algebras-vector} is
stated for Lie groupoids with $X_0$ and $X_1$ Hausdorff, but its proof
only makes use of this assumption on $X_0$.)  If $\X$ is an arbitrary
differentiable stack and $\bphi\colon\B X_\bu\simeq\X$ is a
presentation by a Lie groupoid with $X_0$ Hausdorff, we get an
equivalence of groupoids
\begin{equation}\label{equation;mult-equiv}
\begin{tikzcd}
\Vect_\mult(X_\bu)\ar[r]&\Bvect(\B X_\bu)\ar[r,"(\bphi^*)^{-1}"]&
\Bvect(\X),
\end{tikzcd}
\end{equation}
which Berwick-Evans and
Lerman~\cite[\S\,5]{berwick-evans-lerman;lie-2-algebras-vector} call a
``Lie $2$-algebra atlas'' on $\Bvect(\X)$.  A different choice of
presentation $\B Y_\bu\simeq\X$ leads to an equivalent Lie $2$-algebra
atlas $\Vect_\mult(Y_\bu)\simeq\Vect_\mult(X_\bu)$.

For \'etale stacks $\X$ the situation is simpler.  Let $\B X_\bu\simeq
\X $ be a presentation of $\X$.  Then $X_\bu$ is a foliation groupoid,
so Proposition~\ref{proposition;multiplicative-basic} tells us that
the set of equivalence classes of the groupoid $\Vect_\mult(X_\bu)$ is
equal to $\Vect_\bas(X_\bu)$.  By convention foliation groupoids
$X_\bu$ have Hausdorff $X_0$ (see \S\,\ref{subsection;foliation}), so
combining this with the equivalence~\eqref{equation;mult-equiv} we see
that the quotient map $\Bvect(\X)\to\Vect(\X)$ is an equivalence,
i.e.\ vector fields on \'etale stacks have no non-trivial
self-equivalences, and that the induced map
\begin{equation}\label{equation;bas-biject}
\begin{tikzcd}\Vect_\bas(X_\bu)\ar[r,"\cong"]&\Vect(\X)\end{tikzcd}
\end{equation}
is a bijection.

\begin{proposition}\label{proposition;vector-stack}
Let $\X$ be an \'etale stack.  There is a unique Lie algebra structure
on the set $\Vect(\X)$ with the property that for every presentation
$\B X_\bu\to\X$ by a foliation groupoid $X_\bu$ the
bijection~\eqref{equation;bas-biject} is a Lie algebra isomorphism.
\end{proposition}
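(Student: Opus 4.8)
The plan is to define the bracket by transport of structure from a single presentation and then to show that the outcome is forced, so that uniqueness is automatic and existence reduces to one compatibility statement. For a presentation $\bphi\colon\B X_\bu\simeq\X$ by a foliation groupoid, let $b_\bphi\colon\Vect_\bas(X_\bu)\to\Vect(\X)$ denote the bijection \eqref{equation;bas-biject}; explicitly, $b_\bphi$ is the composite of the identification $\Vect_\bas(X_\bu)=\pi_0\Vect_\mult(X_\bu)$ furnished by Proposition~\ref{proposition;multiplicative-basic}, the map induced on isomorphism classes by the functor $\Vect_\mult(X_\bu)\to\Bvect(\B X_\bu)$, and $\pi_0\bigl((\bphi^*)\n\bigr)\colon\pi_0\Bvect(\B X_\bu)\to\pi_0\Bvect(\X)=\Vect(\X)$. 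Since $\X$ admits such a presentation (Lemma~\ref{lemma;equivtoetale}), I fix one, $\bphi_0$, and transport the Lie bracket along $b_{\bphi_0}$. Everything then follows from the single assertion $(\ast)$: for any two presentations $\bphi\colon\B X_\bu\simeq\X$ and $\bpsi\colon\B Y_\bu\simeq\X$ by foliation groupoids, the bijection $c=b_\bpsi\n\circ b_\bphi\colon\Vect_\bas(X_\bu)\to\Vect_\bas(Y_\bu)$ is an isomorphism of Lie algebras. Granting $(\ast)$, the map $b_\bphi=b_{\bphi_0}\circ(b_{\bphi_0}\n\circ b_\bphi)$ is a composite of Lie isomorphisms for the transported bracket, which gives existence; and any bracket with the stated property must agree on $\bphi_0$ with the transported one, which gives uniqueness.

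I would prove $(\ast)$ in two steps. First note that $b_\bphi$ depends only on the $2$-isomorphism class of $\bphi$, since $2$-isomorphic morphisms induce naturally isomorphic pullback functors on $\Bvect$, hence the same map on isomorphism classes. Next consider a Morita morphism $\rho_\bu\colon Y_\bu\to X_\bu$ of foliation groupoids and the induced presentation $\bphi\circ\B\rho_\bu\colon\B Y_\bu\simeq\X$. The naturality of the equivalence \eqref{equation;mult-equiv} with respect to Morita morphisms, together with the compatibility of the natural map of Proposition~\ref{proposition;multiplicative-basic} with pullback, identifies the map induced on isomorphism classes by $(\B\rho_\bu)^*$ with the pullback $\rho_\bu^*\colon\Vect_\bas(X_\bu)\to\Vect_\bas(Y_\bu)$ of Proposition~\ref{proposition;vector-form-morita}. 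Feeding this into the defining formula for $b_\bphi$ and using $(\bphi\circ\B\rho_\bu)^*=(\B\rho_\bu)^*\circ\bphi^*$ yields the key identity $b_{\bphi\circ\B\rho_\bu}\circ\rho_\bu^*=b_\bphi$, in which $\rho_\bu^*$ is a Lie algebra isomorphism by Proposition~\ref{proposition;vector-form-morita}.

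To deduce $(\ast)$ for arbitrary $\bphi,\bpsi$ I pass to a common refinement. The equivalence $\bxi=\bphi\n\circ\bpsi\colon\B Y_\bu\to\B X_\bu$ is realized, via the bicategorical equivalence \eqref{equation;groupoid-stack} (in the bibundle model for $\Liegpd[\ca{M}\n]$), by an invertible bibundle, equivalently a span of Morita morphisms $Y_\bu\xleftarrow{a_\bu}W_\bu\xrightarrow{g_\bu}X_\bu$ with $W_\bu$ a foliation groupoid and $\B g_\bu\circ(\B a_\bu)\n\simeq\bxi$. Composing with $\B a_\bu$ gives $\bpsi\circ\B a_\bu\simeq\bphi\circ\B g_\bu=:\bphi_W$, so the two presentations of $\X$ by $W_\bu$ agree up to $2$-isomorphism. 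Applying the key identity of the previous step to $g_\bu$ and to $a_\bu$, and using that the bijection is insensitive to $2$-isomorphism of the presenting equivalence, gives $b_\bphi=b_{\bphi_W}\circ g_\bu^*$ and $b_\bpsi=b_{\bphi_W}\circ a_\bu^*$. Hence $c=b_\bpsi\n\circ b_\bphi=(a_\bu^*)\n\circ g_\bu^*$ is a composite of the Lie algebra isomorphisms of Proposition~\ref{proposition;vector-form-morita}, which proves $(\ast)$.

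The main obstacle is that the bijection \eqref{equation;bas-biject} depends a priori on the chosen equivalence $\bphi$ and not merely on the presenting groupoid, so the naturality of \eqref{equation;mult-equiv} under groupoid Morita morphisms does not by itself compare two unrelated presentations: an arbitrary span furnished by Lemma~\ref{lemma;moritaequivandstacks} need not realize the particular equivalence $\bphi\n\circ\bpsi$. The decisive point is the bicategorical equivalence \eqref{equation;groupoid-stack}, which lets me choose the refining span so that it presents exactly $\bxi$ and thereby forces the two presentations of $W_\bu$ into the same $2$-isomorphism class. The remaining verifications---that $2$-isomorphic presentations give equal bijections, and that the pullback on multiplicative vector fields, the pullback on basic vector fields, and the identification of Proposition~\ref{proposition;multiplicative-basic} are mutually compatible---are routine bookkeeping once the naturality statements are in place.
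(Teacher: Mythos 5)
Your proposal is correct and follows essentially the same route as the paper: fix one presentation to transport the bracket along the bijection~\eqref{equation;bas-biject}, and derive well-definedness and uniqueness from the Morita invariance of basic vector fields in Proposition~\ref{proposition;vector-form-morita}\eqref{item;vector-morita}. The paper's proof is terser and leaves implicit the step you spell out carefully --- using the bicategorical equivalence~\eqref{equation;groupoid-stack} to realize the specific equivalence $\bphi\n\circ\bpsi$ by a span of Morita morphisms so that the two induced bijections can actually be compared --- but the underlying argument is the same.
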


\begin{proof} By 
Lemma~\ref{lemma;hausdorff}, there exists a presentation $\B
X_\bu\simeq\X$ by a Lie groupoid with $X_0$ Hausdorff, and so the
bijection~\eqref{equation;bas-biject} endows $\Vect(\X)$ with the
structure of a Lie algebra. Uniqueness of this Lie algebra structure
now follows immediately from
Proposition~\ref{proposition;vector-form-morita}\eqref{item;vector-morita}.
\end{proof}

\subsection{Differential forms on \'etale stacks}
\label{subsection;form-etale}

The functor $\Omega^k\colon\group{Diff}\to\group{Set}$ that takes a
manifold $M$ to its set of $k$-forms $\Omega^k(M)$ is a sheaf on the
site $\group{Diff}$.  By viewing $\Omega^k(M)$ as a groupoid with only
identity arrows, we may think of the sheaf $\Omega^k$ as a
(non-differentiable) stack.

Let $\X$ be an \'etale stack.  We define a \emph{differential form of
  degree $k$} on $\X$ to be a morphism of stacks $\X\to\Omega^k$.  The
collection of $k$-forms on $\X$ is a groupoid
$\Bomega^k(\X)=\Hom(\X,\Omega^k)$.  For a morphism of \'etale stacks
$\bphi\colon\X\to\Y$ and a $k$-form $\bzeta\colon\Y\to\Omega^k$ on
$\Y$ we define the \emph{pullback form} on $\X$ by
$\bphi^*\bzeta=\bzeta\circ\bphi\colon\X\to\Omega^k$.

Since $\Omega^k$ takes values in $\group{Set}$, the groupoid
$\Hom(\X,\Omega^k)$ has no nontrivial arrows.  So $\Bomega^k$ really
takes values in $\group{Set}$, and henceforth we write
$\Omega^k=\Bomega^k$ to emphasize this.  By the Yoneda lemma, the set
$\Omega^k(M)$ of $k$-forms on $M$ (considered as a manifold) is
naturally isomorphic to the set $\Omega^k(M)$ of $k$-forms on $M$
(considered as a stack).

\begin{remark}
This notion of a differential form on a stack corresponds to the
notion of a differential form on the diffeological coarse moduli space
of the stack (see~\cite{watts-wolbert;diffeological-coarse}), or to
that of a basic form on a presenting Lie groupoid (see
Proposition~\ref{proposition;basicformsarethesame} below), and is
suitable for the purposes of this paper.  A different notion (which
corresponds to the notion of a simplicial or Bott-Shulman differential
form on a presenting Lie groupoid, and which is more adequate for
other purposes, especially those involving non-\'etale stacks) is
explained for instance in~\cite[\S\,3]{behrend-xu;stacks-gerbes}.
\end{remark}

Let $\X$ be an \'etale stack, and let $\B X_\bu \to \X$ be a
presentation of $\X$.  Composing the diagram $X_1\rightrightarrows X_0
\to \X$ with a $k$-form $\bzeta\colon\X\to\Omega^k$ determines a
commutative diagram $X_1\rightrightarrows X_0\to\Omega^k$, i.e.\ a
basic form on $X_\bu$.  This defines a map
\begin{equation}\label{equation;basicformassignment}
\Omega^k(\X)\longto\Omega_\bas^k(X_\bu).
\end{equation}

\begin{proposition}\label{proposition;basicformsarethesame}
Let $\X$ be an \'etale stack and let $\B X_\bu\to\X$ be a presentation
of $\X$.  The map~\eqref{equation;basicformassignment} is a bijection
$\Omega^k(\X)\cong\Omega_\bas^k(X_\bu)$.
\end{proposition}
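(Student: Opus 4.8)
The proposition asserts that the map $\Omega^k(\X)\to\Omega_\bas^k(X_\bu)$ of \eqref{equation;basicformassignment} is a bijection. The plan is to exploit the fact that $\Omega^k$ is a sheaf on $\group{Diff}$, so that $\Hom(\X,\Omega^k)$ is governed by descent along the atlas $X_0\to\X$. First I would recall that since $\X\simeq\B X_\bu$, a morphism $\X\to\Omega^k$ is the same datum as a morphism $\B X_\bu\to\Omega^k$. Because $\Omega^k$ is a $0$-stack (a sheaf valued in $\group{Set}$, with no nontrivial arrows), the groupoid $\Hom(\B X_\bu,\Omega^k)$ is in fact a set, and a morphism out of the classifying stack is precisely a morphism of groupoids $X_\bu\to\Omega^k$ (viewing $\Omega^k$ as the groupoid-valued sheaf with only identity arrows) compatible with the simplicial structure. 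Concretely, such a morphism is a map $\zeta_0\colon X_0\to\Omega^k$, i.e.\ an element $\zeta_0\in\Omega^k(X_0)$, together with the requirement that it respects the two arrows $s,t\colon X_1\to X_0$.

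\textbf{Identifying the descent condition with basicness.}

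The key step is to make this compatibility explicit. A natural transformation from the groupoid $X_1\rightrightarrows X_0$ to the set-valued sheaf $\Omega^k$ amounts to an element $\zeta_0\in\Omega^k(X_0)$ such that the two pullbacks $s^*\zeta_0$ and $t^*\zeta_0$ in $\Omega^k(X_1)$ agree; there is no higher coherence to check because the target has only identity $2$-cells, so the cocycle condition over $X_2=X_1\times_{s,X_0,t}X_1$ is automatic once $s^*\zeta_0=t^*\zeta_0$ holds. This is exactly the defining condition for $\zeta_0$ to determine a basic form $\zeta_\bu=(\zeta_0,s^*\zeta_0)\in\Omega_\bas^k(X_\bu)$ in the sense of Definition~\ref{definition;basicform}, using the second description $\Omega_\bas^k(X_\bu)\cong\{\zeta\in\Omega^\bu(X_0)\mid s^*\zeta=t^*\zeta\}$. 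Thus the map \eqref{equation;basicformassignment} can be identified with the map that sends $\bzeta\colon\X\to\Omega^k$ to the data of its restriction along the atlas, and injectivity and surjectivity both follow from the sheaf condition for $\Omega^k$ applied to the open-cover topology underlying the atlas $X_0\to\X$.

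\textbf{Carrying out descent.}

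To finish, I would invoke the standard description of morphisms from a differentiable stack to a sheaf. Since $X_0\to\X$ is an atlas and $X_\bu=(X_1\rightrightarrows X_0)$ presents $\X$, the sheaf property gives an equalizer
\[
\Omega^k(\X)\;\longrightarrow\;\Omega^k(X_0)\;
\rightrightarrows\;\Omega^k(X_1),
\]
where the two maps are $s^*$ and $t^*$. This realizes $\Omega^k(\X)$ as the set of $\zeta_0\in\Omega^k(X_0)$ with $s^*\zeta_0=t^*\zeta_0$, which is precisely $\Omega_\bas^k(X_\bu)$, and the comparison map is \eqref{equation;basicformassignment}. I expect the main obstacle to be justifying this equalizer description rigorously: one must verify that $\Hom(\X,\Omega^k)$, computed as $2$-categorical Hom into a sheaf regarded as a stack, genuinely reduces to the ordinary equalizer of sets, which uses both that $\Omega^k$ is a sheaf (descent along the open cover refining the atlas) and that it is discrete as a stack (so no $2$-categorical correction terms survive). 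Once this reduction is in place the identification is immediate; the remaining verification that the resulting $\zeta_0$ is genuinely smooth and that pullback is respected is routine and follows from the Yoneda identification noted just before the proposition.
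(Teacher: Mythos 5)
Your proposal is correct and follows essentially the same route as the paper: the paper realizes $\X$ as the weak colimit of the $2$-truncated nerve $D(X_\bu)$ and uses the discreteness of the set-valued sheaf $\Omega^k$ to descend a basic form $\zeta_0$ with $s^*\zeta_0=t^*\zeta_0$ uniquely to a morphism $\X\to\Omega^k$, which is exactly your equalizer description $\Omega^k(\X)\cong\{\zeta_0\in\Omega^k(X_0)\mid s^*\zeta_0=t^*\zeta_0\}$ with the cocycle condition over $X_2$ automatic. The one point you flag as the main obstacle --- justifying the reduction of the $2$-categorical Hom to an ordinary equalizer --- is precisely what the paper handles by citing the weak colimit property of the nerve, so your argument is complete once that standard fact is invoked.
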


\begin{proof}
We display an inverse to~\eqref{equation;basicformassignment}.  Let
$D(X_\bu)$ be the $2$-truncated semisimplicial nerve of the Lie
groupoid $X_1\rightrightarrows X_0$, that is to say the diagram
\[
\begin{tikzcd}
D(X_\bu)\colon\quad X_2=X_1\times_{X_0} X_1\ar[r]\ar[r,shift
  left=2]\ar[r,shift right=2]&X_1\ar[r,shift left]\ar[r,shift
  right]&X_0,
\end{tikzcd}
\]
where the arrows from $X_2$ to $X_1$ are the two projections and the
composition map.  It is known (see
e.g.~\cite[Appendix~A]{carchedi;topological-differentiable-stacks})
that $\X$ is the weak colimit in $\Stack$ of the diagram $D(X_\bu)$.
Let $\zeta_\bu=(\zeta_0,\zeta_1)$ be a basic $k$-form on $X_\bu$.
Then $\zeta_0$, viewed as a map $\zeta_0\colon X_0\to\Omega^k$, has
the property that the diagram
\[
\begin{tikzcd}
X_1\ar[r,shift left]\ar[r,shift right]&X_0\ar[r,"\zeta_0"]&\Omega^k
\end{tikzcd}
\]
commutes.  Using this, it is straightforward to check that the diagram
$\begin{tikzcd}[cramped,sep=small]D(X_\bu)\ar[r,"\zeta_0"]&\Omega^k
\end{tikzcd}$
commutes as well.  Hence,  by the universal property
of weak colimits, $\zeta_0$ descends to a morphism of stacks
$\bzeta\colon\X\to\Omega^k$.  Since $\Omega^k$ takes values in the
$1$-category $\group{Set}$ the morphism $\bzeta$ is unique; and the
assignment $\zeta_\bu\mapsto\bzeta$ is inverse
to~\eqref{equation;basicformassignment}.
\end{proof}

Abusing notation, we will write
\[\B\zeta_\bu\in\Omega^k(\X)\]
\glossary{Bz@$\B\zeta_\bu$, differential form on $\B X_\bu$ determined
  by basic form $\zeta_\bu$ on $X_\bu$}%
for the differential form on $\X=\B X_\bu$ determined by the basic
$k$-form $\zeta_\bu\in\Omega^k(X_\bu)$ via the
bijection~\eqref{equation;basicformassignment}.  The following is the
analogue of Proposition~\ref{proposition;vector-stack}.

\begin{proposition}\label{proposition;form-stack}
Let $\X$ be an \'etale stack.  There is a unique differential graded
algebra structure on the set $\Omega^\bu(\X)$ so that for every
presentation $\B X_\bu \to X_\bu$ by a foliation groupoid $X_\bu$ the
bijection~\eqref{equation;basicformassignment} is an isomorphism of
differential graded algebras.  There are unique contraction and
derivation operations
\begin{align*}
\iota&\colon\Vect(\X)\times\Omega^\bu(\X)\longto\Omega^{\bu-1}(\X),\\
\ca{L}&\colon\Vect(\X)\times\Omega^\bu(\X)\longto\Omega^\bu(\X)
\end{align*}
which correspond to the operations~\eqref{equation;contraction-lie}
via the bijections~\eqref{equation;bas-biject}
and~\eqref{equation;basicformassignment}.
\end{proposition}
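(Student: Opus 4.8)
The plan is to follow the proof of Proposition~\ref{proposition;vector-stack}, transporting the structures that already exist on the groupoid level across the bijection~\eqref{equation;basicformassignment} and appealing to Morita invariance to see that the result does not depend on the chosen presentation.

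First I would produce the differential graded algebra structure. By Lemma~\ref{lemma;equivtoetale} choose a presentation $\B X_\bu\simeq\X$ by an \'etale groupoid $X_\bu$; in particular $X_\bu$ is a foliation groupoid with Hausdorff object manifold. The basic de Rham complex $\Omega_\bas^\bu(X_\bu)\cong\{\,\zeta\in\Omega^\bu(X_0)\mid s^*\zeta=t^*\zeta\,\}$ is closed under the wedge product, hence is a differential graded subalgebra of $\Omega^\bu(X_0)$. Transporting this structure along the bijection~\eqref{equation;basicformassignment} of Proposition~\ref{proposition;basicformsarethesame} endows $\Omega^\bu(\X)$ with a differential graded algebra structure. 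To see that~\eqref{equation;basicformassignment} is then an isomorphism of differential graded algebras for \emph{every} presentation by a foliation groupoid, I would use that any two such presentations are related by a Morita morphism (Lemma~\ref{lemma;moritaequivandstacks}), together with the observation that the induced map $\phi_\bu^*$ of Proposition~\ref{proposition;vector-form-morita}\eqref{item;form-morita} is not merely an isomorphism of complexes but of differential graded algebras, since it is induced by the ordinary pullback $\phi_0^*$ of forms, which respects wedge products. Uniqueness is then immediate, as any structure for which~\eqref{equation;basicformassignment} is an isomorphism coincides with the transported one.

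Next I would define $\iota$ and $\ca{L}$ by transporting the contraction and Lie derivative~\eqref{equation;contraction-lie} across the bijections~\eqref{equation;bas-biject} and~\eqref{equation;basicformassignment}. Independence of the presentation reduces to showing that, for a Morita morphism $\phi_\bu\colon X_\bu\to Y_\bu$, the isomorphisms $\phi_\bu^*$ of Proposition~\ref{proposition;vector-form-morita} intertwine these operations:
\[
\phi_\bu^*(\iota_{v_\bu}\zeta_\bu)=\iota_{\phi_\bu^*v_\bu}(\phi_\bu^*\zeta_\bu)
\quad\text{and}\quad
\phi_\bu^*(\ca{L}_{v_\bu}\zeta_\bu)=\ca{L}_{\phi_\bu^*v_\bu}(\phi_\bu^*\zeta_\bu),
\]
where $\phi_\bu^*$ acts on basic vector fields via~\eqref{item;vector-morita} and on basic forms via~\eqref{item;form-morita}.

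I expect this last compatibility to be the main point requiring care, and I would prove it exactly as in Proposition~\ref{proposition;vector-form-morita}. Passing to foliation charts in which the leaf spaces $\bar X$ and $\bar Y$ are manifolds and $\phi_0$ descends to a smooth map $\bar\phi\colon\bar X\to\bar Y$, the basic vector fields and forms become ordinary vector fields and forms on $\bar X$ and $\bar Y$, the map $\phi_\bu^*$ becomes $\bar\phi^*$, and $\iota$, $\ca{L}$ become the usual contraction and Lie derivative. The cartesian square $N_0(X_\bu)\cong\phi_0^*N_0(Y_\bu)$ established in Proposition~\ref{proposition;vector-form-morita} shows that $\phi_\bu^*v_\bu$ is $\phi_0$-related to $v_\bu$ in the sense of~\eqref{equation;foliate-related}; the two displayed identities then follow from the standard naturality of contraction and of the Lie derivative under a smooth map, applied to related vector fields and pulled-back forms. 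With this in hand the transported operations are independent of the presentation, and uniqueness follows as before.
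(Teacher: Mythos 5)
Your proposal is correct and follows essentially the same route as the paper, which simply cites Morita invariance of basic forms (Proposition~\ref{proposition;vector-form-morita}\eqref{item;form-morita}) together with the naturality of the operations~\eqref{equation;contraction-lie} under Morita morphisms. Your elaboration of that naturality — reducing to foliation charts where basic data become ordinary forms and vector fields on the leaf space and invoking the standard compatibility of $\iota$ and $\ca{L}$ with $\phi$-related vector fields and pulled-back forms — is exactly the argument the paper leaves implicit.
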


\begin{proof}
This follows immediately from
Proposition~\ref{proposition;vector-form-morita}\eqref{item;form-morita}
and the naturality of the operations~\eqref{equation;contraction-lie}
with respect to Morita morphisms.
\end{proof}

\subsection{Symplectic stacks}\label{subsection;symplectic-stack}

Let $\X$ be an \'etale stack.  We call a $2$-form
$\bomega\in\Omega^2(\X)$ \emph{symplectic} if for some (and hence for
every) foliation groupoid $X_\bu$ presenting $\X$ the basic $2$-form
$\omega_\bu\in\Omega_\bas(X_\bu)$ corresponding to $\bomega$ via the
isomorphism~\eqref{equation;basicformassignment} is $0$-symplectic in
the sense of \S\,\ref{subsection;symplectic-groupoid}.  A
\emph{symplectic stack} is a pair $(\X,\bomega)$ consisting of an
\'etale stack $\X$ and a symplectic form $\bomega$.
\glossary{Xxomega@$(\X,\bomega)$, symplectic stack}%
\glossary{omegax@$\bomega$, symplectic form on stack}%

This extends the notion of a symplectic stack introduced
in~\cite[\S\,2.12]{lerman-malkin;deligne-mumford} in that we allow our
stacks to be non-separated.  If $(\X,\bomega)$ is a symplectic stack,
the linear map $\Vect(\X)\to\Omega^1(\X)$ defined by
$\group{v}\mapsto\iota_{\group{v}}\bomega$ is an isomorphism.  The
next statement is an immediate consequence of the definitions.

\begin{proposition}\label{proposition;symstack}
Let $(X_\bu,\omega_\bu)$ be a $0$-symplectic groupoid.  Then $(\B
X_\bu,\B\omega_\bu)$ is a symplectic stack, where
$\B\omega_\bu\in\Omega^2(\B X_\bu)$ is the form corresponding to
$\omega_\bu\in\Omega^2_\bas(X_\bu)$ under the isomorphism $\Omega^2(\B
X_\bu)\cong\Omega^2_\bas(X_\bu)$ of
Proposition~\ref{proposition;basicformsarethesame}.  Conversely, if
$(\X,\bomega)$ is a symplectic stack and $\X\simeq\B X_\bu$, then
$X_\bu$ is a $0$-symplectic groupoid.
\end{proposition}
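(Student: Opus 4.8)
The plan is to unwind the two definitions---symplectic stack (\S\,\ref{subsection;symplectic-stack}) and $0$-symplectic groupoid (\S\,\ref{subsection;symplectic-groupoid})---and to observe that they match up under the bijection $\Omega^2(\X)\cong\Omega^2_\bas(X_\bu)$ of Proposition~\ref{proposition;basicformsarethesame}. Once that bijection and the Morita invariance established in Proposition~\ref{proposition;vector-form-morita} are in hand, the whole argument is a matter of reading off the definitions.

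For the forward implication, I start from a $0$-symplectic groupoid $(X_\bu,\omega_\bu)$. Because $X_\bu$ is a foliation groupoid, Lemma~\ref{lemma;equivtoetale} guarantees that $\B X_\bu$ is \'etale, so it is meaningful to ask whether $\B\omega_\bu$ is symplectic. By construction $\B\omega_\bu$ is the form corresponding to $\omega_\bu$ under Proposition~\ref{proposition;basicformsarethesame} applied to the tautological presentation of $\B X_\bu$ by $X_\bu$. Since $X_\bu$ is itself one of the foliation groupoids presenting $\B X_\bu$, and the basic $2$-form attached to it is exactly $\omega_\bu$, which is $0$-symplectic by hypothesis, the ``for some foliation groupoid'' clause in the definition of a symplectic form is satisfied. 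Hence $(\B X_\bu,\B\omega_\bu)$ is a symplectic stack.

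For the converse, I take a symplectic stack $(\X,\bomega)$ together with an equivalence $\X\simeq\B X_\bu$. Since $\X$ is \'etale it admits a presentation by a foliation groupoid (Lemma~\ref{lemma;equivtoetale}); hence $X_\bu$ is Morita equivalent to a foliation groupoid (Lemma~\ref{lemma;moritaequivandstacks}) and, having Hausdorff object manifold, is itself a foliation groupoid, as noted in \S\,\ref{subsection;foliation}. Letting $\omega_\bu\in\Omega^2_\bas(X_\bu)$ be the basic form corresponding to $\bomega$ under Proposition~\ref{proposition;basicformsarethesame}, the definition of a symplectic form---read through its ``and hence for every'' clause---tells me that $\omega_\bu$ is $0$-symplectic. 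Thus $(X_\bu,\omega_\bu)$ is a $0$-symplectic groupoid.

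The one point deserving attention---and it is exactly what makes the definition of a symplectic form well posed---is the independence of the $0$-symplectic condition from the chosen presentation, i.e.\ the passage from ``for some'' to ``for every''. This is supplied by Proposition~\ref{proposition;vector-form-morita}: a Morita morphism identifies the basic de Rham complexes and identifies the normal bundles $N_0$ fibrewise, so both closedness $d\omega_\bu=0$ and nondegeneracy $\ker(\omega_0)=T\F$ transport across Morita equivalences. I therefore anticipate no real obstacle; the substantive work has already been carried out in Proposition~\ref{proposition;vector-form-morita}, and what remains is purely formal.
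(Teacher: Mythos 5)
Your proposal is correct and matches the paper, which states this proposition without proof as "an immediate consequence of the definitions"; your unwinding of the two definitions via Proposition~\ref{proposition;basicformsarethesame}, with Morita invariance (Proposition~\ref{proposition;vector-form-morita}) supplying the well-posedness of the "for some, hence for every" clause, is exactly the intended argument.
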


From Propositions~\ref{proposition;symgpd}
and~\ref{proposition;symstack}, we see that any presymplectic manifold
$(X,\omega)$ gives rise to many symplectic stacks, each of which we
may interpret as a stacky quotient of $X$ along the null foliation of
$\omega$.

\section{Lie \texorpdfstring{$2$-groups}{2-groups} and Lie group stacks}
\label{section;lie-stack}

This section starts with a review of Lie $2$-groups and Lie group
stacks, which we view as group objects internal to the $2$-categories
$\Liegpd$ and $\Diffstack$, respectively.  (Some basic definitions
regarding group objects in a $2$-category are recalled in
Appendix~\ref{appendix;groupobjects}.)  Of main interest to us are
connected \'etale Lie group stacks, which according
to~\cite{trentinaglia-zhu;strictification} are always equivalent to
strict Lie group stacks.  We show that the action of an \'etale Lie
group stack on an \'etale stack can also be strictified.  We end with
a discussion of basic features of \'etale Lie group stacks, such as
the Lie $2$-algebra, the adjoint action, the structure of compact
\'etale Lie group stacks, and maximal stacky tori.

\subsection{Lie \texorpdfstring{$2$-groups}{2-groups}}
\label{subsection;2-group}

A \emph{weak Lie $2$-group} is a weak $2$-group in the $2$-category
$\Liegpd$, as in Definition~\ref{definition;internalgp}.  A
\emph{strict Lie $2$-group} is a strict $2$-group in $\Liegpd$. We
will refer to strict Lie $2$-groups as simply \emph{Lie $2$-groups}.

More explicitly, a (strict) Lie $2$-group is a Lie groupoid $G_\bu$ so
that $G_1$ and $G_0$ are both Lie groups, and all the groupoid
structure maps are Lie group homomorphisms.  We will write $g\cdot h$
for the group product of $g,h\in G_i$ and $g\circ h$ for the groupoid
product (composition) of composable $g$, $h\in G_1$.  We denote the
group identity of $G_1$ and $G_0$ by $1$, and $ u\colon G_0\to G_1$
the groupoid identity bisection.  We write $m_\bu\colon G_\bu\times
G_\bu\to G_\bu$ for group multiplication in $G_\bu$, and note that
$m_\bu$ is a Lie groupoid homomorphism.  We use $(\cdot)\n$ to denote
both inverse with respect to the groupoid structure and the group
structure on $G_\bu$; the meaning should be clear from the context.
\glossary{*@$(\cdot)\n$, inversion law of groupoid or $2$-group}%

A \emph{morphism of Lie $2$-groups} is a strict morphism of strict
$2$-groups in $\Liegpd$, as in
Definition~\ref{definition;hominternal2group}.  Equivalently, it is a
map of Lie groupoids which preserves the group structure on both the
object and the arrow manifolds.  A \emph{Morita morphism} of Lie
$2$-groups is a morphism of Lie $2$-groups which is essentially
surjective and fully faithful, i.e.\ a Morita morphism of the
underlying Lie groupoids
(Definition~\ref{definition;moritaequivalence}).  Two Lie $2$-groups
$G_\bu$ and $H_\bu$ are \emph{Morita equivalent} if there exists a Lie
$2$-group $K_\bu$ and a zigzag of Morita morphisms of Lie $2$-groups
$K_\bu \to G_\bu$ and $K_\bu \to H_\bu$.

The coarse quotient $G_0/G_1$ of a Lie $2$-group $G_\bu$ is a (not
necessarily Hausdorff) topological group, namely
$G_0/G_1=G_0/t(\ker(s))$.  The coarse quotient is preserved under
Morita equivalence.

A \emph{weak action} of a Lie $2$-group $G_\bu$ on a Lie groupoid
$X_\bu$ is a weak action as in
Definition~\ref{definition;actioninternal}, by considering $G_\bu$ to
be a strict $2$-group in $\Liegpd$.  A \emph{strict action} is also as
in Definition~\ref{definition;actioninternal}.  We will usually
abbreviate \emph{strict action} to \emph{action}.  If $G_\bu$ is a Lie
$2$-group acting on $X_\bu$ and $X_\bu'$, an \emph{equivariant map}
$X_\bu \to X_\bu'$ is as in
Definition~\ref{definition;equivariantinternal}, and is always assumed
to be strict.

A (strict) action of a Lie $2$-group $G_\bu$ on $X_\bu$ is equivalent
to a morphism of Lie groupoids $G_\bu\times X_\bu\to X_\bu$ both of
whose component maps
\begin{equation}\label{equation;actions}
\begin{split}
G_0\times X_0\longto X_0&\colon(g,x)\longmapsto L(g)(x)=g\cdot x,\\
G_1\times X_1\longto X_1&\colon(k,f)\longmapsto L(k)(f)=k*f,
\end{split}
\end{equation}
define Lie group actions.

\begin{example}\label{example4} 
Consider the torus $G=\T^n$ and its immersed subgroup $N$ of
Examples~\ref{example1} and~\ref{example2}.  Let $\tilde{N}\to N$ be
an \'etale homomorphism as in Example~\ref{example2}.  Form the action
groupoid $G_\bu=(\tilde{N}\ltimes G\rightrightarrows G)$, where
$\tilde{N}$ acts on $G$ by left translations via $N$.  Then $G_\bu$
has the structure of a Lie $2$-group.  The group structure on
$\tilde{N}\ltimes G$ is the product group structure $\tilde{N}\times
G$.  Taking $X_\bu=(\tilde{N}\ltimes X_0\rightrightarrows X_0)$, from
the action of $G$ on $X_0$ we obtain an action $G_\bu\times X_\bu\to
X_\bu$, which on arrows is given by
\[(n,g)\cdot(n',x)=(nn',g\cdot x).\]
\end{example}

\subsection{Lie \texorpdfstring{$2$-algebras}{2-algebras}}
\label{subsection;2-algebra}

A \emph{strict Lie $2$-algebra} is a Lie groupoid
$\g_1\rightrightarrows\g_0$ so that each $\g_i$ is a Lie algebra and
the groupoid structure maps are Lie algebra homomorphisms.  The strict
Lie $2$-algebra of a strict Lie $2$-group $G_1\rightrightarrows G_0$
is the Lie groupoid $\Lie(G_\bu)=(\g_1\rightrightarrows\g_0)$ obtained
by applying the Lie functor to the structure maps of $G_\bu$.  If
$\g_\bu=\Lie(G_\bu)$, we say that $G_\bu$ \emph{integrates} $\g_\bu$.
\glossary{Lie@$\Lie$, Lie functor}%

We will not consider weak Lie $2$-algebras, so we refer to strict Lie
$2$-algebras as simply Lie $2$-algebras.  Recall
(Definition~\ref{definition;Liealgebroid}) that the Lie algebroid of
$G_\bu$ is denoted $\Alg(G_\bu)$, not $\Lie(G_\bu)$.

A \emph{Morita morphism} of strict Lie $2$-algebras is a Morita
morphism of the underlying Lie groupoids
(Definition~\ref{definition;moritaequivalence}) which preserves the
Lie algebra structure maps.  Two strict Lie $2$-algebras $\g_\bu$ and
$\h_\bu$ are \emph{Morita equivalent} if there is a strict Lie
$2$-algebra $\mathfrak{k}_\bu$ and a zig-zag of Morita morphisms of
Lie $2$-algebras $\mathfrak{k}_\bu\to \g_\bu$ and $\mathfrak{k}_\bu
\to \h_\bu$.

\begin{lemma}\phantomsection\label{lemma;moritaequivliealg}
\begin{enumerate}
\item\label{item;morita-lie-algebra}
Let $\phi_\bu\colon G_\bu\to H_\bu$ be a Morita morphism of Lie
$2$-groups.  Then $\Lie(\phi_\bu)\colon\g_\bu\to\h_\bu$ is a Morita
morphism of Lie $2$-algebras.
\item\label{item;morita-lie-lie}
Let $\g$ and $\g'$ be Lie algebras, considered as Lie $2$-algebras
$\g\rightrightarrows\g$ and $\g'\rightrightarrows\g'$.  Let
$\phi_\bu\colon\h_\bu\to\g$ be a Morita morphism and
$\psi_\bu\colon\h_\bu\to\g'$ a morphism of Lie $2$-algebras.  Then
there is a unique Lie algebra morphism $\zeta\colon\g\to\g'$ such that
$\zeta\circ\phi_\bu=\psi_\bu$.  In particular, if $\g$ and $\g'$ are
Morita equivalent as Lie $2$-algebras, then they are isomorphic as Lie
algebras.
\end{enumerate}
\end{lemma}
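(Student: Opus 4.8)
The plan is to obtain both parts from a single observation about how the Lie functor interacts with the fibred products and submersions that occur in Definition~\ref{definition;moritaequivalence}. The observation is this: if $f\colon P\to Q$ and $f'\colon P'\to Q$ are homomorphisms of Lie groups, then $P\times_Q P'$ is the preimage of the diagonal of $Q\times Q$ under $f\times f'$, hence a closed subgroup of $P\times P'$, and the $\exp$-characterization of the Lie algebra of a closed subgroup gives $\Lie(P\times_Q P')=\p\times_\q\p'$; moreover, if $\pi\colon P\to Q$ is a surjective submersion which is also a homomorphism, then $\Lie(\pi)$ is onto (since $T_e\pi$ is surjective), hence itself a submersion. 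In short, along homomorphisms $\Lie$ preserves these fibred products and carries surjective submersions to surjective linear maps.

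Granting this, part~\eqref{item;morita-lie-algebra} is immediate. Applying $\Lie$ to the homomorphisms defining the $2$-group structure shows at once that $\Lie(\phi_\bu)\colon\g_\bu\to\h_\bu$ preserves the Lie $2$-algebra structure maps. Full faithfulness of $\phi_\bu$ says the square with corners $G_1$, $H_1$, $G_0\times G_0$, $H_0\times H_0$ exhibits $G_1$ as a fibred product of Lie groups along homomorphisms; since $\Lie$ preserves such fibred products, the corresponding square for $\g_\bu$, $\h_\bu$ is again cartesian, so $\Lie(\phi_\bu)$ is fully faithful. Essential surjectivity of $\phi_\bu$ says that the homomorphism $t\circ\pr_1\colon H_1\times_{H_0}G_0\to H_0$ is a surjective submersion; applying $\Lie$ (again using preservation of the fibred product) yields exactly the essential-surjectivity map $\Lie(t)\circ\pr_1\colon\h_1\times_{\h_0}\g_0\to\h_0$ for $\Lie(\phi_\bu)$, which is a surjective submersion by the observation.

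For part~\eqref{item;morita-lie-lie} I would first unwind what a Morita morphism $\phi_\bu\colon\h_\bu\to\g$ into the unit groupoid $\g\rightrightarrows\g$ (with $s=t=\id$) amounts to: essential surjectivity reduces to $\phi_0\colon\h_0\to\g$ being a surjective submersion, and full faithfulness reduces to $(s,t)\colon\h_1\to\h_0\times\h_0$ being an isomorphism onto $\h_0\times_\g\h_0$. Thus $\h_\bu$ is the relation groupoid $\h_0\times_\g\h_0\rightrightarrows\h_0$ of the surjection $\phi_0$ (its \v{C}ech groupoid), in which there is a unique arrow $\eta_1\to\eta_2$ exactly when $\phi_0(\eta_1)=\phi_0(\eta_2)$. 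On the other hand, functoriality of $\psi_\bu\colon\h_\bu\to\g'$ into the unit groupoid forces $\psi_0\circ s=\psi_1=\psi_0\circ t$, so $\psi_0$ is constant on the fibres of $\phi_0$ and factors uniquely as $\psi_0=\zeta\circ\phi_0$ for a set map $\zeta\colon\g\to\g'$, unique because $\phi_0$ is surjective. A linear section $\sigma$ of $\phi_0$ exhibits $\zeta=\psi_0\circ\sigma$, so $\zeta$ is linear; and choosing preimages $\eta_i$ with $\phi_0(\eta_i)=\xi_i$ gives $\zeta[\xi_1,\xi_2]=\zeta\phi_0[\eta_1,\eta_2]=\psi_0[\eta_1,\eta_2]=[\psi_0\eta_1,\psi_0\eta_2]=[\zeta\xi_1,\zeta\xi_2]$, so $\zeta$ respects brackets. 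The identity $\zeta\circ\phi_\bu=\psi_\bu$ then holds on objects by construction and on arrows because $\phi_1=\phi_0\circ s$ and $\psi_1=\psi_0\circ s$, and uniqueness of $\zeta$ follows from surjectivity of $\phi_0$.

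For the final clause, a Morita equivalence between $\g$ and $\g'$ supplies a Lie $2$-algebra $\h_\bu$ with Morita morphisms $\phi_\bu\colon\h_\bu\to\g$ and $\psi_\bu\colon\h_\bu\to\g'$; part~\eqref{item;morita-lie-lie} then produces $\zeta\colon\g\to\g'$ with $\zeta\circ\phi_\bu=\psi_\bu$ and, symmetrically, $\zeta'\colon\g'\to\g$ with $\zeta'\circ\psi_\bu=\phi_\bu$, whence $\zeta'\circ\zeta$ and $\id_\g$ both satisfy $(-)\circ\phi_\bu=\phi_\bu$ and must agree by the uniqueness clause, and likewise $\zeta\circ\zeta'=\id_{\g'}$. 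I expect the only genuinely delicate point to be the opening observation that $\Lie$ commutes with the fibred products along homomorphisms used in part~\eqref{item;morita-lie-algebra}; once that is in hand, part~\eqref{item;morita-lie-algebra} is a direct translation and part~\eqref{item;morita-lie-lie} is a formal consequence of the universal property of the relation groupoid of a surjective submersion.
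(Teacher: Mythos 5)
Your proposal is correct. Part~(i) is the paper's own argument: $\Lie$ preserves the relevant fibred products and sends surjective submersions to surjections, hence preserves full faithfulness and essential surjectivity; your extra justification via the closed-subgroup description of $P\times_QP'$ is fine. In part~(ii) you reach the same descent statement by a slightly different mechanism: you identify $\h_\bu$ outright with the \v{C}ech groupoid $\h_0\times_\g\h_0\rightrightarrows\h_0$ of $\phi_0$ and read off $\psi_0\circ s=\psi_1=\psi_0\circ t$, whereas the paper shows $t(\ker(s))=\ker(\phi_0)$ by a dimension count and then checks $t(\ker(s))\subseteq\ker(\psi_0)$. The two routes are equivalent (full faithfulness over the unit groupoid is exactly the \v{C}ech identification), and yours arguably makes the uniqueness and the bracket-compatibility of $\zeta$ more transparent; the final clause is handled the same way in both.
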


\begin{proof}
\eqref{item;morita-lie-algebra}~The Lie functor ``$\Lie$'' preserves
fibred products and takes surjective submersions to surjective
submersions.  Therefore ``$\Lie$'' preserves essential surjectivity
and full faithfullness.

\eqref{item;morita-lie-lie}~Let $\phi\colon\h_\bu\to\g$ be a Morita
morphism of Lie $2$-algebras.  Then $\phi_0\colon\h_0\to\g$ is
surjective, so $\g\cong\h_0/\ker(\phi_0)$.  Note that
$t(\ker(s))\subseteq\ker(\phi_0)$, since $\phi$ is a Lie groupoid
morphism.  Since $\phi$ is a Morita equivalence, we have
\[\dim\g=2\dim\h_0-\dim\h_1=\dim\h_0-\dim t(\ker(s)),\]
and so $t(\ker(s))=\ker(\phi_0)$.  So $\g\cong\h_0/t(\ker(s))$.
Similarly, $t(\ker(s))\subseteq\ker(\psi_0)$, so $\psi_0$ descends to
a unique Lie algebra map $\zeta\colon\g\to\g'$, which satisfies
$\zeta\circ\phi_\bu=\psi_\bu$.  The last assertion is an immediate
consequence of this.
\end{proof}

\begin{example}\label{example5}
The Lie $2$-algebra of the $2$-group $G_\bu$ in Example~\ref{example4}
is $\lie{n}\ltimes\g\rightrightarrows\g$, where $\lie{n}\ltimes\g$ is
isomorphic as a Lie algebra to the abelian Lie algebra
$\lie{n}\oplus\g$.
\end{example}

\subsection{Crossed modules}\label{subsection;crossed}

A \emph{crossed module of Lie groups} is a quadruple
$(G,H,\partial,\alpha)$, where $G$ and $H$ are Lie groups,
$\partial\colon H\to G$ is a Lie group homomorphism, and $\alpha\colon
G\to\Aut(H)$ is an action of $G$ on $H$ subject to the requirements
$\partial(\alpha(g)(h))=g\partial(h)g^{-1}$ and
$\alpha(\partial(h))(h')=hh'h^{-1}$ for all $g\in G$ and $h$, $h'\in
H$.  We will usually abbreviate $\alpha(g)(h)$ to $^gh$.  A
\emph{morphism (of crossed modules of Lie groups)}
$(\psi_G,\psi_H)\colon(G,H,\partial,\alpha)\to(G',H',\partial',\alpha')$
is a pair of Lie group homomorphisms $\psi_G\colon G\to G'$, and
$\psi_H\colon H\to H'$ which commute with the structure maps and
actions.

There is an equivalence of categories between strict Lie $2$-groups
and their strict homomorphisms on one hand, and crossed modules of Lie
groups and their morphisms on the other; see
\cite{brown-spencer;crossed-modules-groupoids-fundamental} and
\cite[Proposition~3.14,~\S\,8.4]{baez-lauda;2-groups}.  (This can be
extended to an equivalence of appropriately defined $2$-categories,
though we will not need this here).  Since many statements involving
Lie $2$-groups can be more compactly stated and proved in terms of
crossed modules, we will use this equivalence throughout what follows.

Let us recall how this equivalence of categories looks on objects. A
crossed module of Lie groups $(G,H,\partial,\alpha)$ gives rise to the
Lie $2$-group $G_\bu$ with $G_0=G$ and $G_1=H\rtimes_\alpha G$.  As a
Lie groupoid, $G_\bu$ is the action groupoid for the action of $H$ on
$G$ by left translations of $\partial(H)$.  As a Lie group,
$H\rtimes_\alpha G$ is the semidirect product of $H$ and $G$ with
respect to the action $\alpha\colon G\to\Aut(H)$.  Conversely, a
$2$-group $G_\bu$ determines the crossed module
$(G,H,\partial,\alpha)$: we have $G=G_0$, $H=\ker(s)$,
$\partial=t|_H\colon H\to G$, and $\alpha$ is the conjugation action
of $G_1$ on $H$ composed with the identity bisection $ u\colon G_0\to
G_1$.  The Lie $2$-algebra $\Lie(H\rtimes_\alpha G)$ is
$\h\rtimes_{\alpha}\g\rightrightarrows\g$, where $\h\rtimes_\alpha\g$
is the semidirect product Lie algebra obtained by applying the Lie
functor to $\alpha$.

A crossed module is a nonabelian version of a $2$-term chain complex,
whose ``homology'' groups are $\ker(\partial)$ and $\coker(\partial)$.
A Morita morphism is a nonabelian version of a quasi-isomorphism.

\begin{definition}
\label{definition;moritacrossed} 
A \emph{Morita morphism (of crossed modules of Lie groups)} is a
morphism
\[
(\phi_G,\phi_H)\colon(G,H,\partial,\alpha)\to(G',H',\partial',\alpha')
\]
that induces (abstract) group isomorphisms
$\ker(\partial)\cong\ker(\partial')$ and
$\coker(\partial)\cong\coker(\partial')$.
\end{definition}

Thus the kernel and the cokernel of $\partial$ are Morita invariants
of a crossed module $(G,H,\partial,\alpha)$.  Note that the cokernel
$G/\partial(H)$ is the coarse quotient group of the corresponding
$2$-group.

The second countability axiom (which we impose on all manifolds; see
Section~\ref{section;notation}) is necessary for the following
statement to be true.  For instance, if $(G,H,\partial,\alpha)$ is a
crossed module of Lie groups and $G^d$, resp.\ $H^d$, denotes $G$,
resp.\ $H$, equipped with the discrete topology, then the identity map
$(G^d,H^d,\partial,\alpha)\to(G,H,\partial,\alpha)$ is a Morita
morphism of crossed modules of (non-second countable) Lie groups, but
the corresponding morphism of Lie $2$-groups $G_\bu^d\to G_\bu$ is not
Morita.

\begin{lemma}\label{lemma;morita-crossed}
Let $\phi_\bu\colon G_\bu\to G_\bu'$ be a morphism of Lie $2$-groups
and let $(\phi_G,\phi_H)$ be the associated morphism of crossed
modules.  Then the following are equivalent:
\begin{enumerate}
\item\label{item;morita-2}
$\phi_\bu$ is a Morita morphism of Lie $2$-groups;
\item\label{item;morita-cross}
$(\phi_G,\phi_H)$ is a Morita morphism of crossed modules;
\item\label{item;morita-fibre}
$G'=\partial'(H')\phi_G(G)$ and the map $\phi_H\times\partial\colon
  H\to H'\times_{G'}G$ is bijective.
\end{enumerate}
\end{lemma}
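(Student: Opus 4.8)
The plan is to make the crossed-module dictionary fully explicit and then reduce each of the three conditions to a statement about the homomorphisms $\partial$, $\partial'$, $\phi_G$, $\phi_H$. Recall that $G_\bu$ is the action groupoid of $H$ acting on $G=G_0$ by left translation through $\partial$, so that $G_1=H\rtimes_\alpha G$ with $s(h,g)=g$ and $t(h,g)=\partial(h)g$; the same holds for $G_\bu'$, and $\phi_0=\phi_G$, $\phi_1=\phi_H\rtimes\phi_G$. I would establish (i)$\iff$(iii) by Lie theory and (ii)$\iff$(iii) by bare group theory, pivoting through the concrete condition (iii).

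For \emph{essential surjectivity}, I would identify $G_1'\times_{G_0'}G_0$ with $H'\times G$ via $(h',g)\mapsto((h',\phi_G(g)),g)$, under which the defining map $t\circ\pr_1$ becomes $\mu(h',g)=\partial'(h')\phi_G(g)$. The crucial observation is that $\mu$ is a \emph{homomorphism} of Lie groups $H'\rtimes_{\alpha'\circ\phi_G}G\to G'$ (the crossed-module relation $\partial'(\alpha'(g')(h'))=g'\partial'(h')g'^{-1}$ is exactly what makes this work), so, a surjective homomorphism of second countable Lie groups being a submersion, $t\circ\pr_1$ is a surjective submersion precisely when $\mu$ is onto, i.e.\ when $G'=\partial'(H')\phi_G(G)$. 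For \emph{full faithfulness}, after the change of variable $a=g_2g_1^{-1}$ I would identify $(G_0\times G_0)\times_{G_0'\times G_0'}G_1'$ with $G\times(H'\times_{G'}G)$ and the canonical map out of $G_1$ with $\mathrm{id}_G\times(\phi_H\times\partial)$. Here $H'\times_{G'}G$ is the kernel of the homomorphism $H'\times G\to G'$, $(h',g)\mapsto\partial'(h')\phi_G(g)^{-1}$, hence an embedded Lie subgroup, and $\phi_H\times\partial$ is a homomorphism into it; thus the square is cartesian iff $\phi_H\times\partial$ is a diffeomorphism, and since a bijective homomorphism of second countable Lie groups is an isomorphism, this holds iff $\phi_H\times\partial$ is bijective. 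Combining the two gives (i)$\iff$(iii).

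The step (ii)$\iff$(iii) is purely algebraic. Since $\partial'(H')$ is normal in $G'$, the equation $G'=\partial'(H')\phi_G(G)$ says exactly that $\phi_G$ induces a surjection $\coker(\partial)\to\coker(\partial')$. Restricting $\phi_H\times\partial$ to the fibre over $g=1$ shows that it carries $\ker(\partial)$ into $\ker(\partial')$ by $\phi_H$; injectivity of $\phi_H\times\partial$ then forces $\ker(\partial)\to\ker(\partial')$ to be injective, while surjectivity of $\phi_H\times\partial$ yields both its surjectivity and the injectivity of $\coker(\partial)\to\coker(\partial')$, since $\phi_G(g)=\partial'(h')$ makes $(h',g)$ an element of $H'\times_{G'}G$, hence liftable to some $h\in H$ with $\partial(h)=g$. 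This proves (iii)$\Rightarrow$(ii); for the converse one uses the two isomorphisms to solve $\phi_H(h)=h'$, $\partial(h)=g$ exactly, lifting $g$ through the cokernel and then correcting by an element of $\ker(\partial)\cong\ker(\partial')$.

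The genuine content, beyond this bookkeeping, is the smoothness upgrade hidden in the definition of a Morita morphism of Lie groupoids: the clauses of (iii) are set-theoretic, whereas (i) demands a surjective submersion and a cartesian square \emph{of manifolds}. The resolution is that the two maps involved, $\mu$ and $\phi_H\times\partial$, are both Lie group homomorphisms, so "onto" automatically upgrades to "submersion" and "bijective" to "diffeomorphism"; and the transversality needed for $(G_0\times G_0)\times_{G_0'\times G_0'}G_1'$ to be a manifold is itself supplied by the surjectivity of $\mu$, which forces $\mathrm{im}\,\partial'_*+\mathrm{im}\,\phi_{G*}=\g'$, so the two clauses of (iii) interlock. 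Both upgrades rest on second countability, precisely the hypothesis whose necessity is illustrated by the discrete example $(G^d,H^d,\partial,\alpha)$ in the paragraph preceding the lemma, and I expect verifying these smooth reductions to be the main point requiring care.
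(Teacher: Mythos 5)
Your proposal is correct and follows essentially the same route as the paper's (sketched) proof: everything is pivoted through condition (iii), the groupoid-theoretic conditions of essential surjectivity and full faithfulness are identified with the two clauses of (iii) applied to Lie group homomorphisms, and the set-theoretic statements are upgraded to smooth ones using the facts that a surjective homomorphism of second countable Lie groups is a submersion and that the set-theoretic fibred product of Lie group homomorphisms is a Lie group (closed subgroup theorem) — your write-up just makes explicit the identifications the paper leaves implicit. One small correction: the map $H'\times G\to G'$, $(h',g)\mapsto\partial'(h')\phi_G(g)^{-1}$, is not a homomorphism for the direct product structure (the images of $\partial'$ and $\phi_G$ need not commute), so $H'\times_{G'}G$ is not literally its kernel; it is, however, the preimage of the diagonal under the homomorphism $\partial'\times\phi_G\colon H'\times G\to G'\times G'$ (equivalently, up to inverting the first coordinate, the kernel of your homomorphism $\mu$ on the semidirect product), hence still a closed embedded Lie subgroup, and your argument goes through unchanged.
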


\begin{proof}[Sketch of proof]
The implications
\eqref{item;morita-2}$\implies$\eqref{item;morita-cross}$\implies$%
\eqref{item;morita-fibre} are straightforward.
From~\eqref{item;morita-fibre} one deduces that the morphism
$\phi_\bu\colon G_\bu\to G_\bu'$ is essentially surjective and fully
faithful considered as a morphism of abstract $2$-groups (i.e.\ strict
$2$-groups in the $2$-category of set-theoretic groupoids).  Now use
the following two facts to conclude that $\phi_\bu$ is a Morita
morphism of Lie $2$-groups: (1)~a surjective homomorphism of Lie
groups is a submersion (which follows from Sard's theorem); (2)~given
two Lie group homomorphisms $K_1\to K$ and $K_2\to K$, the
set-theoretic fibred product $K_1\times_KK_2$ is a Lie group and is a
fibred product in the category of Lie groups (which follows from the
closed subgroup theorem).
\end{proof}

\begin{remark}\label{remark;morita}
We record three special cases of the lemma.  Let
$(G,H,\partial,\alpha)$ be a crossed module.  First, suppose we are
given a closed subgroup $G'$ of $G$ with the property
$\partial(H)G'=G$.  Then we can form the restricted crossed module
$(G',H',\partial',\alpha')$ with $H'=\partial^{-1}(G')$, and the
inclusion into $(G,H,\partial,\alpha)$ is a Morita morphism.  Second,
given a Lie group extension (e.g.\ a covering homomorphism)
$\phi\colon\hat{G}\to G$ we can form the pullback extension
$\hat{H}=H\times_G\hat{G}$ of $H$.  Define
$\hat\partial\colon\hat{H}\to H$ by
$\hat\partial(h,\hat{g})=\phi(\hat{g})$ and a $\hat{G}$-action
$\hat\alpha$ on $\hat{H}$ by $^{\hat{g}}(h,\hat{g}')=
\bigl({}^{\phi(\hat{g})}h,\hat{g}\hat{g}'\hat{g}^{-1}\bigr)$.  Then
$(\hat{G},\hat{H},\hat\partial,\hat\alpha)$ is a crossed module and
$\phi$ induces a Morita morphism
$(\hat{G},\hat{H},\hat\partial,\hat\alpha)\to(G,H,\partial,\alpha)$.
The third case is the second case run in reverse: given a closed
normal subgroup $N$ of $G$ that is contained in $\partial(H)$, we can
form the quotient crossed module
$(\bar{G},\bar{H},\bar\partial,\bar\alpha)$ with $\bar{G}=G/N$ and
$\bar{H}=H/\partial^{-1}(N)$, and we have a Morita morphism
$(G,H,\partial,\alpha)\to(\bar{G},\bar{H},\bar\partial,\bar\alpha)$.
\end{remark}

We can also reformulate the notion of a strict Lie $2$-group action in
terms of the associated crossed module of Lie groups.

\begin{definition}
\label{definition;crossedaction}
Let $(G,H,\partial,\alpha)$ be a crossed module of Lie groups and let
$X_\bu$ be a Lie groupoid.  An \emph{action} of
$(G,H,\partial,\alpha)$ on $X_\bu$ consists of three smooth actions
\begin{alignat}{2}
\label{item;firstaction}
G\times X_0&\longto X_0\colon\qquad&(g,x)&\longmapsto L(g)(x)=g\cdot x,\\
\label{item;secondaction}
G\times X_1&\longto X_1\colon&(g,f)&\longmapsto L(g)(f)=g*f,\\
\label{item;thirdaction}
H\times X_1&\longto X_1\colon&(h,f)&\longmapsto L(h)(f)=h*f,
\end{alignat}
satisfying the compatibility
conditions~\eqref{equation;functor1}--\eqref{equation;conjugation}
below: for all $g\in G$ the pair of maps $L(g)\colon X_0\to X_0$ and
$L(g)\colon X_1\to X_1$ is an endofunctor of $X_\bu$, i.e.\
\begin{gather}
\label{equation;functor1}
g*u(x)=u(g\cdot x),\qquad g\cdot s(f)=s(g*f),\qquad g\cdot
t(f)=t(g*f),\\
\label{equation;functor2}
g*(f_1\circ f_2)=(g*f_1)\circ(g*f_2),
\end{gather}
for all $x\in X_0$ and $f$, $f_1$, $f_2\in X_1$ for which $f_1\circ
f_2$ is defined; for all $h\in H$ the map $L(h)\colon X_1\to X_1$ is a
natural transformation from the identity functor to the functor
$L(\partial(h))$, i.e.\
\begin{gather}
\label{equation;natural1}
s(h*f)=s(f),\quad t(h*f)=\partial(h)\cdot t(f),\\
\label{equation;natural2}
h*f=(h*u(t(f))\circ f=(\partial(h)*f)\circ(h* u(s(f)),
\end{gather}
for all $f\in X_1$; for all $g\in G$, $h\in H$, and $f\in X_1$, one
has $L({}^gh)=L(g)\circ L(h)\circ L(g)^{-1}$, i.e.\
\begin{equation}\label{equation;conjugation}
g*(h*f)={}^gh*(g*f).
\end{equation}
\end{definition}

We omit the proof of the following straightforward lemma.

\begin{lemma}\label{lemma;crossedactionequivalent}
Let $G_\bu=(G_1\rightrightarrows G_0)$ be a Lie $2$-group with
associated crossed module
\[(G=G_0,H=\ker(s),\partial=t|_H,\alpha),\]
and let $X_\bu$ be a Lie groupoid.  Let $a_\bu\colon G_\bu\times
X_\bu\to X_\bu$ be a strict $G_\bu$-action on $X_\bu$.  Then the maps
\begin{align*}
a_0\colon G\times X_0&\longto X_0\\
a_1\circ(u\times\id_{X_1})\colon G\times X_1&\longto X_1\\
a_1|_H\colon H\times X_1&\longto X_1
\end{align*}
define an action of $(G,H,\partial,\alpha)$ on $X_\bu$.  Conversely,
given a $(G,H,\partial,\alpha)$-action on $X_\bu$, the formulas
\begin{alignat*}{2}
G_0\times X_0&\longto X_0\colon&(g,x)&\longmapsto g\cdot x,\\
G_1\times X_1&\longto X_1\colon\qquad&((h,g),x)&\longmapsto h*(g*x),
\end{alignat*}
define a strict $G_\bu$-action on $X_\bu$, where we identify $G_0=G$
and $G_1=H\rtimes_\alpha G$.  These two constructions are inverse, and
so determine a bijection between the set of strict $G_\bu$-actions on
$X_\bu$ and the set of $(G,H,\partial,\alpha)$-actions on $X_\bu$.
\end{lemma}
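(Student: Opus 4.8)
The plan is to exploit the semidirect-product description of the Lie $2$-group recalled in \S\ref{subsection;crossed}: as a group $G_1=H\rtimes_\alpha G$ with $(h_1,g_1)(h_2,g_2)=(h_1\,{}^{g_1}h_2,g_1g_2)$, while as a groupoid it is the action groupoid of $H$ on $G$, with $s(h,g)=g$, $t(h,g)=\partial(h)g$, identity bisection $u(g)=(1,g)$, and composition $(h_1,g_1)\circ(h_2,g_2)=(h_1h_2,g_2)$ whenever $g_1=\partial(h_2)g_2$. The whole lemma reduces to matching these two structures against the axioms of Definition~\ref{definition;crossedaction}. Concretely, every arrow factors as $(h,g)=(h,1)(1,g)$ in the group, so a strict action necessarily satisfies $a_1((h,g),f)=a_1((h,1),a_1((1,g),f))$; this is exactly the formula $h*(g*f)$ of the backward construction, so the two constructions are forced to be mutually inverse once each is shown to land in the correct class. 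I would therefore set up both maps first, verify each produces an object of the required kind, and finish with the (then routine) inverse check.

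For the forward direction I would take a strict $G_\bu$-action $a_\bu$ and read off the three maps $a_0$, $a_1\circ(u\times\id_{X_1})$, $a_1|_H$; these are smooth because $u$ is smooth and $H=\ker(s)$ is an embedded Lie subgroup of $G_1$. Axioms \eqref{equation;functor1} and \eqref{equation;natural1} then come from evaluating the functoriality of $a_\bu$ with respect to $s$, $t$, $u$ on the special arrows $(1,g)$ and $(h,1)$; \eqref{equation;functor2} is functoriality in composition restricted to the $G$-identity arrows; \eqref{equation;conjugation} follows from $a_1$ being a group action together with the semidirect-product identity $(1,g)(h,1)=({}^{g}h,1)(1,g)$; and the two halves of \eqref{equation;natural2} come from decomposing $((h,1),f)$ as $((h,1),u(t(f)))\circ((1,1),f)$ and as $((1,\partial(h)),f)\circ((h,1),u(s(f)))$ in $G_\bu\times X_\bu$ and applying functoriality, using $a_1((1,1),f)=f$. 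All of this is unwinding definitions.

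The backward direction carries the real content. Given a $(G,H,\partial,\alpha)$-action I would define $a_0(g,x)=g\cdot x$ and $a_1((h,g),f)=h*(g*f)$ and check that $a_1$ is a group action and that $a_\bu$ is a smooth functor. That $a_1$ is a group action uses \eqref{equation;conjugation} to commute the conjugation ${}^{g_1}h_2$ past the $G$-action on $X_1$. Compatibility of $a_\bu$ with $s$, $t$, $u$ is immediate from \eqref{equation;functor1} and \eqref{equation;natural1}. The main obstacle is showing that $a_1$ preserves composition: after substituting the groupoid law $(h_1,g_1)\circ(h_2,g_2)=(h_1h_2,g_2)$ and the condition $g_1=\partial(h_2)g_2$, the two sides become $(h_1h_2)*(p\circ q)$ and $\bigl(h_1*(\partial(h_2)*p)\bigr)\circ(h_2*q)$ with $p=g_2*f_1$, $q=g_2*f_2$. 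To reconcile them I would first extract from \eqref{equation;natural2} the two interchange identities $h*(p\circ q)=(\partial(h)*p)\circ(h*q)$ and $h*(C\circ D)=(h*C)\circ D$ (the first from the naturality square of $L(h)$, the second by inserting $h*f=(h*u(t(f)))\circ f$); applying these together with \eqref{equation;functor2} collapses both expressions to the common form $\bigl(h_1*(\partial(h_2)*p)\bigr)\circ(h_2*q)$.

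Finally, for the inverse statement, running forward-then-backward returns $a_1$ because $h*(g*f)=a_1((h,1),a_1((1,g),f))=a_1((h,1)(1,g),f)=a_1((h,g),f)$, while running backward-then-forward returns the three maps by evaluating $a_1((h,g),f)=h*(g*f)$ at $(1,g)$ and at $(h,1)$. I expect the composition-preservation computation in the backward direction to be the only step requiring genuine care; everything else is bookkeeping in the crossed module.
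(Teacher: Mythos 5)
Your proposal is correct. The paper itself omits the proof of this lemma (declaring it straightforward), and your argument is exactly the unwinding the authors intend: the dictionary $s(h,g)=g$, $t(h,g)=\partial(h)g$, $(h_1,g_1)\circ(h_2,g_2)=(h_1h_2,g_2)$, $(h_1,g_1)(h_2,g_2)=(h_1\,{}^{g_1}h_2,g_1g_2)$ is the one set up in \S\,\ref{subsection;crossed}, and each axiom of Definition~\ref{definition;crossedaction} matches a functoriality or group-action identity as you describe. The one step with content — that $a_1((h,g),f)=h*(g*f)$ preserves groupoid composition — is handled correctly: your two interchange identities do follow from \eqref{equation;natural2} together with \eqref{equation;functor2}, and applying them (plus the $H$-action axiom $(h_1h_2)*y=h_1*(h_2*y)$ and \eqref{equation;conjugation} for the group-action check) collapses both sides as claimed.
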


The next lemma records a simple consequence of
Definition~\ref{definition;crossedaction}.  We use that every element
$\xi\in\g$ gives birth to two vector fields, namely the vector field
$\xi_{X_1}$ induced via the $G$-action on $X_1$ and the vector field
$\xi_{X_0}$ induced via the $G$-action on $X_0$.  Similarly, every
$\eta\in\h$ gives birth to three vector fields: the vector field
$\eta_{X_1}$ induced via the $H$-action on $X_1$, and the two vector
fields $\partial(\eta)_{X_1}$ and $\partial(\eta)_{X_0}$ induced by
the element $\partial(\eta)\in\g$ via the $G$-actions on $X_1$ and
$X_0$.

\begin{lemma}\label{lemma;orbits}
Let $G_\bu$ be a strict Lie $2$-group acting strictly on a Lie
groupoid $X_\bu$.  Let $(G=G_0,H,\partial,\alpha)$ be the crossed
module associated with $G_\bu$.  Let $\eta\in\h$.  The vector field
$\eta_{X_1}$ is tangent to the source fibres, right-invariant, and
$t$-related to the vector field $\partial(\eta)_{X_0}$.  The vector
field $\eta_{X_1}-\partial(\eta)_{X_1}$ is tangent to the target
fibres, left-invariant, and $s$-related to the vector field
$\partial(\eta)_{X_0}$.
\end{lemma}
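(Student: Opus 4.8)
The plan is to derive every assertion by differentiating the defining identities of a crossed-module action (Definition~\ref{definition;crossedaction}) at the identity $1\in H$ in the direction $\eta\in\h$. The one input I will use repeatedly is that $\partial\colon H\to G$ is a homomorphism, so $\partial(\exp t\eta)=\exp(t\partial(\eta))$; consequently the fundamental vector fields $\partial(\eta)_{X_0}$ and $\partial(\eta)_{X_1}$ of the $G$-actions are the velocities at $t=0$ of $\partial(\exp t\eta)\cdot(\cdot)$ and $\partial(\exp t\eta)*(\cdot)$. Throughout I write $L_f$, $R_f$ for left and right translation in $X_\bu$ and $j\colon X_1\to X_1$ for the inversion $f\mapsto f\n$.

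First I treat $\eta_{X_1}$. Differentiating the two identities of~\eqref{equation;natural1}, namely $s(h*f)=s(f)$ and $t(h*f)=\partial(h)\cdot t(f)$, at $h=1$ gives $T_fs\bigl((\eta_{X_1})_f\bigr)=0$ and $T_ft\bigl((\eta_{X_1})_f\bigr)=(\partial(\eta)_{X_0})_{t(f)}$; the first says $\eta_{X_1}$ is tangent to the source fibres and the second that $\eta_{X_1}\sim_t\partial(\eta)_{X_0}$. For right-invariance I first record the consequence $h*(f\circ g)=(h*f)\circ g$ of~\eqref{equation;natural2}: since $t(f\circ g)=t(f)$ we have $h*(f\circ g)=(h*u(t(f)))\circ f\circ g=(h*f)\circ g$. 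The curve $t\mapsto\exp(t\eta)*f$ lies in the single source fibre $s^{-1}(t(g))$, on which $R_g$ is defined, so differentiating this identity in $h$ yields $(\eta_{X_1})_{f\circ g}=T_fR_g\bigl((\eta_{X_1})_f\bigr)$, which is exactly right-invariance.

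Now put $w=\eta_{X_1}-\partial(\eta)_{X_1}$. Differentiating $t(g*f)=g\cdot t(f)$ and $s(g*f)=g\cdot s(f)$ from~\eqref{equation;functor1} shows $\partial(\eta)_{X_1}\sim_t\partial(\eta)_{X_0}$ and $\partial(\eta)_{X_1}\sim_s\partial(\eta)_{X_0}$. Combined with the relations for $\eta_{X_1}$, the target-derivatives of the two summands agree and cancel, so $Tt(w)=0$ and $w$ is tangent to the target fibres; subtracting the source-derivatives (the first vanishing, the second recovered from the $s$-equivariance in~\eqref{equation;functor1}) gives the stated $s$-relation of $w$ with $\partial(\eta)_{X_0}$. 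The crux is left-invariance. Equating the two expressions for $h*f$ in~\eqref{equation;natural2} and solving for $\partial(h)*f$ produces the conjugation formula
\[
\partial(h)*f=(h*u(t(f)))\circ f\circ(h*u(s(f)))\n,
\]
which displays the automorphism $L(\partial(h))$ as conjugation by the bisection $y\mapsto h*u(y)$. Differentiating this triple product at $h=1$ by the Leibniz rule — and using right-invariance to identify the derivative of the outer left factor $h*u(t(f))$ composed with $f$ as $(\eta_{X_1})_f$ — I obtain
\[
(\partial(\eta)_{X_1})_f=(\eta_{X_1})_f+T_{u(s(f))}L_f\bigl(Tj\,(\eta_{X_1})_{u(s(f))}\bigr),
\]
whence $w_f=T_{u(s(f))}L_f\bigl(\tau(s(f))\bigr)$ with $\tau(y)=-Tj\,(\eta_{X_1})_{u(y)}$.

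Since $(\eta_{X_1})_{u(y)}$ is tangent to the source fibre and $j$ fixes $u(y)$ while interchanging source and target fibres, $\tau(y)$ is tangent to the target fibre at $u(y)$; thus $w$ is the left-invariant extension of the section $\tau$ and is in particular left-invariant, completing the second assertion. I expect the main obstacle to be precisely this last computation: differentiating the conjugation formula cleanly requires the Leibniz rule for the tangent of groupoid multiplication together with control of $Tj$, and it is the application of $Tj$ — converting the source-fibre-tangent vector $\eta_{X_1}\circ u$ into a target-fibre-tangent one — that turns the right-invariance of $\eta_{X_1}$ into the left-invariance of $w$. The remaining steps are routine differentiations of the action axioms.
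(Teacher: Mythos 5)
Your proof is correct, and its first half (tangency to source fibres, $t$-relatedness, and right-invariance of $\eta_{X_1}$, obtained by differentiating $h*(f\circ g)=(h*f)\circ g$, equivalently $h*f=(h*u(t(f)))\circ f$) coincides with the paper's argument. Where you genuinely diverge is in the second assertion. The paper treats it as a mirror image of the first: it introduces the element $k=h\,\partial(h)\n=\partial(h)\n h\in G_1=H\rtimes G$, whose one-parameter family $k(t)=\exp(t\eta)\,\partial(\exp(t\eta))\n$ has velocity exactly $w=\eta_{X_1}-\partial(\eta)_{X_1}$, and reads off from \eqref{equation;natural1}--\eqref{equation;natural2} the three identities $s\circ L(k)=L(\partial(h)\n)\circ s$, $t\circ L(k)=t$, and $L(k)(f)=L(f)\bigl(L(k)(u(s(f)))\bigr)$; differentiating the last one gives left-invariance with no further work. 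You instead solve the two expressions in \eqref{equation;natural2} for $\partial(h)*f$, obtaining the conjugation formula $\partial(h)*f=(h*u(t(f)))\circ f\circ(h*u(s(f)))\n$, and differentiate the triple product, which forces you to invoke the Leibniz rule for the tangent of groupoid multiplication and the tangent of the inversion map. Your computation is sound --- the two varying factors stay in a fixed source fibre, respectively a fixed target fibre, so the Leibniz decomposition you use is legitimate, and the resulting formula $w_f=TL_f(\tau(s(f)))$ with $\tau(y)=-Tj\bigl((\eta_{X_1})_{u(y)}\bigr)$ does exhibit $w$ as the left-invariant extension of a section of $\ker(Tt)|_{u(X_0)}$ --- but the paper's choice of $k$ packages the same cancellation into a single group-theoretic identity and avoids both the inversion map and the product rule, at the price of hiding the ``conjugation by a bisection'' picture that your version makes explicit. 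One small caveat, inherited from the statement rather than introduced by you: the literal subtraction of source-derivatives gives $Ts(w)=-\partial(\eta)_{X_0}\circ s$, i.e.\ $w$ is $s$-related to $-\partial(\eta)_{X_0}$ (equivalently $\partial(\eta)_{X_1}-\eta_{X_1}\sim_s\partial(\eta)_{X_0}$); differentiating the paper's identity $s\circ L(k)=L(\partial(h)\n)\circ s$ produces the same sign, so this is a shared convention issue and not a defect of your argument relative to the paper's.
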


\begin{proof}
Let $h\in H$.  Property~\eqref{equation;natural1} can be restated as
$s\circ L(h)=s$ and $t\circ L(h)=L(\partial(h))\circ t$.  In other
words, $s\colon X_1\to X_0$ is $H$-invariant and $t\colon X_1\to X_0$
is equivariant with respect to the homomorphism $\partial\colon H\to
G$.  It follows that $\eta_{X_1}$ is tangent to the fibres of $s$ and
that the vector fields $\eta_{X_1}$ and $\partial(\eta)_{X_0}$ are
$t$-related.  Let $f\in X_1$ have source $s(f)=x$ and target $t(f)=y$.
Right composition with $f$, $R(f)(f')=f'\circ f$, defines a map
$R(f)\colon s^{-1}(y)\to s^{-1}(x)$, and it follows
from~\eqref{equation;natural2} that
$$L(h)(f)=R(f)\bigl(L(h)(u(y))\bigr).$$
Differentiating this identity with respect to $h$ yields
$\eta_{X_1,f}=R(f)_*(\eta_{X_1,u(y)})$, which tells us that
$\eta_{X_1}$ is right-invariant.  This proves the first assertion.
The second assertion is proved similarly, by considering the element
$k=\partial(h)^{-1}h=h\partial(h)^{-1}\in G_1=H\rtimes G$ and the left
composition map $L(f)\colon t^{-1}(y)\to t^{-1}(x)$, and by noting the
properties
$$
s\circ L(k)=L(\partial(h)^{-1})\circ s,\quad t\circ L(k)=t,\quad
L(k)(f)=L(f)\bigl(L(k)(u(x))\bigr),
$$
which follow from~\eqref{equation;natural1}
and~\eqref{equation;natural2}.
\end{proof}  

\begin{example}\label{example6}
The crossed module associated to the Lie $2$-group $G_\bu$ of
Example~\ref{example4} is the homomorphism $\tilde{N}\to G$ obtained
by composing the map $\tilde{N}\to N$ with the inclusion $N\to G$.
The action of $G$ on $\tilde{N}$ is trivial.
\end{example}

\subsection{Foliation \texorpdfstring{$2$-groups}{2-groups}}
\label{subsection;foliation-2group}

Let $G_\bu$ be a Lie $2$-group with crossed module
$(G=G_0,H,\partial,\alpha)$.  The subgroup $\partial(H)$ of $G_0$ is
normal, so
\[\Lie(t)(\ker(\Lie(s)))=\Lie(\partial)(\h)\]
is an ideal of $\g$, and $\g/\Lie(\partial)(\h)$ is a Lie algebra.

\begin{definition}
\label{definition;foliation2group}
A Lie $2$-group $G_\bu$ is a \emph{foliation $2$-group} if any of the
following equivalent conditions hold: (1)~$G_\bu$ is a foliation
groupoid; (2)~the homomorphism $\partial\colon H\to G$ has discrete
kernel; or (3)~the homomorphism $\Lie(\partial)\colon\h\to\g$ is
injective.  A foliation $2$-group $G_\bu$ is \emph{effective} if
$\partial$ is injective.  We say $G_\bu$ is an \emph{\'etale
  $2$-group} if either of the following equivalent conditions holds:
(1)~$G_\bu$ is an \'etale groupoid; or (2)~$H$ is discrete.
\end{definition}

When $G_\bu$ is a foliation $2$-group, we will consider $\h$ as an
ideal of $\g$.  Being a foliation $2$-group is a Morita invariant
property, and so is being effective.

We denote by $\xi_L\in\Vect(G)$ the left-invariant vector field on a
Lie group $G$ induced by a Lie algebra element $\xi\in\g$.  We call a
basic vector field $v_\bu=(v_0,v_1)$ on a foliation $2$-group $G_\bu$
\emph{left-invariant} if for $i=1$, $2$ the vector field $v_i$ is
invariant under the left multiplication action of $G_i$ on itself.
The left-invariant basic vector fields form a Lie subalgebra of
$\Vect_\bas(G_\bu)$ denoted by $\Vect_\bas(G_\bu)_L$.
\glossary{xil@$\xi_L$, left-invariant vector field on Lie group $G$
  induced by $\xi\in\Lie(G)$}%
\glossary{Vectbasl@$\Vect_\bas(G_\bu)_L$, left-invariant basic vector
  fields on Lie $2$-group $G_\bu$}%

We require some basic structural results on foliation $2$-groups.  The
first result says that the Lie $2$-algebra of a foliation $2$-group
$G_\bu$ is equivalent to the quotient Lie algebra $\g/\h$, and that
$\g/\h$ is isomorphic to the Lie algebra of left-invariant basic
vector fields on $G_\bu$.  Part~\eqref{item;left-invariant-commute} of
this result is a special case of
Lerman~\cite[Theorem~1.1]{lerman;vector-lie-2-group}, which says that
the Lie $2$-algebra of left-invariant multiplicative vector fields on
a strict Lie $2$-group $G_\bu$ is isomorphic to the Lie $2$-algebra
$\Lie(G_1)\rightrightarrows \Lie(G_0)$.

\begin{lemma}\label{lemma;liealg}
Let $G_\bu$ be a foliation $2$-group with crossed module
$(G,H,\partial,\alpha)$, where $G_0=G$ and $G_1=H\rtimes G$.  Let
$\pi\colon\g\to\g/\h$ be the quotient map.
\begin{enumerate}
\item\label{item;morita-algebra}
The map $\pi_\bu\colon \g_\bu \to \g/\h$, defined by
\[
\begin{tikzcd}[sep=large]
\g_1\ar[r,"\pi\circ\Lie(t)"]\ar[d,shift left]\ar[d,shift right]&
\g/\h\ar[d,shift left]\ar[d,shift right]
\\
\g_0\ar[r,"\pi"]&\g/\h
\end{tikzcd}
\]
is a Morita morphism of Lie $2$-algebras.
\item\label{item;morita-isomorphism}
Let $G_\bu'$ be a second foliation $2$-group with crossed module
$(G',H',\partial',\alpha')$. A Morita morphism of Lie $2$-groups
$\phi_\bu\colon G_\bu\to G_\bu'$ induces a Lie algebra isomorphism
$\g/\h\to\g'/\h'$.
\item\label{item;left-invariant}
The map $f\colon\g\to\Vect(G_0)\times\Vect(G_1)$ defined by
$f(\xi)=(\xi_L,(\Lie(u)\xi)_L)$ descends to a Lie algebra embedding
$\g/\h\inj\Vect_\bas(G_\bu)$, whose image is equal to
$\Vect_\bas(G_\bu)_L$.
\item\label{item;left-invariant-commute}
Let $\phi_\bu\colon G_\bu\to G'_\bu$ be a Morita morphism of Lie
$2$-groups.  Then the diagram
\[
\begin{tikzcd}
\g/\h\ar[r]\ar[d,"\cong"]&\Vect_\bas(G_\bu)\\
\g'/\h'\ar[r]&\Vect_\bas(G'_\bu)\ar[u,"\cong"']
\end{tikzcd}
\]
commutes.  Here the horizontal arrows are
from~\eqref{item;left-invariant}, the left vertical arrow is
from~\eqref{item;morita-isomorphism}, and the right vertical arrow is
from Proposition%
~\ref{proposition;vector-form-morita}\eqref{item;vector-morita}.
\end{enumerate}
\end{lemma}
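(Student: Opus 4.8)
The plan is to treat the four parts in order, since \eqref{item;morita-isomorphism} will rest on \eqref{item;morita-algebra}, while \eqref{item;left-invariant-commute} will combine \eqref{item;morita-isomorphism}, \eqref{item;left-invariant}, and Proposition~\ref{proposition;vector-form-morita}\eqref{item;vector-morita}. Throughout I would use the explicit action-groupoid description $s(h,g)=g$, $t(h,g)=\partial(h)g$, $u(g)=(1,g)$, so that after applying $\Lie$ one has $\Lie(s)(\eta,\xi)=\xi$, $\Lie(t)(\eta,\xi)=\Lie(\partial)(\eta)+\xi$, and $\Lie(u)(\xi)=(0,\xi)$ on the vector space $\g_1=\h\oplus\g$.

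For \eqref{item;morita-algebra} I would verify directly that $\pi_\bu$ is a morphism of Lie $2$-algebras to the identity groupoid $\g/\h\rightrightarrows\g/\h$: since $\Lie(\partial)(\eta)\in\h$, both $\pi\circ\Lie(s)$ and $\pi\circ\Lie(t)$ send $(\eta,\xi)$ to $\pi(\xi)$, so the composites with source and target agree and $\pi_\bu$ is a well-defined functor, which is a Lie algebra map on each level because $\pi$ and $\Lie(t)$ are. Essential surjectivity is immediate since $\pi$ is a linear surjection, hence a surjective submersion. For full faithfulness I would identify the relevant fibred product as $\{(\xi_1,\xi_2)\in\g\times\g\mid\xi_1-\xi_2\in\h\}$ and check that $(\eta,\xi)\mapsto(\xi,\Lie(\partial)(\eta)+\xi)$ is a linear isomorphism of $\g_1$ onto it; this is exactly where the foliation-$2$-group hypothesis enters, through the injectivity of $\Lie(\partial)$ with image $\h$.

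Part \eqref{item;morita-isomorphism} I would deduce formally. By Lemma~\ref{lemma;moritaequivliealg}\eqref{item;morita-lie-algebra} the map $\Lie(\phi_\bu)\colon\g_\bu\to\g'_\bu$ is Morita, so $\pi_\bu$ and $\pi'_\bu\circ\Lie(\phi_\bu)$ are both Morita morphisms out of $\g_\bu$ landing in the honest Lie algebras $\g/\h$ and $\g'/\h'$. Lemma~\ref{lemma;moritaequivliealg}\eqref{item;morita-lie-lie} then produces a unique Lie algebra map $\zeta\colon\g/\h\to\g'/\h'$ with $\zeta\circ\pi_\bu=\pi'_\bu\circ\Lie(\phi_\bu)$ and, applied with the roles reversed, a candidate inverse; uniqueness forces these to be mutually inverse, so $\zeta$ is an isomorphism. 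Unwinding the construction, $\zeta$ sends $\pi(\xi)$ to $\pi'(\Lie(\phi_0)(\xi))$, a description I would record for use in \eqref{item;left-invariant-commute}.

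The computational heart is \eqref{item;left-invariant}. The workhorse is the standard fact that for a Lie group homomorphism $\varphi\colon K\to K'$ and $\zeta\in\Lie(K)$ one has $\zeta_L\sim_\varphi(\Lie(\varphi)\zeta)_L$. First I would note that the foliation $\F_0$ of $G_0=G$ has $T\F_0$ spanned by the right-invariant fields coming from $\h$ (the image of the anchor at $g$ is $\{\zeta_R(g)\mid\zeta\in\h\}$), so a left-invariant field lies in the normalizer of $\Gamma(T\F_0)$ because left- and right-invariant fields commute; thus $\overline{\xi_L}\in\Vect_0(G_0,\F_0)$ makes sense. Applying the workhorse to the homomorphisms $s$ and $t$ with $\zeta=\Lie(u)\xi=(0,\xi)$ gives $(\Lie(u)\xi)_L\sim_s\xi_L$ and $(\Lie(u)\xi)_L\sim_t\xi_L$, which by Lemma~\ref{lemma;isomorphismpullbackbundles} translate into $s^*\overline{\xi_L}=\overline{(\Lie(u)\xi)_L}=t^*\overline{\xi_L}$; hence $f$ descends to $\bar f\colon\g\to\Vect_\bas(G_\bu)$, a Lie algebra homomorphism since $\xi\mapsto\xi_L$ is. For the kernel I would use $\xi_L(g)=(\Ad_g\xi)_R(g)$ to see that $\overline{\xi_L}=0$ iff $\Ad_g\xi\in\h$ for all $g\in G$; evaluating at $g=e$ gives $\ker\bar f\subseteq\h$, and the reverse inclusion is the $\Ad$-invariance of $\h$, which holds because $\partial(H)$ is a normal subgroup of $G$. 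Finally, the image is obviously contained in $\Vect_\bas(G_\bu)_L$, and conversely a left-invariant basic field $v_\bu$ is determined by $v_0(e)\in(N_0)_e=\g/\h$; lifting this value to $\xi\in\g$ and comparing left-invariant sections agreeing at $e$ shows $v_\bu=\bar f(\xi)$, so the image is exactly $\Vect_\bas(G_\bu)_L$.

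Part \eqref{item;left-invariant-commute} is then bookkeeping. Chasing $\pi(\xi)$ down the left edge via \eqref{item;morita-isomorphism} gives $\pi'(\Lie(\phi_0)\xi)$, the bottom map sends this to $\overline{(\Lie(\phi_0)\xi)_L}$, and commutativity reduces to $\phi_\bu^*\,\overline{(\Lie(\phi_0)\xi)_L}=\overline{\xi_L}$. Since $\phi_\bu^*$ is the normal-bundle pullback of Proposition~\ref{proposition;vector-form-morita}\eqref{item;vector-morita}, this is the statement $\overline{\xi_L}\sim_{\phi_0}\overline{(\Lie(\phi_0)\xi)_L}$, which again follows from the workhorse applied to the homomorphism $\phi_0$. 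I expect the genuine obstacle to lie in \eqref{item;left-invariant}: pinning down the kernel as exactly $\h$ for possibly disconnected $G$ needs the full normality of $\partial(H)$ (not merely the ideal property, which only controls the identity component), and identifying the image requires the left-invariance argument; the relatedness lemma for homomorphisms is what makes both the well-definedness in \eqref{item;left-invariant} and the compatibility in \eqref{item;left-invariant-commute} fall out cleanly.
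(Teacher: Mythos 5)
Your proposal is correct and follows essentially the same route as the paper's proof: the explicit verification of the morphism property and full faithfulness in \eqref{item;morita-algebra}, the formal deduction of \eqref{item;morita-isomorphism} from Lemma~\ref{lemma;moritaequivliealg}, the $s$- and $t$-relatedness of $(\Lie(u)\xi)_L$ to $\xi_L$ together with normality of $\partial(H)$ for \eqref{item;left-invariant}, and the determination of left-invariant basic fields by their value at the identity (equivalently, $\phi_0$-relatedness of left-invariant fields) for \eqref{item;left-invariant-commute}. Your variations --- computing the kernel via $\xi_L(g)=(\Ad_g\xi)_R(g)$ and $\Ad$-invariance of $\h$ rather than directly via $T_g\F=(L_g)_*\h$, and explicitly supplying the surjectivity onto $\Vect_\bas(G_\bu)_L$ and the inverse of $\zeta$ --- are only finer write-ups of steps the paper leaves implicit.
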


\begin{proof}
For simplicity, we write $s=\Lie(s)$ for the source map of $\g_\bu$,
and similarly for the other structure maps.

\eqref{item;morita-algebra}~Since $\pi$ and $t$ are Lie algebra
homomorphisms, to show that $\pi_\bu$ is a homomorphism of Lie
$2$-algebras it suffices to show $t(\xi)-s(\xi)\in t(\ker s)=\h$ for
all $\xi\in\g_1$.  Indeed, $\xi-u(s(\xi))\in\ker s$ and
\[(t-s)(\xi)=(t-s)(\xi-u(s(\xi)))=t(\xi-u(s(\xi))).\]
Next we show that $\pi_\bu$ is a Morita morphism.  Essential
surjectivity is automatic since $\pi$ is a surjective linear map.  For
full faithfullness it is enough to show that the canonical map
$(s,t)\colon \g_1\to\g_0\times_{\g_0/\h}\g_0$ is a linear isomorphism.
To show the map is injective assume $s(\xi)=t(\xi)=0$.  Then
$\xi\in\ker(\partial\colon\h\to\g_0),$ which is $0$ since $G_\bu$ is
assumed to be a foliation groupoid.  So $\xi=0$ and the map is
injective.  Surjectivity follows from counting dimensions, as in the
proof of Lemma~\ref{lemma;moritaequivliealg}.

\eqref{item;morita-isomorphism}~By
Lemma~\ref{lemma;moritaequivliealg}\eqref{item;morita-lie-algebra},
applying the $\Lie$ functor to $\phi_\bu$ gives a Morita morphism
\[
\Lie(\phi_\bu) \colon \g_\bu \to \g'_\bu
\] 
of Lie $2$-algebras. Composing with the projection $\pi'_\bu\colon
\g'_\bu \to \g'/\h'$, by part~\eqref{item;morita-algebra} we have a
Morita morphism of Lie $2$-algebras $\g_\bu \to \g'/\h'$. We also have
the Morita morphism $\g_\bu \to \g'/\h'$. By
Lemma~\ref{lemma;moritaequivliealg}\eqref{item;morita-lie-lie},
$\pi'_\bu \circ \Lie(\phi_\bu)$ descends to a Lie algebra isomorphism
$ \g/\h\cong \g'/\h'$.

\eqref{item;left-invariant}~For $\xi\in\g$ we have
$s(u(\xi)_L)=t(u(\xi)_L)=\xi_L$, so $u(\xi)_L\sim_s\xi_L$ and
$u(\xi)_L\sim_t\xi_L$.  The leaves of the foliation $\F$ of $G$ are
the connected components of the cosets of the immersed normal subgroup
$\partial(H)$.  Therefore the flow of $\xi_L$ maps leaves to leaves,
and $\xi_L$ is in the normalizer of the subalgebra $\Gamma(T\F)$ of
$\Vect(G)$.  This shows that $f(\xi)$ defines a left-invariant element
$\hat{f}(\xi)\in\Vect_\bas(G_\bu)_L$.  We have $\hat{f}(\xi)=0$ if and
only if $\xi_L$ is tangent to the leaves of $\F$ if and only if
$\xi\in\h$.  So $\hat{f}\colon\g\to\Vect_\bas(G_\bu)$ induces an
isomorphism $\g/\h\cong\Vect_\bas(G_\bu)_L$.

\eqref{item;left-invariant-commute}~Since the basic vector fields in
the image of the embeddings from~\eqref{item;left-invariant} are left
invariant, they are uniquely determined by their value at the group
unit~$1$.  Write $N_0=N_0(G_\bu)$ and $N_0'=N_0(G_\bu')$ for the
normal bundles of $\F_0(G_\bu)$ and $\F_0(G'_0)$, respectively.  Then
$N_0|_1=\g/\h$ and $N_0'|_1=\g'/\h'$, and the pullback of $N_0'|_1$ to
$N_0|_1$ is just the inverse of the isomorphism $\g/\h\cong\g'/\h'$,
and the assertion follows.
\end{proof}

In the remainder of this section we will typically identify the Lie
$2$-algebra $\Lie(G_\bu)$ of a foliation $2$-group with the quotient
Lie algebra $\g/\h$.  We will without further mention apply
Lemma~\ref{lemma;morita-crossed}, which allows us to define Morita
morphisms of $2$-groups in terms of their crossed modules.  We will
also use the restriction, extension, and quotient Morita morphisms of
Remark~\ref{remark;morita}.

\begin{lemma}\label{lemma;etale}
Every foliation $2$-group $G_\bu$ is Morita equivalent to an \'etale
$2$-group $G_\bu'$ with the property that the identity component of
$G_0'$ is simply connected.
\end{lemma}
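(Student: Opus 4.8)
The plan is to work entirely with the crossed module $(G,H,\partial,\alpha)$ associated to $G_\bu$, for which the foliation hypothesis says exactly that $\ker(\partial)$ is discrete, and to identify $\h$ with the ideal $\Lie(\partial)(\h)\trianglelefteq\g$. I would produce the desired $G_\bu'$ by a zigzag built from the three Morita moves of Remark~\ref{remark;morita}: first an \emph{extension} move along a covering homomorphism, and then a \emph{quotient} move. The guiding idea is that $G_\bu$ fails to be étale precisely because the connected normal immersed subgroup $\partial(H)^0=\exp(\h)$ has positive dimension; if this subgroup were \emph{closed} I could simply divide it out by a quotient move and land on an étale $2$-group. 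The role of the covering move is to arrange this closedness while simultaneously setting up the simple-connectivity of the identity component.

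Concretely, I would first choose a covering homomorphism $\phi\colon\hat G\to G$ whose identity component $\hat G^0$ is simply connected, and form the pullback crossed module $(\hat G,\hat H,\hat\partial,\hat\alpha)$ with $\hat H=H\times_G\hat G$; this is the extension move, so $\hat G_\bu$ is Morita equivalent to $G_\bu$, and $\ker(\hat\partial)\cong\ker(\partial)$ stays discrete, whence $\hat G_\bu$ is again a foliation $2$-group. Since $\hat G^0$ is simply connected and $\hat\h\cong\h$ is an ideal, the connected normal subgroup $N:=\exp(\hat\h)=\partial(\hat H)^0$ is now closed in $\hat G$: indeed the projection $\hat\g\to\g/\h$ integrates on the simply connected $\hat G^0$ to a homomorphism onto the simply connected group with Lie algebra $\g/\h$, and $N$ is its (closed) kernel. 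As $N$ is closed, normal, and contained in $\partial(\hat H)$, the quotient move applies and yields $(\bar G,\bar H,\bar\partial,\bar\alpha)$ with $\bar G=\hat G/N$ and $\bar H=\hat H/\hat\partial^{-1}(N)$. A short Lie-algebra computation gives $\Lie(\bar H)=\hat\h/\hat\h=0$, so $\bar H$ is discrete and $\bar G_\bu$ is étale; and $\bar G^0=\hat G^0/N$ is simply connected because the homotopy sequence of the fibration $N\to\hat G^0\to\hat G^0/N$ reads $0=\pi_1(\hat G^0)\to\pi_1(\bar G^0)\to\pi_0(N)=0$. Thus $\bar G_\bu$ is an étale $2$-group with simply connected identity component, Morita equivalent to $G_\bu$.

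The main obstacle is the very first step: constructing the covering $\phi\colon\hat G\to G$ with $\hat G^0$ simply connected when $G$ is disconnected. On the identity component one takes the universal cover $\widetilde{G^0}\to G^0$, and the conjugation action of $G$ on $G^0$ lifts canonically (automorphisms of $G^0$ lift to $\widetilde{G^0}$) to an action of $G$ on $\widetilde{G^0}$; the task is then to assemble these into a single Lie group $\hat G$ covering $G$ with $\hat G^0=\widetilde{G^0}$, that is, to lift the extension $1\to G^0\to G\to\pi_0(G)\to1$ to an extension of $\pi_0(G)$ by $\widetilde{G^0}$. This is where the real work lies; the remaining verifications — that the covering and quotient moves preserve the foliation and étale conditions, and the homotopy computation above — are routine. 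I would also note that the two clauses of the conclusion are compatible for a structural reason: the quotient move that achieves étaleness divides a simply connected group by a connected subgroup, which keeps the identity component simply connected.
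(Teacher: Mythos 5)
Your argument is correct and is essentially the paper's proof: an extension move along a covering $\hat G\to G$ whose identity component is simply connected, followed by a quotient move by the closed connected normal subgroup $N$ integrating $\h$. The two points you work out by hand are handled in the paper by citation --- the existence of such a covering for disconnected $G$ is \cite[Corollary~5.6]{brown-mucuk;covering-groups}, and the closedness of $N$ together with the simple connectivity of the identity component of $\hat G/N$ is \cite[Proposition~III.6.14]{bourbaki;groupes-algebres} --- so the step you flag as ``the real work'' is available off the shelf.
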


\begin{proof}
Let $(G,H,\partial,\alpha)$ be the crossed module of $G_\bu$, then
$\ker(\partial)$ is discrete.  We must show that
$(G,H,\partial,\alpha)$ is Morita equivalent to a crossed module
$(G',H',\partial',\alpha')$ where the identity component of $G'$ is
simply connected $H'$ is discrete.  Let $\phi\colon\hat{G}\to G$ be a
surjective \'etale homomorphism, where $\hat{G}$ is simply connected
(this exists even if $G$ is not connected;
see~\cite[Corollary~5.6]{brown-mucuk;covering-groups}). Let
$\hat{H}=H\times_G\hat{G}$ be the pullback cover.  The extension
$(\hat{G},\hat{H},\hat\partial,\hat\alpha)\to(G,H,\partial,\alpha)$ is
a Morita morphism.  Let $N$ be the identity component of
$\hat\partial(\hat{H})$.  Then $N$ is the connected immersed normal
subgroup of $\hat{G}$ with Lie algebra $\h$.  It follows
from~\cite[Proposition~III.6.14]{bourbaki;groupes-algebres} that $N$
is closed and the identity component of the quotient $G'=\hat{G}/N$ is
simply connected.  So we can form the quotient crossed module
$(G',H',\partial',\alpha')$ with $H'=\hat{H}/\hat\partial^{-1}(N)$,
and we have a Morita morphism
$(\hat{G},\hat{H},\hat\partial,\hat\alpha)\to
(G',H',\partial',\alpha')$.  In conclusion we have defined a zigzag of
Morita morphisms
\[
\begin{tikzcd}
(G,H,\partial,\alpha)&(\hat{G},\hat{H},\hat\partial,\hat\alpha)
  \ar[l]\ar[r]&(G',H',\partial',\alpha')
\end{tikzcd}
\]
with the identity component of $G'$ simply connected and $H'$
discrete.
\end{proof}

For any foliation $2$-group $G_\bu$, the quotient
$G_0/\bar{G}_1=G\big/\overline{\partial(H)}$ (i.e.\ the largest
Hausdorff quotient of the coarse quotient group $G_0/G_1$) is a Lie
group, which is a Morita invariant of $G_\bu$.  Compactness properties
of this Lie group translate into compactness properties of the Lie
algebra of $G_\bu$.

\begin{lemma}\label{lemma;etale-compact}
Let $G_\bu$ be an \'etale $2$-group.  If the Lie group $G_0/\bar{G}_1$
is compact, then the Lie algebra of $G_\bu$ is compact.
\end{lemma}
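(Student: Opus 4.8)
The plan is to produce an inner product on $\g=\Lie(G_0)$ that is invariant under the adjoint action of $G_0$; the existence of such a form is the standard criterion for $\g$ to be a compact Lie algebra. First I would record the reduction built into the hypotheses: since $G_\bu$ is \'etale, its crossed module $(G_0,H,\partial,\alpha)$ has $H$ discrete, so $\h=0$, and under the running identification the Lie algebra of $G_\bu$, namely $\g/\h$, is simply $\g=\Lie(G_0)$. Thus the entire content of the lemma is that $\g$ is compact, given that $G_0/\bar G_1=G_0/\overline{\partial(H)}$ is a compact Lie group.

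The key step, which is what makes the hypothesis usable, is to show that $\bar G_1=\overline{\partial(H)}$ acts trivially in the adjoint representation $\Ad\colon G_0\to\GL(\g)$. Because $H$ is discrete, the action map $G_0\times H\to H$ defining $\alpha$ is continuous into a discrete space, so for each $h\in H$ the map $g\mapsto\alpha(g)(h)$ is locally constant, hence constant on the identity component $G_0^\circ$; as it sends $1$ to $h$, the automorphism $\alpha(g)$ is the identity for every $g\in G_0^\circ$. Feeding this into the crossed-module identity $\partial(\alpha(g)(h))=g\partial(h)g^{-1}$ gives $g\partial(h)g^{-1}=\partial(h)$ for all $g\in G_0^\circ$ and $h\in H$, so $G_0^\circ$ centralises $\partial(H)$ and, the centraliser being closed, also its closure $\bar G_1$. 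Since an element centralising $G_0^\circ$ restricts to the identity on $\Lie(G_0^\circ)=\g$, this shows $\bar G_1\subseteq\ker(\Ad)$.

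Finally I would assemble these observations. As $\bar G_1$ is a closed normal subgroup contained in $\ker(\Ad)$, the adjoint representation descends to a continuous homomorphism $G_0/\bar G_1\to\GL(\g)$; because $G_0/\bar G_1$ is compact by hypothesis, its image $\Ad(G_0)$ is a compact subgroup of $\GL(\g)$ and therefore preserves some inner product on $\g$ (average an arbitrary one over Haar measure). Differentiating the invariance of this inner product under $\Ad(\exp(t\xi))$ at $t=0$ shows that every operator $\ad_\xi$ is skew-symmetric, which is exactly the condition that $\g$ be compact. The main obstacle to flag is the tempting shortcut of arguing purely at the Lie-algebra level from the compactness of $\g/\h$: this is \emph{false}, since a central extension such as the Heisenberg algebra has compact abelian quotient yet is not compact. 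It is therefore essential to use the compactness of $G_0/\bar G_1$ at the group level through the adjoint representation, which is legitimate precisely because the second step places $\bar G_1$ inside its kernel.
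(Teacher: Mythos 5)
Your proof is correct. It rests on the same two pillars as the paper's argument: (i) the closure $\bar G_1=\overline{\partial(H)}$ is central enough that $\Ad$ factors through the compact quotient $G_0/\bar G_1$, and (ii) an averaging argument over that compact quotient. The differences lie in how each pillar is realized. For (i), the paper assumes without loss of generality that $G_0$ is connected and invokes the fact that a discrete normal subgroup of a connected group is central; you instead derive centrality from the crossed-module axiom $\partial({}^gh)=g\partial(h)g^{-1}$ together with local constancy of $g\mapsto{}^gh$, and you only need $\bar G_1$ to centralize $G_0^\circ$ to conclude $\bar G_1\subseteq\ker(\Ad)$ --- so your version handles disconnected $G_0$ directly, without the reduction. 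For (ii), the paper averages a linear section to split the central extension $\bar\nnn\to\g\to\lie{k}$ and concludes that $\g\cong\bar\nnn\oplus\lie{k}$ is compact, whereas you average an inner product and invoke the criterion that a Lie algebra admitting an $\ad$-invariant inner product is compact. The two averaging steps are interchangeable; yours is slightly more direct, while the paper's yields the explicit decomposition into abelian and compact summands. One small slip in your closing remark: for an \'etale $2$-group one has $\h=0$, so the quotient you mean to warn about is $\g/\Lie(\bar G_1)$ (the Lie algebra of $G_0/\bar G_1$), not $\g/\h$; with that correction, the Heisenberg example does make the right point that compactness of the quotient Lie algebra alone is insufficient, which is exactly why the group-level compactness of $G_0/\bar G_1$ must be used.
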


\begin{proof}
Let $(G,H,\partial,\alpha)$ be the crossed module of $G_\bu$.  We may
assume without loss of generality that $G$ is connected.  Let $N$ be
the immersed subgroup $\partial(H)$ of $G$, let $\bar{N}$ be its
closure, and let $\nnn=\Lie(N)$ and $\bar\nnn=\Lie(\bar{N})$.  Since
$N$ is $0$-dimensional and normal, it is central in $G$, so $\bar{N}$
is central in $G$, so $\bar\nnn$ is a central subalgebra of $\g$.
Since $K=G/\bar{N}$ is compact Lie and its Lie algebra is
$\g/\bar\nnn$, the central extension of Lie algebras
$
\begin{tikzcd}[cramped,sep=small]
\bar\nnn\ar[r,hook]&\g\ar[r,two heads]&\lie{k}
\end{tikzcd}
$
is split.  (To produce a splitting, start with an arbitrary linear
right inverse $s$ of the projection $\g\to\lie{k}$.  The adjoint
action of $G$ on $\g$ descends to an action of $K$, and the map
$\bar{s}=\int_K\Ad_k\circ s\circ\Ad_k^{-1}\,dk$, where $dk$ is
normalized Haar measure on $K$, is a Lie algebra splitting of $p$.)
This shows that $\g\cong\bar\nnn\oplus\lie{k}$ is compact.
\end{proof}

These lemmas can be made more precise when the coarse quotient
$G_0/G_1$ is connected.

\begin{definition}\label{definition;connected-compact}
Let $G_\bu$ be a foliation $2$-group.  We say $G_\bu$ is
\emph{base-connected} (resp. \emph{base-simply connected}) if $G_0$ is
connected (resp. if $G_0$ is simply connected).  We say $G_\bu$ is of
\emph{compact type} if the Lie algebra $\g_0$ is a compact Lie algebra
and the coarse quotient $G_0/G_1$ is a compact topological space.  We
say $G_\bu$ is \emph{base-compact} if $G_0$ is compact.
\end{definition}

\begin{lemma}\label{lemma;connected-etale}
Let $G_\bu$ be a foliation $2$-group.  The following conditions are
equivalent.
\begin{enumerate}
\item\label{item;connected}
$G_0/G_1$ is connected;
\item\label{item;closure-connected}
$G_0/\bar{G}_1$ is connected;
\item\label{item;base-connected}
the inclusion $G_\bu'\inj G_\bu$ is a Morita morphism, where $G_\bu'$
is the full subgroupoid obtained by restricting $G_\bu$ to the
identity component $G_0'$ of $G_0$;
%
\item\label{item;base-1-connected-etale}
$G_\bu$ is Morita equivalent to a base-simply connected \'etale
  $2$-group.
\end{enumerate}
\end{lemma}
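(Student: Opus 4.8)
The plan is to reduce all four conditions to the single group-theoretic condition $(\ast)$: $G=\partial(H)\cdot G_0'$, where $(G,H,\partial,\alpha)$ is the crossed module of $G_\bu$, $N=\partial(H)$ is the immersed normal subgroup, and $G_0'$ is the identity component of $G=G_0$. First I would dispose of \eqref{item;connected}$\iff$\eqref{item;closure-connected}. Since $G$ is a manifold, $G_0'$ is an open, hence closed, subgroup, and as $N$ is normal the set $N\cdot G_0'$ is an open (therefore closed) subgroup. Because every open subgroup of $G$ contains the identity component, the preimage under the open quotient map $q\colon G\to G/N=G_0/G_1$ of the identity component $(G/N)^0$ is exactly $N\cdot G_0'$; thus $G/N$ is connected iff $(\ast)$ holds. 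As $N\cdot G_0'$ is closed and contains $N$, it contains $\bar N=\overline{\partial(H)}$, so $\bar N\cdot G_0'=N\cdot G_0'$, and the identical argument applied to $\bar N$ shows $G/\bar N=G_0/\bar G_1$ is connected iff $\bar N\cdot G_0'=G$ iff $(\ast)$. This gives \eqref{item;connected}$\iff$\eqref{item;closure-connected}, both equivalent to $(\ast)$.

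Next I would treat \eqref{item;connected}$\iff$\eqref{item;base-connected}. Here $G_\bu$ is the action groupoid of $H$ acting on $G$ by left translation through $\partial$, so the orbit of $g\in G_0$ is the coset $Ng$. The restriction $G_\bu'$ of $G_\bu$ to the open subset $G_0'$ is the pullback groupoid along the open embedding $\iota_0\colon G_0'\inj G_0$ (Definition~\ref{definition;pullbackgroupoid}); this embedding is transverse to the orbit foliation, so by Lemma~\ref{lemma;transversal} the induced morphism $G_\bu'\to G_\bu$ is fully faithful and is a Morita morphism precisely when $\iota_0$ is complete, i.e.\ when $G_0'$ meets every orbit. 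Since $G_0'\cap Ng\neq\emptyset$ for all $g$ is equivalent to $G=N\cdot G_0'$, we obtain \eqref{item;base-connected}$\iff(\ast)\iff$\eqref{item;connected}.

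For \eqref{item;base-1-connected-etale} I would use \eqref{item;base-connected}: we then have $G_\bu\simeq G_\bu'$, a foliation $2$-group whose base $G_0'$ is connected. Rerunning the construction in the proof of Lemma~\ref{lemma;etale} on $G_\bu'$, the universal cover $\hat G\to G_0'$ is now \emph{connected} and simply connected; after pulling back the crossed module and quotienting by the identity component $N'$ of the image of the connecting map (a connected closed normal subgroup), the resulting \'etale $2$-group has base $\hat G/N'$. This base is connected, being a continuous image of $\hat G$, and the homotopy exact sequence of the fibration $N'\to\hat G\to\hat G/N'$, with $\pi_1(\hat G)=0$ and $\pi_0(N')=0$, forces $\pi_1(\hat G/N')=0$; hence the base is simply connected. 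This base-simply connected \'etale $2$-group is Morita equivalent to $G_\bu'$, and so to $G_\bu$, proving \eqref{item;base-1-connected-etale}. Conversely, if \eqref{item;base-1-connected-etale} holds, choose a base-simply connected \'etale $2$-group $K_\bu\simeq G_\bu$; its base $K_0$ is connected, so the coarse quotient $K_0/K_1$ is a continuous image of a connected space and hence connected. As the coarse quotient is a Morita invariant (see \S\,\ref{subsection;2-group}), $G_0/G_1\cong K_0/K_1$ is connected, giving \eqref{item;connected}.

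The one place needing care is the implication into \eqref{item;base-1-connected-etale}: Lemma~\ref{lemma;etale} only guarantees that the identity component of the base becomes simply connected, so the hard part is upgrading this to full \emph{base}-simple-connectedness. This is exactly where condition \eqref{item;base-connected} earns its keep, since it provides a Morita representative with connected base; once the base is connected the universal cover is connected, and the quotient of a simply connected group by a connected closed normal subgroup remains simply connected by the homotopy long exact sequence. Everything else is routine topological-group and groupoid bookkeeping.
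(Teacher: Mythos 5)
Your proof is correct and follows essentially the same route as the paper's: restrict to the identity component using the condition $G=\partial(H)G_0'$, then pass to the universal cover and quotient by the identity component of the image of the connecting map, exactly as in the proof of Lemma~\ref{lemma;etale}. The only differences are organizational (you route everything through the single condition $(\ast)$ and prove \eqref{item;base-connected} as a biconditional via Lemma~\ref{lemma;transversal} rather than via the restricted crossed module of Remark~\ref{remark;morita}, and you use the homotopy exact sequence where the paper cites Bourbaki for the simple connectivity of $\hat{G}/N'$ --- note that the closedness of $N'$, which you assert parenthetically, is precisely what that citation supplies).
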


\begin{proof}
Let $(G,H,\partial,\alpha)$ be the crossed module of $G_\bu$.  The
equivalence~\eqref{item;connected}$\iff$\eqref{item;closure-connected}
is straightforward.

\eqref{item;connected}$\implies$\eqref{item;base-connected}:~If
$G/\partial(H)$ is connected, then $G=\partial(H)G'$, where $G'$ is
the identity component of $G$.  Therefore the inclusion
$(G',H',\partial',\alpha')\to(G,H,\partial,\alpha)$ is a Morita
morphism, where $(G',H',\partial',\alpha')$ is the restricted crossed
module with $H'=\partial^{-1}(H\cap G')=H\times_GG'$.

\eqref{item;base-connected}$\implies$%
\eqref{item;base-1-connected-etale}:~Given the Morita morphism of
crossed modules $(G',H',\partial',\alpha')\inj (G,H,\partial,\alpha)$,
we let $\tilde{G}$ be the universal cover of $G'$, and as in the proof
of Lemma~\ref{lemma;etale} we obtain a base-simply connected crossed
module $(\tilde{G},\tilde{H},\tilde\partial,\tilde\alpha)$ and a
Morita morphism $(\tilde{G},\tilde{H},\tilde\partial,\tilde\alpha)\to
(G',H',\partial',\alpha')$.  Next we let $N$ the identity component of
$\tilde\partial(\tilde{H})$, we form the quotient crossed module
$(G'',H'',\partial'',\alpha'')$ with $G''=\tilde{G}/N$, and we obtain
a zigzag of Morita morphisms
\[
\begin{tikzcd}
(G,H,\partial,\alpha)&(G',H',\partial',\alpha')\ar[l,hook']&
  (\tilde{G},\tilde{H},\tilde\partial,\tilde\alpha)
  \ar[l]\ar[r]&(G'',H'',\partial'',\alpha''),
\end{tikzcd}
\]
where $G''$ is simply connected and $H''$ is discrete.

\eqref{item;base-1-connected-etale}$\implies%
$\eqref{item;connected}:~If $G_\bu\simeq G_\bu''$ with $G_\bu''$
base-connected, then $G/\partial(H)=G''/\partial''(H'')$ is connected.
\end{proof}

Under additional compactness assumptions we have the following result,
which is a $2$-group analogue of the fact that every compact connected
Lie group is up to a covering isomorphic to a product of a torus and a
simply connected compact Lie group.

\begin{proposition}\label{proposition;compact-connected}
Let $G_\bu$ be a foliation $2$-group.  The following conditions are
equivalent.
\begin{enumerate}
\item\label{item;compact-connected}
$G_0/G_1$ is compact and connected;
\item\label{item;closure-compact-connected}
$G_0/\bar{G}_1$ is compact and connected;
\item\label{item;base-compact-connected}
%
$G_\bu$ is Morita equivalent to a base-simply connected \'etale
  $2$-group of compact type;
\item\label{item;base-compact}
$G_\bu$ is Morita equivalent to a base-compact and base-connected
  \'etale $2$-group.
\end{enumerate}
\end{proposition}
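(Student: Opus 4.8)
The plan is to establish the cycle (i)$\Rightarrow$(iii)$\Rightarrow$(iv)$\Rightarrow$(i) and fold in (ii) via (i)$\Leftrightarrow$(ii), using throughout two Morita-invariance facts already recorded: the coarse quotient $G_0/G_1$ is a Morita invariant topological group, and $G_0/\bar G_1$ is a Morita invariant Lie group. I will also freely use Lemma~\ref{lemma;connected-etale}, which tells us that the connectedness halves of all four conditions coincide and that any foliation $2$-group with connected coarse quotient is Morita equivalent to a base-simply-connected \'etale one. Write $(G,H,\partial,\alpha)$ for the crossed module of $G_\bu$, so $G=G_0$, $\ G_0/G_1=G/\partial(H)$, and $G_0/\bar G_1=G/\overline{\partial(H)}$.

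For (i)$\Leftrightarrow$(ii): the Hausdorffification $G/\partial(H)\to G/\overline{\partial(H)}$ is a continuous surjective homomorphism, so it carries a compact connected group to a compact connected group, giving (i)$\Rightarrow$(ii). For the converse, connectedness is supplied by Lemma~\ref{lemma;connected-etale}, and the compactness rests on the observation that $\partial(H)$ is dense in $\overline{\partial(H)}$. A dense subgroup $D$ of any topological group $A$ makes $A/D$ indiscrete: a nonempty proper $D$-saturated open set would have a nonempty closed $D$-saturated complement containing a dense $D$-coset, which is impossible. Hence $\overline{\partial(H)}/\partial(H)$ is compact; since quotient maps of (locally compact) groups are open, $\partial(H)$ is cocompact in $\overline{\partial(H)}$, and combining a compact $C_1$ with $C_1\overline{\partial(H)}=G$ and a compact $C_2\subseteq\overline{\partial(H)}$ with $C_2\partial(H)=\overline{\partial(H)}$ yields a compact set whose image is all of $G/\partial(H)$. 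Thus (ii)$\Rightarrow$(i).

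The easy corners of the main cycle are (i)$\Rightarrow$(iii) and (iv)$\Rightarrow$(i). For the former, Lemma~\ref{lemma;connected-etale} replaces $G_\bu$ by a Morita equivalent base-simply-connected \'etale $2$-group $G_\bu'$; its coarse quotient $G_0'/G_1'$ is compact by Morita invariance, its Hausdorffification is compact by the previous paragraph, so Lemma~\ref{lemma;etale-compact} shows the Lie algebra $\g_0'$ is compact, i.e.\ $G_\bu'$ is of compact type. For (iv)$\Rightarrow$(i), if $G_\bu$ is Morita equivalent to a base-compact, base-connected \'etale $2$-group, then its coarse quotient is a continuous image of a compact connected group, hence compact and connected, and this transfers back by Morita invariance.

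The substantial step is (iii)$\Rightarrow$(iv), and its delicate point is the extraction of a closed cocompact lattice lying inside $\partial'(H')$. Starting from a base-simply-connected \'etale $2$-group $G_\bu'$ of compact type, the structure theorem for a simply connected group with compact Lie algebra gives $G_0'\cong\R^k\times K$ with $K$ compact and simply connected; the $0$-dimensional normal subgroup $N'=\partial'(H')$ is then central, hence contained in $\R^k\times Z(K)$ with $Z(K)$ finite. Compactness of $G_0'/N'$ forces $N'$ to be cocompact in $G_0'$, so its projection to $\R^k$ is a cocompact subgroup and therefore spans $\R^k$; choosing $k$ elements of $N'$ whose $\R^k$-components are linearly independent and raising them to the power $|Z(K)|$ clears the finite $K$-components and produces a full-rank lattice $\Lambda\subseteq N'\cap(\R^k\times\{1\})$. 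This $\Lambda$ is a closed central subgroup of $G_0'$ contained in $\partial'(H')$ with $G_0'/\Lambda\cong\T^k\times K$ compact, so the quotient crossed-module construction (third case of Remark~\ref{remark;morita}) yields a Morita morphism from $G_\bu'$ to a base-compact, base-connected \'etale $2$-group, which is (iv). I expect the trade of the \emph{a priori} non-closed lifts for honest lattice vectors, via killing the finite part $Z(K)$, to be the part that needs the most care, as it is exactly what forces the use of both hypotheses (compact Lie algebra and simple connectivity) on $G_0'$.
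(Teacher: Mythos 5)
Your proof is correct, and its substantial step, (iii)$\Rightarrow$(iv), is essentially the paper's argument: write the base-simply connected group of compact type as $E\times K$ with $K$ compact, note that $N=\partial(H)$ is a countable normal, hence central, subgroup, extract from it a full-rank cocompact lattice, and pass to the quotient crossed module of Remark~\ref{remark;morita}. The differences are organizational plus one genuine addition. The paper runs the single cycle (i)$\Rightarrow$(ii)$\Rightarrow$(iii)$\Rightarrow$(iv)$\Rightarrow$(i), whereas you prove (i)$\Leftrightarrow$(ii) separately and then cycle (i)$\Rightarrow$(iii)$\Rightarrow$(iv)$\Rightarrow$(i). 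Since ``compact type'' requires compactness of the coarse quotient $G_0/G_1$ and not merely of its Hausdorff quotient $G_0/\bar{G}_1$, your direct argument for (ii)$\Rightarrow$(i) --- that $\partial(H)$ is cocompact in its closure because a dense subgroup has indiscrete, hence compact, quotient, combined with openness of quotient maps and local compactness --- supplies exactly the detail that the paper's (ii)$\Rightarrow$(iii) step leaves implicit, so this is a worthwhile refinement. Finally, in the lattice-extraction step you raise the lifts $n_i$ to the power $\lvert Z(K)\rvert$ to land in $E\times\{1\}$; the paper simply takes the subgroup generated by the $n_i$ themselves, which is already discrete, closed, and cocompact because it injects onto a full lattice in $E$ while its $K$-components lie in the finite group $Z(K)$. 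Your extra step is harmless but not needed.
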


\begin{proof}
Let $(G,H,\partial,\alpha)$ be the crossed module of $G_\bu$.  The
implication~\eqref{item;compact-connected}$\implies$%
\eqref{item;closure-compact-connected} is obvious.

\eqref{item;closure-compact-connected}$\implies%
$\eqref{item;base-compact-connected}:~Suppose
$G\big/\overline{\partial(H)}$ is compact and connected.  By
Lemma~\ref{lemma;connected-etale} we may assume without loss of
generality that $G_\bu$ is base-simply connected and \'etale.  It then
follows from Lemma~\ref{lemma;etale-compact} that the Lie algebra $\g$
is compact.

\eqref{item;base-compact-connected}$\implies%
$\eqref{item;base-compact}:~Suppose that $G_\bu$ is base-simply
connected, \'etale, and of compact type.  Then $G$ is isomorphic to
the product $E\times K$ of a vector group $E$ and a simply connected
compact Lie group $K$.  Let $\pr_E\colon G\to E$ be the projection and
let $N_E=\pr_E(N)$, where $N=\partial(H)$.  Then $\pr_E$ induces a
surjection $G/N\to E/N_E$, so $E/N_E$ is compact, so $N_E$ contains a
basis $e_1$, $e_2$,~\dots, $e_k$ of $E$.  Choose $n_i\in N$ with
$\pr_E(n_i)=e_i$; then the subgroup $L$ of $N$ generated by the $n_i$
is a discrete cocompact normal subgroup of $G$ isomorphic to $\Z^k$.
Then $G'=G/L$ is compact and connected, and we can form the quotient
crossed module $(G',H'=H/\partial^{-1}(L),\partial',\alpha')$, which
is Morita equivalent to $(G,H,\partial,\alpha)$.  The associated
$2$-group $G_\bu'$ is Morita equivalent to $G_\bu$ and is
base-compact, base-connected, and \'etale.

\eqref{item;base-compact}$\implies$\eqref{item;compact-connected}:~If
$G_\bu\simeq G_\bu'$ with $G_\bu'$ base-compact and base-connected,
then $G/\partial(H)=G'/\partial'(H')$ is compact and connected.
\end{proof}

\begin{example}\label{example7}
The Lie $2$-group $G_\bu$ of Example~\ref{example4} is a foliation
$2$-group of compact type.  Its Lie $2$-algebra
$\lie{n}\ltimes\g\rightrightarrows\g$ (see Example~\ref{example5}) is
Morita equivalent to the abelian Lie algebra $\g/\lie{n}$.
\end{example}

\subsection{Lie group stacks}\label{subsection;group-stack}

A \emph{weak Lie group stack} is weak $2$-group in the $2$-category
$\Diffstack$, as in Definition~\ref{definition;internalgp}.  A
\emph{weak homomorphism} of weak Lie group stacks is a weak
homomorphism of $2$-groups in $\Diffstack$, as in
Definition~\ref{definition;hominternal2group}.  An \emph{equivalence}
of weak Lie group stacks is an equivalence of $2$-groups in
$\Diffstack$, as in Definition~\ref{definition;equivalence}.  A
\emph{strict Lie group stack} is a strict $2$-group in $\Diffstack$.
We will usually abbreviate \emph{strict Lie group stack} to \emph{Lie
  group stack}.
\glossary{Gx@$\G$, Lie group stack}%

By Lemma~\ref{lemma;preservelimits}, the $2$-functor $\B\colon
\Liegpd\to \Diffstack$ preserves weak pullbacks.  So $\B$ preserves
takes weak (resp.\ strict) Lie $2$-groups to weak (resp.\ strict) Lie
group stacks.  Note that equivalent weak Lie group stacks are
equivalent as differentiable stacks.  A weak homomorphism of Lie
groups, considered as Lie group stacks, is the same as a homomorphism
of Lie groups.

\begin{definition}
\label{definition;presentationLiegroupstack}
Let $\G$ be a Lie group stack.  A \emph{presentation} of $\G$ is an
equivalence of Lie group stacks $\B G_\bu\to\G$ $G_\bu$, where $G_\bu$
is a Lie $2$-group.
\end{definition}

Given a stack $\X$, there is an underlying topological space called
the \emph{coarse moduli space}; see for
instance~\cite[\S\,3]{noohi;foundations-topological-stacks}
or~\cite{watts-wolbert;diffeological-coarse} for a definition.  For
our purposes, it is enough to know that, given a presentation $\B
X_\bu \simeq \X$ of a differentiable stack, the coarse moduli space is
homeomorphic to the coarse quotient $X_0/X_1$.

\begin{definition}
\label{definition;compactLiegroupstacks}
A weak Lie group stack $\G$ is \emph{compact},
resp.\ \emph{connected}, if the coarse moduli space is compact,
resp.\ connected.
\end{definition}

Our focus on the strict case is justified by the following
strictification theorem, which says that a weak Lie group stack $\G$
that is connected and \'etale is equivalent to a strict Lie group
stack and has a particularly nice atlas.

\begin{theorem}%
[Trentinaglia and
  Zhu~{\cite[Theorem~5.13]{trentinaglia-zhu;strictification}}]
\label{theorem;lie-group-stack}
Let $\G$ be a weak Lie group stack.  Then the following properties are
equivalent.
\begin{enumerate}
\item\label{item;weak-lie-stack}
$\G$ is connected and \'etale;
\item\label{item;lie-stack}
$\G$ is equivalent to a connected \'etale strict Lie group stack;
\item\label{item;lie-equivalent}
$\G$ is equivalent to the classifying stack $\B G_\bu$ of a
  base-simply connected \'etale $2$-group~$G_\bu$;
\item\label{item;lie-atlas}
there exist a simply connected Lie group $G$ and an \'etale atlas
$\bpsi\colon G\to\G$ which is a weak homomorphism.
\end{enumerate}
\end{theorem}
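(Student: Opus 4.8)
The plan is to prove the four statements equivalent through the implications \eqref{item;lie-equivalent}$\implies$\eqref{item;lie-stack}$\implies$\eqref{item;weak-lie-stack} and \eqref{item;lie-equivalent}$\implies$\eqref{item;lie-atlas}$\implies$\eqref{item;weak-lie-stack}, together with the single substantial implication \eqref{item;weak-lie-stack}$\implies$\eqref{item;lie-equivalent}; everything except the last is formal. First I would dispatch the two implications landing in~\eqref{item;weak-lie-stack}. For \eqref{item;lie-equivalent}$\implies$\eqref{item;lie-stack}: if $G_\bu$ is a base-simply connected \'etale $2$-group, then $\B G_\bu$ is a strict Lie group stack because $\B$ carries strict Lie $2$-groups to strict Lie group stacks (it preserves weak pullbacks, Lemma~\ref{lemma;preservelimits}); it is \'etale by Lemma~\ref{lemma;equivtoetale}; and it is connected because its coarse moduli space is the quotient $G_0/G_1$ of the connected group $G_0$. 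The implication \eqref{item;lie-stack}$\implies$\eqref{item;weak-lie-stack} is immediate, a strict Lie group stack being in particular weak and the notions of connected and \'etale being the same.

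Next I would treat the atlas condition~\eqref{item;lie-atlas}. For \eqref{item;lie-equivalent}$\implies$\eqref{item;lie-atlas}: the canonical atlas $G_0\to\B G_\bu$ is \'etale since $G_\bu$ is an \'etale groupoid, its source $G_0$ is simply connected by hypothesis, and the inclusion of the unit objects $G_0\inj G_\bu$ together with the groupoid multiplication furnishes the coherence data making this atlas a weak homomorphism. For \eqref{item;lie-atlas}$\implies$\eqref{item;weak-lie-stack}: an \'etale atlas exhibits $\G$ as an \'etale stack, and since an atlas is an epimorphism the coarse moduli space is a continuous surjective image of the connected group $G$ and is therefore connected.

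It remains to prove \eqref{item;weak-lie-stack}$\implies$\eqref{item;lie-equivalent}, the substance of the theorem. Suppose $\G$ is a connected \'etale weak Lie group stack. The key step is to \emph{strictify} $\G$, that is, to present it \emph{as a group stack} by a strict Lie $2$-group. I would do this by integrating infinitesimal data: by Proposition~\ref{proposition;vector-stack} the vector fields on the \'etale stack $\G$ form a Lie algebra $\g$, which serves as the Lie algebra of $\G$, and I let $G$ be the simply connected Lie group integrating $\g$. One then constructs a homomorphic \'etale atlas $\bpsi\colon G\to\G$, and the associated groupoid $G_\bu=(G\times_\G G\rightrightarrows G)$ is an \'etale groupoid, hence a foliation $2$-group, which carries a strict $2$-group structure inherited from the multiplication of $\G$ and satisfies $\B G_\bu\simeq\G$ as group stacks. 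Finally, since $\G$ is connected the coarse quotient $G_0/G_1$ is connected, so Lemma~\ref{lemma;connected-etale}\eqref{item;base-1-connected-etale} replaces $G_\bu$ up to Morita equivalence by a base-simply connected \'etale $2$-group, which is exactly~\eqref{item;lie-equivalent}.

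The hard part will be the strictness asserted in the previous paragraph: the group structure on $\G$ only makes $G_\bu$ a \emph{weak} $2$-group a priori, and one must trivialize the coherence $2$-isomorphisms of its multiplication to produce a genuine strict Lie $2$-group. This is precisely where connectedness of $\G$ and simple connectedness of $G$ are indispensable, as they annihilate the obstruction to rigidifying the associator; it is the content of \cite[Theorem~5.13]{trentinaglia-zhu;strictification}, on which we rely for this step.
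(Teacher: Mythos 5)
The paper offers no proof of this theorem: it is quoted from Trentinaglia--Zhu \cite[Theorem~5.13]{trentinaglia-zhu;strictification}, as the attribution indicates. Your proposal correctly dispatches the routine implications and, for the one substantive implication (i)$\implies$(iii), explicitly defers to that same reference, so your approach coincides with the paper's.
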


We will often use the following case of the strictification theorem.
See Definition~\ref{definition;connected-compact} for the notions
base-connected and base-compact.

\begin{corollary}\label{corollary;compact-lie-group-stack}
The following conditions on a weak Lie group stack $\G$ are
equivalent: \emph{(1)}~$\G$ is compact, connected, and \'etale;
\emph{(2)}~there exists a presentation $\B G_\bu\simeq\G$, where
$G_\bu$ is a base-compact, base-connected, \'etale $2$-group;
\emph{(3)}~there exist a compact connected Lie group $G$ and an
\'etale atlas $\bpsi\colon G\to\G$ which is a weak homomorphism.
\end{corollary}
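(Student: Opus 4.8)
The plan is to prove the cycle of implications $(1)\Rightarrow(2)\Rightarrow(3)\Rightarrow(1)$. The two substantial inputs are the strictification theorem (Theorem~\ref{theorem;lie-group-stack}) and the structure theorem for foliation $2$-groups (Proposition~\ref{proposition;compact-connected}); the remaining work is bookkeeping, using that for any presentation $\B X_\bu\simeq\X$ the coarse moduli space of $\X$ is homeomorphic to the coarse quotient $X_0/X_1$, so that compactness and connectedness of $\G$ translate into the corresponding properties of $G_0/G_1$.

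First I would prove $(1)\Rightarrow(2)$. Assuming $\G$ is compact, connected, and \'etale, Theorem~\ref{theorem;lie-group-stack} (implication \eqref{item;weak-lie-stack}$\Rightarrow$\eqref{item;lie-equivalent}) gives an equivalence of Lie group stacks $\B G_\bu\simeq\G$ with $G_\bu$ a base-simply connected \'etale $2$-group, in particular a foliation $2$-group. Transporting the coarse moduli space along this equivalence shows that $G_0/G_1$ is compact and connected, so Proposition~\ref{proposition;compact-connected} (\eqref{item;compact-connected}$\Rightarrow$\eqref{item;base-compact}) produces a zigzag of Morita morphisms of $2$-groups relating $G_\bu$ to a base-compact, base-connected, \'etale $2$-group $G_\bu'$. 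Since $\B$ carries each Morita morphism of $2$-groups to an equivalence of Lie group stacks, composing the zigzag yields a presentation $\B G_\bu'\simeq\G$ of the required type.

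Next, for $(2)\Rightarrow(3)$, let $\B G_\bu\simeq\G$ be a presentation with $G_\bu$ base-compact, base-connected, and \'etale, and set $G=G_0$, a compact connected Lie group. The point is that the canonical inclusion $G_0\to G_\bu$ of Lie groupoids is a strict homomorphism of Lie $2$-groups: on objects it is the identity and on arrows it is the identity bisection $u\colon G_0\to G_1$, which is a Lie group homomorphism in a strict Lie $2$-group. Applying the product-preserving functor $\B$ turns this into a homomorphism of Lie group stacks $G=\B G_0\to\B G_\bu$, which is precisely the canonical atlas of $\B G_\bu$ and is \'etale because $G_\bu$ is \'etale. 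Composing with the equivalence $\B G_\bu\simeq\G$ of Lie group stacks gives an \'etale atlas $\bpsi\colon G\to\G$ that is a weak homomorphism. Finally, $(3)\Rightarrow(1)$ is immediate: an \'etale atlas makes $\G$ an \'etale stack, and since $\bpsi$ is a representable epimorphism it is surjective on coarse moduli spaces, exhibiting the coarse moduli space of $\G$ as a continuous image of the compact connected group $G$; hence $\G$ is compact and connected by Definition~\ref{definition;compactLiegroupstacks}.

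The main obstacle is not topological but structural: at each step one must track compatibility with the group operation, ensuring that the equivalences obtained are equivalences \emph{of Lie group stacks} (so that one obtains genuine presentations) and that the atlas in (3) is a weak homomorphism rather than a mere morphism of stacks. Concretely, the delicate points are recognizing the object inclusion $G_0\to G_\bu$ as a strict $2$-group homomorphism and verifying that $\B$ sends the Morita morphisms of Proposition~\ref{proposition;compact-connected} to equivalences of Lie group stacks; once these are in place, Theorem~\ref{theorem;lie-group-stack} and Proposition~\ref{proposition;compact-connected} supply all the \'etale, compact, and connected content.
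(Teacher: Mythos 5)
Your proof is correct and follows essentially the same route as the paper, which simply combines Theorem~\ref{theorem;lie-group-stack} with Proposition~\ref{proposition;compact-connected} via the fact that the coarse moduli space of $\G$ is homeomorphic to the coarse quotient of a presenting $2$-group. Your write-up just makes explicit the cycle of implications and the bookkeeping (that $\B$ sends Morita morphisms of $2$-groups to equivalences of Lie group stacks, and that the object inclusion $G_0\to G_\bu$ is a strict homomorphism) that the paper leaves implicit.
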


\begin{proof}
Combine Theorem~\ref{theorem;lie-group-stack} with
Proposition~\ref{proposition;compact-connected}, using the fact that
the coarse moduli space of $\G$ is homeomorphic to the coarse quotient
of a presenting $2$-group.
\end{proof}

\subsection{Presentations of equivalent Lie group stacks}
\label{subsection;presentation-group-stack}

The following result states that the fibred product of two strict Lie
group stacks (if it exists as a differentiable stack) is a weak Lie
group stack.

\begin{theorem}\label{theorem;pullbackgroups}
Let $\G\to\HH$ and $\G'\to\HH$ be weak homomorphisms of (strict) Lie
group stacks, and assume that the fibred product of stacks
$\KK=\G\times_{\HH}\G'$ is a differentiable stack.  Then $\KK$ is
naturally a weak Lie group stack, and the projections $\KK\to\G$ and
$\KK\to\G'$ are strict homomorphisms.
\end{theorem}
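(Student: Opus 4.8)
The plan is to realize $\KK$ purely through its universal property as a weak pullback in the $(2,1)$-category $\Stack$, and then to transport the group structure by a $2$-categorical Yoneda argument. Since $\Stack$ is a $(2,1)$-category, the fibred product $\KK=\G\times_\HH\G'$ \emph{is} the weak $2$-pullback, which by hypothesis happens to be a differentiable stack; concretely this means that for every stack $\Y$ the comparison functor
\[
\Hom(\Y,\KK)\;\longrightarrow\;\Hom(\Y,\G)\times_{\Hom(\Y,\HH)}\Hom(\Y,\G')
\]
is an equivalence of groupoids, where the right-hand side is the comma groupoid whose objects are triples $(\bo{a},\bo{a}',\btheta)$ with $\btheta\colon\bphi\circ\bo{a}\To\bphi'\circ\bo{a}'$ and whose arrows are compatible pairs of $2$-morphisms; here $\bphi\colon\G\to\HH$ and $\bphi'\colon\G'\to\HH$ are the given weak homomorphisms. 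This equivalence is $2$-natural in $\Y$. My strategy is to show that the target is naturally a weak $2$-group (categorical group) and then invoke $2$-Yoneda to conclude that $\KK$ is a weak group object in $\Stack$, i.e.\ a weak Lie group stack.

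First I would observe that for each $\Y$ the groupoid $\Hom(\Y,\G)$ is itself a weak group object in $\group{Grpd}$, because $\G$ is a weak group object in $\Stack$ in the sense of Definition~\ref{definition;internalgp}: the multiplication $\bm$, unit $\bunit$, inversion $\bi$ and the associator, unitor, and invertor $2$-morphisms of $\G$ induce, by composition, the corresponding structure on $\Hom(\Y,\G)$. The weak homomorphisms $\bphi$ and $\bphi'$ (Definition~\ref{definition;hominternal2group}) induce weak homomorphisms of categorical groups $\Hom(\Y,\G)\to\Hom(\Y,\HH)\leftarrow\Hom(\Y,\G')$, carrying the multiplicativity constraints of $\bphi$ and $\bphi'$ as their coherence data.

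The key step is then the purely algebraic fact that the comma groupoid of two weak homomorphisms of categorical groups is again a categorical group, with operations defined \emph{componentwise}: the product of $(\bo{a}_1,\bo{a}_1',\btheta_1)$ and $(\bo{a}_2,\bo{a}_2',\btheta_2)$ has first two components $\bo{a}_1\bo{a}_2$ and $\bo{a}_1'\bo{a}_2'$, while its third component is assembled from $\btheta_1$, $\btheta_2$ and the multiplicativity constraints of $\bphi$ and $\bphi'$. The associator and unitors are taken componentwise from those of $\Hom(\Y,\G)$ and $\Hom(\Y,\G')$; they constitute legitimate arrows of the comma groupoid \emph{precisely because} $\bphi$ and $\bphi'$ are coherent weak homomorphisms, and this coherence is exactly what makes the pentagon and triangle identities hold in the comma groupoid. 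Crucially, the two forgetful functors to $\Hom(\Y,\G)$ and $\Hom(\Y,\G')$ are \emph{strict} homomorphisms of categorical groups, since they read off the first (respectively second) component of a product on the nose, with no correction $2$-morphism.

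Finally I would transport this structure across the natural equivalence and along $2$-Yoneda. Because the comma-groupoid structure is $2$-natural in $\Y$, the representable $2$-functor $\Y\mapsto\Hom(\Y,\KK)$ acquires the structure of a weak group object in the $2$-category of $2$-functors $\Stack^{\op}\to\group{Grpd}$; representability then endows the object $\KK$ with multiplication, unit, inversion and coherence $2$-morphisms making it a weak group object in $\Stack$, hence a weak Lie group stack, as $\KK$ is assumed differentiable. The projections $\KK\to\G$ and $\KK\to\G'$ correspond to the forgetful functors, which we have observed are strict, so they are strict homomorphisms. I expect the main obstacle to be the bookkeeping in the $2$-Yoneda transfer: one must check that the coherence $2$-morphisms produced on $\KK$ are independent of the chosen representing data and satisfy the pentagon and triangle axioms. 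However, all of this is inherited verbatim from the categorical-group structure on the comma groupoid, so no genuinely new coherence needs to be verified. (Should one instead prefer to argue with explicit atlases, Lemma~\ref{lemma;preservelimits}, which guarantees that $\B$ preserves weak pullbacks, connects this presentation-free description to a weak Lie $2$-group structure on the weak fibred product of presenting Lie $2$-groups.)
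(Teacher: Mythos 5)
Your proposal is correct and follows essentially the same route as the paper's proof: the paper also reduces the statement to the algebraic fact that the weak fibred product (comma groupoid) of categorical groups along weak homomorphisms is again a categorical group with componentwise operations and strict projections --- it does so object-wise over the site, evaluating the stacks at each $U\in\group{Diff}$ and writing out $\tilde{m}$, $\tilde{\alpha}$, $\tilde{1}$, $\tilde{\imath}$ explicitly in terms of $\phi_{(2)}$, $\phi_{(0)}$, and the inversion constraint $\gamma$, exactly the formulas you describe as ``assembled from the multiplicativity constraints.'' The only difference is packaging: you phrase the reduction via representables and $2$-Yoneda over arbitrary $\Y$, whereas the paper evaluates directly on objects $U$ of the site (where the comparison functor is the identity, so no transport is needed and strictness of the projections is immediate), and the paper carries out in full the coherence verifications that you cite as a standard fact about comma groupoids of categorical groups.
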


\begin{proof} See Appendix~\ref{appendix;zhu}.
\end{proof}

We will deduce from this that if the classifying stacks of two Lie
$2$-groups are equivalent as Lie group stacks, then the two Lie
$2$-groups are Morita equivalent as Lie $2$-groups.

\begin{proposition}\label{proposition;strictmor}
Let $\G$ be a (strict) Lie group stack. Assume there are two
presentations
\[
\begin{tikzcd}
\bpsi\colon\B G_\bu\ar[r,"\simeq"]&\G
\end{tikzcd}, 
\qquad
\begin{tikzcd}
\bpsi'\colon\B G_\bu'\ar[r,"\simeq"]&\G
\end{tikzcd},
\]
where $G_\bu$ and $G_\bu'$ are Lie $2$-groups.  Let $K_\bu$ be the Lie
groupoid defined by
\[
K_0=G_0\times_\G G_0'\qquad\text{and}\qquad K_1=K_0\times_\G K_0\cong
G_1\times_{G_0}K_0\times_{G_0'}G_1'.
\]
Then
\begin{enumerate}
\item\label{item;morita-lie}
$K_\bu$ is naturally a Lie $2$-group. The maps $\bpsi$ and $\bpsi'$
  induce Morita morphisms of Lie $2$-groups $K_\bu\to G_\bu$ and
  $K_\bu\to G_\bu'$.  The maps $K_0\to G_0$ and $K_0\to G_0'$ are
  surjective submersions.  
%
\item\label{item;morita-lie-tangent}
$\bpsi$ and $\bpsi'$ induce Morita morphisms of Lie $2$-algebras
  $\Lie(K_\bu)\to\Lie(G_\bu)$ and $\Lie(K_\bu)\to\Lie(G_\bu')$.
\item\label{item;morita-lie-compact}
If $\G$ is compact, connected, and \'etale, and if $G_\bu$ and
$G_\bu'$ are of compact type, then $K_\bu$ is of compact type.
\end{enumerate}
\end{proposition}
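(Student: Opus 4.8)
The plan is to exhibit $K_\bu$ as the \v{C}ech groupoid of an atlas of $\G$ that is a group homomorphism, and to extract the Lie $2$-group structure and the two Morita morphisms from Theorem~\ref{theorem;pullbackgroups}. First I would note that the canonical atlas $G_0\to\B G_\bu$ is the classifying morphism of the inclusion of Lie $2$-groups $G_0\inj G_\bu$ given by the identity bisection, hence is a homomorphism of Lie group stacks; composing with $\bpsi$ shows that $G_0\to\G$ is a weak homomorphism, and likewise $G_0'\to\G$. As both are atlases, $K_0=G_0\times_\G G_0'$ is representable by a manifold and the projections $p\colon K_0\to G_0$ and $q\colon K_0\to G_0'$ are surjective submersions, being base changes of atlases; this is the last assertion of~\eqref{item;morita-lie}. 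Applying Theorem~\ref{theorem;pullbackgroups} to the two homomorphisms $G_0\to\G\leftarrow G_0'$ makes $K_0$ a weak Lie group stack with $p$ and $q$ strict homomorphisms. Since $K_0$ is a manifold this weak group structure is an ordinary Lie group structure and $p$, $q$ are Lie group homomorphisms.

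Next I would identify the arrows. The composite $\bchi\colon K_0\xrightarrow{\,p\,}G_0\to\G$ is an atlas and a homomorphism; using $G_1\cong G_0\times_\G G_0$ one sees that $K_\bu=(K_0\times_\G K_0\rightrightarrows K_0)$ is precisely the pullback groupoid $p^*G_\bu$ of Definition~\ref{definition;pullbackgroupoid}, and in particular $\B K_\bu\simeq\G$. The identification $K_1\cong G_1\times_{G_0}K_0\times_{G_0'}G_1'$ then realizes $K_1$ as a fibred product of Lie groups along the homomorphisms $s$, $p$, $q$, $s'$, so $K_1$ is a Lie group (fact (2) in the proof of Lemma~\ref{lemma;morita-crossed}). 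All groupoid structure maps of $K_\bu$ are assembled from $p$, $q$ and the (homomorphic) structure maps of $G_\bu$ and $G_\bu'$, hence are Lie group homomorphisms, and the interchange law for $K_\bu$ is inherited from those of $G_\bu$ and $G_\bu'$; thus $K_\bu$ is a Lie $2$-group. The canonical morphism $p^*G_\bu\to G_\bu$ is fully faithful, and essentially surjective because $p$ is surjective, so it is a Morita morphism; as it is a homomorphism on objects and on arrows it is a Morita morphism of Lie $2$-groups. The same argument with $q$ gives the Morita morphism $K_\bu\to G_\bu'$, which proves~\eqref{item;morita-lie}. Part~\eqref{item;morita-lie-tangent} is then immediate from Lemma~\ref{lemma;moritaequivliealg}\eqref{item;morita-lie-algebra} applied to these two Morita morphisms.

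For~\eqref{item;morita-lie-compact} I would verify the two conditions of Definition~\ref{definition;connected-compact}. The coarse quotient $K_0/K_1$ is the coarse moduli space of $\B K_\bu\simeq\G$, which is compact since $\G$ is compact. For the Lie algebra I would use that $\G$ is \'etale: writing $\G\simeq\B Y_\bu$ for an \'etale groupoid $Y_\bu$ (Lemma~\ref{lemma;equivtoetale}), the diagonal $\Delta_\G\colon\G\to\G\times\G$ pulls back along $Y_0\times Y_0\to\G\times\G$ to the immersion $(s,t)\colon Y_1\to Y_0\times Y_0$, hence is a representable immersion. Since $K_0=(G_0\times G_0')\times_{\G\times\G,\Delta_\G}\G$, the homomorphism $(p,q)\colon K_0\to G_0\times G_0'$ is the base change of $\Delta_\G$ and is therefore an immersion, so its differential embeds $\Lie(K_0)$ as a Lie subalgebra of $\g_0\oplus\g_0'$. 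Because $G_\bu$ and $G_\bu'$ are of compact type, $\g_0$ and $\g_0'$ are compact, so $\g_0\oplus\g_0'$ is compact, and a Lie subalgebra of a compact Lie algebra is again compact (restrict an invariant inner product). Hence $\Lie(K_0)$ is compact and $K_\bu$ is of compact type.

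The step I expect to be the main obstacle is~\eqref{item;morita-lie}: showing not merely that $K_0$ and $K_1$ are Lie groups but that the two structures fit together into a genuine Lie $2$-group. Routing everything through the single explicit identification $K_\bu\cong p^*G_\bu$ keeps the homomorphism and interchange-law checks mechanical. In~\eqref{item;morita-lie-compact} the one genuinely new input is the immersion $(p,q)$, which is exactly where \'etaleness of the stack $\G$ itself (rather than of the presenting groupoids $G_\bu$, $G_\bu'$) enters.
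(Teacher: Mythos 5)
Your argument is correct, and parts \eqref{item;morita-lie} and \eqref{item;morita-lie-tangent} follow essentially the same route as the paper: both hinge on Theorem~\ref{theorem;pullbackgroups} to endow $K_0$ (and then the arrows) with a group structure, and your explicit identification $K_\bu\cong p^*G_\bu$ is a clean way to package the Morita morphism check that the paper leaves implicit. The genuine divergence is in part \eqref{item;morita-lie-compact}. The paper invokes the strictification theorem to produce a simply connected Lie group $G$ with an \'etale atlas $\bchi\colon G\to\G$ that is a weak homomorphism, and then runs a cube of weak pullbacks to identify $\Lie(K_0)\cong\g_0\times_{\Lie(G)}\g_0'$, which is compact as a subalgebra of $\g_0\oplus\g_0'$. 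You instead observe that \'etaleness of $\G$ makes the diagonal $\Delta_\G$ a representable immersion (its base change to an \'etale presentation $Y_\bu$ is $(s,t)\colon Y_1\to Y_0\times Y_0$, an immersion because $s$ and $t$ are \'etale), so the homomorphism $(p,q)\colon K_0\to G_0\times G_0'$ is an immersion and $\Lie(K_0)$ embeds as a subalgebra of $\g_0\oplus\g_0'$ directly. The two arguments land on the same compactness criterion, but yours bypasses Theorem~\ref{theorem;lie-group-stack} and the fibred-product computation of the Lie algebra, at the cost of the (correct, but worth spelling out) facts that immersions pull back along submersions and can be detected after base change along a surjective submersion; the paper's version has the side benefit of actually computing $\Lie(K_0)$ as $\g_0\times_{\Lie(G)}\g_0'$, not merely embedding it.
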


\begin{proof}
\eqref{item;morita-lie}~Composing $\bpsi$ and $\bpsi'$ with the
quotient maps $G_0\to\B G_\bu$ and $G_0'\to\B G_\bu'$ we get two
atlases $\bpsi_0\colon G_0\to\G$ and $\bpsi_0'\colon G_0'\to\G'$, both
of which are weak homomorphisms.  Because atlases are surjective
representable submersions, the fibred product $K_0=G_0\times_\G G_0'$
is (equivalent to) a manifold, and the projections $K_0\to G_0$ and
$K_0\to G_0'$ are surjective submersions.  On the other hand, by
Theorem~\ref{theorem;pullbackgroups}, $K_0$ is also a weak Lie group
stack.  It follows that $K_0$ is a Lie group.  Moreover, the
composition $K_0\to G_0\to\G$ is an atlas which is a weak
homomorphism.  Repeating the argument we see that $K_1$ is likewise a
Lie group, and that the Lie groupoid $K_\bu$ is a Lie $2$-group
equipped with two Morita morphisms of $2$-groups $K_\bu\to G_\bu$ and
$K_\bu\to G_\bu'$.

\eqref{item;morita-lie-tangent}~This follows
from~\eqref{item;morita-lie} and Lemma~\ref{lemma;moritaequivliealg}.

\eqref{item;morita-lie-compact}~It follows
from~\eqref{item;morita-lie} that $K_0/K_1\cong G_0/G_1$ is compact.
Let $G$ be a simply connected Lie group and $\bchi\colon G\to\G$ an
\'etale atlas which is a weak homomorphism as in the strictification
theorem, Theorem~\ref{theorem;lie-group-stack}\eqref{item;lie-atlas}.
It follows from~\eqref{item;morita-lie-tangent} and from
Lemma~\ref{lemma;liealg} that $\bpsi$ and $\bpsi'$ induce isomorphisms
of Lie algebras $\g_0/\h\cong\Lie(G)$ and $\g_0'/\h'\cong\Lie(\G)$.
Therefore the Lie algebra $\kk_0':=\g_0\times_{\Lie(G)}\g_0'$ is
compact.  We claim that $\kk_0'$ is isomorphic to $\kk_0=\Lie(K_0)$.
Let $\tilde{G}_0=G_0\times_\G G$ and $\tilde{G}_0'=G_0'\times_\G G$.
Consider the cube
\[
\begin{tikzcd}
&\tilde{K}_0\arrow[dl]\arrow[rr]\arrow[dd]&&
  \tilde{G}_0'\arrow[dl]\arrow[dd]\\
\tilde{G}_0\arrow[rr,crossing over]\arrow[dd]&&G\\
&K_0\arrow[dl]\arrow[rr]&&G_0'\arrow[dl,"\bpsi_0'"]\\
G_0\arrow[rr,"\bpsi_0"]&&\G\arrow[from=uu,crossing over,"\bchi"near
  start]\\
\end{tikzcd}
\]
where $\tilde{K}_0$ is the weak limit of the three squares which
contain $\G$.  Since these squares are all weak pullbacks, every face
of the cube is a weak pullback of stacks and we have
$\tilde{K}_0\cong\tilde{G}_0\times_G\tilde{G}_0'$.  Since $\bchi\colon
G\to\G$ is \'etale, the vertical maps are all \'etale.  In particular,
$\kk_0=\Lie(K_0)\cong\Lie(\tilde{K}_0)$.  We then have
\[
\kk_0\cong\Lie(\tilde{K}_0)\cong
\Lie(\tilde{G}_0\times_G\tilde{G}_0')\cong
\Lie(\tilde{G}_0)\times_{\Lie(G)}\Lie(\tilde{G}_0')\cong
\g_0\times_\g\g_0'=\kk_0',
\]
as was claimed.
\end{proof}

\begin{remark}\label{remark;morita-lie-base-connected}
Even if the $2$-groups $G_\bu$ and $G'_\bu$ in
part~\eqref{item;morita-lie-compact} are base-connected, the fibred
product $K_\bu$ is not necessarily base-connected.  However, by
Lemma~\ref{lemma;connected-etale}\eqref{item;base-connected} $K_\bu$
has a Morita equivalent full subgroupoid $K_\bu'$ which is a
base-connected sub-Lie $2$-group, and the maps $K_0'\to G_0$ and
$K_0'\to G_0'$ are still surjective submersions.
\end{remark}

\subsection{The Lie algebra of an \'etale Lie group stack}
\label{subsection;lie-algebra}

We define the \emph{Lie algebra} of a connected \'etale weak Lie group
stack $\G$ to be the Lie subalgebra $\Lie(\G)$ of $\Vect(\G)$
characterized by the next proposition.  Here $\Vect(\G)$ is the Lie
algebra of vector fields of $\G$ described in
\S\,\ref{subsection;vector-etale}.
\glossary{Lie@$\Lie$, Lie functor}%

\begin{proposition}\label{proposition;LiealgLiegroupstack}
Let $\G$ be a connected \'etale weak Lie group stack.  There is a
unique Lie subalgebra $\Lie(\G)$ of $\Vect(\G)$ so that for every
presentation $\B G_\bu\simeq\G$ by a strict Lie $2$-group the image of
the Lie algebra embedding
\begin{equation}\label{equation;Liealgembedding}
\begin{tikzcd}
\g/\h\ar[r,hookrightarrow]&\Vect_\bas(G_\bu)\ar[r,"\cong"]&\Vect(\G)
\end{tikzcd}
\end{equation}
is $\Lie(\G)$.  The first map of~\eqref{equation;Liealgembedding} is
from Lemma~\ref{lemma;liealg}\eqref{item;left-invariant}, and the
second map is~\eqref{equation;bas-biject}.
\end{proposition}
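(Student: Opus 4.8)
The plan is to split the statement into existence and uniqueness. For existence, I first note that since $\G$ is connected and \'etale, the strictification theorem (Theorem~\ref{theorem;lie-group-stack}) guarantees that a presentation $\bpsi\colon\B G_\bu\simeq\G$ by a Lie $2$-group $G_\bu$ actually exists. Fixing one such presentation, the composite~\eqref{equation;Liealgembedding} is the composition of the Lie algebra embedding $\g/\h\inj\Vect_\bas(G_\bu)$ of Lemma~\ref{lemma;liealg}\eqref{item;left-invariant} with the Lie algebra isomorphism~\eqref{equation;bas-biject} furnished by Proposition~\ref{proposition;vector-stack}. Its image is therefore a Lie subalgebra of $\Vect(\G)$ (isomorphic to $\Lie(G_\bu)=\g/\h$), and I take this image as the definition of $\Lie(\G)$. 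The substance of the proposition is that this subalgebra does not depend on the presentation, which is exactly the asserted uniqueness.

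For uniqueness, let $\bpsi\colon\B G_\bu\simeq\G$ and $\bpsi'\colon\B G_\bu'\simeq\G$ be two presentations, with associated images $\Lie(\G)$ and $\Lie(\G)'$. The key tool is the common refinement of Proposition~\ref{proposition;strictmor}: it produces a Lie $2$-group $K_\bu$ together with Morita morphisms of Lie $2$-groups $\kappa_\bu\colon K_\bu\to G_\bu$ and $\kappa_\bu'\colon K_\bu\to G_\bu'$, and (from the $2$-cell defining the fibred product $K_0=G_0\times_\G G_0'$) a canonical $2$-isomorphism $\bpsi\circ\B\kappa_\bu\cong\bpsi'\circ\B\kappa_\bu'$ between the two resulting presentations of $\G$ by $\B K_\bu$. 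Applying Lemma~\ref{lemma;liealg}\eqref{item;left-invariant-commute} to the Morita morphism $\kappa_\bu$ tells me that the square
\[
\begin{tikzcd}
\Lie(K_\bu)\ar[r]\ar[d,"\cong"']&\Vect_\bas(K_\bu)\\
\Lie(G_\bu)\ar[r]&\Vect_\bas(G_\bu)\ar[u,"\kappa_\bu^*"',"\cong"]
\end{tikzcd}
\]
commutes, the horizontal maps being the embeddings of Lemma~\ref{lemma;liealg}\eqref{item;left-invariant} and the right vertical map the isomorphism $\kappa_\bu^*$ of Proposition~\ref{proposition;vector-form-morita}\eqref{item;vector-morita}. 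Hence $\kappa_\bu^*$ carries the image of $\Lie(G_\bu)$ in $\Vect_\bas(G_\bu)$ precisely onto the image of $\Lie(K_\bu)$ in $\Vect_\bas(K_\bu)$, and the identical statement holds for $\kappa_\bu'$.

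It then remains to transport these images along~\eqref{equation;bas-biject} into $\Vect(\G)$. By the compatibility of the bijection~\eqref{equation;bas-biject} with Morita morphisms (which is the content of the uniqueness half of Proposition~\ref{proposition;vector-stack}, resting on Proposition~\ref{proposition;vector-form-morita}\eqref{item;vector-morita}), the map $\kappa_\bu^*$ corresponds, under the identifications $\Vect_\bas(G_\bu)\cong\Vect(\G)$ via $\bpsi$ and $\Vect_\bas(K_\bu)\cong\Vect(\G)$ via $\bpsi\circ\B\kappa_\bu$, to the identity of $\Vect(\G)$; thus $\Lie(\G)$ equals the image of $\Lie(K_\bu)$ in $\Vect(\G)$ computed through $\bpsi\circ\B\kappa_\bu$, and symmetrically $\Lie(\G)'$ equals the image of $\Lie(K_\bu)$ computed through $\bpsi'\circ\B\kappa_\bu'$. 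Finally, the canonical $2$-isomorphism $\bpsi\circ\B\kappa_\bu\cong\bpsi'\circ\B\kappa_\bu'$ forces these two presentations of $\B K_\bu$ to induce the \emph{same} identification $\Vect_\bas(K_\bu)\cong\Vect(\G)$ (by the final clause of Proposition~\ref{proposition;vector-form-morita}, which equates the pullbacks of naturally isomorphic Morita morphisms), so the two images coincide and $\Lie(\G)=\Lie(\G)'$. I expect the main obstacle to be precisely this last bookkeeping: verifying that $\kappa_\bu^*$ and $\kappa_\bu'^*$ intertwine the two left-invariant subalgebras with one well-defined subspace of $\Vect(\G)$, for which the naturality of~\eqref{equation;bas-biject} under both Morita morphisms \emph{and} $2$-isomorphisms is indispensable.
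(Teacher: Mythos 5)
Your proof is correct and follows essentially the same route as the paper's: both rest on Proposition~\ref{proposition;strictmor} to relate two presentations, Lemma~\ref{lemma;liealg}\eqref{item;left-invariant-commute} to transport the left-invariant subalgebras along Morita morphisms, and the Morita-naturality of~\eqref{equation;bas-biject}. The only difference is that you carry out the zigzag through $K_\bu$ and the $2$-isomorphism bookkeeping explicitly, where the paper compresses this to ``without loss of generality there is a Morita morphism $G_\bu\to G_\bu'$''; your extra care is sound but not a different argument.
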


\begin{proof}
Let $\B G_\bu\simeq \G$ and $\B G_\bu'\simeq \G$ be two presentations
of $\G$; such presentations exist by
Theorem~\ref{theorem;lie-group-stack}.  By
Proposition~\ref{proposition;strictmor}, the Lie $2$-groups $G_\bu$
and $G'_\bu$ are Morita equivalent. Without loss of generality we may
assume there is a Morita morphism $\phi_\bu \colon G_\bu \to
G_\bu'$. Then the result follows from naturality
of~\eqref{equation;bas-biject} with respect to Morita morphisms as
well as Lemma~\ref{lemma;liealg}\eqref{item;left-invariant-commute}.
\end{proof}
 
Given a presentation $\B G_\bu\simeq \G$ by a Lie $2$-group, we have
canonical isomorphisms
\[\Lie(\G)\cong\g/\h\cong\Vect_\bas(G_\bu)_L.\] 
If the groupoid $G_\bu$ is \'etale, then $\Lie(\G)\cong\g$.


%

Let
$\begin{tikzcd}[cramped,sep=small]
\bphi\colon\G\ar[r,"\simeq"]&\G'
\end{tikzcd}$
be an equivalence of connected \'etale weak Lie group stacks. This
induces an isomorphism of Lie algebras $\Vect(\G) \cong \Vect(\G')$ by
Proposition~\ref{proposition;vector-stack}. By
Lemma~\ref{lemma;liealg}\eqref{item;left-invariant-commute}, this
restricts to an isomorphism of Lie algebras, denoted by
\begin{equation}\label{equation;lie-stack}
\begin{tikzcd}
\Lie(\bphi)\colon\Lie(\G)\ar[r,"\cong"]&\Lie(\G').
\end{tikzcd}
\end{equation}
%
%
%
%

\subsection{Actions of Lie group stacks}\label{subsection;action-stack}

A \emph{weak action} of a (strict) Lie group stack $\G$ on a
differentiable stack $\X$ is as in
Definition~\ref{definition;actioninternal}, by considering $\G$ to be
a strict $2$-group in $\Diffstack$.  A \emph{strict action} is also as
in Definition~\ref{definition;actioninternal}.  A \emph{weakly
  equivariant morphism} $\X\to\X'$ of stacks with $\G$-actions is as
in Definition~\ref{definition;equivariantinternal}.

The $2$-functor $\B\colon\Liegpd\to\Diffstack$ preserves products
(Lemma~\ref{lemma;preservelimits}), and so it takes (strict) actions
of Lie $2$-groups to (strict) actions of Lie group stacks.


Actions of connected \'etale Lie group stacks, like the stacks
themselves, can be strictified in the sense of the following
statement, which
generalizes~\cite[Proposition~3.2]{lerman-malkin;deligne-mumford}.

\begin{theorem}\label{theorem;strictaction}
Let $\G$ be a connected (strict) Lie group stack acting weakly on an
\'etale differentiable stack $\X$.  Suppose that $\G$ admits a
presentation $\B G_\bu\simeq\G$ by a base-connected Lie $2$-group
$G_\bu$.  For every such presentation $\B G_\bu\simeq\G$ there exists
a presentation $\B X_\bu\simeq\X$ of $\X$ by a Lie groupoid $X_\bu$ so
that
\begin{enumerate}
\item
$G_\bu$ acts strictly on $X_\bu$;
\item
identifying $\B G_\bu=\G$, the equivalence $\B X_\bu\simeq\X$ is
weakly $\G$-equivariant.
\end{enumerate}
\end{theorem}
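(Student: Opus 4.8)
The plan is to strictify the action by enlarging the atlas of $\X$ by the object group $G_0$, so that the weak $\G$-action becomes left translation on the new $G_0$-factor, with all coherence data absorbed into the arrow manifold of the presenting groupoid. First I would fix the given presentation $\B G_\bu\simeq\G$ and compose the canonical atlas $G_0\to\B G_\bu$ with it to obtain an \'etale atlas $\bpsi\colon G_0\to\G$ which is a weak homomorphism (cf.\ Theorem~\ref{theorem;lie-group-stack}); here base-connectedness of $G_\bu$ guarantees that $\bpsi$ is a connected surjective atlas, so that the map constructed below is again an atlas. I would also choose, using Lemma~\ref{lemma;hausdorff} and Lemma~\ref{lemma;equivtoetale}, an initial presentation $\B Y_\bu\simeq\X$ by an \'etale groupoid with a Hausdorff \'etale atlas $p_0\colon Y_0\to\X$.

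Next I would set $X_0=G_0\times Y_0$ and define
\[
a_0\colon X_0\longto\X,\qquad a_0=\ba\circ(\bpsi\times p_0),
\]
so that on points $a_0(g,y)=g\cdot p_0(y)$. Since $p_0$ is an \'etale atlas and each element of $\G$ acts on $\X$ by an equivalence, $a_0$ is again an \'etale atlas, surjectivity coming from surjectivity of $\bpsi$. I would equip $X_0$ with the strict $G_0$-action by left translation on the first factor, $h\cdot(g,y)=(hg,y)$. The associator $\balpha$ of the weak action then furnishes, for each $h$, a $2$-isomorphism $a_0(h\cdot(g,y))=(hg)\cdot p_0(y)\cong h\cdot\bigl(g\cdot p_0(y)\bigr)=h\cdot a_0(g,y)$, exhibiting $a_0$ as weakly equivariant. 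Now let $X_\bu$ be the presenting groupoid with $X_1=X_0\times_\X X_0$, so that $\B X_\bu\simeq\X$; because $a_0$ is \'etale, $X_\bu$ is an \'etale groupoid. The translation action extends to a $G_0$-action on $X_1$ by acting simultaneously on both copies of $X_0$ and transporting the connecting arrow in $\X$ by the $\G$-action, pre- and post-composed with the associator isomorphisms. Finally, writing $(G=G_0,H,\partial,\alpha)$ for the crossed module of $G_\bu$, the subgroup $H=\ker(s)$ would act on $X_1$ through the $2$-morphism part of the weak action, and together with the translation action of $G$ this defines, via Lemma~\ref{lemma;crossedactionequivalent}, a candidate strict $G_\bu$-action on $X_\bu$.

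The remaining work is to verify that these formulas assemble into a genuine strict action $G_\bu\times X_\bu\to X_\bu$ in the sense of Definition~\ref{definition;actioninternal}, equivalently that the maps satisfy the crossed-module action axioms of Definition~\ref{definition;crossedaction}, and that the tautological equivalence $\B X_\bu\simeq\X$ is weakly $\G$-equivariant with equivariance $2$-morphism induced by $\balpha$. \textbf{The main obstacle} is exactly this rigidification step: one must show that the associator and unitor of the weak action, which a priori obstruct strictness, become trivial once the atlas is enlarged by $G_0$. The point is that arrows of $X_\bu$ already encode $2$-isomorphisms in $\X$, so the coherence data of the weak action are recorded as arrow data in $X_1$ rather than as genuine obstructions, and the pentagon and triangle identities of the weak action translate precisely into the strict associativity and unit axioms for the $G_\bu$-action on $X_\bu$. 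A secondary point requiring care is smoothness of all structure maps together with the observation that $X_0=G_0\times Y_0$ is Hausdorff, since $G_0$ is a Lie group and $Y_0$ was chosen Hausdorff, so that $X_\bu$ is a foliation groupoid presenting the \'etale stack $\X$.
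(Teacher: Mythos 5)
Your presenting groupoid is exactly the one the paper constructs: the atlas $X_0=G_0\times Y_0\to\X$, $(g,y)\mapsto g\cdot\bp(y)$, together with $X_\bu=(X_0\times_\X X_0\rightrightarrows X_0)$, is the content of Proposition~\ref{proposition;strictatlas}, and your justification that this is an atlas (each translation $\bo{L}_g$ is an equivalence, so each slice $\{g\}\times Y_0\to\X$ is an atlas) is the argument used there. Two of your attributions of the hypotheses are off, however: base-connectedness of $G_\bu$ is not what makes this map an atlas (the canonical map $G_0\to\B G_\bu$ is an atlas in any case), and \'etaleness of $\X$ is not consumed merely by Hausdorffness of $X_0$. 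Both hypotheses are spent precisely at the step you defer.

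That step --- which you correctly identify as ``the main obstacle'' --- is asserted rather than proved, and the mechanism you propose for it is not the one that makes the paper's proof go through. In the paper one obtains a groupoid morphism $A_\bu\colon G_\bu\times X_\bu\to X_\bu$ with $A_0=m_0\times\id$ already strict on objects, together with $2$-morphisms $\beta\colon G_0\times G_0\times X_0\to X_1$ and $\epsilon\colon X_0\to X_1$ taking values in the isotropy groups of $X_\bu$, and one corrects $A_1$ by conjugation with $\epsilon$. The coherence identities \eqref{coherenceA}--\eqref{coherenceC} alone are not what make this correction associative: the decisive input is that $\X$ is \'etale, so the isotropy groups of $X_\bu$ are discrete, and that $G_0$ is connected, whence the maps $(k,j)\mapsto u((kj)\n)\cdot\beta(k,1,j\cdot x)$ and $(k,j)\mapsto u((kj)\n)\cdot\beta(k,j,x)$ into a fixed discrete group are constant and therefore equal. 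This yields $\beta(k,1,j\cdot x)=\beta(k,j,x)$ and then the identity $u(k)\cdot\epsilon(j\cdot x)=\beta(k,j,x)\n$ of \eqref{equation;beta}, which is exactly what converts the $\epsilon$-conjugated $A_1$ into a strict action. Your sketch invokes neither connectedness of $G_0$ nor discreteness of the isotropy at this point, and instead claims that the pentagon and triangle identities ``translate precisely'' into strictness. For the $G_0$-translation part of the action on $X_1$, defined by conjugation with the associator, such a purely formal computation can indeed be carried out; but you have not done it, and you have said essentially nothing about the remaining and more delicate data --- the action of $H=\ker(s)$ on $X_1$, the two naturality conditions of \eqref{equation;natural2}, and the compatibility $g*(h*f)={}^gh*(g*f)$ of Definition~\ref{definition;crossedaction} --- nor about why the resulting strict action still presents $\ba$ equivariantly. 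To close the gap you must either carry out the coherence computation in full for the entire crossed-module action, or supply the rigidity argument above; as written, the heart of the theorem is missing.
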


\begin{proof}
See Appendix~\ref{appendix;strict}.
\end{proof}

Theorem~\ref{theorem;lie-group-stack} and
Theorem~\ref{theorem;strictaction} together justify our focus on
strict actions of foliation Lie $2$-groups on foliation groupoids.

\subsection{Fundamental vector fields}\label{subsection;fundamental}

Let $\G$ be a connected strict Lie group stack, $\X$ an \'etale stack,
and $\ba\colon\G\times\X\to\X$ a weak action.  Recall that
$\Bvect(\X)$ denotes the groupoid of vector fields on $\X$ defined in
\S\,\ref{subsection;vector-etale}, and that the set $\Vect(\X)$ of
equivalence classes of $\Bvect(\X)$ carries a natural Lie bracket
(Proposition~\ref{proposition;vector-stack}).  Let $\bxi$ be an
element of the Lie algebra $\Lie(\G)$ of $\G$ (as defined in
\S\,\ref{subsection;lie-algebra}), regard $\bxi$ as an element of the
Lie algebra $\Vect(\G)$, and let $\tilde{\bxi}\in \Bvect(\G)$ be a
lift of $\bxi$ to $\Bvect(\G)$.  The composition of the morphisms
\begin{equation}\label{eqn;funvec}
\begin{tikzcd}
\X\simeq\star\times\X\ar[r,"\bunit\times{\id}"]&
\G\times\X\ar[r,"\tilde{\bxi}\times\bzero"]&T\G\times T\X\simeq
T(\G\times\X)\ar[r,"T\ba"]&T\X
\end{tikzcd}
\end{equation}
defines an object of the groupoid $\Bvect(\X)$.  Here the equivalence
$T\G\times T\X\simeq T(\G\times\X)$ follows
from~\cite[Example~4.6]{hepworth;vector-flow-stack}, and $T\ba$ is the
tangent morphism of the action $\ba$ as defined
in~\cite[\S\,3.1]{hepworth;vector-flow-stack}.  The isomorphism class
$\bxi_\X\in\Vect(\X)$ of this object does not depend on the choice of
lift $\tilde{\bxi}$, and we call $\bxi_\X$ the \emph{fundamental
  vector field} of $\bxi$ associated with the action.
\glossary{xixx@$\bxi_\X$, fundamental vector field on $\G$-stack $\X$
  induced by $\bxi\in\Lie(\G)$}%

The following describes the fundamental vector field in terms of an
atlas.

\begin{proposition}\label{proposition;funvec} 
Let a foliation Lie $2$-group $G_\bu$ act strictly on a foliation
groupoid $X_\bu$.  The map
\[\g\longto\Vect(X_0)\times\Vect(X_1)\]
defined by $\xi\mapsto(\xi_{X_0},(\Lie(u)\xi)_{X_1})$ descends to a
Lie algebra anti-homomorphism
\begin{equation}\label{eqn;funcoords}
\g/\h\cong \Vect_\bas(G_\bu)_L \longto\Vect_\bas(X_\bu).
\end{equation}
The diagram
\[
\begin{tikzcd}
\g/\h\ar[r]\ar[d,"\cong"']&\Vect_\bas(X_\bu)\ar[d,"\cong"]\\
\Lie(\B G_\bu)\arrow[r]&\Vect(\B X_\bu)
\end{tikzcd}
\]
commutes, where the bottom arrow is the assignment
$\stackmorphism{\xi}\mapsto\stackmorphism{\xi}_{\B X_\bu}$ described
in~\eqref{eqn;funvec}, the left arrow is from
Proposition~\ref{proposition;LiealgLiegroupstack}, and the right arrow
is~\eqref{equation;bas-biject}.
%
%
%
\end{proposition}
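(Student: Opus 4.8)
The plan is to treat the two assertions separately. For the first, I work through the crossed module $(G=G_0,H,\partial,\alpha)$ associated with $G_\bu$, using Lemma~\ref{lemma;crossedactionequivalent} to describe the action. Since $u\colon G_0\to G_1$ sends $g$ to $(1,g)$, the element $\Lie(u)\xi$ equals $(0,\xi)\in\h\oplus\g$, and its fundamental vector field on $X_1$ for the $G_1$-action $((h,g),f)\mapsto h*(g*f)$ is exactly the fundamental vector field $\xi_{X_1}$ of the $G$-action~\eqref{item;secondaction}. By the functoriality relations~\eqref{equation;functor1}, both $s$ and $t$ are $G$-equivariant, so $\xi_{X_1}\sim_s\xi_{X_0}$ and $\xi_{X_1}\sim_t\xi_{X_0}$. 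Because $s$ and $t$ are foliate maps (the source map descends to $s_*$ as in~\eqref{equation;source-star}, and likewise $t$), passing to normal classes gives $s^*\bar\xi_{X_0}=\bar\xi_{X_1}=t^*\bar\xi_{X_0}$; hence $(\bar\xi_{X_0},\bar\xi_{X_1})$ is a basic vector field in the sense of Definition~\ref{definition;basicvectorfield}, and $\xi\mapsto(\bar\xi_{X_0},\bar\xi_{X_1})$ is a well-defined map $\g\to\Vect_\bas(X_\bu)$.

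Next I check that this map kills $\h$ and is an anti-homomorphism. For $\eta\in\h$, regarded as $\partial(\eta)\in\g$, Lemma~\ref{lemma;orbits} says the $H$-fundamental vector field $\eta_{X_1}$ is tangent to the source fibres and $t$-related to $(\partial\eta)_{X_0}$; evaluating at the identity bisection and using $\rho=Tt|_{\Alg(X_\bu)}$, whose image is $T\F_0$, shows $(\partial\eta)_{X_0}\in\Gamma(T\F_0)$, so $\overline{(\partial\eta)_{X_0}}=0$. Since a basic vector field is determined by its first component, the map annihilates $\h$ and descends to $\g/\h$, which is identified with $\Vect_\bas(G_\bu)_L$ by Lemma~\ref{lemma;liealg}\eqref{item;left-invariant}. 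For the bracket, I use that $\xi\mapsto\xi_{X_0}$ is a Lie algebra anti-homomorphism $\g\to\Vect(X_0)$ (Section~\ref{section;presymplectic}), that each $\xi_{X_0}$ normalizes $\Gamma(T\F_0)$ (being the first component of a basic vector field, via Lemma~\ref{lemma;bas-inf}\eqref{item;bas-vector}), and that the induced projection onto $\Vect_0(X_0,\F_0)\cong\lie{N}/\Gamma(T\F_0)$ carries the bracket of $\Vect(X_0)$ to the bracket of $\Vect_\bas(X_\bu)$. This yields~\eqref{eqn;funcoords}.

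For the commuting diagram, I first compute the groupoid-level version of the defining composition~\eqref{eqn;funvec}, replacing $\G,\X$ by $G_\bu,X_\bu$, the unit $\bunit$ by $1\in G_0$ (respectively $u(1)\in G_1$), the lift $\tilde\bxi$ by the left-invariant multiplicative vector field on $G_\bu$ with components $(\xi_L,(\Lie(u)\xi)_L)$ (this represents $\bxi$ by Proposition~\ref{proposition;LiealgLiegroupstack}, Lemma~\ref{lemma;liealg}\eqref{item;left-invariant}, and Proposition~\ref{proposition;multiplicative-basic}), $\bzero$ by the zero section, and $T\ba$ by the tangent of the strict action $a_\bu$. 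Differentiating the orbit maps shows that the object component of the resulting groupoid morphism is $x\mapsto T_{(1,x)}a_0(\xi,0)=\xi_{X_0}(x)$ and its arrow component is $f\mapsto T_{(u(1),f)}a_1(\Lie(u)\xi,0)=(\Lie(u)\xi)_{X_1}(f)$. Thus the groupoid-level construction is precisely the multiplicative vector field $(\xi_{X_0},(\Lie(u)\xi)_{X_1})$, whose basic class is the image of $\xi$ under~\eqref{eqn;funcoords}.

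Finally, since $\ba=\B a_\bu$, $\bunit=\B 1$, $\bzero=\B 0$, and $\tilde\bxi=\B(\xi_L,(\Lie(u)\xi)_L)$, I would invoke the naturality of Hepworth's tangent stack functor expressed in the equivalence~\eqref{equation;tanstackequiv}, together with the $2$-isomorphism $\bo{p}\cong\B p_\bu$ and the product equivalence $T\G\times T\X\simeq T(\G\times\X)$, to identify the whole composition~\eqref{eqn;funvec} with $\B$ applied to the groupoid composition of the previous paragraph. Consequently $\bxi_{\B X_\bu}$ is the image of the multiplicative vector field $(\xi_{X_0},(\Lie(u)\xi)_{X_1})$ under~\eqref{equation;mult-equiv} and~\eqref{equation;bas-biject}, which is exactly the image of $\xi$ under~\eqref{eqn;funcoords}; this is the asserted commutativity. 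I expect the main obstacle to be this last identification: one must align the coherence $2$-isomorphisms in Hepworth's lax tangent functor with the strictly functorial groupoid-level tangent construction so that~\eqref{eqn;funvec} is recognized as $\B$ of the groupoid composition. Once the comparison equivalences of~\eqref{equation;tanstackequiv} are pinned down, the remaining verification is routine.
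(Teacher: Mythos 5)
Your proposal is correct and follows essentially the same route as the paper's proof: well-definedness via the $s$- and $t$-equivariance of the action, vanishing on $\h$, the anti-homomorphism property inherited from $\xi\mapsto\xi_{X_0}$, and commutativity of the diagram via the naturality of the tangent stack equivalence~\eqref{equation;tanstackequiv}. The only (harmless) divergence is that you deduce $(\partial\eta)_{X_0}\in\Gamma(T\F_0)$ from Lemma~\ref{lemma;orbits} and the anchor map, whereas the paper pushes the vector field $(\eta_L,0)$, tangent to the orbit foliation of $G_0\times X_0$, forward along the action map; both work, and your treatment of the final diagram is in fact more detailed than the paper's one-line appeal to~\eqref{equation;tanstackequiv}.
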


\begin{proof}
Let us show that the map~\eqref{eqn;funcoords} is well defined.  That
it is an anti-homo\-morphism then follows from the fact that the map
$\g\to\Vect(X_0)\times\Vect(X_1)$ is an anti-homomorphism, and
commutivity of the diagram follows
from~\eqref{equation;tanstackequiv}.  Let $\xi\in\g$.  Then
$\Lie(s)(\Lie(u)\xi)=\xi$.  Hence, since the source map is equivariant
with respect to the action, the vector field $(\Lie(u)\xi)_{X_1}$ is
$s$-related to $\xi_{X_0}$.  Similarly, $(\Lie(u)\xi)_{X_1}$ is
$t$-related to $\xi_{X_0}$, so the pair
$(\xi_{X_0},(\Lie(u)\xi)_{X_1})$ descends to an element of
$\Vect_\bas(X_\bu)$.  Now let $\xi\in\h$.  The vector field
$(\xi_L,0)\in\Vect(G_0)\times\Vect(X_0)$ is tangent to the foliation
of $G_0\times X_0$ by $G_\bu\times X_\bu$-orbits, and so its
pushforward by the derivative of the action map is tangent to the
foliation $\F_0(X_\bu)$ of $X_0$.  So $\xi_{X_0}$ descends to the zero
section of the normal bundle $N_0(X_\bu)$, which shows that
$(\xi_{X_0},(\Lie(u)\xi)_{X_1})$ is the zero basic vector field.
\end{proof}

\begin{definition}
\label{definition;fundamentalbasic}
Let a foliation $2$-group $G_\bu$ act strictly on a Lie groupoid
$X_\bu$.  For $\xi\in\g/\h$, let $\xi_{X_\bu}\in \Vect_\bas(X_\bu)$
denote the image of $\xi$ under the map \eqref{eqn;funcoords}. The
basic vector field $\xi_{X_\bu}$ is the \emph{fundamental vector field
  of $\xi$}.
\glossary{xix*@$\xi_{X_\bu}$, fundamental vector field on groupoid
  $X_\bu$ induced by action of foliation $2$-group $G_\bu$ by
  $\xi\in\Lie(G_\bu)$}%
\end{definition}

\subsection{The (co)adjoint action}\label{subsection;adjoint}

Let $G_\bu$ be a Lie $2$-group.  The \emph{adjoint action} of $G_\bu$
is the Lie groupoid homomorphism
\[\Ad_\bu\colon G_\bu\times\g_\bu\to\g_\bu,\]
where, for $i=0$, $1$, $\Ad_i$ is the adjoint action of $G_i$ on its
Lie algebra $\g_i$.  The adjoint action is a strict $G_\bu$-action on
$\g_\bu$ in the sense of Definition \ref{definition;actioninternal}
since each map $\Ad_i$ is an action of a Lie group.
\glossary{Ad@$\Ad_\bu$, adjoint action of Lie $2$-group}%

Since the adjoint action is natural with respect to Lie group
homomorphisms, a homomorphism of Lie $2$-groups $G_\bu\to G'_\bu$
takes the adjoint action of $G_\bu$ on $\g_\bu$ to the adjoint action
of $G'_\bu$ on $\g'_\bu$.  In particular, Morita morphisms of Lie
$2$-groups intertwine the adjoint actions.

\begin{lemma}\label{lemma;descendadjoint}
Let $G_\bu$ be a foliation Lie $2$-group with crossed module
$(G,H,\partial,\alpha)$.  Then the adjoint action of $G_\bu$ on
$\g_\bu$ descends to a strict action of $G_\bu$ on $\g/\h$.
\end{lemma}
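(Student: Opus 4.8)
The plan is to realize $\g/\h$ as the identity Lie groupoid $\g/\h\rightrightarrows\g/\h$ and to produce the descended action in the crossed-module form of Definition~\ref{definition;crossedaction}, via the dictionary of Lemma~\ref{lemma;crossedactionequivalent}. First I would observe that, since $\partial(H)$ is normal in $G=G_0$, its Lie algebra $\h$ is an $\Ad(G)$-invariant ideal of $\g$, so $\Ad_0$ descends to a linear $G$-action $\overline{\Ad}\colon G\times\g/\h\to\g/\h$. I take $\overline{\Ad}$ as both the object action~\eqref{item;firstaction} and the arrow action~\eqref{item;secondaction} of $G$, and I take the $H$-action~\eqref{item;thirdaction} on arrows to be trivial, $h*v=v$. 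This is in fact the only possible choice: on the identity groupoid every arrow is an identity, so $s=t=u=\id_{\g/\h}$, and the first half of~\eqref{equation;natural1} forces $h*v=s(h*v)=s(v)=v$.

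Next I would verify the compatibility conditions~\eqref{equation;functor1}--\eqref{equation;conjugation}. Because all structure maps of the identity groupoid are the identity, conditions~\eqref{equation;functor1},~\eqref{equation;functor2} and~\eqref{equation;conjugation} hold trivially, as does the first half of~\eqref{equation;natural1}. All the content is concentrated in the second half of~\eqref{equation;natural1} (equivalently, in~\eqref{equation;natural2}), which reads $t(h*v)=\partial(h)\cdot t(v)$, i.e.\ $v=\overline{\Ad}_{\partial(h)}(v)$. Thus the whole lemma reduces to the single claim that $\overline{\Ad}_{\partial(h)}=\id_{\g/\h}$ for every $h\in H$; equivalently, $\Ad_{\partial(h)}\xi-\xi\in\h$ for all $\xi\in\g$.

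The crux is this claim, and I would prove it by a commutator-curve argument. Writing $n=\partial(h)$ and fixing $\xi\in\g$, consider $\gamma(t)=\exp(t\xi)\,n\,\exp(-t\xi)\,n^{-1}$. Since $\partial(H)$ is normal, $\gamma(t)$ lies in $\partial(H)$ for all $t$ (its class in the abstract quotient group $G/\partial(H)$ is trivial), and $\gamma(0)=1$. As $\partial(H)$ is an immersed, hence weakly embedded (initial), Lie subgroup of $G$, the curve $\gamma$, being smooth into $G$ and taking values in $\partial(H)$, is smooth into $\partial(H)$; since its domain is connected and $\gamma(0)=1$, it stays in the identity component, whose tangent space at $1$ is $\h$. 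Hence $\dot\gamma(0)\in\h$, and a direct computation gives $\dot\gamma(0)=\xi-\Ad_n\xi$, so $\Ad_n\xi\equiv\xi\pmod{\h}$, as required.

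I expect the weak embeddedness of $\partial(H)$ to be the main obstacle to state cleanly: it is precisely what guarantees that a curve which is only known to be smooth into $G$ but lands in the (possibly non-closed, possibly disconnected) subgroup $\partial(H)$ cannot escape the identity component near $t=0$, so that its velocity is genuinely tangent to $\h$ rather than merely to the Lie algebra of the closure $\overline{\partial(H)}$. The same input can instead be packaged by noting that $t\colon G_1\to G_0$ is a group homomorphism with $t|_{\ker s}=\partial$, so $\Lie(t)$ intertwines the adjoint actions and $\pi\circ\Lie(t)$ realizes the arrow projection of Lemma~\ref{lemma;liealg}\eqref{item;morita-algebra}; well-definedness of the induced arrow action of $(h,g)$, namely the identity $\overline{\Ad}_{\partial(h)g}=\overline{\Ad}_g$, then again reduces to $\overline{\Ad}_{\partial(h)}=\id$, recovering the same key claim.
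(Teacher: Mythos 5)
Your proof is correct, but it takes a genuinely different route from the paper's. The paper's argument is purely formal: it invokes the Morita morphism $\pi_\bu\colon\g_\bu\to\g/\h$ of Lemma~\ref{lemma;liealg}\eqref{item;morita-algebra}, checks that $\Ad_0$ preserves $\ker(\pi_0)=\h$ and that $\Ad_1$ preserves $\ker(\pi_1)=\ker(\Lie(s))+\ker(\Lie(t))$, and concludes that the action descends; the identity you isolate as the crux, $\Ad_{\partial(h)}\xi\equiv\xi\pmod\h$, is never stated there but is absorbed into the facts that $\pi_\bu$ and $\Ad_\bu$ are groupoid morphisms. You instead reduce everything to that single identity (correctly --- your bookkeeping with \eqref{equation;natural1}--\eqref{equation;conjugation} is right) and prove it by a commutator curve in the normal immersed subgroup $\partial(H)$, invoking weak embeddedness of second-countable immersed subgroups. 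That argument is valid under the paper's standing second-countability hypothesis, but the differential-geometric input is avoidable: the ``alternative packaging'' you mention at the end in fact already finishes the proof. Indeed, the computation in the proof of Lemma~\ref{lemma;liealg}\eqref{item;morita-algebra} shows $(\Lie(t)-\Lie(s))(w)\in\h$ for every $w\in\g_1$; applying this to $w=\Ad_1(h,1)\bigl(\Lie(u)\xi\bigr)$, whose images under $\Lie(s)$ and $\Lie(t)$ are $\xi$ and $\Ad_{\partial(h)}\xi$ respectively (because $s$ and $t$ are group homomorphisms and hence intertwine adjoint representations), yields $\Ad_{\partial(h)}\xi-\xi\in\h$ in one line. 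So your route is sound but heavier than necessary; what it buys is an explicit, atlas-level explanation of \emph{why} $\partial(H)$ acts trivially on $\g/\h$, which the paper's slicker descent argument leaves implicit.
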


\begin{proof}
Recall Morita morphism of Lie $2$-algebras
$\pi_\bu\colon\g_\bu\to\g/\h$ of
Lemma~\ref{lemma;liealg}\eqref{item;morita-algebra}:
$\pi_0\colon\g\to\g/\h$ is the projection and
$\pi_1\colon\g_1\to\g/\h$ is $\pi_0$ composed with $\Lie(t)$.  The
kernel of $\pi_0$ is $\h$, and this is preserved by $\Ad_0$ since it
is an ideal of $\g$.  The kernel of $\pi_1$ is
$\ker(\Lie(s))+\ker(\Lie(t))$, and this is preserved by $\Ad_1$ since
it is an ideal of $\g_1$.  Therefore the action $\Ad_\bu$ descends
through the morphism~$\pi_\bu$.
\end{proof}

By abuse of language we will also call the induced action of $G_\bu$
on $\g/\h$ from Lemma \ref{lemma;descendadjoint} the \emph{adjoint
  action} and denote it by
\[\Ad_\bu\colon G_\bu\times\g/\h\to\g/\h.\]
Let $(\g/\h)^*$ be vector space dual to $\g/\h$ and let
$\langle{\cdot},{\cdot}\rangle\colon(\g/\h)^*\times\g/\h\to\R$ be the
dual pairing.  The \emph{coadjoint action}
\[\Ad_\bu^*\colon G_\bu\times(\g/\h)^*\to(\g/\h)^*\]
is the strict action given by the defining property
\[
\langle\Ad_i^*(g\n)(\lambda),\xi\rangle=\langle g,\Ad_i(g)(\xi)\rangle
\]
for $i=0$, $1$ and for all $g\in G_i$, $\lambda\in(\g/\h)^*$, and
$\xi\in\g/\h$.

The notions of adjoint and coadjoint action carry over to stacks as
follows.

\begin{proposition}\label{proposition;adjointactionstack}
Let $\G$ be a connected \'etale weak Lie group stack and let
$\Lie(\G)$ be its Lie algebra, as defined in
\S\,\ref{subsection;lie-algebra}.  There is a unique strict action
$\Bad$ of $\G$ on $\Lie(\G)$ so that for every presentation $\B
G_\bu\simeq\G$ by a Lie $2$-group the diagram
\[
\begin{tikzcd}
\B G_\bu\times\g/\h\ar[r,"\B\Ad_\bu"]\ar[d]&\g/\h\ar[d]\\
\G\times\Lie(\G)\ar[r,"\Bad"]&\Lie(\G),
\end{tikzcd}
\]
commutes, where left arrow is the product of the presentation $\B
G_\bu\simeq\G$ and the map~\eqref{equation;Liealgembedding}.
\end{proposition}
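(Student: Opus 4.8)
The plan is to \emph{define} $\Bad$ from a single presentation by applying the classifying functor to the descended adjoint action, and then to prove that the resulting action does not depend on the presentation. Fix a presentation $\B G_\bu\simeq\G$ by a Lie $2$-group. By Lemma~\ref{lemma;descendadjoint} the adjoint action descends to a strict action $\Ad_\bu\colon G_\bu\times\g/\h\to\g/\h$. Since $\g/\h$ is a manifold, its classifying stack is itself, and since $\B$ preserves products (Lemma~\ref{lemma;preservelimits}) it carries strict Lie $2$-group actions to strict Lie group stack actions; thus $\B\Ad_\bu$ is a strict action of $\B G_\bu$ on $\g/\h$. The identification $\g/\h\cong\Lie(\G)$ of~\eqref{equation;Liealgembedding} is an isomorphism of finite-dimensional vector spaces, so transporting $\B\Ad_\bu$ along it (and along the equivalence $\B G_\bu\simeq\G$) yields a strict action $\Bad$ of $\G$ on $\Lie(\G)$, and by construction the square in the statement commutes for this chosen presentation.

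It remains to show that the \emph{same} $\Bad$ makes the square commute for every other presentation $\B G_\bu'\simeq\G$, which is the crux of the argument. By Proposition~\ref{proposition;strictmor} any two presentations are linked through Morita morphisms of Lie $2$-groups, and exactly as in the proof of Proposition~\ref{proposition;LiealgLiegroupstack} it suffices to treat a single Morita morphism $\phi_\bu\colon G_\bu\to G_\bu'$. Morita morphisms of Lie $2$-groups intertwine the adjoint actions (as recorded just before Lemma~\ref{lemma;descendadjoint}), and since $\Lie(\phi_\bu)$ descends through the projections $\pi_\bu$, $\pi_\bu'$ to the Lie algebra isomorphism $\g/\h\cong\g'/\h'$ of Lemma~\ref{lemma;liealg}\eqref{item;morita-isomorphism}, this isomorphism intertwines the descended actions $\Ad_\bu$ and $\Ad_\bu'$. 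Applying $\B$ and invoking the commuting diagram of Lemma~\ref{lemma;liealg}\eqref{item;left-invariant-commute} — which says precisely that the identifications $\g/\h\cong\Lie(\G)$ and $\g'/\h'\cong\Lie(\G)$ arising from~\eqref{equation;Liealgembedding} and~\eqref{equation;bas-biject} agree with the Morita-induced isomorphism — shows that the action produced from $G_\bu'$ coincides with $\Bad$. Hence the square commutes for every presentation.

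Uniqueness is then immediate: the left vertical arrow of the square is the product of the equivalence $\B G_\bu\simeq\G$ with the isomorphism~\eqref{equation;Liealgembedding}, hence is itself an equivalence, so any strict action making the square commute for even one presentation is determined by $\B\Ad_\bu$. I expect the main obstacle to be the presentation-independence step, where one must assemble the Morita-invariance of the adjoint action together with the compatibility of the various identifications of $\Lie(\G)$ encoded in Lemma~\ref{lemma;liealg}; by contrast the strictness of $\Bad$ is essentially automatic once we know that $\B$ preserves strict actions and that the descent to $\Lie(\G)$ is an isomorphism of manifolds.
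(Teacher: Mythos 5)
Your proposal is correct and follows essentially the same route as the paper: define $\Bad$ from one presentation via the descended adjoint action, and deduce presentation-independence (hence uniqueness) from the Morita-naturality of $\Ad_\bu$ together with Lemma~\ref{lemma;liealg}. The only point where your justification is slightly off target is strictness: transporting a strict action along the equivalence $\B G_\bu\simeq\G$ would in general only produce a weak action, and the reason $\Bad$ is nevertheless strict is not that $\B$ preserves strict actions but that $\Lie(\G)$, viewed as a stack, takes values in the $1$-category $\group{Set}$, so all coherence $2$-morphisms are forced to be identities — which is exactly the observation the paper's proof makes.
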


\begin{proof}
Given a presentation $\B G_\bu\simeq\G$, one can define the morphism
$\Bad$ so that the diagram commutes.  Since $\Lie(\G)$ (considered as
a differentiable stack) takes values in the $1$-category
$\group{Set}$, the action is automatically strict.  Uniqueness of
$\Bad$ then follows from naturality of the adjoint action $\Ad_\bu$
with respect to Morita equivalences.
\end{proof}

We call the strict action
\[\Bad\colon\G\times\Lie(\G)\to\Lie(\G)\]
of Proposition~\ref{proposition;adjointactionstack} the \emph{adjoint
  action} of the Lie group stack $\G$.
\glossary{Ad@$\Bad$, adjoint action of Lie group stack}%
Let $\Lie(\G)^*$ be the vector space dual to $\Lie(\G)$ and let
$\langle{\cdot},{\cdot}\rangle\colon\Lie(\G)^*\times\Lie(\G)\to\R$ be
the dual pairing.  Then the \emph{coadjoint action} of $\G$ is the
strict action
\[\Bad^*\colon\G\times\Lie(\G)^*\to\Lie(\G)^*\]
given by the defining property
\[
\langle\Bad^*\circ((\cdot)\n\times\id_{\Lie(\G)^*}),\id_{\Lie(\G)}\rangle=
\langle\id_{\Lie(\G)^*},\Bad\rangle.
\]
If $\bo{g}\colon\star\to\G$ is a categorical point of $\G$, the
isomorphism
\[
\begin{tikzcd}
\Lie(\G)\ar[r,"\simeq"]&\star\times\Lie(\G)
\ar[r,"\bo{g}\times\id_{\Lie(\G)}"]&
\G\times\Lie(\G)\ar[r,"\Bad"]&\Lie(\G)
\end{tikzcd}
\]
is denoted by $\Bad_{\bo{g}}$.  The equivalence
$\Bad_{\bo{g}}^*\colon\Lie(\G)^*\to\Lie(\G)^*$ is similarly defined.


\subsection{Stacky tori}\label{subsection;torus}

Stacky tori play an analogous role to that of compact tori in the
theory of compact Lie groups.  A \emph{$2$-torus} is a Lie $2$-group
which is Morita equivalent to a foliation $2$-group $G_\bu$ with the
property that $G_0$ is a torus.  A \emph{stacky torus} is an \'etale
Lie group stack equivalent to $\B G_\bu$, where $G_\bu$ is a
$2$-torus.

\begin{lemma}\label{lemma;abelian}
Suppose that $G_\bu$ is a foliation $2$-group and that $G_0$ is
connected and abelian.  Then $G_1$ is abelian and the action
$\alpha\colon G_0\to\Aut(\ker(s))$ is trivial.
\end{lemma}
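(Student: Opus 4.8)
The plan is to work throughout with the crossed module $(G,H,\partial,\alpha)$ associated with $G_\bu$, as in \S\ref{subsection;crossed}: here $G=G_0$, $H=\ker(s)$, $\partial=t|_H$, $\alpha$ is the conjugation action of $G$ on $H$, and $G_1=H\rtimes_\alpha G$. Since $G_\bu$ is a foliation $2$-group, the kernel of $\partial$ is discrete (Definition~\ref{definition;foliation2group}). I will first show that $\alpha$ is trivial, and then deduce that both $H$ and $G_1$ are abelian directly from the two crossed module axioms.

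The key step is the triviality of $\alpha$. The first crossed module axiom reads ${}^g h\mapsto\partial({}^g h)=g\,\partial(h)\,g^{-1}$ for all $g\in G$ and $h\in H$. Because $G_0=G$ is abelian, the right-hand side equals $\partial(h)$, so ${}^g h\cdot h^{-1}$ lies in $\ker(\partial)$ for all $g$ and $h$. Now fix $h\in H$ and consider the smooth map $G\to H$ given by $g\mapsto{}^g h\cdot h^{-1}$. By the preceding observation it takes values in the discrete subgroup $\ker(\partial)$, and since $G$ is connected it must be constant; evaluating at $g=1$ (where ${}^1 h=h$, as $\alpha$ is a homomorphism) shows the constant value is $1$. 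Hence ${}^g h=h$ for all $g\in G$ and $h\in H$, i.e.\ $\alpha$ is trivial, which is the second assertion.

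From this the remaining claims follow formally. The Peiffer identity (the second crossed module axiom) states that $\alpha(\partial(h))(h')=h h' h^{-1}$ for all $h,h'\in H$. Since $\alpha$ is trivial its left-hand side is $h'$, giving $h h'=h' h$, so $H$ is abelian. Finally, triviality of $\alpha$ means the semidirect product $G_1=H\rtimes_\alpha G$ is the direct product $H\times G$, a product of two abelian groups, and is therefore abelian.

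There is no serious obstacle here; the single point that genuinely uses the hypotheses is the triviality of $\alpha$, which hinges on the interplay between the connectedness of $G$ (forcing a continuous map into a discrete set to be constant) and the foliation hypothesis (making $\ker(\partial)$ discrete). The rest is a purely formal consequence of the crossed module axioms, and the argument uses neither smoothness beyond continuity nor any structure of $G$ other than abelianness and connectedness.
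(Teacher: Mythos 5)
Your proof is correct and follows essentially the same route as the paper's: use the first crossed module axiom plus abelianness of $G$ to see that $g\mapsto{}^ghh^{-1}$ lands in the discrete group $\ker(\partial)$, conclude triviality of $\alpha$ by connectedness of $G$, and then deduce commutativity of $H$ (hence of $G_1=H\rtimes_\alpha G=H\times G$) from the Peiffer identity. No gaps.
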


\begin{proof}
Let $g\in G$ and $h\in H$.  Then
$\partial({}^gh)=g\partial(h)g^{-1}=\partial(h)$ because $G$ is
abelian.  Therefore $\partial({}^ghh^{-1})=1$,
i.e.\ ${}^ghh^{-1}\in\ker(\partial)$.  In other words, for each $h\in
H$ the map $f(g)={}^ghh^{-1}$ maps $G$ to $\ker(\partial)$.  But
$G_\bu$ is a foliation groupoid, so $\ker(\partial)$ is discrete, and
$G$ is connected, so $f$ is constant.  Thus
${}^ghh^{-1}={}^1hh^{-1}=hh^{-1}=1$, so ${}^gh=h$, i.e.\ the action
$\alpha\colon G\to\Aut(H)$ is trivial.  Hence for all $h$, $h'\in H$
we have $h'={}^{\partial(h)}h'=hh'h^{-1}$, i.e.\ $h'h=hh'$.  It
follows that $G_1=H\rtimes_\alpha G$ is abelian.
\end{proof}

In the setting of Lemma~\ref{lemma;abelian} we will often write the
crossed module $(G,H,\partial,\alpha)$ simply as $\partial\colon H\to
G$ or $H\to G$.

\begin{definition}
\label{definition;quasilattice}
A \emph{quasi-lattice} is a crossed module $\partial\colon A\to E$,
where $E$ is (the additive group of) a finite-dimensional real vector
space and $A$ is a countable discrete abelian group, and where the
image $\partial(A)$ is required to span $E$ as a vector space.
\end{definition}

Definition \ref{definition;quasilattice} generalizes a definition of
Prato~\cite{prato;non-rational-symplectic}, who assumes the map
$\partial\colon A\to E$ to be injective.  Our next result is a stacky
analogue of the familiar fact that a torus is isomorphic to the
quotient of its Lie algebra by the exponential lattice.  Recall
from~\cite[\S\,5]{trentinaglia-zhu;strictification} that the
fundamental group $\pi_1(\G)$ of a Lie group stack is the set of
equivalence classes of maps $\group{S}^1\to\G$ based at the identity
of $\G$, modulo homotopy.

\begin{proposition}\label{proposition;2-torus}
Let $\G$ be a stacky torus and let $G_\bu$ be a $2$-torus with
$\G\simeq\B G_\bu$.  The crossed module of $G_\bu$ is Morita
equivalent to a quasi-lattice $\partial\colon A\to E$.  This
quasi-lattice is isomorphic to $\pi_1(\G)\to\Lie(\G)$, and hence is
uniquely determined up to isomorphism by $\G$.
\end{proposition}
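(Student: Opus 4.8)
The plan is to attack the two assertions separately: first to replace the crossed module of $G_\bu$ by a quasi-lattice using the Morita moves for crossed modules recorded in Remark~\ref{remark;morita}, and then to identify the resulting quasi-lattice intrinsically with $\pi_1(\G)\to\Lie(\G)$, from which uniqueness is immediate. Since $G_\bu$ is a $2$-torus, after a Morita equivalence I may assume its crossed module has the form $\partial\colon H\to T$ with $T$ a torus; by Lemma~\ref{lemma;abelian} the group $H$ is then abelian and the action $\alpha$ is trivial, so this is a crossed module of the kind appearing in Definition~\ref{definition;quasilattice}.

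To reach a quasi-lattice I would run exactly the argument in the proof of Lemma~\ref{lemma;etale}, specialized to the abelian case. Let $\exp\colon\ttt\to T$ be the universal covering homomorphism, $\ttt=\Lie(T)$. The extension case of Remark~\ref{remark;morita} gives a Morita morphism $(\ttt,\hat H,\hat\partial,\hat\alpha)\to(T,H,\partial,\alpha)$ with $\hat H=H\times_T\ttt$, whose base $\ttt$ is now a vector group. Let $N$ be the identity component of $\hat\partial(\hat H)=\exp^{-1}(\partial(H))$; it is the connected immersed subgroup of $\ttt$ with Lie algebra $\im(\Lie\partial)$, and by the Bourbaki closedness argument of Lemma~\ref{lemma;etale} it is closed, hence a linear subspace. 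The quotient case of Remark~\ref{remark;morita} applied to $N$ then yields a Morita morphism onto a crossed module $\partial'\colon A\to E$ with $E=\ttt/N$ a vector space and $A=\hat H/\hat\partial^{-1}(N)$ discrete and countable (by second countability). Finally $\partial'(A)$ spans $E$, since $\exp^{-1}(\partial(H))$ contains the kernel lattice of $\exp$, which already spans $\ttt$. Thus the crossed module of $G_\bu$ is Morita equivalent to the quasi-lattice $\partial'\colon A\to E$.

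For the second assertion, the action $2$-group of $\partial'\colon A\to E$ is an \'etale $2$-torus presenting $\G$, so $\B(A\ltimes E)\simeq\G$. Because $A$ is discrete, this presentation is \'etale, and the discussion following Proposition~\ref{proposition;LiealgLiegroupstack} gives a canonical isomorphism $\Lie(\G)\cong\g/\h=E$. It remains to compute $\pi_1(\G)$ and its map to $\Lie(\G)$. As a stack, $\B(A\ltimes E)$ is the quotient $[E/A]$ with $A$ acting by $v\mapsto v+\partial'(a)$; since $E$ is contractible, its homotopy type is that of $BA$, so a based loop $\group{S}^1\to\G$ is classified up to homotopy by its monodromy in $A$, giving $\pi_1(\G)\cong A$. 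Under this identification I expect the map $\pi_1(\G)\to\Lie(\G)$ of~\cite{trentinaglia-zhu;strictification} to send the class of a loop with monodromy $a$ to the period $\partial'(a)$ of its lift $r\mapsto r\,\partial'(a)$ to $E$, so that $\pi_1(\G)\to\Lie(\G)$ is identified with $\partial'\colon A\to E$. Hence $\partial'\colon A\to E$ is isomorphic to $\pi_1(\G)\to\Lie(\G)$.

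Uniqueness is then automatic: $\pi_1(\G)$, $\Lie(\G)$, and the homomorphism between them depend only on $\G$, so the isomorphism class of the quasi-lattice does too (equivalently, any two presentations yield Morita equivalent crossed modules by Proposition~\ref{proposition;strictmor}, and both compute the quasi-lattice $\pi_1(\G)\to\Lie(\G)$). The main obstacle will be this second assertion: one must match the Trentinaglia–Zhu definition of $\pi_1(\G)\to\Lie(\G)$ with the structure map $\partial'$, and in particular verify that the monodromy of a based loop is a complete homotopy invariant even though the $A$-action on $E$ is not free, its constant stabilizer $\ker\partial'$ contributing only to the higher homotopy of $[E/A]$ and not to $\pi_1$.
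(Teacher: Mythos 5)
Your overall strategy matches the paper's: pull back the crossed module along $\exp\colon\ttt\to T$, quotient to land on a discrete group over a vector space, and then identify the result with $\pi_1(\G)\to\Lie(\G)$. The genuine gap is in the quotient step. You set $A=\hat H/\hat\partial^{-1}(N)$, taking the third case of Remark~\ref{remark;morita} at face value; but dividing by the \emph{full} preimage $\hat\partial^{-1}(N)$ kills $\ker(\hat\partial)\cong\ker(\partial)$, so the resulting $\partial'\colon A\to E$ is automatically injective, and by the criterion of Definition~\ref{definition;moritacrossed} (or Lemma~\ref{lemma;morita-crossed}) the quotient map is a Morita morphism of crossed modules only when $\partial$ was already injective. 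Concretely, $\hat\partial^{-1}(N)=(\ker\partial\times\{0\})\cdot\tilde\h$, where $\tilde\h=\{(\exp_H(\eta),\Lie(\partial)(\eta)):\eta\in\h\}$ is the canonical lifted copy of $\h$ inside $\hat H$; your $A$ is the correct group $\hat H/\tilde\h$ further divided by $\ker\partial$. Test case: $\partial\colon\Z\to\T$ the trivial homomorphism, so that $\G$ is the product of $\T$ with the classifying stack of the discrete group $\Z$. Then $\pi_1(\G)\cong\Z^2$ and the quasi-lattice $\pi_1(\G)\to\Lie(\G)=\R$ is projection onto one $\Z$-factor, hence not injective, whereas your recipe returns $A=\Z$ with $\partial'$ injective. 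Since the proposition is precisely about allowing non-injective quasi-lattices (this is the stated point of departure from Prato's definition), the step as written fails. The fix is what the paper actually does: quotient $\hat H$ by the lift $\tilde\h$ of $N=\h$ rather than by $\hat\partial^{-1}(N)$; this is the honest ``second case run in reverse'' and preserves both $\ker\partial$ and $\coker\partial$.

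The remaining ingredients are sound and in places cleaner than the paper's. Your spanning argument (the kernel lattice of $\exp$ lies in $\hat\partial(\hat H)$ and already spans $\ttt$) is a nice shortcut compared with the paper's use of compactness of $T$ and the surjection $T\to E/\partial'(A)$. For the identification $A\cong\pi_1(\G)$ the paper invokes Noohi's fibration $A\to E\to\G$ and its long exact homotopy sequence rather than the Borel-construction argument you sketch; either works once $A$ is the right group, and the paper is no more explicit than you are about matching the map $\pi_1(\G)\to\Lie(\G)$ with $\partial'$. Note finally that your closing worry about the non-free $A$-action on $E$ is a symptom of the error above: with your $A$ the stabilizer $\ker\partial'$ is trivial by construction, yet you correctly expect it to be nontrivial in general.
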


\begin{proof}
We may assume that $G_\bu$ is \'etale and that $G_0$ is a torus.  By
Lemma~\ref{lemma;abelian}, $G_1$ is abelian and $G_0$ acts trivially
on $G_1$.  We obtain our Morita equivalence from two weak equivalences
as in the diagram
\[
\begin{tikzcd}[row sep=large]
H\ar[d,"\partial"']&\tilde{H}\ar[l]\ar[d,"\tilde{\partial}"']\ar[r]&
A\ar[d,"\partial"']
\\
G&\g\ar[l,"\exp"']\ar[r]&E.
\end{tikzcd}
\]
Here $\tilde{H}$ is the fibred product
\[H\times_G\g=\{\,(h,\xi)\mid\partial(h)=\exp(\xi)\,\}\]
and $E$ is the quotient $E=\g/\h$.  The kernel of the exponential map
$\exp\colon\g\to G$ is isomorphic to $\pi_1(G)=\Hom(\U(1),G)$, the
fundamental group of $G$, and we have a short exact sequence
\[
\pi_1(G)\longinj\tilde{H}\longsur H,
\]
which shows that $\tilde{H}$ is an extension of $H$ by $\pi_1(G)$.
The group $\tilde{H}$ contains a copy $\tilde{\h}$ of $\h$, namely the
image of the embedding $\h\to\tilde{H}$ which sends $\eta$ to
$(\exp(\eta),\Lie(\partial)(\eta))$.  We have
$\Lie(\tilde{H})\cong\tilde{\h}$.  We define $A=\tilde{H}/\tilde{\h}$
to complete the diagram.  Then $\Lie(A)=0$, so $A$ is discrete.  The
Morita equivalence between $H\to G$ and $A\to E$ gives us a group
isomorphism $G/\partial(H)\cong E/\partial(A)$, and hence a surjection
$G\to E/\partial(A)$, which implies that $\partial(A)$ generates $E$
as a vector space, because $G$ is compact.  This proves the existence
of the quasi-lattice $\partial\colon A\to E$.

The uniqueness is proved as follows.  Since $H\to G$ is Morita
equivalent to $A\to E$,
Proposition~\ref{proposition;LiealgLiegroupstack} guarantees that
there is an isomorphism of the abelian Lie algebras
\[
\Lie(\G) \cong \Lie(E)/\Lie(A) = \Lie(E) = E
\]  
Let $E_\bu$ be the Lie $2$-group associated with the crossed module
$A\to E$, then the equivalence $\G\simeq \B E_\bu$ gives a fibration
of stacks $A\to E\to\G$ in the sense of
Noohi~\cite[\S\,5]{noohi;fibrations-topological-stacks}, and hence a
long exact homotopy sequence, which yields $A\cong\pi_1(\G)$.
\end{proof}

\subsection{Maximal stacky tori}\label{subsection;maximal-torus}

We now introduce the appropriate analogue of a maximal torus in the
$2$-categories of Lie $2$-groups and Lie group stacks.

\begin{definition}\label{definition;maximal-2-torus}
Let $G_\bu$ be a foliation $2$-group of compact type.  A \emph{maximal
  $2$-torus} of $G_\bu$ is a full Lie $2$-subgroup $T_\bu$ of $G_\bu$
such that $T_0\subseteq G_0$ is connected and $\ttt_0=\Lie(T_0)$ is a
maximal abelian subalgebra of $\g_0$.
\end{definition}

A maximal $2$-torus is a $2$-torus in the sense of
\S\,\ref{subsection;torus}.  If $(G,H,\partial, \alpha)$ is the
crossed module of $G_\bu$, then the crossed module of $T_\bu$ is
$(T,H_T=\partial^{-1}(T),\partial,\alpha)$.  By
Lemma~\ref{lemma;abelian}, the closed subgroup $H_T$ of $H$ is abelian
and the action of $T$ on $H_T$ is trivial.

\begin{definition}
\label{definition;maximal-stacky-torus}
Let $\G$ be a compact connected \'etale Lie group stack.  A
\emph{maximal stacky torus} is a strict homomorphism $\TT\to\G$ of Lie
group stacks with the following property: there exist a foliation
$2$-group $G_\bu$ of compact type, a maximal $2$-torus $T_\bu$ of
$G_\bu$, and presentations
\[
\B G_\bu\simeq\G\qquad\text{and}\qquad\B T_\bu\simeq\TT,
\]
such that the following diagram commutes:
\[
\begin{tikzcd}
\B T_\bu\ar[r]\ar[d,"\simeq"']&\B G_\bu\ar[d,"\simeq"]\\
\TT\ar[r]&\G.
\end{tikzcd}
\]
\end{definition}

It follows from the strictification theorem,
Corollary~\ref{corollary;compact-lie-group-stack}, that every compact
connected \'etale Lie group stack has a maximal stacky torus.
%
%

\begin{lemma}\label{lemma;torus-morita}
Let
$(\psi_G,\psi_H)\colon(G,H,\partial,\alpha)\to(G',H',\partial',\alpha')$
be Morita morphism of crossed modules.  Assume the Lie algebras $\g$
and $\g'$ are compact, and assume $\psi_G\colon G\to G'$ is
surjective.  Let $\lie{t}$ and $\lie{t}'$ be maximal abelian Lie
subalgebras of $\g$ and $\g'$, respectively, chosen so that $\lie{t}'$
contains $\Lie(\psi_G)(\lie{t})$.  Let $T\subseteq G$ and $T'\subseteq
G'$ be the connected subgroups with Lie algebras $\lie{t}$ and
$\lie{t}'$, respectively.  Then the restriction
\[
(\psi_T,\psi_{H_T}):=(\psi_G|_{T},\psi_H|_{H_T})\colon
(T,H_T,\partial,\alpha)\longto(T',H'_T,\partial',\alpha')
\]
is a Morita morphism.
\end{lemma}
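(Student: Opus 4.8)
The plan is to verify directly that the restriction $(\psi_T,\psi_{H_T})$ meets the two requirements of Definition~\ref{definition;moritacrossed}, namely that it induces isomorphisms on the kernels and on the cokernels of the boundary maps $\partial|_{H_T}\colon H_T\to T$ and $\partial'|_{H'_T}\colon H'_T\to T'$. First I would dispose of the (routine) well-definedness: writing $N=\partial(H)$ and $N'=\partial'(H')$, the relation $\partial'\circ\psi_H=\psi_G\circ\partial$ together with $\psi_G(T)\subseteq T'$ forces $\psi_H(H_T)\subseteq H'_T$, and $\alpha$ restricts to an action $T\to\Aut(H_T)$ because $\partial(\alpha(t)(h))=t\,\partial(h)\,t\n\in T$ for $t\in T$, $h\in H_T$. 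So $(\psi_T,\psi_{H_T})$ is genuinely a morphism of crossed modules.

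The main step, and the only place compactness enters, is to show that $\psi_T\colon T\to T'$ is surjective; this is the crux and the main obstacle. Since a surjective homomorphism of Lie groups is a submersion, $f=\Lie(\psi_G)\colon\g\to\g'$ is surjective, and I would prove $f(\ttt)=\ttt'$. Choosing an invariant inner product on the compact Lie algebra $\g$, split $\g=\kk\oplus\s$ with $\kk=\ker f$ an ideal and $\s=\kk^{\perp}$ an ideal carried isomorphically onto $\g'$ by $f$. If $x'\in\g'$ centralizes $f(\ttt)$, lift it to $x\in\s$; then for every $\tau\in\ttt$ one has $f([x,\tau])=[x',f(\tau)]=0$ while $[x,\tau]\in\s$, so $[x,\tau]\in\s\cap\kk=0$. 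Thus $x$ centralizes $\ttt$, whence $x\in\ttt$ because a maximal abelian subalgebra of a compact Lie algebra is self-centralizing, and therefore $x'=f(x)\in f(\ttt)$. Hence $f(\ttt)$ is self-centralizing, i.e.\ maximal abelian, in $\g'$; as $f(\ttt)\subseteq\ttt'$ with $\ttt'$ maximal abelian, we get $f(\ttt)=\ttt'$. Since $T,T'$ are connected, surjectivity of $\Lie(\psi_T)=f|_{\ttt}\colon\ttt\to\ttt'$ yields $\psi_T(T)=T'$.

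The remaining bookkeeping is formal. For the kernels, $\ker\partial\subseteq\partial^{-1}(T)=H_T$ gives $\ker(\partial|_{H_T})=\ker\partial$ and likewise $\ker(\partial'|_{H'_T})=\ker\partial'$, so the map induced by $\psi_{H_T}$ is exactly the isomorphism $\ker\partial\iso\ker\partial'$ supplied by the hypothesis that $(\psi_G,\psi_H)$ is Morita. For the cokernels, $\partial(H_T)=N\cap T$ and $\partial'(H'_T)=N'\cap T'$, so the induced map is $\bar\psi_T\colon T/(N\cap T)\to T'/(N'\cap T')$; it is surjective because $\psi_T$ is. Injectivity follows from the fact that the Morita hypothesis together with surjectivity of $\psi_G$ gives $\psi_G^{-1}(N')=N$: if $t\in T$ and $\psi_T(t)\in N'\cap T'$, then $t\in\psi_G^{-1}(N')=N$, so $t\in N\cap T$. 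Hence $\bar\psi_T$ is an isomorphism and $(\psi_T,\psi_{H_T})$ is Morita.

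As a cleaner alternative to the last paragraph, once $\psi_T(T)=T'$ is established one can invoke the characterization in Lemma~\ref{lemma;morita-crossed}\eqref{item;morita-fibre}: the condition $T'=\partial'(H'_T)\,\psi_T(T)$ is immediate, and the bijectivity of $\psi_H\times\partial\colon H\to H'\times_{G'}G$ for the original crossed module restricts to bijectivity of $\psi_{H_T}\times\partial\colon H_T\to H'_T\times_{T'}T$, since any $(h',t)$ in the latter fibred product has unique $H$-preimage $h$ with $\partial(h)=t\in T$, forcing $h\in H_T$. Either way, all the genuine content is in the equality $f(\ttt)=\ttt'$; the rest is diagram-chasing.
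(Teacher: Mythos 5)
Your proof is correct, and the concluding bookkeeping (in either of your two versions) coincides with the paper's: the paper likewise identifies $H_T=T\times_{T'}(\partial')\n(T')$ and concludes via Lemma~\ref{lemma;morita-crossed}. The genuine difference is in the crux, the equality $\Lie(\psi_G)(\ttt)=\ttt'$. The paper decomposes $\g\cong[\g,\g]\oplus\z(\g)$, writes $[\g,\g]$ as a sum of simple ideals each of which is either annihilated or mapped isomorphically by $\Lie(\psi_G)$, and reads off that the image of a maximal abelian subalgebra is maximal abelian. You instead split $\g=\kk\oplus\lie{s}$ with $\kk=\ker\Lie(\psi_G)$ and $\lie{s}=\kk^{\perp}$ for an invariant inner product, and show directly that $\Lie(\psi_G)(\ttt)$ is self-centralizing in $\g'$; the key point --- that $[x,\tau]\in\lie{s}\cap\kk=0$ because the orthogonal complement of an ideal is an ideal --- is sound, and maximal abelian subalgebras are always self-centralizing. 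Your route is slightly more elementary, avoiding the decomposition of a compact Lie algebra into center plus simple summands, while both arguments use compactness in an essential way (yours through the invariant inner product, the paper's through the reductive decomposition). One marginal remark: for injectivity of the induced map on cokernels you only need $\psi_G\n(N')\subseteq N$, which already follows from injectivity of $G/N\to G'/N'$; surjectivity of $\psi_G$ is needed only to get $\psi_T(T)=T'$.
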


\begin{proof}
Let us first show that $\psi_T\colon T\to T'$ is surjective.  Since
$\g$ and $\g'$ are compact, they decompose into
$\g\cong[\g,\g]\oplus\z(\g)$ and $\g'\cong[\g',\g']\oplus\z(\g')$,
where $[\g,\g]$ and $[\g',\g']$ are the derived subalgebras of $\g$
and $\g'$, respectively, and $\z(\g)$ and $\z(\g')$ are the centers of
$\g$ and $\g'$, respectively.  Since $\psi_G$ is surjective,
$\Lie(\psi_G)$ maps $\z(\g)$ onto $\z(\g')$.  If
$[\g,\g]=\bigoplus_i\g_i$ is a decomposition into simple subalgebras,
then $[\g',\g']=\bigoplus_i\g_i'$, where
$\g_i'=\Lie(\psi_G)(\g_i)\cong\g_i$ or $\g'_i=0$.  It follows that
$\Lie(\psi_G)(\ttt)$ is a maximal abelian subalgebra of $\g'$, and
hence $\psi_T(T)=T'$.

Next we show that $H_T=\partial\n(T)\cong
T\times_{T'}(\partial')\n(T')$.  Since $(\psi_G,\psi_H)$ is a Morita
morphism, we may identify $H= G\times_{G'} H'$ and $H_T= T\times_{G'}
H'=T\times_{T'} H'$.  Assume that $t\in T$ and $h'\in H'$ with
$\psi_T(t)=\partial'(h')$.  Then $\partial'(h')\in T'$, so
$h'\in(\partial')\n(T')$.  Thus $H_T=T\times_{T'}(\partial')\n(T')$.
The result now follows from Lemma~\ref{lemma;morita-crossed}.
\end{proof}

The conjugation action $\bo{C}\colon\G\times\G\to\G$ is a strict
action of $\G$ on itself.  Composing this action with a categorical
point $\bo{g}\colon\star\to\G$ gives an equivalence
$\bo{C_g}\colon\G\to\G$, called \emph{conjugation by $\bo{g}$}.

\begin{corollary}\label{cor;maxtorusinvariant}
Let $\G$ be a compact connected \'etale Lie group stack and let
$\TT\to\G$ and $\TT'\to\G$ be two maximal stacky tori.  Then there
exist a categorical point $\bo{g}\colon\star\to\G$ and an equivalence
of Lie group stacks $\TT\simeq\TT'$ so that the following diagram
commutes:
\[
\begin{tikzcd}
\TT\ar[r]\ar[d,"\simeq"']&\G\ar[d,"\bo{C_g}"]\\
\TT'\ar[r]&\G.
\end{tikzcd}
\]
\end{corollary}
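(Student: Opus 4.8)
The plan is to reduce both maximal stacky tori to maximal $2$-tori inside a single presenting Lie $2$-group and then invoke the conjugacy of maximal abelian subalgebras of a compact Lie algebra.

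First I would unwind Definition~\ref{definition;maximal-stacky-torus}. The tori $\TT\to\G$ and $\TT'\to\G$ come with foliation $2$-groups $G_\bu$, $G_\bu'$ of compact type, maximal $2$-tori $T_\bu\inj G_\bu$, $T_\bu'\inj G_\bu'$, and presentations $\B G_\bu\simeq\G\simeq\B G_\bu'$, $\B T_\bu\simeq\TT$, $\B T_\bu'\simeq\TT'$ fitting into commuting squares. Applying Proposition~\ref{proposition;strictmor} to the two presentations of $\G$ yields a Lie $2$-group $K_\bu$ with Morita morphisms $\psi_\bu\colon K_\bu\to G_\bu$ and $\psi_\bu'\colon K_\bu\to G_\bu'$ whose base maps $\psi_G\colon K_0\to G_0$ and $\psi_G'\colon K_0\to G_0'$ are surjective submersions, and $K_\bu$ is of compact type by part~\eqref{item;morita-lie-compact}. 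Using Remark~\ref{remark;morita-lie-base-connected} I would replace $K_\bu$ by a Morita equivalent base-connected full sub-$2$-group, still of compact type and still admitting surjective submersions to $G_0$ and $G_0'$; so I may assume $K_0$ is connected and $\kk_0=\Lie(K_0)$ is compact.

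Next I would carry each given maximal $2$-torus into $K_\bu$. The key point is that there is a maximal abelian subalgebra $\lie{s}\subseteq\kk_0$ with $\Lie(\psi_G)(\lie{s})=\ttt_0$, where $\ttt_0=\Lie(T_0)$. To see this, set $\pi=\Lie(\psi_G)$ (surjective, since $\psi_G$ is a surjective submersion) and let $\lie{a}=\pi\n(\ttt_0)$. Then $\lie a$ is a compact subalgebra of $\kk_0$ containing the ideal $\lie{c}=\ker\pi$, and $[\lie a,\lie a]\subseteq\lie c$ because $\lie a/\lie c\cong\ttt_0$ is abelian; hence $\pi$ maps the center $\z(\lie a)$ onto $\ttt_0$. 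Extending $\z(\lie a)$ to a maximal abelian subalgebra $\lie s$ of $\kk_0$, the abelian subalgebra $\pi(\lie s)$ contains the maximal abelian subalgebra $\ttt_0$ of $\g_0$, so $\pi(\lie s)=\ttt_0$. Letting $S_\bu\subseteq K_\bu$ be the maximal $2$-torus with $\Lie(S_0)=\lie s$ (Definition~\ref{definition;maximal-2-torus}), Lemma~\ref{lemma;torus-morita} applied to $\psi_\bu$ shows that $\psi_\bu$ restricts to a Morita morphism $S_\bu\to T_\bu$, so $\B S_\bu\simeq\B T_\bu\simeq\TT$ compatibly with the maps to $\G$. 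The identical argument applied to $\psi_\bu'$ produces a maximal $2$-torus $S_\bu'\subseteq K_\bu$, with $\Lie(S_0')=\lie s'$ a maximal abelian subalgebra of $\kk_0$, such that $\B S_\bu'\simeq\TT'$ compatibly.

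Finally I would conjugate $S_\bu$ onto $S_\bu'$ inside $K_\bu$. Since $\lie s$ and $\lie s'$ are maximal abelian subalgebras of the compact Lie algebra $\kk_0$ and $K_0$ is connected, there is $k\in K_0$ with $\Ad(k)\lie s=\lie s'$, hence $kS_0k\n=S_0'$. Writing $(K=K_0,H_K,\partial,\alpha)$ for the crossed module of $K_\bu$, the inner automorphism of $K_\bu$ determined by $k$ acts as $x\mapsto kxk\n$ on $K_0$ and as $h\mapsto{}^kh$ on $H_K$; the relation $\partial({}^kh)=k\partial(h)k\n$ shows it carries $\partial\n(S_0)$ to $\partial\n(S_0')$, so it restricts to an isomorphism of Lie $2$-groups $S_\bu\to S_\bu'$ which is the corestriction of the inner automorphism. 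Passing through $\B$, the point $k\in K_0\to\B K_\bu\simeq\G$ defines the categorical point $\bo g\colon\star\to\G$, the classifying map of the inner automorphism is identified with $\bo{C_g}$ by functoriality of $\B$, and the resulting square
\[
\begin{tikzcd}
\TT\ar[r]\ar[d,"\simeq"']&\G\ar[d,"\bo{C_g}"]\\
\TT'\ar[r]&\G
\end{tikzcd}
\]
commutes. I expect the main obstacle to be the middle step: pinning down a maximal abelian subalgebra of $\kk_0$ whose image is \emph{exactly} the prescribed $\ttt_0$ so that Lemma~\ref{lemma;torus-morita} applies on the nose, and checking that the restricted Morita morphisms are compatible with the structure maps to $\G$; the concluding conjugacy is then the familiar maximal-torus argument transported through the crossed module.
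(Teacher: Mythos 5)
Your proposal is correct and follows essentially the same route as the paper: pass to a common base-connected presenting $2$-group of compact type via Proposition~\ref{proposition;strictmor}, transport both maximal $2$-tori there using Lemma~\ref{lemma;torus-morita}, and conclude by the classical conjugacy of maximal abelian subalgebras of a compact Lie algebra. The only difference is that you spell out the step the paper leaves implicit, namely producing a maximal abelian subalgebra of $\kk_0$ whose image under the surjection is exactly the prescribed $\ttt_0$, and your argument for that (via the splitting $\lie{a}=\z(\lie{a})\oplus[\lie{a},\lie{a}]$ of the compact subalgebra $\lie{a}=\pi\n(\ttt_0)$) is sound.
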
 

\begin{proof}
By Lemma~\ref{lemma;torus-morita} and
Proposition~\ref{proposition;strictmor}, we can assume that there
exists a single base-connected foliation Lie $2$-groups of compact
type $G_\bu$ presenting $\G$, so that $\TT$ and $\TT'$ both come from
maximal $2$-tori $T_\bu$ and $T'_\bu$ of $G_\bu$. From the theory of
Lie groups, the subgroups $T_0$ and $T_0'$ of $G_0$ are related by
conjugation in $G_0$. Passing to the associated Lie group stacks gives
the result.
\end{proof}

By an argument as in Corollary~\ref{cor;maxtorusinvariant}, one can
also show that a maximal stacky torus $\TT$ is maximal in the sense
that, if $\TT'\to \G$ is a sub-Lie group stack and $\TT'$ is a stacky
torus, then there is some $\bo{g}\colon\star\to\G$ so that $\TT'\to
\G$ factors through the morphism
$\begin{tikzcd}[cramped,column sep=1.4em]
\TT\ar[r]&\G\ar[r,"\bo{C_g}"]&\G
\end{tikzcd}$.

\subsection{Principal bundles}\label{subsection;principal}

Bursztyn, Noseda, and
Zhu~\cite{bursztyn-noseda-zhu;principal-stacky-groupoids} have
introduced the notion of a principal bundle in the $2$-category of
differentiable stacks.  We briefly introduce the analogous notion in
the $2$-category of Lie groupoids, and compare the two.

\begin{definition}
\label{definition;principal-groupoid}
Let $G_\bu$ be a Lie $2$-group, $X_\bu$ and $Y_\bu$ Lie groupoids, and
$a_\bu\colon G_\bu\times X_\bu\to X_\bu$ a strict action.  A Lie
groupoid morphism $\psi_\bu\colon X_\bu \to Y_\bu$ is
\emph{$G_\bu$-invariant} if there is a given $2$-isomorphism
$\gamma\colon\psi_\bu\circ\pr_2\To\psi_\bu\circ a_\bu$ making the
following diagram $2$-commute:
\[
\begin{tikzcd}
G_\bu\times X_\bu\ar[r,"a_\bu"]\ar[d,"\pr_2"']&X_\bu\ar[d,"\psi_\bu"]
\\
X_\bu\ar[r,"\psi_\bu"]&Y_\bu.
\end{tikzcd}
\]
The $2$-isomorphism $\gamma\colon G_0\times X_0 \to Y_1$ is required
to satisfy two coherence conditions, for all $x\in X_0$ and $g,h\in
G_0$:
\begin{equation}\label{equation;coherencegpdprinc}
\gamma(h,g\cdot x)\circ\gamma(g,x)=\gamma(hg,x)\qquad\text{and}\qquad
\gamma(1,x)=u(\psi_0(x)).
\end{equation}
A Lie groupoid morphism $\psi_\bu\colon X_\bu\to Y_\bu$ is a
\emph{principal $G_\bu$-bundle} if
\begin{enumerate}
\item\label{item;principal1} $\psi_\bu$ is essentially surjective;
\item\label{item;principal2} $\psi_\bu$ is $G_\bu$-invariant;
\item\label{item;principal3} the canonical morphism $G_\bu\times
  X_\bu\to X_\bu\times_{Y_\bu}^{(w)}X_\bu$ is a Morita morphism.
\end{enumerate}
\end{definition}

\begin{example}
Let $G$ be a Lie group (viewed as the identity $2$-group
$G\rightrightarrows G$) and $X$ a $G$-manifold (viewed as the identity
groupoid $X\rightrightarrows X$).  Then the morphism $\psi_\bu\colon
X\to G\ltimes X$ defined by $\psi_1(x)=(1,x)$ is a principal
$G$-bundle.
\end{example}

\begin{definition}
\label{definition;principal-stack}
Let $\G$ be a Lie group stack, $\X$ and $\Y$ differentiable stacks,
and $\ba\colon\G\times\X\to\X$ a strict action.  A morphism
$\bpsi\colon\X\to\Y$ is \emph{$\G$-invariant} if there is a given
$2$-isomorphism $\bgamma\colon\bpsi\circ\pr_2\To\bpsi\circ\ba$
satisfying two coherence conditions analogous to
\eqref{equation;coherencegpdprinc}, namely that the following diagrams
commute:
\[
\begin{tikzcd}[row sep=large,column sep=small,/tikz/column 1/.append
    style={anchor=base east},/tikz/column 3/.append style={anchor=base
      west}]
\bpsi\circ\pr_2\circ\pr_{23}\ar[r,Rightarrow,"\bgamma"]
\ar[d,equal,start anchor=south east,end anchor=north east,shift
  right=8]& \bpsi\circ\ba\circ\pr_{23}
\ar[r,equal]&
\bpsi\circ\pr_2\circ(\id_\G\times\ba)
\ar[d,Rightarrow,start anchor=south west,end anchor=north west,shift
  left=8,"\bgamma"]\\
\bpsi\circ\pr_2\circ(\bm\times\id_\X)
\ar[r,Rightarrow,"\bgamma"]&
\bpsi\circ\ba\circ(\bm\times\id_\X)
\ar[r, equal]&
\bpsi\circ\ba\circ(\id_\G\times\ba)
\end{tikzcd}
\]
\[
\begin{tikzcd}[row sep=large]
\bpsi\circ\pr_2\circ(1_\G\times\id_\X)\ar[d,Rightarrow,"\bpsi"']
\ar[dr,equal,start anchor=south east]&\\
\bpsi\circ\ba\circ(1_\G\times\id_\X)\ar[r,equal]&\bpsi\circ\pr_2.
\end{tikzcd}
\]
Here we omit horizontal composition of $1$-morphisms and
$2$-morphisms, and $\pr_{23}\colon\G\times\G\times\X\to\G\times\X$ is
the projection to the second and third factors.  A stack morphism
$\bpsi\colon\X\to\Y$ is a \emph{principal $\G$-bundle} if
\begin{enumerate}
\item\label{item;principal1-stack}
$\bpsi$ is a stack epimorphism, and there is an atlas $X\to\X$ so the
  composition $X\to\X\to\Y$ is representable;
\item\label{item;principal2-stack}
$\bpsi$ is $\G$-invariant;
\item\label{item;principal3-stack}
the canonical morphism $\G\times\X\to\X\times_\Y\X$ is an equivalence.
\end{enumerate}
\end{definition}

\begin{proposition}\label{proposition;principal-groupoid-stack}
Let $(\psi_\bu\colon X_\bu\to Y_\bu,\gamma)$ be a principal
$G_\bu$-bundle.  Then $(\B\psi_\bu\colon\B X_\bu\to\B Y_\bu,\B\gamma)$
is a principal $\B G_\bu$-bundle.
\end{proposition}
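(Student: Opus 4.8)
The plan is to establish each of the three conditions of Definition~\ref{definition;principal-stack} for $(\B\psi_\bu,\B\gamma)$ by transporting the corresponding condition of Definition~\ref{definition;principal-groupoid} for $(\psi_\bu,\gamma)$ through the classifying functor $\B$. Throughout I write $\X=\B X_\bu$, $\Y=\B Y_\bu$, $\G=\B G_\bu$, $\bpsi=\B\psi_\bu$, and let $\ba=\B a_\bu\colon\G\times\X\to\X$ be the induced strict action (\S\ref{subsection;action-stack}). The whole argument rests on four formal properties of $\B$: it sends essentially surjective morphisms to epimorphisms and Morita morphisms to equivalences (\S\ref{subsection;stack}); it preserves weak pullbacks, hence finite products (Lemma~\ref{lemma;preservelimits}); and it is a $2$-functor, so it carries $2$-isomorphisms and the identities between them to $2$-isomorphisms and identities.

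For condition~\eqref{item;principal1-stack}, essential surjectivity of $\psi_\bu$ gives at once that $\bpsi$ is a stack epimorphism. For the representability clause I would take the atlas $X_0\to\X$ coming from the inclusion $X_0\to X_\bu$ and consider the composite $X_0\to\X\xrightarrow{\bpsi}\Y$, which arises from the groupoid morphism $\psi_0\colon X_0\to Y_\bu$. Since representability of a morphism may be checked over a single atlas of the target, it suffices to see that the base change of $X_0\to\Y$ along the atlas $Y_0\to\Y$ is a manifold. By preservation of weak pullbacks this base change is $\B$ of the weak fibred product $X_0\times_{Y_\bu}^{(w)}Y_0$ of identity groupoids, which is the identity groupoid on the manifold $X_0\times_{\psi_0,Y_0,s}Y_1$ (a manifold because $s$ is a submersion); hence $X_0\to\Y$ is representable.

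For condition~\eqref{item;principal2-stack}, I would apply $\B$ to the $2$-commutative invariance square of $\psi_\bu$. Using the product-preservation equivalence $\B(G_\bu\times X_\bu)\simeq\G\times\X$, under which $\B\pr_2$ and $\B a_\bu$ correspond to $\pr_2$ and $\ba$, this produces a $2$-commutative square exhibiting $\bpsi$ as $\G$-invariant with structure $2$-cell $\B\gamma$. The two coherence identities~\eqref{equation;coherencegpdprinc} are equalities of smooth maps $G_0\times X_0\to Y_1$, that is, equalities of the relevant pasting composites built from $\gamma$; transporting them through the $2$-functor $\B$ and matching its product and composition constraints yields precisely the two commuting coherence diagrams required in Definition~\ref{definition;principal-stack}. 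This step is the most bookkeeping-intensive, but it is formal once the constraint isomorphisms of $\B$ are tracked.

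Finally, for condition~\eqref{item;principal3-stack}, note that the requirement in Definition~\ref{definition;principal-groupoid}\eqref{item;principal3} that the canonical morphism $G_\bu\times X_\bu\to X_\bu\times_{Y_\bu}^{(w)}X_\bu$ be Morita presupposes that the weak fibred product is a Lie groupoid; by the remark following Definition~\ref{definition;fibreproductLiegpd} it is then a weak pullback in $\Liegpd$. Applying $\B$ and Lemma~\ref{lemma;preservelimits} gives equivalences $\B(X_\bu\times_{Y_\bu}^{(w)}X_\bu)\simeq\X\times_\Y\X$ and $\B(G_\bu\times X_\bu)\simeq\G\times\X$, and since $\B$ turns the Morita morphism into an equivalence, the resulting map $\G\times\X\to\X\times_\Y\X$ is an equivalence. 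The main obstacle is to verify that this $\B$-image of the canonical groupoid morphism is $2$-isomorphic to the canonical comparison morphism of Definition~\ref{definition;principal-stack}\eqref{item;principal3-stack}: both are induced, via the universal property of $\X\times_\Y\X$, by the pair $(\ba,\pr_2)$ together with an invariance cell, and one must check that the cell produced by $\B\gamma$ is exactly the one used to form the stack-level comparison map. This is where the coherence of $\B\gamma$ established in condition~\eqref{item;principal2-stack} is used; once the naturality check is in place, condition~\eqref{item;principal3-stack} follows and the proof is complete.
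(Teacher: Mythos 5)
Your proposal is correct and follows essentially the same route as the paper: deduce the epimorphism property from essential surjectivity, obtain conditions (ii) and (iii) of Definition~\ref{definition;principal-stack} formally from the fact that $\B$ is a $2$-functor preserving weak pullbacks and sending Morita morphisms to equivalences, and verify the representability clause with the atlas $M=X_0$. The only (immaterial) difference is in that last step: the paper factors $X_0\to\B Y_\bu$ through $\B\psi_0^*(Y_\bu)$ for the pullback groupoid $\psi_0^*(Y_\bu)$, whereas you base-change directly along the atlas $Y_0\to\B Y_\bu$ and identify the result with the manifold $X_0\times_{\psi_0,Y_0,s}Y_1$ — two standard phrasings of the same computation.
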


\begin{proof}
Since $\psi_\bu$ is essentially surjective, the map $\B\psi_\bu$ is an
epimorphism of stacks.  To check the conditions of
Definition~\ref{definition;principal-stack}, it suffices to note that
$\B$ is a $2$-functor which preserves weak fibred products
(Lemma~\ref{lemma;preservelimits}), and to show that there is an atlas
$M\to\B X_\bu$ so that the composition $M\to\B X_\bu\to\B Y_\bu$ is
representable.  Indeed, we can take $M=X_0$.  Then $\B
Y_\bu\simeq\B\psi_0^*(Y_\bu)$, where $\psi_0^*(Y_\bu)$ is the pullback
groupoid, and the composition $X_0\to\B
X_\bu\to\B(\psi_0^*(Y_\bu)\simeq\B Y_\bu$ is representable.
\end{proof}

\begin{remark}\label{remark;formsinject}
Let $\psi_\bu\colon X_\bu\to Y_\bu$ be a principal $G_\bu$-bundle.
The canonical morphism $X_\bu\to\psi_0^*(Y_\bu)$ is the identity on
the manifold of objects $X_0$, and therefore induces an injection
$\Omega_\bas^\bu (\psi_0^*(Y_\bu))\to\Omega_\bas^\bu(X_\bu)$.  The
canonical morphism $\psi_0^*(Y_\bu)\to Y_\bu$ is a Morita morphism, so
$\Omega_\bas^\bu(\psi^*(Y_\bu))$ is isomorphic to
$\Omega_\bas^\bu(Y_\bu)$. We conclude that the pullback map of basic
forms $\psi^*\colon\Omega_\bas^\bu(Y_\bu)\to\Omega_\bas^\bu(X_\bu)$ is
an injection.
\end{remark}

\section{Hamiltonian actions on groupoids and stacks}
\label{section;hamiltonian-stack}

In this section we introduce Hamiltonian actions in the $2$-categories
of Lie groupoids and of differentiable stacks.  Our notion of
Hamiltonian actions extends that of Hamiltonian Lie group actions on
stacks defined by Lerman and
Malkin~\cite{lerman-malkin;deligne-mumford} in two ways: we allow our
groups to be \'etale Lie group stacks, and we allow our stacks to be
non-separated.  We show that every presymplectic Hamiltonian action
can be integrated, in several different ways, to a Hamiltonian action
of a foliation groupoid on a $0$-symplectic groupoid
(Theorem~\ref{theorem;integrate}).  The classifying functor $\B$ takes
any such Hamiltonian groupoid to a Hamiltonian stack.  The converse is
also true: any Hamiltonian stack arises from a Hamiltonian groupoid
(Theorem~\ref{theorem;stack-groupoid}).  We then show that, for
compact Lie group stacks, the moment map image is an invariant of a
stacky Hamiltonian action and obtain the stacky convexity theorem
(Theorem~\ref{theorem;convex}).

\subsection{Hamiltonian actions on
  \texorpdfstring{$0$-symplectic}{0-symplectic} Lie groupoids}
\label{subsection;hamiltonian}

Let $(X_\bu,\omega_\bu)$ be a $0$-symplectic Lie groupoid
(\S\,\ref{subsection;symplectic-groupoid}), let $G_\bu$ be a foliation
$2$-group (Definition~\ref{definition;foliation2group}), and let
$(G,H,\partial,\alpha)$ be the associated crossed module
(\S\,\ref{subsection;crossed}).  We assume the coarse quotient group
$G_0/G_1$ to be connected.  A strict action $a_\bu\colon G_\bu\times
X_\bu\to X_\bu$ is \emph{Hamiltonian} if there is a morphism of Lie
groupoids called the \emph{moment map}
\[
\mu_\bu=(\mu_0,\mu_1)\colon
X_\bu\longto(\g/\h)^*\cong\ann(\h)\subseteq\g^*
\]
which satisfies the following conditions:
\glossary{mu*@$\mu_\bu$, moment map for $2$-group action on
  $0$-symplectic groupoid}%
\begin{enumerate}
\item
Let $\xi\in \g/\h$, and let $\xi_{X_\bu}\in \Vect_\bas(X_\bu)$ be the
fundamental vector field of $\xi$ on $X_\bu$, as in Definition
\ref{definition;fundamentalbasic}.  Then we require
\[
d\mu_0^{\xi}=\iota_{\xi_{X_0}}\omega_0.
\] 
\item
Recall the \emph{coadjoint action} of $G_\bu$ on $(\g/\h)^*$ defined
in \S\,\ref{subsection;adjoint}.  We require that $\mu_\bu$ is
(strictly) equivariant with respect to the $G_\bu$ action on $X_\bu$
and the coadjoint action.
\end{enumerate}
A \emph{Hamiltonian $G_\bu$-groupoid} is a tuple
$(X_\bu,\omega_\bu,G_\bu,\mu_\bu)$ consisting of a $0$-symplectic
groupoid equipped with a Hamiltonian $G_\bu$-action with moment map
$\mu_\bu$.
\glossary{X*omega*G*mu*@$(X_\bu,\omega_\bu,G_\bu,\mu_\bu)$,
  Hamiltonian $G_\bu$-groupoid}%

\begin{example}\label{example8} 
The actions of Example~\ref{example4} are Hamiltonian, with moment map
$(\mu_0,\mu_0\circ s)$ which was described in Example~\ref{example1}.
\end{example}

\subsection{From manifolds to Lie groupoids}
\label{subsection;manifold-groupoid}

Let $(X,\omega,G,\mu)$ be a Hamiltonian presymplectic $G$-manifold
with null foliation $\F=\ker(\omega)$ and null ideal $\nnn=\nnn(\F)$.
We will show we can integrate these data to a Hamiltonian Lie groupoid
$(X_\bu,\omega_\bu,G_\bu,\mu_\bu)$, where $X_\bu$ is a foliation
groupoid that integrates $(X,\F)$, and $G_\bu$ is a foliation
$2$-group that integrates the Lie $2$-algebra
$\g_\bu=(\nnn\rtimes\g\rightrightarrows\g)$.  As usual we can
integrate both $(X,\F)$ and $\g_\bu$ in a number of different ways,
but we have to integrate them in a compatible manner.  The following
theorem shows that the monodromy groupoid of $X$ and the source-simply
connected integration of $\g_\bu$ always work.  If the action of $G$
on the monodromy groupoid descends to an action on the holonomy
groupoid, then we can take $X_\bu$ to be the holonomy groupoid and
$G_\bu=(N\rtimes G\rightrightarrows G)$, where $N=N(\F)$ is the null
subgroup, i.e.\ the immersed subgroup of $G$ generated by $\nnn$.

\begin{theorem}\label{theorem;integrate}
Let $G$ be a Lie group and $(X,\omega,G,\mu)$ a presymplectic
Hamiltonian $G$-manifold with null foliation $\F$ and null ideal
$\nnn$.
\begin{enumerate}
\item\label{item;2-group}
There exists a source-simply connected Lie $2$-group $G_\bu$ with
object group $G_0=G$ and Lie $2$-algebra
$\Lie(G_\bu)=\nnn\rtimes\g\rightrightarrows \g$.  This Lie $2$-group
is unique up to a unique isomorphism that induces the identity map of
$G$ and of $\nnn\rtimes\g$.
\item\label{item;integrate}
Let $X_\bu$ be a source-connected Lie groupoid over $X_0=X$
integrating $\F$ and let $\psi_\bu=\psi_{X_\bu}\colon\Mon(X,\F)\to
X_\bu$ be the universal morphism as in Theorem~\ref{theorem;monhol}.
There exists a $G_\bu$-action on $X_\bu$ that extends the action of
$G_0=G$ on $X$ if and only if $\ker(\psi_\bu)$ is preserved by the
$G$-action, where $\ker(\psi_\bu)$ is the kernel of $\phi_\bu$ as
defined in~\ref{definition;Liegroupbundle}.  This $G_\bu$-action is
unique and it is Hamiltonian with respect to the $0$-symplectic
structure $\omega_\bu$ on $X_\bu$ determined by $\omega$.
\item\label{item;holdesc}
Assume that the $G$-action preserves the kernel of the holonomy
homomorphism $\hol\colon\Mon(X,\F)\to\Hol(X,\F)$.  Then the
$G_\bu$-action on $\Hol(X,\F)$ given by~\eqref{item;integrate}
descends to a Hamiltonian action of the Lie $2$-group $N\rtimes
G\rightrightarrows G$, where $N\subseteq G$ is the null subgroup.
\end{enumerate}
\end{theorem}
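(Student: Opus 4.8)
The plan is to apply part~\eqref{item;integrate} to the holonomy groupoid and then check that the resulting action factors through the covering of the null subgroup. Write $G_\bu=(\tilde N\rtimes G\rightrightarrows G)$ for the source-simply connected $2$-group of part~\eqref{item;2-group}, so that $\tilde N$ is the simply connected Lie group with $\Lie(\tilde N)=\nnn$ and $\partial\colon\tilde N\to G$ integrates the inclusion $\nnn\inj\g$. Since $\partial$ factors as $\tilde N\xrightarrow{\,c\,}N\inj G$, where $c$ is the covering homomorphism onto the null subgroup $N$, I obtain a morphism of Lie $2$-groups $q_\bu\colon\tilde N\rtimes G\to N\rtimes G$ induced by $c$, whose kernel on arrows is $K=\ker(c)\cong\pi_1(N)$. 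Taking $X_\bu=\Hol(X,\F)$ in Theorem~\ref{theorem;monhol}, the universal morphism $\psi_{X_\bu}$ is the holonomy morphism $\hol$ itself, so the standing hypothesis that the $G$-action preserve $\ker(\hol)$ is exactly the hypothesis of part~\eqref{item;integrate}. That part therefore supplies a unique Hamiltonian $G_\bu$-action on $\Hol(X,\F)$ with moment map $\mu_\bu$, and it remains to descend it along $q_\bu$.

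First I would reduce the descent to a statement about $K$. By Lemma~\ref{lemma;crossedactionequivalent} the $G_\bu$-action is encoded, in the manner of Definition~\ref{definition;crossedaction}, by the $G$-actions on the object and arrow manifolds of $\Hol(X,\F)$ together with the action $\tilde N\times X_1\to X_1$. The $G$-actions are unaffected by $q_\bu$, so the action descends to a strict $N\rtimes G$-action precisely when the $\tilde N$-action on $X_1$ factors through $c$, that is, when $K$ acts trivially on $X_1$; the crossed-module axioms~\eqref{equation;functor1}--\eqref{equation;conjugation} for the descended data then follow automatically from those of the $\tilde N\rtimes G$-action.

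The heart of the matter is to show that $K$ acts trivially on $X_1$. By Lemma~\ref{lemma;orbits}, for each $\eta\in\nnn$ the fundamental vector field $\eta_{X_1}$ is tangent to the source fibres and $t$-related to $\partial(\eta)_{X_0}$, so the $\tilde N$-action preserves each source fibre $s^{-1}(x)$ and, through the holonomy covering $t\colon s^{-1}(x)\to\F(x)$ of the leaf (\cite[Theorem~5.4]{moerdijk-mrcun;foliations-groupoids}), covers the orbit map $\tilde N\to\F(x)$, $h\mapsto\partial(h)\cdot x$. Fix $h\in K$ and a path $\gamma$ in $\tilde N$ from $1$ to $h$; then $\beta=c\circ\gamma$ is a loop in $N$ representing the class $h\in K\cong\pi_1(N)$, and since $\partial(h)=1$ the orbit path $\delta_{h,x}(r)=\beta(r)\cdot x$ is a loop in $\F(x)$ based at $x$. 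As $r\mapsto\gamma(r)*u(x)$ is the $t$-lift of $\delta_{h,x}$ starting at $u(x)$, its endpoint $h*u(x)\in\Iso_{\Hol}(x)$ is the holonomy of $\delta_{h,x}$. The key geometric point is that this holonomy is trivial: the ambient maps $\Phi_r=(\beta(r)\cdot{-})\colon X\to X$ form a foliated isotopy, each $\Phi_r$ preserving every leaf (the $N$-orbits lie in leaves because $\nnn\subseteq\Gamma(T\F)$ and $N$ is connected), with $\Phi_0=\Phi_1=\id$; sliding a transversal at $x$ along $\delta_{h,x}$ is realized by the $\Phi_r$, whose return map $\Phi_1|_T=\id$. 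Hence $h*u(x)=u(x)$, and for an arbitrary arrow $f$ the identity~\eqref{equation;natural2} gives $h*f=(h*u(t(f)))\circ f=u(t(f))\circ f=f$. Thus $K$ acts trivially and the action descends to $N\rtimes G$.

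Finally I would check that the descended action is Hamiltonian. The $2$-groups $\tilde N\rtimes G$ and $N\rtimes G$ share the Lie $2$-algebra $\nnn\rtimes\g\rightrightarrows\g$, so $\g/\nnn$, the moment target $\ann(\nnn)$, the fundamental vector fields $\xi_{X_0}$, and the coadjoint action on $\ann(\nnn)$ all coincide (the $\tilde N$- and $N$-parts act trivially on $\g/\nnn$ because $\nnn$ is an ideal). Consequently $\mu_\bu$ remains a moment map and both conditions of \S\,\ref{subsection;hamiltonian} persist for the $N\rtimes G$-action. I expect the only genuinely delicate point to be the triviality of the holonomy of $\delta_{h,x}$, which is precisely where the foliation-preserving nature of the null subgroup $N$ is used; everything else is bookkeeping with the crossed-module description.
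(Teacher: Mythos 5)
Your argument for part~\eqref{item;holdesc} is correct and is, in its essential geometric content, the paper's own proof: both reduce the descent to showing that $\ker(\partial)\cong\pi_1(N)$ acts trivially on the arrows of $\Hol(X,\F)$, both identify $h*u(x)$ with the holonomy of the orbit loop $r\mapsto\beta(r)\cdot x$, and both kill that holonomy by extending the loop to the ambient leaf-preserving isotopy $\Phi_r=(\beta(r)\cdot{-})$, which starts and ends at the identity, so that the induced return map on a transversal is trivial; the closing step $h*f=(h*u(t(f)))\circ f=f$ via~\eqref{equation;natural2} is also exactly the paper's. Your route to identifying $h*u(x)$ with the holonomy class --- unique path lifting in the covering $t\colon s^{-1}(x)\to\F(x)$ rather than the explicit formula $[\nu]*[\gamma]=[\nu\cdot\gamma]$ on the monodromy groupoid pushed through $\hol$ --- is a harmless variant.

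The genuine gap is one of scope: the statement has three parts and your proposal proves only the third, invoking~\eqref{item;2-group} and~\eqref{item;integrate} as black boxes. Since these are themselves assertions of the theorem, they require proof. Part~\eqref{item;2-group} needs the construction of the crossed module $(G,\tilde{N},\partial,\alpha)$ with $\partial([\nu])=\nu(1)$ and $\alpha(g)([\nu])=[\tau\mapsto g\nu(\tau)g^{-1}]$, together with verification of the crossed-module axioms and of uniqueness. Part~\eqref{item;integrate} needs (a) uniqueness of the extension, via Lie's second theorem for source-connected Lie groupoids applied to the induced $G$-action on $T\F$ together with Lemma~\ref{lemma;orbits}, which forces $\eta_{X_1}$ to equal the right-invariant extension of the algebroid section corresponding to $\partial(\eta)_X$; (b) the explicit construction of the action on $\Mon(X,\F)$; and (c) for a general source-connected integration with $G$-invariant $\ker(\psi_\bu)$, integration of the resulting $\h$-action on $X_1$ to an $H$-action by the Lie--Palais theorem, using completeness of the lifted vector fields and Hausdorffness of the source fibres. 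None of this appears in your proposal, and your proof of~\eqref{item;holdesc} genuinely depends on~\eqref{item;integrate} applied to $\Hol(X,\F)$, so the omission is not merely cosmetic.
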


\begin{proof}
\eqref{item;2-group}~Let $H$ be the universal cover of $N$.  We
interpret elements of $H$ as homotopy classes relative to endpoints of
paths $\nu\colon[0,1]\to N$ starting at $\nu(0)=1$.  Define the
homomorphism $\partial\colon H\to G$ by $\partial([\nu])=\nu(1)$ and
the action $\alpha\colon G\to\Aut(H)$ by
$\alpha(g)([\nu])={}^g[\nu]=[\tau\mapsto g\nu(\tau)g^{-1}]$.  This
defines a crossed module $(G,H,\partial,\alpha)$ and hence a Lie
$2$-group $G_\bu$ with $G_0=G$, simply connected source fibre
$\ker(s)=H$, and $\Lie(G_1)=\nnn\rtimes\g$.  The uniqueness of $G_\bu$
follows from the uniqueness of the simply connected group $H$.

\eqref{item;integrate}~We start by showing that the $G$-action on $X$
extends in at most one way to a $G_\bu$-action on $X_\bu$.  The
$G$-action on $X$ preserves the foliation $\ca{F}$ and therefore
induces an action on $\Lie(X_\bu)=T\ca{F}$ by Lie algebroid
automorphisms.  Since $X_\bu$ is source-connected, by Lie's theorems
for Lie groupoids
(see~\cite[\S\,6.3]{moerdijk-mrcun;foliations-groupoids}) there can
exist no more than one $G$-action on $X_\bu$ that is compatible with
the action on $\Lie(X_\bu)$.  We show that the $H$-action on $X_1$ is
unique by showing that the action of its Lie algebra $\h$ is unique.
Let $\eta\in\h$.  By assumption $\partial\colon\h\to\g$ is an
isomorphism onto $\nnn$, so the vector field $\partial(\eta)_X$ is
tangent to the foliation, and therefore
$\partial(\eta)_X=\rho(\sigma(\eta))$ for a unique section
$\sigma(\eta)$ of $\Lie(X_\bu)=T\ca{F}$.  Let $\sigma(\eta)_R$ be the
right-invariant vector field on $X_1$ determined by $\sigma(\eta)$.
By Lemma~\ref{lemma;orbits} we must have $\sigma(\eta)_R=\eta_{X_1}$.
So the $\h$-action on $X_1$ is determined by the $G$-action on $X$.

Next we show the existence of a $G_\bu$-action on $X_\bu$.  First
consider the case of $M_\bu=\Mon(X,\ca{F})$, the monodromy groupoid of
$\ca{F}$.  An element of $M_1$ is a leafwise homotopy class relative
to endpoints of a path $\gamma\colon[0,1]\to M_0=X$ in a leaf of
$\ca{F}$.  Let $[\nu]\in H$.  The homotopy class of the path
$\nu\cdot\gamma\colon\tau\mapsto\nu(\tau)\cdot\gamma(\tau)$ depends
only on the homotopy classes $[\nu]\in H$ and $[\gamma]\in M_1$, so we
have a well-defined action of $H$ on $M_1$ given by
$[\nu]*[\gamma]=[\nu\cdot\gamma]$.  Similarly, the homotopy class of
the path $g\cdot\gamma\colon\tau\mapsto g\cdot\gamma(\tau)$ depends
only on $g\in G$ and on the homotopy class $[\gamma]\in M_1$, so we
have a well-defined action of $G$ on $M_1$ given by
$g*[\gamma]=[g\cdot\gamma]$.  The actions of $G$ on $M_0$ and on $M_1$
and the action of $H$ on $M_1$ satisfy the
rules~\eqref{equation;functor1}--\eqref{equation;conjugation}, and
therefore combine to an action of $G_\bu$ on $M_\bu$.

Now consider the general case of a source-connected groupoid $X_\bu$
integrating $(X,\ca{F})$ and for which the kernel of $\psi_\bu\colon
M_\bu\to X_\bu$ is preserved by $G$.  Then the $G$-action on $M_1$
descends to a $G$-action on $X_1=M_1/\ker(\psi)$.  As we saw in the
discussion of uniqueness, the $G$-action on $X_0$ determines an action
of $\h$ on $X_1$.  For each $\eta\in\h$ the vector field $\eta_{X_1}$
is complete because it lifts to the complete vector field
$\eta_{M_1}$.  Hence, by the Lie-Palais theorem, the $\h$-action on
$X_1$ integrates to an $H$-action.  (Here we use that the source
fibres of $X_\bu$ are Hausdorff by Lemma~\ref{lemma;hausdorff} and
that the vector fields $\eta_{X_1}$ are tangent to the source fibres
by Lemma~\ref{lemma;orbits}.)  The
conditions~\eqref{equation;functor1}--\eqref{equation;conjugation}
hold because they hold on $M_\bu$.

Finally, the moment map $\mu$ is invariant on leaves of the null
foliation, so it defines a map of Lie groupoids $\mu\colon
X_\bu\to\ann(\nnn)$.  The map $\mu_0$ is $G$-equivariant by
assumption.

\eqref{item;holdesc} It suffices to show that the kernel of the
homomorphism $\partial\colon H\to G$ acts trivially on the arrows of
$\Hol(X,\F)$.  If $[\nu]\in\ker(\partial)$, then $\nu$ is a loop based
at the identity of $N$.  Let $S$ be a local transversal to $x\in X$,
then $\hol([\nu\cdot u(x)])$ is the germ of the holonomy action of the
path $\nu(\tau)\cdot x\colon[0,1]\times\{x\}\to X$ on $S$.  This path
extends to $\nu(\tau)\cdot S\colon [0,1]\times S\to X$, and so the
holonomy action on $S$ is just $\nu(1)\cdot S= \nu(0)\cdot S$, and
therefore $\hol([\nu\cdot u(x)])=u(x)$.  For $\gamma\in\Mon(X,\F)$,
applying~\eqref{equation;natural2} gives
\[
[\nu]*\hol([\gamma])=\hol([\nu\cdot\gamma])=\hol([\nu\cdot
  u(t(g))]\circ[\gamma])=\hol([\gamma]),
\]
which proves the claim.
\end{proof}  

\subsection{Hamiltonian actions on stacks}
\label{subsection;hamiltonian-stack}

Let $(\X,\bomega)$ be a symplectic stack
(\S\,\ref{subsection;symplectic-stack}) and let $\G$ be a connected
\'etale Lie group stack (\S\,\ref{subsection;2-group}).  A weak action
$\ba\colon\G\times\X\to\X$ is \emph{Hamiltonian} if there is a
morphism $\bmu\colon\X\to(\Lie(\G))^*$ called the \emph{moment map},
where $(\Lie(\G))^*$ is the dual vector space of $\Lie(\G)$.  We
require that
\glossary{mux@$\bmu$, moment map for Lie group stack action on
  symplectic stack}%
\begin{enumerate} 
\item
$d\bmu^\bxi=\iota_{\bxi_\X}\bomega$ for all $\bxi\in\Lie(\G)$, and
\item
$\bmu$ is $\G$-equivariant with respect to the coadjoint action of
  $\G$ on $\Lie(\G)^*$.
\end{enumerate}
A \emph{Hamiltonian $\G$-stack} is a tuple $(\X,\bomega,\G,\bmu)$
consisting of a symplectic stack $(\X,\bomega)$ and a Hamiltonian
$\G$-action with moment map $\bmu$.
\glossary{Xxomegagmu@$(\X,\bomega,\G,\bmu)$, Hamiltonian
  $\G$-stack}%

\begin{definition}\label{definition;equivhamstack}
An \emph{equivalence of Hamiltonian $\G$-stacks}
\[\bphi\colon(\X,\bomega,\G,\bmu)\simeq(\X',\bomega',\G',\bmu')\]
is a pair $(\bphi_\X,\bphi_\G)$, where
$\bphi_\X\colon(\X,\bomega)\simeq(\X',\bomega')$ is an equivalence of
symplectic stacks, and $\bphi_\G\colon\G\simeq\G'$ is an equivalence
of Lie group stacks, subject to the following conditions.  The
equivalence $\bphi_\G$ determines an action of $\G'$ on $\X$, which is
\[
\begin{tikzcd}[column sep=3em]
\G'\times\X\ar[r,"\bphi_\G\n\times{\id}"]&\G\times\X\ar[r,"\ba"]&\X,
\end{tikzcd}
\]
where $\bphi_\G\n$ is a weak inverse of $\bphi_\G$.  The tuple
$\bigl(\X,\bomega,\G',\Lie\bigl(\bphi_\G\n\bigr)^*\circ\bmu\bigr)$ is
a Hamiltonian $\G'$-stack, where $\Lie(\bphi_\G)$ is as
in~\eqref{equation;lie-stack}.  We then require that
\begin{enumerate}
\item
$\bphi_\X$ is $\G'$-equivariant, and
\item\label{item;equivstackmoment}
$\bmu'\circ\bphi_\X = \Lie\bigl(\bphi_\G\n\bigr)^*\circ\bmu$.
\end{enumerate}
\end{definition}

The preceding sections establish the following bijection up to
equivalence between Hamiltonian stacks and groupoids.  For the
following theorem, recall that by the strictification theorem
(Theorem~\ref{theorem;lie-group-stack}), every connected \'etale Lie
group stack $\G$ has a presentation $\B G_\bu\simeq\G$ by a
base-connected foliation $2$-group $G_\bu$.

\begin{theorem}\phantomsection\label{theorem;stack-groupoid}
\begin{enumerate}
\item\label{item;groupoid-stack}
Let $(X_\bu,\omega_\bu, G_\bu,\mu_\bu)$ be a Hamiltonian
$G_\bu$-groupoid.  Then the classifying stack $(\B
X_\bu,\B\omega_\bu,\B G_\bu,\B\mu_\bu)$ is a Hamiltonian $\G$-stack.
\item\label{item;stack-groupoid}
Let $(\X,\bomega,\G,\bmu)$ be a Hamiltonian $\G$-stack. For every
base-connected Lie $2$-group $G_\bu$ with $\B G_\bu\simeq\G$, there
exists a Hamiltonian $G_\bu$-groupoid
$(X_\bu,\omega_\bu,G_\bu,\mu_\bu)$ so that $(\B X_\bu,\B\omega_\bu,\B
G_\bu,\B\mu_\bu)$ is equivalent to $(\X,\bomega,\G,\bmu)$ as a
Hamiltonian $\G$-stack.
\end{enumerate}
\end{theorem}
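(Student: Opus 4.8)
The plan is to prove both directions by transporting each of the four pieces of structure across the dictionary between foliation groupoids and \'etale stacks that was assembled in Sections~\ref{section;foliation-etale} and~\ref{section;lie-stack}, so that the theorem reduces to the strictification theorem together with routine bookkeeping. For the forward direction~\eqref{item;groupoid-stack} I would simply apply the classifying functor $\B$ to $(X_\bu,\omega_\bu,G_\bu,\mu_\bu)$. By Proposition~\ref{proposition;symstack} the pair $(\B X_\bu,\B\omega_\bu)$ is a symplectic stack; since $\B$ preserves products (Lemma~\ref{lemma;preservelimits}) the strict $G_\bu$-action is carried to a strict $\G$-action on $\B X_\bu$; and since the target of $\mu_\bu$ is the identity groupoid on the vector space $(\g/\h)^*=\ann(\h)$, its classifying morphism $\B\mu_\bu$ is a stack morphism $\B X_\bu\to\Lie(\G)^*$ under the identification $\Lie(\G)\cong\g/\h$ of Proposition~\ref{proposition;LiealgLiegroupstack}. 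To verify the two moment-map axioms on the stack side I would invoke that the bijection~\eqref{equation;bas-biject} sends the fundamental field $\xi_{X_\bu}$ to $\bxi_\X$ (Proposition~\ref{proposition;funvec}) and that contraction and the differential on $\Omega^\bu(\X)$ correspond to those on $\Omega_\bas^\bu(X_\bu)$ (Proposition~\ref{proposition;form-stack}); hence $d\mu_0^\xi=\iota_{\xi_{X_0}}\omega_0$ translates verbatim into $d\bmu^\bxi=\iota_{\bxi_\X}\bomega$, while equivariance of $\B\mu_\bu$ for the coadjoint actions is immediate from functoriality of $\B$ and Proposition~\ref{proposition;adjointactionstack}.

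For the converse~\eqref{item;stack-groupoid} I would start from a Hamiltonian $\G$-stack $(\X,\bomega,\G,\bmu)$ and a base-connected Lie $2$-group $G_\bu$ with $\B G_\bu\simeq\G$. Because a symplectic stack is \'etale, Theorem~\ref{theorem;strictaction} supplies a presentation $\bphi_\X\colon\B X_\bu\simeq\X$ on which $G_\bu$ acts strictly and which is weakly $\G$-equivariant, and by Lemma~\ref{lemma;equivtoetale} the groupoid $X_\bu$ may be taken to be a foliation groupoid. I would then pull the symplectic form back, letting $\omega_\bu\in\Omega_\bas^2(X_\bu)$ be the basic form corresponding to $\bphi_\X^*\bomega$ under Proposition~\ref{proposition;basicformsarethesame}; it is $0$-symplectic by Proposition~\ref{proposition;symstack}.

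Next I would construct the moment map by composing $\bmu$ with the atlas $X_0\to\B X_\bu\simeq\X$. Since $\Lie(\G)^*$ is set-valued, the resulting map $\mu_0\colon X_0\to(\g/\h)^*$ satisfies $\mu_0\circ s=\mu_0\circ t$ and therefore assembles into a Lie groupoid morphism $\mu_\bu\colon X_\bu\to(\g/\h)^*=\ann(\h)$, with the containment in $\ann(\h)$ built into the target. Reversing the translations used above, namely Propositions~\ref{proposition;funvec}, \ref{proposition;form-stack}, and~\ref{proposition;adjointactionstack}, the two stacky moment-map axioms for $\bmu$ become exactly the two groupoid axioms for $\mu_\bu$, so that $(X_\bu,\omega_\bu,G_\bu,\mu_\bu)$ is a Hamiltonian $G_\bu$-groupoid. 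Finally I would check that the pair $(\bphi_\X,\id_\G)$, with $\B G_\bu$ identified with $\G$, is an equivalence of Hamiltonian $\G$-stacks in the sense of Definition~\ref{definition;equivhamstack}: $\bphi_\X$ is an equivalence of symplectic stacks and is weakly $\G$-equivariant by construction, and the conditions on the form and on the moment map hold because $\omega_\bu$ and $\mu_\bu$ were defined as the pullbacks of $\bomega$ and $\bmu$ along $\bphi_\X$.

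The genuinely substantial input is the strictification theorem, Theorem~\ref{theorem;strictaction}, which I am free to invoke; granting it, the hard part will be the careful matching of all four data simultaneously under $\bphi_\X$. In particular the delicate point is that moment-map condition~(i), phrased on the stack through Hepworth's tangent-stack fundamental vector field $\bxi_\X$ and the contraction $\iota_{\bxi_\X}\bomega$, must correspond precisely to the groupoid identity $d\mu_0^\xi=\iota_{\xi_{X_0}}\omega_0$; this rests on the compatibility of $\bxi_\X$ with the bijection~\eqref{equation;bas-biject} established in Proposition~\ref{proposition;funvec} and on the intertwining of $d$ and $\iota$ by the isomorphisms of Proposition~\ref{proposition;form-stack}. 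Once these identifications are in hand, both directions amount to transporting structure and verifying compatibility, with no further analytic input required.
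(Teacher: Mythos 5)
Your proposal is correct and follows essentially the same route as the paper: part~(i) by applying $\B$ and invoking Proposition~\ref{proposition;symstack} together with the groupoid--stack dictionary for forms, vector fields, and the (co)adjoint action, and part~(ii) by strictifying the action via Theorem~\ref{theorem;strictaction} and composing $\bmu$ with the atlas $X_0\to\X$ to obtain $\mu_\bu$. The paper's own proof is just a compressed version of this argument; your additional detail (in particular the explicit appeal to Propositions~\ref{proposition;funvec} and~\ref{proposition;form-stack} to match the moment-map conditions) fills in exactly the steps the paper leaves implicit.
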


\begin{proof}
\eqref{item;groupoid-stack}~This follows from
Proposition~\ref{proposition;symstack} and the definitions of
Hamiltonian groupoids and stacks.

\eqref{item;stack-groupoid}~Let $X_\bu$ and $G_\bu$ be Lie groupoids
presenting $\X$ and $\G$ as in Theorem~\ref{theorem;strictaction}, so
that $G_\bu\times X_\bu\to X_\bu$ is a strict action presenting $\ba$.
It follows from Proposition~\ref{proposition;symstack} that $X_\bu$ is
$0$-symplectic Lie groupoid.  The map
$\begin{tikzcd}[cramped,sep=small]X_0\ar[r]&\X\ar[r,"\bmu"]&\Lie(\G)^*
\end{tikzcd}$
gives a moment map for the $G_\bu$ action on $X_\bu$, and $X_\bu$ is a
Hamiltonian $G_\bu$-groupoid.  Finally, $\B X_\bu$ is equivalent to
$\X$ as a Hamiltonian stack by construction.
\end{proof}

\subsection{The stacky moment body}\label{subsection;moment-body}

In this subsection $\G$ denotes a compact connected \'etale Lie group
stack and $\TT\to\G$ a maximal stacky torus.  The Lie algebra
$\Lie(\G)$ is compact and $\Lie(\TT)$ is a maximal abelian subalgebra.
Moreover, $\Lie(\TT)$ is in a natural way a direct summand of
$\Lie(\G)$, so we can identify $\Lie(\TT)^*$ with a subspace of
$\Lie(\G)^*$.  The \emph{Weyl group} of $\G$ relative to $\TT$ is by
definition $W=W(\Lie(\G),\Lie(\TT))$, the Weyl group of $\Lie(\G)$
relative to $\Lie(\TT)$.  We choose a closed Weyl chamber $C$ for the
$W$-action on $\Lie(\TT)^*$.

Let $(\X,\bomega,\G,\bmu)$ be a Hamiltonian $\G$-stack.  The
\emph{moment map image} is the set $\bmu(\X)\subseteq\Lie(\G)^*$
defined by $\bmu(\X)=\mu_0(X_0)=\mu_1(X_1)$ for any presentation $\B
X_\bu\simeq\X$ of $\X$ and $\B\mu_\bu\cong\bmu$ of $\bmu$.  Picking
different presentations of $\X$ and $\bmu$ leaves $\bmu(\X)$
unchanged.  We define the \emph{stacky moment body} of
$(\X,\bomega,\G,\bmu)$ to be the pair $(\Delta(\X),\TT)$, where
$\Delta(\X)=\bmu(\X)\cap C$.  An \emph{equivalence} of stacky moment
bodies $(\Delta,\TT)\simeq(\Delta',\TT')$ is an equivalence
$\begin{tikzcd}[cramped,sep=small]
  \bphi_\TT\colon\TT\ar[r,"\simeq"]&\TT'\end{tikzcd}$ so that
  $\Lie(\bphi_\TT)^*(\Delta')=\Delta$.
\glossary{DeltaXT@$(\Delta(\X),\TT)$, stacky moment body of $\X$}%

\begin{proposition}\label{proposition;momentinvariance}
Let $(\X,\bomega,\G,\bmu)$ be a Hamiltonian $\G$-stack.
\begin{enumerate}
\item\label{item;body}
Up to equivalence, the stacky moment body of $\X$ is independent of
the choice of the maximal stacky torus $\TT\to\G$ and the Weyl chamber
$C$.
\item\label{item;body-equivalent}
If $\bphi\colon(\X,\bomega,\G,\bmu)\simeq(\X',\bomega',\G',\bmu')$ is
an equivalence of Hamiltonian stacks, then the stacky moment body of
$\X$ is equivalent to the stacky moment body of $\X'$.
\end{enumerate}
\end{proposition}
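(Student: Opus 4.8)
The plan is to reduce both statements to a single observation: the moment map image $\bmu(\X)\subseteq\Lie(\G)^*$ is invariant under the coadjoint action, so that everything comes down to the classical structure theory of the ``chamber slice'' of a coadjoint-invariant set. Since $\bmu$ is equivariant with respect to the coadjoint action $\Bad^*$ of Proposition~\ref{proposition;adjointactionstack}, for every categorical point $\bo{g}\colon\star\to\G$ the self-map $\ba_{\bo{g}}\colon\X\to\X$ given by acting by $\bo{g}$ is an equivalence and $\bmu\circ\ba_{\bo{g}}\simeq\Bad_{\bo{g}}^*\circ\bmu$, whence $\Bad_{\bo{g}}^*(\bmu(\X))=\bmu(\X)$. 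To identify the relevant facts about chambers, fix a presentation $\B G_\bu\simeq\G$ by a base-compact, base-connected, \'etale $2$-group (Corollary~\ref{corollary;compact-lie-group-stack}) and a Hamiltonian $G_\bu$-groupoid presenting $(\X,\bomega,\G,\bmu)$ (Theorem~\ref{theorem;stack-groupoid}); then $\Lie(\G)\cong\g$ with $\g=\Lie(G_0)$ compact, $\Lie(\TT)^*\cong\ttt^*$ a maximal abelian summand, $W$ the ordinary Weyl group, and $\bmu(\X)=\mu_0(X_0)$ the ordinary coadjoint-invariant moment image. Thus the classical facts apply: a coadjoint-invariant $S\subseteq\g^*$ meets $\ttt^*$ in a $W$-invariant set, $W$ is realized by the normalizer $N_{G_0}(T_0)$, and $W$ permutes the closed Weyl chambers simply transitively.

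For part~\eqref{item;body}, let $(\TT,C)$ and $(\TT',C')$ be two choices and set $\Delta=\bmu(\X)\cap C$, $\Delta'=\bmu(\X)\cap C'$. By Corollary~\ref{cor;maxtorusinvariant} there are a categorical point $\bo{g}$ and an equivalence $\bphi_\TT\colon\TT\to\TT'$ with $\iota'\circ\bphi_\TT\simeq\bo{C_g}\circ\iota$, where $\iota,\iota'$ are the structure homomorphisms. Applying $\Lie$ and dualizing, and using that $\Lie(\TT)^*\hookrightarrow\Lie(\G)^*$ is the direct summand on which $\Lie(\iota)^*$ is the identity, one finds that $\Lie(\bphi_\TT)^*$ agrees on $\Lie(\TT')^*$ with the coadjoint transformation $\Bad_{\bo{g}^{-1}}^*$, which carries $\Lie(\TT')^*$ onto $\Lie(\TT)^*$ and preserves $\bmu(\X)$. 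Hence $\Lie(\bphi_\TT)^*(\Delta')=\bmu(\X)\cap C''$ with $C''=\Bad_{\bo{g}^{-1}}^*(C')$ again a closed Weyl chamber in $\Lie(\TT)^*$. Let $w\in W$ be the unique element with $w(C'')=C$, realized by a point $\bo{n}\colon\star\to\G$ in the normalizer so that $\bo{C_n}$ restricts to a self-equivalence of $\TT$ inducing $w$ on $\Lie(\TT)^*$; after composing $\bphi_\TT$ with this self-equivalence on the appropriate side and invoking the $W$-invariance of $\bmu(\X)\cap\Lie(\TT)^*$, we obtain $\Lie(\bphi_\TT)^*(\Delta')=w(\bmu(\X)\cap C'')=\bmu(\X)\cap C=\Delta$. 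I expect the main obstacle to be exactly this bookkeeping: translating the conjugation square of Corollary~\ref{cor;maxtorusinvariant} into a coadjoint identity under the summand identification $\Lie(\TT)^*\subseteq\Lie(\G)^*$, and absorbing the residual Weyl element by a categorical point so that the chamber is matched exactly.

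For part~\eqref{item;body-equivalent}, observe first that an equivalence of Lie group stacks $\bphi_\G\colon\G\to\G'$ carries a maximal stacky torus $\TT\to\G$ to the maximal stacky torus $\TT'\to\G'$ given by $\bphi_\G\circ(\TT\to\G)$, since $\Lie(\bphi_\G)$ sends the maximal abelian subalgebra $\Lie(\TT)$ isomorphically onto a maximal abelian subalgebra of $\Lie(\G')$; it also induces an equivalence $\bphi_\TT=\bphi_\G|_\TT\colon\TT\to\TT'$. By part~\eqref{item;body} we may compute $\Delta(\X)$ and $\Delta(\X')$ with respect to any choice, so take $C'=\Lie(\bphi_\G^{-1})^*(C)$, a Weyl chamber for $\TT'$ because $\Lie(\bphi_\G)$ is a Weyl-equivariant isomorphism. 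Condition~\eqref{item;equivstackmoment} of Definition~\ref{definition;equivhamstack} gives $\bmu'\circ\bphi_\X=\Lie(\bphi_\G^{-1})^*\circ\bmu$, and since $\bphi_\X$ is an equivalence of stacks we get $\bmu'(\X')=\Lie(\bphi_\G^{-1})^*(\bmu(\X))$. Therefore $\Delta(\X')=\bmu'(\X')\cap C'=\Lie(\bphi_\G^{-1})^*(\bmu(\X)\cap C)=\Lie(\bphi_\G^{-1})^*(\Delta(\X))$, and applying $\Lie(\bphi_\TT)^*=(\Lie(\bphi_\G^{-1})^*)^{-1}$ on these subspaces yields $\Lie(\bphi_\TT)^*(\Delta(\X'))=\Delta(\X)$, which is the desired equivalence of stacky moment bodies.
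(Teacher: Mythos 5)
Your proof is correct and follows essentially the same route as the paper: both parts come down to conjugacy of (maximal stacky torus, Weyl chamber) pairs by a categorical point of $\G$ together with coadjoint equivariance of $\bmu$, and part~(ii) is the same direct computation from Definition~\ref{definition;equivhamstack}\eqref{item;equivstackmoment}. The only difference is bookkeeping: the paper compresses the chamber-matching into the phrase ``arguing as in Corollary~\ref{cor;maxtorusinvariant}, there exists $\bo{g}$ carrying $C$ to $C'$,'' whereas you first match the tori via the Corollary as stated and then absorb the residual Weyl element by a normalizer point --- a slightly more explicit unpacking of the same classical fact.
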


\begin{proof}
\eqref{item;body}~Given two maximal stacky tori $\TT$ and $\TT'$ of
$\G$ and Weyl chambers $C\subseteq\Lie(\TT)^*$ and
$C'\subseteq\Lie(\TT')^*$, arguing as in
Corollary~\ref{cor;maxtorusinvariant}, there exists a categorical
point $\bo{g}\colon\star\to\G$ so that
$\Bad_{\bo{g}}^*\colon\Lie(\G)^*\to\Lie(\G)^*$ maps $\Lie(\TT)^*$ to
$\Lie(\TT')^*$ and $C$ to $C'$.  Therefore, by equivariance of the
moment map,
\[
\Bad_{\bo{g}}^*(\bmu(\X)\cap C)=\bmu(\X)\cap\Bad_{\bo{g}}^*(C)=
\bmu(\X)\cap C',
\]
as desired.

\eqref{item;body-equivalent}~Given a pair of equivalences
$\bphi_\X\colon\X\simeq\X'$ and $\bphi_\G\colon\G\simeq\G'$ as in
Definition~\ref{definition;equivhamstack}, we have
$\bmu'(\X')=\Lie\bigl(\bphi_\G\n\bigr)^*(\bmu(\X))$ by
Definition~\ref{definition;equivhamstack}\eqref{item;equivstackmoment}.
Choose a maximal torus $\TT\to\G$ of $\G$ and a chamber
$C\subseteq\ttt^*$, let $C'$ be the chamber
$\Lie\bigl(\bphi_\G\n\bigr)^*(C)$ of the maximal torus
\[
\begin{tikzcd}[cramped]
\TT\ar[r]&\G\ar[r,"\bphi_\G"]&\G'
\end{tikzcd}
\] 
of $\G'$.  Then $\bphi_G$ defines an equivalence from
$(\Delta(\X),\TT\to\G)$ to $(\Delta(\X'),\TT\to\G')$.
\end{proof}

We can now rephrase the main result
of~\cite{lin-sjamaar;presymplectic} in the language of stacks.

\begin{theorem}\label{theorem;convex}
Let $(\X,\bomega,\G,\bmu)$ be a Hamiltonian $\G$-stack.  If
$(\X,\bomega,\G,\bmu)$ is equivalent to $(\B X_\bu,\B \omega_\bu,\B
G_\bu,\B\mu_\bu)$, where $X_0$ and $G_0$ are compact and connected,
and the action of $G_0$ on $X_0$ is clean, then $\Delta(\X)$ is a
closed convex polyhedron.
\end{theorem}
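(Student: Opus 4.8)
The plan is to reduce the statement to the presymplectic convexity theorem of Lin and Sjamaar, Theorem~\ref{theorem;LiSj}, applied to the object manifold of a presenting groupoid. By Proposition~\ref{proposition;momentinvariance}\eqref{item;body-equivalent} the stacky moment body is an invariant of the equivalence class of $(\X,\bomega,\G,\bmu)$, so I may work directly with the Hamiltonian $G_\bu$-groupoid $(X_\bu,\omega_\bu,G_\bu,\mu_\bu)$, writing $(G,H,\partial,\alpha)$ for the crossed module of $G_\bu$, with $G=G_0$ and $\h\subseteq\g$ the ideal of the foliation $2$-group. By Proposition~\ref{proposition;symgpd}\eqref{item;tangent-algebroid} the pair $(X_0,\omega_0)$ is a presymplectic manifold whose null foliation is the orbit foliation $\F=\F_0(X_\bu)$, and by definition $\bmu(\X)=\mu_0(X_0)$ with $\mu_0\colon X_0\to\ann(\h)\subseteq\g^*$.

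First I would verify that $(X_0,\omega_0,G_0,\mu_0)$ is Hamiltonian for the honest $G_0$-action in the sense of Section~\ref{section;presymplectic}. Condition~\eqref{item;hamilton} holds for every $\xi\in\g$: the function $\mu_0^\xi$ depends only on the class of $\xi$ in $\g/\h$ because $\mu_0$ takes values in $\ann(\h)$, and $\iota_{\xi_{X_0}}\omega_0$ likewise depends only on that class, since $\xi_{X_0}$ is tangent to $\F=\ker(\omega_0)$ when $\xi\in\h$ by Proposition~\ref{proposition;funvec}; the identity then reduces to the stacky Hamiltonian condition for $\g/\h$. Condition~\eqref{item;coadjoint} is the object-level equivariance of $\mu_\bu$, using that $\ann(\h)$ is coadjoint-invariant because $\h$ is an ideal. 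Proposition~\ref{proposition;funvec} also gives $\h\subseteq\nnn(\F)$, but the null ideal $\nnn(\F)$ of the $G_0$-action may be strictly larger than $\h$ (its excess is the kernel of the fundamental vector field map $\g/\h\to\Vect_\bas(X_\bu)$), so condition~\eqref{item;null} need not hold for $\mu_0$ as it stands. To repair this I would invoke~\cite[Proposition~2.9.1]{lin-sjamaar;presymplectic}: since $G_0$ is compact and $\mu_0$ satisfies~\eqref{item;hamilton} and~\eqref{item;coadjoint}, there is a coadjoint-fixed $\lambda\in\g^*$ with $\mu_0+\lambda$ satisfying~\eqref{item;null}. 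Comparing images shows $\lambda\in\ann(\h)$, and being coadjoint-fixed it lies in $\z(\g)^*$, hence in $\z(\g)^*\cap\ann(\h)=\z(\g/\h)^*$; thus $(X_0,\omega_0,G_0,\mu_0+\lambda)$ is a genuine presymplectic Hamiltonian $G_0$-manifold.

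Next I would apply Theorem~\ref{theorem;LiSj} to this manifold. The moment map $\mu_0+\lambda$ is proper because $X_0$ is compact, $X_0$ is connected and $G_0$ is compact and connected by hypothesis, and the $G_0$-action is clean by assumption. Choosing a maximal torus $T$ of $G_0$ with $\ttt=\Lie(T)$ and a closed Weyl chamber $C_0\subseteq\ttt^*$, the theorem yields that $(\mu_0+\lambda)(X_0)\cap C_0$ is a closed convex polyhedral set.

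Finally I would transport this back to the stacky moment body. Using Proposition~\ref{proposition;momentinvariance}\eqref{item;body} to make compatible choices, I realize the maximal stacky torus $\TT$ by a maximal $2$-torus $T_\bu$ whose Cartan $\ttt_0$ splits along the ideal decomposition $\g\cong\h\oplus\kk$ with $\kk\cong\g/\h=\Lie(\G)$, so that $\Lie(\TT)^*=\ttt_\kk^*\subseteq\ann(\h)$ and the Weyl chamber $C\subseteq\Lie(\TT)^*$ satisfies $C=C_0\cap\ann(\h)$. Since $(\mu_0+\lambda)(X_0)\subseteq\ann(\h)$, the intersection with $C_0$ equals the intersection with $C$. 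As $\lambda\in\z(\g/\h)^*$ and a closed Weyl chamber of a compact Lie algebra contains the whole central direction, we have $C-\lambda=C$, so
\[
\Delta(\X)=\mu_0(X_0)\cap C=\bigl((\mu_0+\lambda)(X_0)\cap C\bigr)-\lambda
\]
is a translate of a closed convex polyhedral set, hence a closed convex polyhedron. I expect the main obstacle to be exactly the bookkeeping forced by the possible strict inclusion $\h\subsetneq\nnn(\F)$: one must check that the corrective shift $\lambda$ supplied by~\cite[Proposition~2.9.1]{lin-sjamaar;presymplectic} is central and that the chamber $C$ absorbs central translations, so that passing from $\mu_0+\lambda$ back to $\mu_0$ neither changes the intersection with the chamber nor destroys convexity, together with the identification of the Weyl chamber of the maximal stacky torus $\TT$ with that of $G_0$.
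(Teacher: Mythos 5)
Your proof is correct and takes essentially the same route as the paper, whose own proof simply cites Proposition~\ref{proposition;momentinvariance} and Theorem~\ref{theorem;LiSj}. You additionally spell out the verification that $(X_0,\omega_0,G_0,\mu_0)$ is a presymplectic Hamiltonian $G_0$-manifold --- in particular the possible gap between $\h$ and the null ideal $\nnn(\F)$, repaired by the central shift $\lambda$ from \cite[Proposition~2.9.1]{lin-sjamaar;presymplectic} and absorbed by the Weyl chamber --- a detail the paper's one-line proof leaves implicit.
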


\begin{proof}
This follows from Proposition~\ref{proposition;momentinvariance} and
Theorem~\ref{theorem;LiSj}.
\end{proof}

\begin{example}\label{example;quasi}
Let $(X_\bu,\omega_\bu,G_\bu,\mu_\bu)$ be the Hamiltonian groupoid of
Example~\ref{example8}.  The Lie $2$-group $G_\bu$ presents a stacky
torus $\G$.  Let $(\X,\bomega,\G,\bmu)$ be the Hamiltonian $\G$-stack
presented by $(X_\bu,\omega_\bu,G_\bu,\mu_\bu)$.  Referring back to
Example~\ref{example1}, we see that the image of the moment map
$\mu_0\colon X_0\to\ann(\lie{n})$ is the polyhedron
\[
P=\{\,\eta\in\ann(\lie{n})\mid\text{$\inner{a_i,\eta}\ge\lambda_i$ for
  $1\le i\le n$}\,\}.
\]
Here $a_i\in\g/\lie{n}$ is the image of the standard basis vector
$e_i\in\g=\R^n$ under the projection $\g\to\g/\lie{n}$.  We conclude
that the stacky moment body is $\Delta(\X)=(P,\G)$.  Following
Prato~\cite{prato;non-rational-symplectic} we call $\X$ a \emph{toric
  quasifold} associated with the stacky polyhedron $(P,\G)$.
\end{example}

\begin{remark}
Lerman and Tolman%
~\cite{lerman-tolman;hamiltonian-torus-actions-symplectic-orbifolds}
show that compact symplectic toric orbifolds (which can be thought of
as certain separated Hamiltonian stacks, as
in~\cite{lerman-malkin;deligne-mumford}) are classified by their
moment polytopes which have positive integer labels attached to their
faces. Let $M$ be a toric $\T^k$-orbifold and let $n$ be the number of
faces of the moment polytope $\Delta$ of $M$.  The labels of $\Delta$
can be thought of as defining a homomorphism $\Z^n\to \Z^k$.  More
generally, we can consider homomorphisms $\Z^n\to A$ labeling the
stacky moment polytope of a compact Hamiltonian $\G$-stack, where
$A\to E$ is a quasi-lattice presenting a stacky torus $\G$.
See~\cite{hoffman;toric-stacks} for more on this perspective and for
an interpretation of the results
of~\cite{lerman-tolman;hamiltonian-torus-actions-symplectic-orbifolds},
\cite{prato;non-rational-symplectic},
and~\cite{ratiu-zung;presymplectic} in a stacky context.
\end{remark}

\section{Leafwise transitivity}\label{section;hamiltonian-groupoid}

In this section we prove two basic structural results about $2$-group
actions on groupoids, with an eye toward the symplectic reduction
theorem and the Duistermaat-Heckman theorem.  We introduce the notion
of a leafwise transitive Lie $2$-group action.  We show that if
$(X_\bu,\omega_\bu,G_\bu,\mu_\bu)$ is a Hamiltonian $G_\bu$-groupoid,
then a regular fibre of $\mu_\bu$ is Morita equivalent to a Lie
groupoid with a locally leafwise transitive $G_\bu$-action
(Proposition~\ref{proposition;invarianttransversal}).  If $G_\bu$ is a
$2$-torus acting leafwise transitively on a foliation groupoid
$X_\bu$, we show that $X_\bu$ is isomorphic to an action groupoid
(Proposition~\ref{proposition;leafwisetrans}).

Throughout Section~\ref{section;hamiltonian-groupoid} $X_\bu$ denotes
a foliation groupoid and $G_\bu$ denotes a foliation $2$-group acting
strictly on $X_\bu$.  We denote by $\F$ the foliation of $X_0$ defined
by $X_\bu$, and by $(G,H,\partial,\alpha)$ the crossed module
associated to $G_\bu$.

\subsection{Leafwise transitive and regular actions}
\label{subsection;leafwise}

Let $x\in X_0$ and $h\in H$.  Let $f=h*u(x)\in X_1$.  It follows
from~\eqref{equation;natural1} that $s(f)=x$ and
$t(f)=\partial(h)\cdot x$.  This shows that the orbit of $x$ under the
$\partial(H)$-action is contained in the $X_\bu$-orbit of $x$.  We
call the $G_\bu$-action \emph{leafwise transitive} if this inclusion
is an equality, in other words if
\[\partial(H)\cdot x=t(s\n(x))\]
for all $x\in X_0$.  We call the action \emph{locally leafwise
  transitive} if for every $x\in X_0$ the image of the map $\h\to
T_xX_0$ defined by $\eta\mapsto(\partial(\eta)_{X_0})_x$ is equal to
$T_x\F$.

A leafwise transitive action is locally leafwise transitive.
Conversely, if the action is locally leafwise transitive and if
additionally $G_\bu$ and $X_\bu$ are both source-connected, then
$\partial(H)\cdot x=\F(x)=t(s\n(x))$ for all $x\in X_0$, so in
particular the action is leafwise transitive.

We call the $G_\bu$-action \emph{regular} if the following conditions
hold: $G_\bu$-action is locally leafwise transitive; the $G$-action on
$X_0$ is free; and if $h\in H$ satisfies $h*f=f$ for any $f\in X_1$,
then $h\in\ker(\partial)$.

\subsection{Regular form of the zero fibre}\label{subsection;zero}

In this subsection $(X_\bu,\omega_\bu,G_\bu,\mu_\bu)$ denotes a
Hamiltonian $G_\bu$-groupoid (as defined in
\S\,\ref{subsection;hamiltonian}).  The \emph{zero fibre} of $\mu_\bu$
is the subgroupoid
$\mu_\bu\n(0)=(\mu_1\n(0)\rightrightarrows\mu_0\n(0))$ of $X_\bu$.  We
say that $0$ is a \emph{regular value} of $\mu_\bu$ if
$0\in(\g_0/\h)^*$ is a regular value of $\mu_0\colon
X_0\to(\g_0/\h)^*$.

By~\cite[\S\,3.9]{lerman-malkin;deligne-mumford}, $\mu_\bu\n(0)$ is a
Lie subgroupoid of $X_\bu$ if $0$ is a regular value of $\mu_\bu$.

Suppose that $0$ is a regular value of $\mu_\bu$.  We define a
\emph{regular form} of the zero fibre $\mu_\bu\n(0)$ to be a pair
$(R_\bu,\phi_\bu)$ with the following properties: $R_\bu$ is a
foliation groupoid equipped with a regular $G_\bu$-action;
$\phi_\bu\colon R_\bu\to\mu_\bu\n(0)$ is a (strictly)
$G_\bu$-equivariant Morita morphism; and the $G_0$-orbits of $R_0$ are
the leaves of the null foliation of
$\phi_0^*\omega_0\in\Omega^2(R_0)$.

\begin{proposition}\label{proposition;invarianttransversal} 
Suppose that $0$ is a regular value of $\mu_\bu$.  Then there exists a
regular form $(R_\bu,\phi_\bu)$ of the zero fibre
$Z_\bu=\mu_\bu^{-1}(0)$.
\end{proposition}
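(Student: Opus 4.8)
The plan is to realize $R_\bu$ as a pullback groupoid $\phi_0^\ast Z_\bu$ along a carefully chosen complete transversal, using a crossing with $G$ to force freeness. Write $Z_0=\mu_0\n(0)$ and $Z_1=\mu_1\n(0)$; since $0$ is a regular value, $Z_\bu$ is a Lie subgroupoid of $X_\bu$, and because $T\F\subseteq TZ_0$ it is a foliation groupoid integrating the restriction of $\F$ to $Z_0$. For $\xi\in\g$ abbreviate by $\xi_{Z_0}$ the fundamental vector field of Proposition~\ref{proposition;funvec} restricted to $Z_0$, and set $\g\cdot x=\{(\xi_{Z_0})_x\mid\xi\in\g\}$. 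The moment map condition $d\mu_0^\xi=\iota_{\xi_{Z_0}}\omega_0$, together with equivariance at the coadjoint fixed point $0$, shows that $\ker(\omega_0|_{Z_0})_x=\g\cdot x+T_x\F$. Regularity of the value $0$ says exactly that the fundamental fields of $\g/\h$ are linearly independent modulo $T\F$ along $Z_0$; consequently the Lie algebra of any isotropy group $\stab_G(x)$ is contained in $\h$, viewed as the ideal $\partial(\h)$ of $\g$.

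First I would construct the transversal. For suitable points choose slices $S_\alpha\ni x_\alpha$ with $T_{x_\alpha}S_\alpha$ complementary to $\g\cdot x_\alpha+T_{x_\alpha}\F$ in $T_{x_\alpha}Z_0$. Put $R_0=\coprod_\alpha G\times S_\alpha$, indexed by a countable family chosen so that $\bigcup_\alpha G\cdot S_\alpha=Z_0$, and define $\phi_0\colon R_0\to Z_0$ by $\phi_0(g,s)=g\cdot s$. By the choice of $S_\alpha$ and $G$-invariance of $\F$, after shrinking the $S_\alpha$ the map $\phi_0$ is transverse to $\F$ and complete, while $G$ acts on $R_0$ by left translation on the $G$-factor---a free action for which $\phi_0$ is equivariant. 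Lemma~\ref{lemma;transversal} then shows that $R_\bu=\phi_0^\ast Z_\bu$ is a foliation groupoid and that the induced morphism $\phi_\bu\colon R_\bu\to Z_\bu$ is a Morita morphism.

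Next I would lift the action and verify regularity. Using the description $R_1=R_0\times_{Z_0}Z_1\times_{Z_0}R_0$, I define a strict action of the crossed module $(G,H,\partial,\alpha)$ on $R_\bu$ in the sense of Definition~\ref{definition;crossedaction} by letting $G$ act on $R_1$ through $g\cdot(r,f,r')=(g\cdot r,g\ast f,g\cdot r')$ and $H$ act through $h\ast(r,f,r')=(r,h\ast f,\partial(h)\cdot r')$; properties~\eqref{equation;functor1}--\eqref{equation;conjugation} for $R_\bu$ follow from those for $Z_\bu$, and $\phi_\bu$ is manifestly equivariant. Freeness of $G$ on $R_0$ immediately yields two of the conditions for the action to be regular: the $G$-action on $R_0$ is free, and if $h\ast(r,f,r')=(r,f,r')$ then $\partial(h)\cdot r'=r'$, whence $\partial(h)=1$. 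For the rest I would compute $\ker(\phi_0^\ast\omega_0)$. Transversality gives $\ker(\phi_0^\ast\omega_0)_r=(T\phi_0)\n(\g\cdot\phi_0(r)+T\F)=\g\cdot r+\ker(T\phi_0)$, and the complementarity built into $S_\alpha$ forces $\ker(T\phi_0)$ to lie inside the orbit direction $\g\cdot r$, since its failure to vanish measures precisely $\stab_G$. Hence the null leaves of $\phi_0^\ast\omega_0$ are exactly the $G_0$-orbits. The same computation identifies $\phi_0^\ast\F$ with $\partial(\h)\cdot r$, using $\stab_G(x)\subseteq\h$, which is local leafwise transitivity. Thus $(R_\bu,\phi_\bu)$ is a regular form of $Z_\bu$.

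The main obstacle is producing a \emph{free} $G$-action while keeping Morita equivalence, and with no properness or compactness hypothesis, so that slice theorems are unavailable. The device of replacing each local slice $S_\alpha$ by $G\times S_\alpha$ with the translation action circumvents this: it makes $G$ act freely by construction, at the cost that $\phi_0$ is no longer an immersion. The delicate point, and the place where the regular-value hypothesis is indispensable, is then to verify that the non-immersivity directions $\ker(T\phi_0)$ are absorbed into the now-free $G$-orbit rather than enlarging the null foliation---this is exactly the content of the inclusion $\stab_G(x)\subseteq\h$ established at the outset.
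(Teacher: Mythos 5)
Your proposal is correct and follows essentially the same route as the paper: both pull back $Z_\bu$ along the map $G\times S\to Z_0$, $(g,s)\mapsto g\cdot s$, for a complete transversal $S$ to the foliation spanned by $T\F$ and the $\g$-orbit directions, endow the pullback groupoid with the same crossed-module action ($G$ translating the $G$-factor, $H$ acting on the target), and read off freeness and faithfulness from the free $G$-factor. The only cosmetic differences are your use of a disjoint union of local slices in place of a single complete transversal and your direct computation of $\ker(\phi_0^*\omega_0)$ where the paper verifies local leafwise transitivity by a dimension count on the Lie algebroid of $R_\bu$.
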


\begin{proof}
Let $\F_0$ be the restriction of the foliation $\F$ to $Z_0$.  Let $D$
be the subbundle of $TZ_0$ spanned by $T\F_0$ and the fundamental
vector fields of $G$.  Then $\rank D=\rank T\F+\dim\g-\dim\h$ is
constant because $0$ is a regular value of $\mu$.  Let
$v\in\Gamma(T\F)$ and $\xi\in\g$.  The identity
\[
\iota_{[\xi_{Z_0},v]}\omega_0=
\ca{L}_{\xi_{Z_0}}\iota_v\omega_0-\iota_v\ca{L}_{\xi_{Z_0}}\omega_0=0-0=0
\]
shows that $[\xi_{Z_0},v]\in\Gamma(T\F)$, so $D$ is involutive.
Therefore $D=T\ca{D}$ for a unique foliation $\ca{D}$ of $Z_0$.  This
is the foliation denoted by $\g\ltimes\F_0$
in~\cite[\S\,2]{lin-sjamaar;riemannian}; its leaves are the $G$-orbits
$G\cdot\F_0(x)$ of leaves of $\F_0$.  Let $S\inj Z_0$ be a complete
transversal of $\ca{D}$ and define $\phi_0\colon G\times S\to Z_0$ by
$\phi_0(g,x)=g\cdot x\in Z_0$.  Then $\phi_0$ is transverse to $\F_0$
and is complete, so it follows from Lemma~\ref{lemma;transversal} that
the pullback groupoid $R_\bu=\phi_0^*Z_\bu$ is a Lie groupoid and that
the induced morphism $\phi_\bu\colon R_\bu\to Z_\bu$ is a Morita
morphism.  The object manifold of $R_\bu$ is $R_0=G\times S$.
Elements of $R_1$ are tuples $((j,x),f,(j',x'))\in R_0\times Z_1\times
R_0$ where $s(f)=j\cdot x$ and $t(f)=j'\cdot x'$.  Using the action of
the crossed module $(G,H,\partial,\alpha)$ on $Z_\bu$ we define an
action on $R_\bu$ as follows.
\begin{align*}
G\times R_0\longto R_0&\colon\quad g\cdot(j,x))=(gj,x)\\
G\times R_1\longto R_1&\colon\quad
g*((j,x),f,(j',x'))=((gj,x),g*f,(gj',x'))\\
H\times R_1\longto R_1&\colon\quad
h*((j,x),f,(j',x'))=((j,x),h*f,(\partial(h)j',x')).
\end{align*}
These actions satisfy the
conditions~\eqref{equation;functor1}--\eqref{equation;conjugation} and
so by Lemma~\ref{lemma;crossedactionequivalent} determine a strict
action of $G_\bu$ on $R_\bu$.  The morphism $\phi$ is
$G_\bu$-equivariant, the $G$-action on $R_0$ is free, and if $h\in H$
fixes any tuple $((j,x),f,(j',x'))\in R_1$, then $\partial(h)=1$.  To
check that the action of $G_\bu$ on $R_\bu$ is locally leafwise
transitive, we count dimensions.  Since the $G$-action on $R_0$ is
free, the $H$-action $y\mapsto\partial(h)\cdot y$ is locally free.
Hence for every $y\in R_0$ the infinitesimal orbit map $\h\to T_yR_0$
is injective, so the dimension of its image is equal to $\dim\h$.  On
the other hand, by Lemma~\ref{lemma;orbits} the image is contained in
the fibre at $y$ of the Lie algebroid $\Alg(R_\bu)$, which has rank
equal to $\dim R_1-\dim R_0=\dim\h$.  This proves that the
$G_\bu$-action on $R_\bu$ is regular.  Since $0$ is a regular value of
$\mu_\bu$, the foliation $\ca{D}$ of $Z_0$ is transversely symplectic,
so $(S,\omega_0|_S)$ is a symplectic manifold, and
$(R_0,\phi_0^*\omega_0)$ is a presymplectic manifold with null
foliation given by the $G$-orbits in $R_0$.
\end{proof}


\subsection{Foliation groupoids versus action groupoids}
\label{subsection;foliation-action}

Recall (Definition~\ref{definition;Liegroupbundle}) that a Lie group
bundle is a Lie groupoid where every arrow $g$ has $s(g)=t(g)$.  For a
group $K$ acting on a manifold $X$, let $K_x\subseteq K$ denote the
stabilizer of a point $x\in X$.

\begin{lemma}\label{lemma;foliation-action} 
Let $K$ be a simply connected Lie group acting locally freely on a
manifold $X$ and let $\F$ be the foliation of $X$ into $K$-orbits.
Let $L=\ker(K\to\Diff(X))$ be the kernel of the $K$-action.  Assume
that the set $X_L=\{\,x\in X\mid K_x=L\,\}$ is dense in $X$.
\begin{enumerate}
\item\label{item;monodromy}
The monodromy groupoid $\Mon(X,\F)$ is isomorphic to the action
groupoid $K\ltimes X$.
\item\label{item;holonomy}
The holonomy groupoid $\Hol(X,\F)$ is isomorphic to the action
groupoid $(K/L)\ltimes X$.
\item\label{item;between}
Every source-connected Lie groupoid $X_1\rightrightarrows X_0=X$
integrating $\F$ is isomorphic to one of the form $(K\ltimes X)/Z$,
where $Z\to X$ is an open Lie group subbundle of the Lie group bundle
$L\times X\to X$; and $X_1$ is Hausdorff if and only if $Z$ is closed.
\end{enumerate}
\end{lemma}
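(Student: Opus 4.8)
The plan is to deduce all three parts from the Crainic--Moerdijk factorization theorem (Theorem~\ref{theorem;monhol}), once we observe that $K\ltimes X$ integrates $\F$. Since $K$ acts locally freely, the anchor of the action algebroid, $\xi\mapsto\xi_X$, has fibrewise kernel $\Lie(K_y)=0$ at each $y$ and image $T\F$, so $K\ltimes X$ is a source-connected foliation groupoid integrating $\F$. For part~\eqref{item;monodromy} I would then note that the source fibre of $K\ltimes X$ at $y$ is $\{(k,y)\mid k\in K\}\cong K$, which is simply connected. Hence $K\ltimes X$ is source-simply connected, and Theorem~\ref{theorem;monhol} gives that the universal morphism $\psi_{K\ltimes X}\colon\Mon(X,\F)\to K\ltimes X$ is an isomorphism.

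For part~\eqref{item;holonomy} everything reduces to identifying the kernel of the holonomy morphism $\hol\colon\Mon(X,\F)\cong K\ltimes X\to\Hol(X,\F)$, which by Theorem~\ref{theorem;monhol} and the remark following Definition~\ref{definition;Liegroupbundle} is a Lie group subbundle of the isotropy bundle $\bigl(\{(k,y)\mid k\in K_y\}\rightrightarrows X\bigr)$ of $K\ltimes X$; since $\hol$ is étale it restricts to an étale map of isotropy bundles over $\id_X$, so $\ker(\hol)$ is in fact an \emph{open} subbundle. I claim $\ker(\hol)=L\times X$, which yields $\Hol(X,\F)\cong(K\ltimes X)/(L\times X)\cong(K/L)\ltimes X$, using that $L$ is a closed normal subgroup of $K$ acting trivially on $X$. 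The inclusion $L\times X\subseteq\ker(\hol)$ is immediate: for $l\in L$ and a path $\gamma$ from $1$ to $l$ in $K$, the loop $t\mapsto\gamma(t)\cdot y$ is a loop at \emph{every} nearby $y$ (as $l$ fixes $X$ pointwise), so the associated holonomy germ is the identity.

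The reverse inclusion is the main obstacle, and it is precisely here that the density hypothesis on $X_L$ is used. Both $\ker(\hol)$ and $L\times X$ are open subbundles of the isotropy bundle, and the source map of that bundle is étale. Given $(k,x)\in\ker(\hol)$, I would choose, using openness of $\ker(\hol)$ and étaleness of the source, a local section $y\mapsto(k(y),y)$ of the isotropy bundle valued in $\ker(\hol)$ with $k(x)=k$, defined over a neighborhood $V$ of $x$. For $y$ in the dense set $V\cap X_L$ we have $k(y)\in K_y=L$, so this section meets $L\times X$ on a dense subset of $V$. Since $L=\bigcap_y K_y$ is an intersection of closed subgroups, it is closed and discrete, so $L\times X$ is closed in the isotropy bundle; continuity of the section then forces $(k,x)\in L\times X$, i.e.\ $k\in L$. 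This establishes $\ker(\hol)\subseteq L\times X$ and completes~\eqref{item;holonomy}.

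Part~\eqref{item;between} follows formally from the same factorization. Given any source-connected integration $X_\bu$, Theorem~\ref{theorem;monhol} supplies étale maps surjective on arrows $\Mon(X,\F)\xrightarrow{\psi_{X_\bu}}X_\bu\xrightarrow{\hol_{X_\bu}}\Hol(X,\F)$. Setting $Z=\ker(\psi_{X_\bu})$, this is (again by the remark after Definition~\ref{definition;Liegroupbundle}, with openness from étaleness) an open Lie group subbundle of the isotropy bundle, and from $\hol=\hol_{X_\bu}\circ\psi_{X_\bu}$ it is contained in $\ker(\hol)=L\times X$; it is moreover a subbundle of $L\times X$ containing the identity section. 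The isomorphism $\Mon(X,\F)\cong K\ltimes X$ of part~\eqref{item;monodromy} then exhibits $X_\bu\cong\Mon(X,\F)/Z\cong(K\ltimes X)/Z$. Under this identification the arrow manifold is the quotient of the Hausdorff manifold $K\times X$ by the relation $(kl,y)\sim(k,y)$ for $l\in Z_y$, which is Hausdorff exactly when this relation is closed, i.e.\ when $Z$ is closed; this gives the final assertion.
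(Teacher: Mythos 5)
Your proof is correct and its skeleton is the same as the paper's: part~\eqref{item;monodromy} follows from source-simple-connectedness of $K\ltimes X$ together with Theorem~\ref{theorem;monhol}, the whole lemma then hinges on the identification $\ker(\hol)=L\times X$, and part~\eqref{item;between} is a formal consequence of the factorization. Where you genuinely diverge is in proving the hard inclusion $\ker(\hol)\subseteq L\times X$. The paper opens a foliation chart: triviality of the holonomy germ of $(g,x_0)$ yields a smooth plaque-matching function $f\colon S\to K$ on a transversal with $f(x)\in gK_x$, and density of $X_L$ plus discreteness of $L$ force $f\equiv 1$, hence $g\in L$. You instead use that $\ker(\hol)$ is a Lie group bundle, \'etale over $X$ (recorded after Definition~\ref{definition;Liegroupbundle}), take a local section $y\mapsto(k(y),y)$ through the given point, and apply the density-plus-closedness argument to the section itself; this avoids foliation charts at the cost of invoking the smooth structure on $\ker(\hol)$, which the paper's chart computation in effect establishes by hand. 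One cosmetic point: the ambient ``isotropy bundle'' $\{\,(k,y)\mid k\in K_y\,\}$ need not be a manifold when the stabilizers jump, so calling $\ker(\hol)$ open and $L\times X$ closed \emph{in it} is imprecise --- but your argument never actually needs this, only that $\ker(\hol)$ is a manifold with \'etale source (to produce the local section) and that $L$ is closed in $K$ (to pass to the limit), both of which hold. Your direct holonomy computation of the easy inclusion $L\times X\subseteq\ker(\hol)$ is also fine; the paper gets it for free from $\ker(\psi_{(K/L)\ltimes X})\subseteq\ker(\hol)$.
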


\begin{proof}
The Lie groupoids $K\ltimes X$ and $(K/L)\ltimes X$ are
source-connected, have Lie algebroid isomorphic to $T\ca{F}$, and
$K\ltimes X$ is source-simply connected.  Therefore $K\ltimes X$ is
isomorphic to $\Mon(X,\ca{F})$, which proves~\eqref{item;monodromy}.

Let $X_\bu$ be any source-connected Lie groupoid with base $X_0=X$ and
Lie algebroid $T\ca{F}$.  By Theorem~\ref{theorem;monhol} the holonomy
morphism $\Mon(X,\F)=K\ltimes X\to\Hol(X,\F)$ factors as two
surjective \'etale maps:
\begin{equation}\label{equation;groupoids}
\begin{tikzcd}
\hol\colon K\ltimes X\ar[r,"\psi_{X_\bu}"]&X_\bu\ar[r,"\hol_{X_\bu}"]&
\Hol(X,\F).
\end{tikzcd}
\end{equation}
Let us first take $X_\bu=(K/L)\ltimes X$.  To
establish~\eqref{item;holonomy} we need to prove that $\hol_{X_\bu}$
is an isomorphism, which amounts to showing that
$\ker(\hol)=\ker(\psi_{X_\bu})$.  The kernel of $\psi_{X_\bu}$ is the
Lie group bundle $L\times X$.  Consider any pair $(g,x_0)\in K\ltimes
X$ contained in the kernel of $\hol$.  The equation
$\hol(g,x_0)=u(x_0)$ means that $g\cdot x_0=x_0$ and, for a
sufficiently small section $S$ of the foliation $\ca{F}$ at $x_0$, for
every $x\in S$ the points $x$ and $g\cdot x$ are in the same plaque of
the foliation near $S$.  The map $\Phi\colon K\times S\to X$ that
sends $(g,x)$ to $g\cdot x$ is \'etale at $(1_K,x_0)$, so, after
shrinking $S$ if necessary, $\Phi$ restricts to a diffeomorphism
$\phi\colon U\times S\to V$, where $U$ is a neighbourhood of $1_K\in
K$ and $V$ a neighbourhood of $x_0\in X$.  The inverse $\phi\n$ is a
foliation chart at $x_0$.  The slice $g\cdot S$ of $\ca{F}$ is
transverse to the orbit $K\cdot x_0$ at $x_0$ and therefore (if $S$ is
small enough) $\phi\n(g\cdot S)\subseteq U\times S$ is the graph of a
unique smooth function $f\colon S\to U$.  Since $x$ and $g\cdot x$ are
in the same plaque we have $\phi\n(g\cdot x)\in U\times\{x\}$, which
is equivalent to $g\cdot x=f(x)\cdot x$.  Hence $f(x)\in gK_x$ for all
$x\in S$.  The set $X_L$, being preserved by the $K$-action and dense
in $X$, intersects $S$ in a dense set.  Thus $f(x)\in g^{-1}L$ for a
dense set of $x\in S$.  Since $L$ is discrete, the smooth map $f$ is
constant on $S$.  It follows that $f(x)=f(x_0)=1_K$ for $x\in S$, in
other words that $g\in L$ and $(g,x_0)\in L\times X$.  Thus
$\ker(\hol)=L\times X=\ker(\psi_{X_\bu})$, which
proves~\eqref{item;holonomy}.

Returning to a general Lie groupoid $X_\bu$ integrating $\ca{F}$ as in
the diagram~\eqref{equation;groupoids}, we see that
$X_\bu\cong(K\ltimes X)/Z$, where $Z=\ker(\psi_{X_\bu})$ is a group
subbundle of the Lie group bundle $\ker(\hol)=L\times X$.  The Lie
group bundles $Z$ and $L\times X$ are manifolds of dimension equal to
that of $X$, and therefore $Z$ is open in $L\times X$.  The
equivalence relation on $K\ltimes X$ defined by the action of $Z$ is
closed if and only if $Z$ is closed.  This
proves~\eqref{item;between}.
\end{proof}  

Specializing to the abelian case gives the following result.

\begin{proposition}\label{proposition;leafwisetrans}
Let $G_\bu$ be a $2$-torus.  Assume that $G_\bu$ and $X_\bu$ are
source-connected.  Also assume that $X_0$ is connected, $X_1$ is
Hausdorff, the action of $G_\bu$ on $X_\bu$ is leafwise transitive,
and the action
$\begin{tikzcd}[cramped,sep=small]
  H\ar[r,"\partial"]&G\ar[r]&\Diff(X_0)
\end{tikzcd}$
of $H$ on $X_0$ is locally free.  Then $X_\bu$ is isomorphic to the
action groupoid $H/Z\ltimes X_0$, where $Z$ is a subgroup of the
discrete subgroup $\ker(H\to\Diff(X_0))$ of $H$.  The action of
$G_1\cong H\times G_0$ on $X_1\cong H/Z\ltimes X_0$ is given by
$(h,g)*(kZ,x)=(hkZ,g\cdot x)$.
\end{proposition}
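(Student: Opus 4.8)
The plan is to build an explicit comparison morphism from the action groupoid $H\ltimes X_0$ to $X_\bu$ and to show it is an étale covering with a constant discrete kernel. Since $G_\bu$ is a $2$-torus, Lemma~\ref{lemma;abelian} applies to its object group, so $H$ is abelian, the action $\alpha$ is trivial, and $G_1\cong H\times G_0$; let $H$ act on $X_0$ through $\partial$. Because $X_\bu$ is source-connected the leaves of $\F$ are the orbits $t(s\n(x))$, which by leafwise transitivity coincide with the $\partial(H)$-orbits, so $\F$ is the orbit foliation of the locally free $H$-action. I would then define $\Phi\colon H\ltimes X_0\to X_\bu$ by $\Phi(h,x)=h*u(x)$; by \eqref{equation;functor1} this is the identity on objects with $s(\Phi(h,x))=x$ and $t(\Phi(h,x))=\partial(h)\cdot x$, and \eqref{equation;natural2} shows $\Phi$ is a groupoid functor (it sends $(h',\partial(h)x)\circ(h,x)=(h'h,x)$ to $(h'*u(\partial(h)x))\circ(h*u(x))=h'*(h*u(x))=(h'h)*u(x)$).

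Next I would check that $\Phi$ is étale. On Lie algebroids it induces the anchor map $\h\to T_x\F$, $\eta\mapsto(\partial(\eta)_{X_0})_x$; local freeness makes this injective, local leafwise transitivity makes its image all of $T_x\F$, and a dimension count ($\dim\h=\dim\F$) shows it is an isomorphism. A groupoid morphism that is the identity on objects and an isomorphism on algebroids restricts to a local diffeomorphism on each source fibre, so $\Phi$ is étale. The crux is surjectivity. Fix $x$ and use that $H$ acts on the connected source fibre $s\n(x)$, preserving it by \eqref{equation;natural1}. By Lemma~\ref{lemma;orbits} the fundamental vector fields $\eta_{X_1}$ are tangent to $s\n(x)$ and $t$-related to $\partial(\eta)_{X_0}$, and since $t\colon s\n(x)\to\F(x)$ is a principal bundle with discrete structure group $\Iso(x)$, the map $\eta\mapsto\eta_{X_1,f}$ is a linear isomorphism $\h\to T_f s\n(x)$ for every $f$. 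Thus every $H$-orbit in $s\n(x)$ is open, the orbits partition the connected manifold $s\n(x)$ into open (hence closed) pieces, so there is only one orbit; that orbit is $H*u(x)=\Phi(H\times\{x\})$, whence $\Phi$ is onto.

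Finally I would identify the kernel. Set $L=\ker(H\to\Diff(X_0))$, a closed $0$-dimensional (hence discrete) subgroup by local freeness. The kernel $\ker(\Phi)$ of Definition~\ref{definition;Liegroupbundle} equals $\Phi\n(u(X_0))$, an embedded submanifold of $H\times X_0$ of dimension $\dim X_0$ on which the projection to $X_0$ is étale; as $X_0$ is connected its fibre $Z=\{h\mid h*u(x)=u(x)\}$ is independent of $x$, and being independent of $x$ it lies in $\bigcap_x H_x=L$, where $H_x$ is the $H$-stabilizer of $x$. Because $\Phi$ is an étale surjection that is the identity on objects, it exhibits $X_\bu\cong(H\ltimes X_0)/(Z\times X_0)\cong(H/Z)\ltimes X_0$, the Hausdorffness of $X_1$ guaranteeing (consistently with $Z\subseteq L$ closed) that the quotient is a manifold. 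The $G_1\cong H\times G_0$-action transports through $\Phi$: using \eqref{equation;conjugation} and triviality of $\alpha$, $(h,g)*(h'*u(x))=h*\bigl(g*(h'*u(x))\bigr)=h*(h'*u(g\cdot x))=(hh')*u(g\cdot x)$, which is exactly $(h,g)*(h'Z,x)=(hh'Z,g\cdot x)$. The main obstacle is the surjectivity step; everything else is routine étale-groupoid bookkeeping once the open-orbit argument on source fibres is in place. This argument is the abelian specialization of Lemma~\ref{lemma;foliation-action}, the role of simple connectivity there being played here by the source-connected group $H$ together with the directly constructed morphism $\Phi$.
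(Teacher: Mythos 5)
Your construction of $\Phi\colon H\ltimes X_0\to X_\bu$, $(h,x)\mapsto h*u(x)$, is correct and is a genuinely more direct route than the paper's: the functoriality check via~\eqref{equation;natural2}, the \'etale property via the algebroid isomorphism, and the surjectivity via open $H$-orbits in the connected source fibres all go through, and by working with $H$ itself you avoid the paper's detour through $\Mon(X_0,\F)\cong\tilde H\ltimes X_0$ for the universal cover $\tilde H$ followed by the classification of source-connected integrations in Lemma~\ref{lemma;foliation-action}. Contrary to your closing remark, though, the surjectivity step is not the crux; the crux is the kernel, and that is where your argument has a genuine gap.

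From the facts that $\ker(\Phi)=\Phi_1\n(u(X_0))$ is a closed $\dim X_0$-dimensional submanifold of $H\times X_0$ whose projection to the connected manifold $X_0$ is \'etale, you infer that the fibre $Z_x=\{\,h\mid h*u(x)=u(x)\,\}$ is independent of $x$. That inference is not valid: an \'etale surjection onto a connected base does not force the fibres, viewed as subsets of $H$, to coincide (consider the closed Lie group subbundle $\{\,(t,x)\mid t\in\lambda(x)\Z\,\}\subseteq\R\times X_0$ for a nonconstant positive function $\lambda$, which is \'etale over $X_0$ with varying fibres). A priori you only know $Z_x\subseteq H_x$, and the stabilizers of a locally free action can vary continuously. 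Your next step, ``being independent of $x$ it lies in $\bigcap_xH_x=L$,'' therefore runs the logic backwards: one must \emph{first} show that every element of $Z_x$ acts trivially on all of $X_0$, i.e.\ $Z_x\subseteq L$ pointwise, and only then does local constancy follow (a continuous local section of $\ker(\Phi)$ with values in the closed discrete group $L$ is locally constant, whence $Z_x$ is globally constant by connectedness of $X_0$). This is precisely where the paper uses the hypothesis that $G$ is a torus: by the principal orbit type theorem the set of points whose $G$-stabilizer equals the kernel of $G\to\Diff(X_0)$ is dense in $X_0$, hence so is the corresponding set for $H$, and that density is the hypothesis of Lemma~\ref{lemma;foliation-action} which forces $\ker(\hol)=L\times X_0$ and hence $\ker(\Phi)\subseteq L\times X_0$. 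Your proof never invokes compactness of $G$ anywhere, which is the warning sign; to repair it, insert the density argument (or cite Lemma~\ref{lemma;foliation-action} for it) before asserting the constancy of $Z_x$.
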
 

\begin{proof}
Assume first that $H$ is simply connected.  Put $X=X_0$.  Let $L_G$ be
the kernel of the action $G\to\Diff(X)$ and $L_H=\partial^{-1}(L_G)$
the kernel of the action $H\to\Diff(X)$.  Since $H$ acts locally
freely, $L_H$ is a discrete subgroup of $H$.  Let $X_{L_G}=\{\,x\in
X\mid G_x=L\,\}$ and $X_{L_H}=\{\,x\in X\mid H_x=L\,\}$.  Then
$X_{L_G}\subseteq X_{L_H}$.  Since $G$ is a torus, it follows from the
principal orbit type theorem (see e.g.~\cite[\S\,IX.9,
  Theor\`eme~2]{bourbaki;groupes-algebres}) that $X_{L_G}$ is dense in
$X$.  Hence $X_{L_H}$ is dense in $X$.  Therefore we can apply
Lemma~\ref{lemma;foliation-action}\eqref{item;between} to the
$H$-action on $X$.  An open and closed Lie group subbundle of
$\ker(\hol)=L_H\times X$ is a trivial bundle $Z\times X$ for some
subgroup $Z$ of $L_H$, so we see that $X_\bu\cong H/Z\ltimes X_0$.
The identification $\Mon(X,\F)\cong H\ltimes X$ is $G_\bu$-equivariant
with respect to the action of $G_1=H\rtimes G$ on $H\ltimes X$ given
by $(h,g)\cdot(k,x)=(hk,g\cdot x)$.  This action descends to the
action $(h,g)*(kZ,x)=(hkZ,g\cdot x)$ on $H/Z\ltimes X_0\cong X_1$.  If
$H$ is not simply connected, we can apply the previous argument to the
crossed module $\tilde{H}\to G$, where $\tilde{H}$ is the universal
cover of $H$, to get that $X_1\cong\tilde{H}/\tilde{Z}\ltimes X_0$,
where $\tilde{Z}$ is a subgroup of $\tilde{H}$.  But the action of
$\tilde{H}$ on $X_1$ descends to an action of $H$, so
$N=\ker(\tilde{H}\to H)$ is a subgroup of $\tilde{Z}$.  Write
$Z=\tilde{Z}/N$; then $H/Z\ltimes X\cong\tilde{H}/\tilde{Z}\ltimes X$.
\end{proof}

\section{Symplectic reduction}\label{section;reduction}

The main result of this section is Theorem~\ref{theorem;reduction},
which is an extension of the Meyer-Marsden-Weinstein symplectic
reduction theorem to the setting of Hamiltonian $\G$-stacks, where
$\G$ is an \'etale Lie group stack.  The theorem is valid under the
assumption that $0$ is a regular value of the moment map
$\bmu\colon\X\to\Lie(\G)^*$, which ensures that the zero fibre
$\bmu^{-1}(0)$ is a differentiable stack.  The group stack $\G$ is not
required to be compact or separated, nor is it required to act freely
or properly.  A quotient stack $\bmu^{-1}(0)/\G$ then exists and is
symplectic, provided that a ``second-order'' freeness condition is
fulfilled.  This second-order condition holds automatically if $\G$ is
equivalent to a Lie group.  If it fails, the quotient does not exist
as a $1$-stack, although it might still exist as a higher-order stack.
To keep the size of this paper within reasonable limits we have
omitted any discussion of reduction at nonzero levels.  We draw the
reader's attention to the recent
preprint~\cite{battaglia-prato;nonrational-reduction}, which handles a
special case of our situation, namely symplectic reduction of toric
quasifolds by stacky tori.

\subsection{Symplectic reduction theorem}\label{subsection;reduction}

Let $\G$ be a connected \'etale Lie group stack (as defined in
\S\,\ref{subsection;group-stack}) and $(\X,\bomega,\G,\bmu)$ a
Hamiltonian $\G$-stack (as defined in
\S\,\ref{subsection;hamiltonian-stack}).  We denote the zero fibre
$\bmu\n(0)=\X\times_{\mu,\g^*,0}\star$ by $\stack{Z}$ and the natural
morphism $\stack{Z}\to\X$ by $\bi$.  We say that $0\in\Lie(\G)^*$ is a
\emph{regular value} of the moment map $\bmu$ if $0$ is a regular
value of the composite map $X_0\to\X\to\Lie(\G)^*$ for every atlas
$X_0\to\X$.  If $0$ is a regular value of $\bmu$, then the zero fibre
$\stack{Z}=\bmu\n(0)$ is an \'etale stack
by~\cite[\S\,3.9]{lerman-malkin;deligne-mumford}.

Assume that $0\in\Lie(\G)^*$ is a regular value of $\bmu$.  A
\emph{symplectic reduction (at $0$)} of $\X$ is a triple
$(\Y,\bp,\bomega_\Y)$ consisting of an \'etale stack $\Y$, a stack
morphism $\bp\colon\stack{Z}\to\Y$ which is a principal $\G$-bundle in
the sense of Definition~\ref{definition;principal-stack}, and a
symplectic form $\bomega_\Y\in\Omega^2(\Y)$ with the property
$\bp^*\bomega_\Y=\bi^*\bomega$.  As a consequence of~\cite[Theorem
  5.2]{bursztyn-noseda-zhu;principal-stacky-groupoids} and
Remark~\ref{remark;formsinject}, if a symplectic reduction exists it
is unique up to equivalence.

The goal of this section is to prove the following theorem, which
provides a necessary and sufficient condition for a symplectic
reduction to exist.  The theorem is formulated in terms of a
presentation of the Hamiltonian stack.
  
\begin{theorem}\label{theorem;reduction}
Let $\G$ be a connected \'etale Lie group stack and
$(\X,\bomega,\G,\bmu)$ a Hamiltonian $\G$-stack.  Assume that
$0\in\Lie(\G)^*$ is a regular value of $\bmu$.  Let $\B G_\bu\simeq\G$
be a presentation of $\G$ by a base-connected foliation $2$-group
$G_\bu$ with crossed module $(G,H,\partial,\alpha)$, and let
$(X_\bu,\omega_\bu,G_\bu,\mu_\bu)$ be a Hamiltonian groupoid
presenting $(\X,\bomega,\G,\bmu)$.  (Such presentations exists by
Corollary~\ref{corollary;compact-lie-group-stack} and
Theorem~\ref{theorem;stack-groupoid}\eqref{item;stack-groupoid}).  Let
$Z_\bu=\mu_\bu\n(0)$ and let $(R_\bu,\phi_\bu)$ be a regular form of
$Z_\bu$ (which exists by
Proposition~\ref{proposition;invarianttransversal}).  Then a
symplectic reduction of $\X$ exists if and only if the Lie group $H$
acts freely on the manifold $R_1$.  If $H$ acts freely on $R_1$, the
$0$-symplectic groupoid $(G\times^HR_\bu,\omega_\bu^\red)$ defined in
Lemma~\ref{lemma;reduction} below presents a symplectic reduction of
$\X$.
\end{theorem}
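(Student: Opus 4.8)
The plan is to transport the entire problem onto the regular form $R_\bu$, construct the reduction there as a contracted product, and then read off existence from whether that contracted product is a smooth groupoid. First I would use the $G_\bu$-equivariant Morita morphism $\phi_\bu\colon R_\bu\to Z_\bu$ to identify $\stack{Z}\simeq\B R_\bu$ equivariantly, so that the $\G$-action and the pulled-back form $\phi_0^*(\omega_0|_{Z_0})$ move to $R_\bu$; by the defining property of a regular form the $G_0$-orbits on $R_0$ are exactly the null leaves of this form. A symplectic reduction of $\X$ is then the same as a $\G$-quotient of the $0$-symplectic groupoid $R_\bu$, and since reductions are unique up to equivalence (via Remark~\ref{remark;formsinject} and Bursztyn-Noseda-Zhu) it suffices to produce one concrete candidate and, separately, to obstruct all candidates when freeness fails.

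For the direction ``$H$ free on $R_1\Rightarrow$ reduction exists'', I would invoke Lemma~\ref{lemma;reduction}: freeness of $H$ on $R_1$ is exactly what makes the contracted product $G\times^H R_1$ a manifold, while $G\times^H R_0$ is smooth unconditionally because $H$ acts freely on the $G$-factor by right translation through $\partial$. Hence $G\times^H R_\bu$ is a $0$-symplectic Lie groupoid, with form $\omega_\bu^\red$ descended from $\phi_0^*\omega_0$. I would then exhibit the quotient morphism $\pi_\bu\colon R_\bu\to G\times^H R_\bu$ as a principal $G_\bu$-bundle by verifying the three conditions of Definition~\ref{definition;principal-groupoid}: essential surjectivity and $G_\bu$-invariance are formal from the contracted-product structure, while condition~(iii), that $G_\bu\times R_\bu\to R_\bu\times_{G\times^H R_\bu}^{(w)}R_\bu$ is Morita, holds precisely because $H$ acts freely on $R_1$. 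Applying $\B$ and Proposition~\ref{proposition;principal-groupoid-stack} turns $\pi_\bu$ into a principal $\G$-bundle $\bp\colon\stack{Z}\to\B(G\times^H R_\bu)$, and the identity $\pi_0^*\omega_\bu^\red=\phi_0^*\omega_0$ yields $\bp^*\B\omega_\bu^\red=\bi^*\bomega$, so $(\B(G\times^H R_\bu),\bp,\B\omega_\bu^\red)$ is the desired reduction.

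For the converse, suppose a reduction $(\Y,\bp,\bomega_\Y)$ exists. Then $\bp$ is a principal $\G$-bundle, so Definition~\ref{definition;principal-stack}~(iii) forces $\G\times\stack{Z}\to\stack{Z}\times_\Y\stack{Z}$ to be an equivalence; presenting $\Y$ by a foliation groupoid $Q_\bu$ and translating through $R_\bu$, the canonical map $G_\bu\times R_\bu\to R_\bu\times_{Q_\bu}^{(w)}R_\bu$ must be Morita, in particular fully faithful. The hard part, and the heart of the theorem, is to squeeze freeness out of full faithfulness. By the regularity of the action, any $h\in H$ fixing any arrow $f\in R_1$ lies in the discrete group $\ker(\partial)$; so if $H$ failed to act freely on $R_1$ there would be $h\in\ker(\partial)\setminus\{1\}$ with $h*f=f$. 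Then the two \emph{distinct} arrows $(1,f)$ and $(h,f)$ of $G_\bu\times R_\bu$ share the source $(1,s(f))$ and target $(1,t(f))$ (since $\partial(h)=1$) and map to the same arrow of $R_\bu\times_{Q_\bu}^{(w)}R_\bu$, so the cartesian square defining full faithfulness fails. This ``second-order'' freeness---freeness at the level of arrows rather than of objects---is exactly the obstruction that cannot be absorbed into a $1$-stack quotient, and its absence forces $H$ to act freely on $R_1$, completing the equivalence.
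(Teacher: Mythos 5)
Your forward direction is the paper's own argument: Lemma~\ref{lemma;reduction} builds the candidate quotient $G\times^H R_\bu$, and Proposition~\ref{proposition;principal} together with Proposition~\ref{proposition;principal-groupoid-stack} upgrades the quotient morphism $R_\bu\to G\times^HR_\bu$ to a principal $\G$-bundle of stacks. Two small corrections there: the object manifold of $G\times^H R_\bu$ is $R_0$ itself, not a contracted product $G\times^H R_0$ (and the aside that ``$H$ acts freely on the $G$-factor through $\partial$'' is false when $\ker(\partial)\neq 1$); and the bulk of the work, which you wave off as formal, is the verification of condition~(iii) of Definition~\ref{definition;principal-groupoid} --- the paper constructs explicit inverse maps and checks submersivity of $t\circ\pr_1$ using the multiplication of $R_\bu$. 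Since the theorem statement itself delegates to Lemma~\ref{lemma;reduction}, citing it is fair, but be aware that this is where the content sits.

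Your converse isolates the right obstruction --- an $h\in\ker(\partial)\setminus\{1\}$ fixing an arrow $f\in R_1$ yields two distinct arrows $((1,1),f)$ and $((h,1),f)$ of $G_\bu\times R_\bu$ with the same source, the same target (since $\partial(h)=1$), and the same image --- but the way you deploy it has a genuine gap. You assume that an arbitrary reduction $(\Y,\bp,\bomega_\Y)$ can be presented by a \emph{strict} groupoid morphism $R_\bu\to Q_\bu$ with $Q_\bu$ presenting $\Y$ and $R_\bu$ left unchanged. A stack morphism out of $\B R_\bu$ is in general only a bibundle; to strictify it you must either refine $R_\bu$ (and then transport the non-free arrow through a $G_\bu$-equivariant refinement, which needs an argument), or know that $R_0\to\stack{Z}\to\Y$ is representable, which condition~(i) of Definition~\ref{definition;principal-stack} guarantees only for \emph{some} atlas of $\stack{Z}$, not for the particular one coming from $R_0$. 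The paper routes around this entirely by invoking the Bursztyn--Noseda--Zhu criterion: $\B R_\bu$ is a principal $\B G_\bu$-bundle over \emph{some} stack if and only if the groupoid $Y_\bu=(R_0\times R_0)\times^{(w)}_{R_\bu\times R_\bu}(G_\bu\times R_\bu)$ --- built from $R_\bu$ and the action alone, with no reference to $\Y$ or a presentation of it --- is Morita equivalent to a manifold; your fixed arrow then reappears as a non-trivial isotropy arrow of $Y_\bu$, a Morita-invariant obstruction. Either recast your converse in these intrinsic terms or supply the missing strictification step.
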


We will give the proof after establishing some preliminary results.

\begin{lemma}\label{lemma;reduction}
In the situation described in Theorem~\ref{theorem;reduction}, suppose
that $H$ acts freely on $R_1$.  Then the orbit space $R_1/H$ is a (not
necessarily Hausdorff) manifold and the projection $p\colon R_1\to
R_1/H$ is a principal $H$-bundle.  The associated bundle
$G\times^HR_1$ with fibre $G$ is the arrow manifold of a foliation
groupoid $G\times^HR_\bu=(G\times^HR_1\rightrightarrows R_0)$.  The
presymplectic form $\phi_0^*\omega_0\in\Omega^2(R_0)$ defines a
$0$-symplectic form $\omega_\bu^\red$ on $G\times^HR_\bu$.
\end{lemma}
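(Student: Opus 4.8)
The plan is to dispatch the three assertions in order, the first being the crux. Throughout I write $s_R,t_R\colon R_1\to R_0$ for the source and target of $R_\bu$, and recall from Proposition~\ref{proposition;invarianttransversal} and its proof that $R_0=G\times S$ with $G$ acting freely by left translation on the first factor, that the $G_\bu$-action on $R_\bu$ is regular, and that $\rank\Alg(R_\bu)=\dim R_1-\dim R_0=\dim\h$. The hard step is showing that $p\colon R_1\to R_1/H$ is a principal $H$-bundle, and this is delicate precisely because $H$ is neither compact nor acting properly (indeed $\partial\colon H\to G$ may be a dense immersion, so properness genuinely fails). The key observation that circumvents this is that the $H$-orbits are \emph{open} in the source fibres of $R_\bu$. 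By Lemma~\ref{lemma;orbits} every fundamental vector field $\eta_{R_1}$ with $\eta\in\h$ is tangent to the fibres of $s_R$; freeness makes the infinitesimal orbit map $\eta\mapsto(\eta_{R_1})_f$ injective at each $f\in R_1$, so each orbit has dimension $\dim\h$, while $s_R$ is a submersion with fibres of dimension $\dim R_1-\dim R_0=\dim\h$. Hence each orbit is open in its source fibre, so that $\ker(Ts_R)$ is spanned by the orbit directions. I would then take through any point a local section $\Sigma$ of the submersion $s_R$: its tangent space is complementary to $\ker(Ts_R)$, hence $\Sigma$ is a slice, and because $s_R$ is constant on orbits and injective on $\Sigma$, freeness makes $H\times\Sigma\to R_1$, $(h,\sigma)\mapsto h*\sigma$, injective and a local diffeomorphism, thus an $H$-equivariant diffeomorphism onto the $H$-invariant open set $H*\Sigma$. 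These are principal-bundle charts covering $R_1$, so $R_1/H$ is a manifold (charted by open subsets of $R_0$ via $s_R$), possibly non-Hausdorff since orbits in distinct source fibres need not be separable.

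\textbf{The groupoid $G\times^HR_\bu$.} With $p$ principal, $G\times^HR_1=(G\times R_1)/H$, where $H$ acts by $h\cdot(g,r)=(g\partial(h)^{-1},h*r)$, is the associated bundle with fibre $G$, hence a manifold. I would equip $G\times^HR_\bu$ with the structure maps over $R_0$
\[
\bar s[g,r]=s_R(r),\qquad \bar t[g,r]=g\cdot t_R(r),\qquad [g_1,r_1]\circ[g_2,r_2]=\bigl[g_1g_2,\,(g_2^{-1}*r_1)\circ r_2\bigr],
\]
with evident units and inverse $[g,r]^{-1}=[g^{-1},\,g*r^{-1}]$. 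Each formula descends to the $H$-quotient by a short computation using $t_R(h*r)=\partial(h)\cdot t_R(r)$, the right-invariance of the $H$-action (Lemma~\ref{lemma;orbits}), and the compatibility~\eqref{equation;conjugation}; smoothness is inherited from $R_\bu$ and the $G$-action. To see it is a \emph{foliation} groupoid, note that $R_0=G\times S$ is Hausdorff and that its orbits are the free $G$-orbits $G\times\{x\}$, while its isotropy group at $(j,x)$ is in natural bijection with the set $s_R^{-1}(j,x)/H$ of $H$-orbits in the source fibre, which is discrete by the openness established above.

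\textbf{The reduced $0$-symplectic form.} Put $\omega_R=\phi_0^*\omega_0\in\Omega^2(R_0)$. Since $\phi_\bu\colon R_\bu\to Z_\bu$ is a Morita morphism and the restriction of $\omega_\bu$ to $Z_\bu$ is basic, Proposition~\ref{proposition;vector-form-morita} gives that $\omega_R$ is basic on $R_\bu$, i.e.\ $s_R^*\omega_R=t_R^*\omega_R$; moreover $\omega_R$ is $G$-invariant (as $\omega_0$ is and $\phi_0$ is $G$-equivariant), and by Proposition~\ref{proposition;invarianttransversal} its kernel is the tangent bundle of the $G$-orbit foliation of $R_0$, which is exactly the foliation of $G\times^HR_\bu$. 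Consequently the action map $a\colon G\times R_0\to R_0$ satisfies $a^*\omega_R=\pr_{R_0}^*\omega_R$. Pulling $\bar s^*\omega_R$ and $\bar t^*\omega_R$ back along the quotient submersion $q\colon G\times R_1\to G\times^HR_1$ then yields $\pr_{R_1}^*s_R^*\omega_R$ and $\pr_{R_1}^*t_R^*\omega_R$, which agree; since $q$ is a submersion, $\bar s^*\omega_R=\bar t^*\omega_R$, so $\omega_R$ is basic on $G\times^HR_\bu$. As $d\omega_R=0$ and $\ker\omega_R$ is the tangent bundle of the foliation, the pair $\omega_\bu^\red=(\omega_R,\bar s^*\omega_R)$ is the asserted $0$-symplectic form, by Proposition~\ref{proposition;symgpd}. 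The main obstacle is the first step: once the $H$-orbits are seen to be open in the source fibres of $R_\bu$, the bundle, groupoid, and symplectic structures all follow essentially formally.
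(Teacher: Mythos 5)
Your proof is correct and follows essentially the same route as the paper's: both arguments rest on the observation that the source map $s\colon R_1\to R_0$ is $H$-invariant with $\ker(Ts)$ equal to the span of the fundamental vector fields of $\h$ (local leafwise transitivity together with the dimension count $\dim R_1-\dim R_0=\dim\h$), both equip $G\times^HR_\bu$ with the same structure maps, and both descend the form in the same way. The only real difference is in the first step: the paper feeds the invariant submersion $s$ into the quotient criterion of Lemma~\ref{lemma;quotient}, whereas you reprove that criterion by hand, using the openness of the $H$-orbits in the source fibres to build explicit slices $H\times\Sigma\cong H*\Sigma$; your version has the small bonus of making the discreteness of the isotropy groups of $G\times^HR_\bu$, and hence the foliation-groupoid property, explicit. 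One minor imprecision there: the isotropy group at $(j,x)$ injects into, but is not in general in bijection with, $s^{-1}(j,x)/H$, since an arrow $r$ with source $(j,x)$ contributes to the isotropy only when $t(r)$ lies in the $G$-orbit of $(j,x)$; the injection into a discrete set is all you need, so the conclusion stands.
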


\begin{proof}
The source map $s\colon R_1\to R_0$ is $H$-invariant.  Since the
action of $G_\bu$ on $R_\bu$ is locally leafwise transitive, the
kernel of $T_xs$ is precisely the span of the fundamental vector
fields at $x$.  The first assertion now follows from
Lemma~\ref{lemma;quotient} below.  The associated bundle $G\times^H
R_1$ is the quotient of $G\times R_1$ by the action
$h\cdot(g,f)=(g\partial(h\n),h*f)$.  Using the local trivializations
of $R_1\to R_1/H$, one gives $G\times^HR_1$ a smooth manifold
structure which makes the projection $G\times R_1\to G\times^HR_1$ a
surjective submersion.  Let $[g,f]\in G\times^HR_1$ denote the
equivalence class of the pair $(g,f)\in G\times R_1$.  We define the
groupoid $G\times^HR_\bu$ as follows.  For $[g,f]$, $[g',f']\in
G\times^HR_1$ and $x\in R_0$ put
\begin{gather*}
s[g,f]=s(f),\qquad t[g,f]=g\cdot t(f),\qquad u(x)=[1, u(x)],\\
[g,f]\circ[g',f']=[gg',((g')\n*f)\circ f'],\\
[g,f]\n=[g\n,(g*f)\n]=[g\n, g* f\n].
\end{gather*}
It follows
from~\eqref{equation;functor1}--\eqref{equation;conjugation} that
these structure maps are well defined.  Because $G$ is connected, the
form $\omega_0^\red=\phi^*\omega_0$ is $G$-invariant.  Since the
action of $H$ preserves source fibres, the form $\phi^*\omega_1$ is
$H$-invariant, and so descends to a form $\omega_1^\red$ on
$G\times^HR_1$.  The pair
$\omega_\bu^\red=(\omega_0^\red,\omega_1^\red)$ is a basic form on the
groupoid $G\times^HR_\bu$.  Since the $G$-orbits of $R_0$ are the
leaves of the null foliation of $\omega_0^\red$, the form
$\omega_\bu^\red$ is $0$-symplectic.
\end{proof}

Lemma~\ref{lemma;reduction} makes use of the following fact, which is
part of~\cite[Lemma~5.5]{moerdijk-mrcun;foliations-groupoids}.

\begin{lemma}\label{lemma;quotient}
Let $X$ be a (possibly non-Hausdorff) manifold and let $G$ be a Lie
group with a smooth free action $a\colon G\times X\to X$.  There is a
(necessarily unique) smooth (possibly non-Hausdorff) manifold
structure on orbit space $X/G$ such that the quotient map $X\to X/G$
is a principal $G$-bundle, if and only if there exist a (possibly
non-Hausdorff) manifold $Y$ and a smooth map $f\colon X\to Y$ which is
$G$-invariant and satisfies $\ker(Tf)_x=T_x(G\cdot x)$ for all $x\in
X$.
\end{lemma}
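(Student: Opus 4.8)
The plan is to reduce the statement to the construction of a smooth structure on the orbit space $B=X/G$ for which the projection $\pi\colon X\to B$ is a submersion, and then to upgrade such a submersion to a principal bundle using freeness of the action. The forward implication is immediate: if $B$ carries a smooth structure making $\pi$ a principal $G$-bundle, then $Y=B$ and $f=\pi$ do the job, since $\pi$ is $G$-invariant and, being a submersion whose fibres are exactly the orbits, satisfies $\ker(T\pi)_x=T_x(G\cdot x)$. For the converse I would use the given map $f$ to manufacture an atlas on $B$.

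First I would record that, because the action is free, each orbit $G\cdot x$ is an immersed submanifold of dimension $\dim G$, so the hypothesis $\ker(Tf)_x=T_x(G\cdot x)$ forces $f$ to have constant rank $r=\dim X-\dim G$. Applying the constant rank theorem at a point $x$, I would produce an $r$-dimensional slice $S=S_x$ through $x$ which is transverse to the orbits (its tangent space complements $\ker(Tf)$ at every point of $S$) and on which $f$ restricts to an embedding onto an $r$-dimensional submanifold $f(S)\subseteq Y$. Transversality makes the orbit map $G\times S\to X$ a local diffeomorphism, so the saturation $G\cdot S$ is open, and these saturations $G\cdot S_x$ cover $X$. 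The key point is that $G$-invariance of $f$ together with injectivity of $f|_S$ makes the descended map $\bar f\colon(G\cdot S)/G\to f(S)$ a bijection: if $g\cdot s$ and $g'\cdot s'$ have the same image then $f(s)=f(s')$, whence $s=s'$ and the two points lie on one orbit. I would transport the smooth structure of $f(S)$ to $(G\cdot S)/G$ through $\bar f$. Since $\bar f\circ\pi=f|_{G\cdot S}$ is open and $\pi$ is a quotient map, $\bar f$ is a homeomorphism for the quotient topology, and under this identification $\pi$ becomes $f|_{G\cdot S}$, a submersion. On an overlap of two such charts both identifications are restrictions of the single map $\bar f$, so the transition is the identity between subsets of $Y$ and is automatically smooth; this is what glues the charts into a global (possibly non-Hausdorff) manifold structure on $B$ with $\pi$ submersive.

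Finally I would promote $\pi$ to a principal bundle: over a small $V\subseteq B$ the submersion $\pi$ admits a section $\sigma$, and $(g,b)\mapsto g\cdot\sigma(b)$ is a bijective local diffeomorphism $G\times V\to\pi^{-1}(V)$ by freeness and the orbit-fibre property, hence a trivialisation. Uniqueness of the smooth structure follows from the universal property of submersions, which makes $\pi$ a quotient in the smooth category. The main obstacle is the second step: extracting from the constant rank theorem slices that are simultaneously transverse to the orbits and injective for $f$, and then verifying that the resulting charts are mutually compatible and induce the quotient topology; this is where the non-Hausdorff nature of $X$ and $Y$ must be handled with care, although the reduction of every chart to the single map $\bar f$ keeps the cocycle condition trivial.
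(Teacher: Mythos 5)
The paper does not actually prove this lemma: it is quoted verbatim as ``part of [Moerdijk--Mr\v{c}un, Lemma~5.5]'' and used as a black box, so there is no in-text argument to compare yours against. Your proof is correct and is essentially the standard slice argument one finds in that reference: the constant-rank theorem produces slices $S$ transverse to the orbits on which $f$ is injective, the orbit map $G\times S\to G\cdot S$ is a bijective local diffeomorphism (hence a diffeomorphism, which already gives local triviality), and $\bar f_S$ transports the submanifold structure of $f(S)$ to $\pi(G\cdot S)$ compatibly with the quotient topology. Two small points deserve to be made explicit. First, the claim that the orbits have dimension $\dim G$ (equivalently, that $f$ has constant rank $\dim X-\dim G$) requires the observation that freeness forces the infinitesimal orbit map $\g\to T_xX$ to be injective; this follows from local uniqueness of integral curves in a chart (if $\xi_X(x)=0$ then $\exp(t\xi)\cdot x=x$ for all $t$, so $\xi=0$), and since the argument is chart-local it survives the non-Hausdorffness of $X$. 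Second, your statement that the transition maps are ``the identity between subsets of $Y$ and automatically smooth'' tacitly uses that the common overlap $f(G\cdot S\cap G\cdot S')$ is open in both embedded submanifolds $f(S)$ and $f(S')$ and that an embedded-submanifold smooth structure on a fixed subset of $Y$ with the subspace topology is unique (via corestriction through a slice chart); both facts hold, again without any Hausdorff hypothesis on $Y$, but they are the actual content of the gluing step and should be said.
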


The next proposition completes one direction of the proof of
Theorem~\ref{theorem;reduction}.

\begin{proposition}\label{proposition;principal} 
In the situation described in Theorem~\ref{theorem;reduction}, suppose
that $H$ acts freely on $R_1$.  Let $G\times^HR_\bu$ be the Lie
groupoid described in Lemma~\ref{lemma;reduction}.  Then the Lie
groupoid morphism $\psi_\bu\colon R_\bu\to G\times^HR_\bu$ defined by
$\psi_0=\id_{R_0}$ and $\psi_1(f)=[1_G,f]$ is a principal
$G_\bu$-bundle in the sense of
Definition~\ref{definition;principal-groupoid}, and satisfies
$\psi_\bu^*\omega_\bu^\red=\phi_\bu^*\omega_\bu$.
\end{proposition}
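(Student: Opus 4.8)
The plan is to verify in turn the three conditions of Definition~\ref{definition;principal-groupoid} for $\psi_\bu$, using the explicit structure maps of $G\times^HR_\bu$ recorded in Lemma~\ref{lemma;reduction}, and then dispose of the form identity. First I would note that $\psi_\bu$ is a genuine groupoid morphism: from those structure maps one reads off $s[1,f]=s(f)$, $t[1,f]=t(f)$, $[1,u(x)]=u(x)$, and $[1,f]\circ[1,f']=[1,f\circ f']$. Essential surjectivity (condition~\eqref{item;principal1}) is then immediate, because $\psi_0=\id_{R_0}$ reduces it to the statement that the target map $t\colon G\times^HR_1\to R_0$ is a surjective submersion, which holds since $G\times^HR_\bu$ is a Lie groupoid.

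For $G_\bu$-invariance (condition~\eqref{item;principal2}) I would take $\gamma(g,x)=[g,u(x)]$, which is an arrow from $x$ to $g\cdot x$. The coherence conditions~\eqref{equation;coherencegpdprinc} follow from $g{*}u(x)=u(g\cdot x)$: indeed $\gamma(1,x)=[1,u(x)]=u(x)$, and a short computation gives $\gamma(h,g\cdot x)\circ\gamma(g,x)=[hg,u(x)]=\gamma(hg,x)$. The naturality square for an arrow $((h,g_0),f)$ of $G_\bu\times R_\bu$ becomes the identity $[\partial(h)g_0,u(t(f))]\circ[1,f]=[1,h{*}(g_0{*}f)]\circ[g_0,u(s(f))]$ in $G\times^HR_\bu$; both sides reduce to $[\partial(h)g_0,f]=[g_0,{}^{g_0\n}h{*}f]$, the equality of these last two classes being an instance of the defining relation of $G\times^HR_1$ together with the crossed-module identity $\partial({}^{g}h)=g\partial(h)g\n$ (and~\eqref{equation;natural2}, \eqref{equation;conjugation}).

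The crux is condition~\eqref{item;principal3}: the canonical morphism $c_\bu\colon G_\bu\times R_\bu\to R_\bu\times_{G\times^HR_\bu}^{(w)}R_\bu$ is Morita. Identifying the objects of the weak fibred product with $G\times^HR_1$ via $k\mapsto(s(k),k,t(k))$ and its arrows with pairs $(f_1,f_2)$ of $R_\bu$-arrows making the evident square commute, one finds $c_0(g,x)=[g,u(x)]$ and $c_1((h,g_0),f)=(f,h{*}(g_0{*}f))$. Essential surjectivity of $c_\bu$: the object $[g,f]$ is joined to $c_0(g,t(f))$ by the arrow $(f,u(t[g,f]))$, and this depends smoothly on $[g,f]$ because $G\times R_1\to G\times^HR_1$ is a principal $H$-bundle. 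Full faithfulness is where the freeness hypothesis enters. Given an arrow-pair $(f_1,f_2)$ with lifts $(g_a,x_a),(g_b,x_b)$ of its source and target along $c_0$, the compatibility condition reduces (by the same manipulations as above) to $[g_b,f_1]=[g_a,g_a\n{*}f_2]$ in $G\times^HR_1$; unwinding the equivalence relation yields $h_0\in H$ with $\partial(h_0)=g_a\n g_b$ and $f_2={}^{g_a}h_0{*}(g_a{*}f_1)$, so $\bigl(({}^{g_a}h_0,g_a),f_1\bigr)$ is a $c_1$-preimage. Uniqueness of this preimage is precisely freeness of the $H$-action on $R_1$: two preimages differ by an element of $\ker(\partial)$ fixing $g_a{*}f_1$, hence by the identity. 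Thus $c_1$ is a fibrewise bijection, and the principal $H$-bundle structure upgrades it to a diffeomorphism onto the fibred product, so $c_\bu$ is Morita.

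The form identity is then formal: both $\psi_\bu^*\omega_\bu^\red$ and $\phi_\bu^*\omega_\bu$ are basic forms on $R_\bu$, and since $\psi_0=\id_{R_0}$ their $0$-components agree, namely $\psi_0^*\omega_0^\red=\omega_0^\red=\phi_0^*\omega_0$ by Lemma~\ref{lemma;reduction}; as a basic form is determined by its $0$-component, the two forms coincide. The main obstacle is the full-faithfulness step of condition~\eqref{item;principal3}, where freeness of $H$ on $R_1$ is exactly equivalent to the bijectivity of $c_1$ on fibres — this is also the content that fails, giving the ``only if'' direction of Theorem~\ref{theorem;reduction}.
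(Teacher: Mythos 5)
Your proof follows essentially the same route as the paper's: the same $2$-isomorphism $\gamma(g,x)=[g,u(x)]$ for invariance, the same explicit identification of the weak fibred product $P_\bu=R_\bu\times^{(w)}_{G\times^HR_\bu}R_\bu$ with objects $G\times^HR_1$, the same use of freeness to invert the comparison map on arrows, and the same appeal to the principal $H$-bundle $R_1\to R_1/H$ (the smooth division map) to upgrade the set-theoretic inverse to a diffeomorphism. The form identity and the invariance coherences are handled correctly.

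There is one step that, as written, does not quite close: essential surjectivity of the canonical morphism $c_\bu\colon G_\bu\times R_\bu\to P_\bu$ in condition~(iii). By Definition~\ref{definition;moritaequivalence} this requires $t\circ\pr_1\colon P_1\times_{s,P_0,c_0}(G_0\times R_0)\to P_0$ to be a surjective \emph{submersion}, i.e.\ submersive at every point of its domain. You exhibit, for each $[g,f]\in P_0$, one arrow joining it to a point in the image of $c_0$, depending smoothly on $[g,f]$; this gives surjectivity and a smooth local section through one point of each fibre, but a map can admit smooth local sections through every point of its target and still fail to be a submersion at other points of its domain (e.g.\ $(x,y)\mapsto xy$ at the origin). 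The paper repairs exactly this: it observes that the comparison map $\kappa$ is equivariant for a $G$-action on both sides, reduces to points lying over $[1,f]$, and then produces a local section of $\kappa$ through an \emph{arbitrary} preassigned point $(r,1,r')$ of the fibre, using that the multiplication map of $R_\bu$ is a submersion. You should add this (or an equivalent) argument. The remaining minor looseness --- describing arrows of $P_\bu$ as pairs $(f_1,f_2)$ rather than triples $(f_1,k,f_2)$ --- is harmless in context, since in the full-faithfulness computation the middle component is pinned down by the prescribed source lift.
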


\begin{proof}
The statement $\psi_\bu^*\omega_\bu^\red=\phi_\bu^*\omega_\bu$ holds
because
\[
\psi_0^*\omega^\red_0=\omega^\red_0=\phi_0^*\omega_0\in\Omega^2(R_0).
\]
To show that $\psi_\bu$ is a principal $G_\bu$-bundle we will verify
conditions~\eqref{item;principal1}--\eqref{item;principal3} of
Definition~\ref{definition;principal-groupoid}.
Condition~\eqref{item;principal1} is obvious: $\psi_\bu$ is
essentially surjective because $\psi_0=\id_{R_0}$.  To check
condition~\eqref{item;principal2} let $\pr_2\colon G_\bu\times
R_\bu\to R_\bu$ be the projection and $a_\bu\colon G_\bu\times
X_\bu\to X_\bu$ the action.  Recall that $G_0=G$ and $G_1=
H\rtimes_\alpha G$, and define
$\gamma\colon\psi_\bu\circ\pr_2\To\psi_\bu\circ a_\bu$ to be the map
\[
\gamma\colon G\times R_0\longto
G\times^HR_1,\qquad(g,x)\longmapsto[g,u(x)].
\]
Let $f\in R_1$ be an arrow in $R_\bu$ from $x$ to $y$, and let
$((h,g),f)\in G_1\times R_1$ be an arrow from $s((h,g),f)=(g,x)$ to
$t((h,g),f)=(\partial(h)g,y)$.  Then the following diagram in
$G\times^H R_\bu$ commutes:
\[
\begin{tikzcd}[column sep=7em,row sep=3em]
\psi_0\circ\pr_2(g,x)=x\ar[r,"{\gamma(g,x)=[g,u(x)]}"]
\ar[d,"{\psi_1\circ\pr_2((h,g),f)=[1,f]}"']&\psi_0\circ
a_0(g,x)=g\cdot x\ar[d,"{\psi_1\circ a_1((h,g),f)=[1,h*(g*f)]}"]
\\
\psi_0\circ\pr_2(\partial(h)g,y)=y
\ar[r,"{\gamma(\partial(h)g,y)=[\partial(h)g,u(y)]}"]&
\partial(h)g\cdot y.
\end{tikzcd}
\]
Thus $\gamma$ is a natural transformation.  It is automatically a
natural isomorphism, because $\Liegpd$ is a $(2,1)$-category.  The
coherence conditions on $\gamma$ are verified in a similar way.  This
shows that $\psi_\bu$ is $G_\bu$-invariant.  To check
condition~\eqref{item;principal3} we first describe the groupoid
\[
P_\bu:=R_\bu\times_{G\times^H R_\bu}^{(w)}R_\bu,
\]
which is the weak fibre product as described in
Definition~\ref{definition;fibreproductLiegpd}.  The object and arrow
manifolds are
\begin{gather*}
P_0=R_0\times_{R_0}(G\times^HR_1)\times_{R_0}R_0\cong G\times^HR_1,\\
P_1=R_1\times_{R_0}(G\times^HR_1)\times_{R_0}R_1.
\end{gather*}
The source and target maps of $P_\bu$ can be written
\[
s(r,[g,f],r')=[g,f],\qquad t(r,[g,f],r')=[g,(g\n*r')\circ f\circ r\n].
\]
We must show that the canonical morphism $\tau_\bu=(\pr_2,a_\bu)\colon
G_\bu\times R_\bu\to P_\bu$ is a Morita morphism.  On objects this is
the map $\tau_0\colon G_0\times R_0\to P_0$ given by
\[\tau_0(g,x)=\gamma(g,x)=[g,u(x)],\]
and on arrows this is the map $\tau_1\colon(H\rtimes_\alpha G)\times
R_1\to P_1$ given by
\[
\tau_1((h,g),f)=(f,\gamma(s((h,g),f)),h*(g*f))=(f,[g,u(s(f))],h*(g*f)).
\]
First we show that $\tau_\bu$ is essentially surjective, i.e.\ the map
\begin{align*}
t\circ\pr_1\colon P_1\times_{s,P_0,\tau_0}(G\times R_0)&\longto P_0\\
((r,[g,u(s(r))],r'),(g,u(s(r))))&\longmapsto[g,(g\n * r')\circ r\n]
\end{align*}
is a surjective submersion.  Since the action of $H$ on $R_1$
preserves source fibres and is free, whenever $[g,u(x)]=[g',u(x')]\in
G\times^HR_1$ we must have $g=g'$ and $x=x'$.  It follows that we have
a diffeomorphism $\lambda\colon P_1\times_{P_0}(G\times
R_0)\cong(R_1\times G)\times_{R_0}R_1$ given by
\[\lambda((r,[g,u(s(r))],r'),(g,u(s(r))))=(r,g,r'),\]
where
\[
(R_1\times G)\times_{R_0}R_1=\{\,((r,g,r')\in R_1\times G\times
R_1\mid g\cdot s(r)=s(r')\,\}.
\]
Under the isomorphism $\lambda$ the map $t\circ\pr_1$ becomes the map
$\kappa\colon(R_1\times G)\times_{R_0}R_1\to P_0=G\times^HR_1$ given
by
\[\kappa(r,g,r')=[g,(g\n * r')\circ r\n].\]
For surjectivity, let $[g,f]\in P_0$.  Then
$(f\n,g,g*u(t(f)))\in(R_1\times G)\times_{R_0}R_1$ and
$\kappa(f\n,g,g*u(t(f)))=[g,f]$.  For submersivity, note that there is
a $G$-action on $(R_1\times G)\times_{R_0} R_1$ given by
\[g\cdot(r,g',r')=(g*r,g'g\n, r')\]
and also a $G$-action on $P_0=G\times^HR_1$ given by
\[g\cdot [g',f] =[g'g\n,g*f].\]
The map $\kappa$ is $G$-equivariant.  So it suffices to show that, for
each $[1,f]\in P_0$ and for any $(r,1,r')\in (R_1\times G)\times_{R_0}
R_1$ with $r'\circ r\n= f$, there is a local section $\sigma\colon
P_0\to (R_1\times G)\times_{R_0} R_1$ of $\kappa$ with
$\sigma[1,f]=(r,1,r')$.  But this follows from the fact that the
multiplication map of $R_\bu$ is a submersion.  Therefore $\tau_\bu$
is essentially surjective.  Next we show $\tau_\bu$ is fully faithful.
Consider the fibred product
\[
M=\bigl((G\times R_0)\times(G\times R_0)\bigr)\times_{P_0\times
  P_0}P_1
\]
with respect to the maps $\tau_0\times\tau_0\colon(G\times
R_0)\times(G\times R_0)\to P_0\times P_0$ and $(s,t)\colon P_1\to
P_0\times P_0$.  A typical element of $M$ is a tuple
\[
\bigl((g,s(r)),(g',x),(r,[g,u(s(r))],r')\bigr)\in(G\times
R_0)\times(G\times R_0)\times P_1
\]
satisfying
\[
[g',u(x)]=t(r,[g,u(s(r))],r')=\bigl[g,(g\n * r')\circ r\n\bigr],
\]
where $x=t(r)$ because $h*u(x)=(g\n*r')\circ r\n$ for some $h\in H$,
and because the $H$-action preserves the $s$-fibres.  The universal
property of the fibred product $M$ yields a canonical map
$\chi\colon(H\rtimes_\alpha G)\times R_1\to M$ given by
\[
\chi(h,g,f)=
\bigl((g,s(f)),(\partial(h)g,t(f)),(f,[g,u(s(f))],h*(g*f))\bigr).
\]
We must show $\chi$ is a diffeomorphism.  For $r$, $r'\in R_1$ in the
same $H$-orbit, let $\delta(r,r')$ be the unique element $h\in H$
satisfying $hr=r'$.  The map $\delta\colon R_1\times_{R_1/H}R_1\to H$
is smooth, because $R_1\to R_1/H$ is a principal $H$-bundle.  Define
$\zeta\colon M\to(H\rtimes_\alpha G)\times R_1$ by
\[
\zeta\bigl((g,s(r)),(g',x),(r,[g,u(s(r))],r')\bigr)=(\delta(g*r,r'),g,r).
\]
We assert that $\zeta$ is the inverse of $\chi$.  Indeed,
\begin{align*}
(\zeta\circ\chi)(h,g,f)&=
  \zeta\bigl((g,s(f)),(\partial(h)g,t(f)),(f,[g,u(s(f))],h*(g*f))\bigr)\\
&=(\delta(g*f,h*(g*f)),g,f)\\
&=(h,g,f),
\end{align*}
and
\begin{multline*}
(\chi\circ\zeta)\bigl((g,s(r)),(g',t(r)),(r,[g,u(s(r))],r')\bigr)
\\
\begin{aligned}
&=\chi(\delta(g*r,r'),g,r)
\\
&=\bigl((g,s(r)),(\partial(\delta(g*r,r'))g,t(r)),
(r,[g,u(s(r))],\delta(g*r,r')*(g*r))\bigr)
\\
&=\bigl((g,s(r)),(\partial(\delta(g*r,r'))g,t(r)),
(r,[g,u(s(r))],r')\bigr).
\end{aligned}
\end{multline*}
It remains to show that $\partial(\delta(g*r,r'))g=g'$.  We have
$\partial(\delta(g*r,r'))g\cdot t(r) = t(r')$
by~\eqref{equation;natural1}.  And $g'\cdot t(r)=t(r')$ by the
definition of $M$.  Since $G$ acts freely on $R_0$, we have
$\partial(\delta(g*r,r'))g=g'$.  So $\zeta=\chi\n$, and $\tau_\bu$ is
fully faithful.
\end{proof}

\begin{proof}[Proof of Theorem~\ref{theorem;reduction}]
If $H$ acts freely on $R_1$, Propositions~\ref{proposition;principal}
and~\ref{proposition;principal-groupoid-stack} show that a symplectic
reduction of $\X$ exists.  Conversely, suppose that $H$ does not act
freely on $R_1$.  It follows
from~\cite[Theorem~5.2]{bursztyn-noseda-zhu;principal-stacky-groupoids}
that $\B R_\bu\to\stack{S}$ is a principal $\B G_\bu$-bundle over some
stack $\stack{S}$ if and only if the Lie groupoid
\[
Y_\bu:=(R_0\times R_0)\times_{R_\bu\times R_\bu}^{(w)}(G_\bu\times
R_\bu)
\]
is Morita equivalent to a manifold.  Here the weak fibred product is
taken over the canonical map $R_0\times R_0\to R_\bu\times R_\bu$ and
the projection-action map ${\pr}_2\times a_\bu\colon G_\bu\times
R_\bu\to R_\bu\times R_\bu$.  We will show that $Y_\bu$ has
non-trivial isotropy groups and so cannot be Morita equivalent to a
manifold.  Choose $f\in R_1$ and $1\ne h\in H$ so that $h*f=f$.  Let
$x=s(f)$ and $y=t(f)$. Consider the point
\[
z=((x,x),(f,f),(1,y))\in (R_0\times R_0)\times_{R_0\times R_0}
(R_1\times R_1)\times_{R_0\times R_0}(G_0\times R_0).
\]
Let $k\in Y_1$ be the arrow
\[
k=((x,x),(f,f),((h,1),u(y)))\in(R_0\times R_0)\times_{R_0\times
  R_0}(R_1\times R_1)\times_{R_0\times R_0}(G_1\times R_1),
\]
where we identify $G_1=H\rtimes_\alpha G$.  Then $s(k)=z$ and
\[
t(k)=((x,x),(u(y),h*u(y))\circ(f,f),(\partial(h),y))=
((x,x),(f,f),(\partial(h),y)).
\]
But since $R_\bu$ is a regular form and $h*f=f$, we have
$\partial(h)=1$.  So $t(k)=z$.  Therefore the isotropy group of $z$ is
nontrivial.
\end{proof}

\subsection{Symplectic reduction by a Lie group}
\label{subsection;group-reduction}

Theorem~\ref{theorem;reduction} gives new information even in the
context of symplectic reduction by an ordinary Lie group.

\begin{corollary}\label{corollary;reduction}
Let $G$ be a Lie group and $(X,\omega)$ a symplectic manifold on which
$G$ acts in a Hamiltonian fashion with moment map $\mu\colon
X\to\g^*$.  If $0$ is a regular value of $\mu$, then $\mu^{-1}(0)/G$
is a symplectic stack.
\end{corollary}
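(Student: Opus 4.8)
The plan is to exhibit the classical situation as the special case of Theorem~\ref{theorem;reduction} in which the second group of the crossed module is trivial, so that the freeness obstruction becomes vacuous. First I would present the Lie group $G$ as a Lie group stack $\G=\B G_\bu$ by taking $G_\bu$ to be the identity $2$-group $(G\rightrightarrows G)$, and present the symplectic manifold $(X,\omega)$ as a symplectic stack $\X=\B X_\bu$ by taking $X_\bu$ to be the identity groupoid $(X\rightrightarrows X)$. The given Hamiltonian action makes $(X_\bu,\omega_\bu,G_\bu,\mu_\bu)$ a Hamiltonian $G_\bu$-groupoid with $\omega_\bu=(\omega,\omega)$ and $\mu_\bu=(\mu,\mu)$, and hence $(\X,\bomega,\G,\bmu)$ a Hamiltonian $\G$-stack by Theorem~\ref{theorem;stack-groupoid}\eqref{item;groupoid-stack}. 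The crossed module of $G_\bu$ is $(G,H,\partial,\alpha)$ with $H=\ker(s)=\{1\}$, since the source map of the identity groupoid is the identity. Throughout I read the quotient $\mu^{-1}(0)/G$ in the statement as the quotient stack $\B(G\ltimes\mu^{-1}(0))$.

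Next I would record the consequences of $0$ being a regular value. From $d\mu^\xi=\iota_{\xi_X}\omega$ and nondegeneracy of $\omega$, the map $\xi\mapsto(\xi_X)_x$ is injective for every $x\in\mu^{-1}(0)$, so the infinitesimal stabilizers $\g_x$ vanish and $G$ acts locally freely on $\mu^{-1}(0)$; in particular $G\ltimes\mu^{-1}(0)$ is a foliation groupoid and $\B(G\ltimes\mu^{-1}(0))$ is an \'etale stack. The same regularity guarantees that $0\in\Lie(\G)^*$ is a regular value of $\bmu$ in the stacky sense, so the zero fibre $Z_\bu=\mu_\bu^{-1}(0)$ is the identity groupoid of $\mu^{-1}(0)$ and $\stack{Z}=\bmu^{-1}(0)\simeq\B Z_\bu$. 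By Proposition~\ref{proposition;invarianttransversal} I may then choose a regular form $(R_\bu,\phi_\bu)$ of $Z_\bu$.

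The key step is the application of Theorem~\ref{theorem;reduction}. Because $H=\{1\}$, the hypothesis that $H$ act freely on $R_1$ is automatically satisfied, so the theorem produces a symplectic reduction, presented by the $0$-symplectic groupoid $(G\times^HR_\bu,\omega_\bu^\red)$ of Lemma~\ref{lemma;reduction}. It remains to identify this reduction with the quotient stack. By Proposition~\ref{proposition;principal} the morphism $\psi_\bu\colon R_\bu\to G\times^HR_\bu$ is a principal $G_\bu$-bundle, so by Proposition~\ref{proposition;principal-groupoid-stack} the morphism $\B\psi_\bu\colon\B R_\bu\to\B(G\times^HR_\bu)$ is a principal $G$-bundle. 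Since $\phi_\bu$ is a $G_\bu$-equivariant Morita morphism, $\B R_\bu\simeq\B Z_\bu\simeq\mu^{-1}(0)$ compatibly with the $G$-actions, and a principal $G$-bundle over $\B(G\times^HR_\bu)$ with total space $\mu^{-1}(0)$ exhibits $\B(G\times^HR_\bu)\simeq\B(G\ltimes\mu^{-1}(0))$. Transporting $\omega_\bu^\red$ across this equivalence yields the reduced symplectic form, so $\mu^{-1}(0)/G$ is a symplectic stack.

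The main obstacle I anticipate is bookkeeping rather than substance. The crux is the final identification of the abstractly produced reduction $\B(G\times^HR_\bu)$ with the concrete quotient stack $\B(G\ltimes\mu^{-1}(0))$, which I handle through the principal-bundle characterization above. A secondary point is the connectedness hypothesis of Theorem~\ref{theorem;reduction}: the argument applies verbatim when $G$ is connected, and the general case reduces to this one, since local freeness on $\mu^{-1}(0)$ is a statement about the identity component, with the residual quotient by the discrete group $\pi_0(G)$ preserving both \'etaleness and the $0$-symplectic structure.
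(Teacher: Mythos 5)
Your proposal is correct and follows exactly the route the paper intends: the paper offers no explicit proof, presenting the corollary as the special case of Theorem~\ref{theorem;reduction} in which the presenting $2$-group is the identity $2$-group $G\rightrightarrows G$, so that $H=\ker(s)$ is trivial and the freeness hypothesis on $R_1$ is vacuous. Your additional care in identifying the abstract reduction $\B(G\times^HR_\bu)$ with the quotient stack $\B(G\ltimes\mu^{-1}(0))$ via the uniqueness of principal-bundle quotients, and your remark reducing the disconnected case to the connected one (which the framework of Theorem~\ref{theorem;reduction} formally requires), fill in details the paper leaves implicit.
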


An \emph{orbifold} is a separated \'etale stack.  If in
Corollary~\ref{corollary;reduction} we make the extra assumption that
$G$ acts properly on $\mu^{-1}(0)$, then the stack $\mu^{-1}(0)/G$ is
separated, so we obtain the following familiar Meyer-Marsden-Weinstein
reduction theorem~\cite{meyer;symmetries-integrals},
\cite{marsden-weinstein;reduction-symplectic-manifolds-symmetry},~%
\cite{marsden-weinstein;comments-symplectic-reduction}.

\begin{corollary}\label{corollary;orbifold}
Let $G$ be a Lie group and $(X,\omega)$ a symplectic manifold on which
$G$ acts in a Hamiltonian fashion with moment map $\mu\colon
X\to\g^*$.  If $0$ is a regular value of $\mu$ and if $G$ acts
properly on $\mu^{-1}(0)$, then $\mu^{-1}(0)/G$ is a symplectic
orbifold.  If in addition the action of $G$ on $\mu^{-1}(0)$ is free,
then $\mu^{-1}(0)/G$ is a symplectic manifold.
\end{corollary}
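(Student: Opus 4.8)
The plan is to deduce both statements from Corollary~\ref{corollary;reduction}, which already identifies $\Y:=\mu^{-1}(0)/G$ as a symplectic stack; since symplectic stacks are by definition \'etale and carry a symplectic form $\bomega_\Y$, all that remains for the first claim is to verify that $\Y$ is separated, and for the second claim to identify $\Y$ with an ordinary manifold.

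First I would exhibit the action groupoid $G\ltimes\mu^{-1}(0)$ as a presentation of $\Y$: for an ordinary Lie group acting on a manifold this is the standard presentation of the quotient stack. Recall that a differentiable stack is separated exactly when it admits a presentation by a proper Lie groupoid, and that properness of a Lie groupoid is a Morita-invariant condition. The action groupoid $G\ltimes\mu^{-1}(0)$ is proper if and only if the map $(s,t)\colon G\times\mu^{-1}(0)\to\mu^{-1}(0)\times\mu^{-1}(0)$ sending $(g,x)$ to $(x,g\cdot x)$ is proper, which is precisely the hypothesis that $G$ acts properly on $\mu^{-1}(0)$. Hence $\Y$ is a separated \'etale stack, i.e.\ an orbifold, and it is symplectic by Corollary~\ref{corollary;reduction}.

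For the second assertion, suppose in addition that the $G$-action on $\mu^{-1}(0)$ is free. A free and proper action admits a quotient manifold: the orbit space $\mu^{-1}(0)/G$ carries a unique smooth structure making the quotient map $q\colon\mu^{-1}(0)\to\mu^{-1}(0)/G$ a surjective submersion, in fact a principal $G$-bundle. The map $q$ then induces a Morita morphism from $G\ltimes\mu^{-1}(0)$ to the identity groupoid of the manifold $\mu^{-1}(0)/G$, so $\Y$ is equivalent to this manifold. Under the equivalence the symplectic form $\bomega_\Y\in\Omega^2(\Y)$ corresponds, via Proposition~\ref{proposition;basicformsarethesame}, to a basic form on the identity groupoid of $\mu^{-1}(0)/G$, that is, to a genuine closed $2$-form; its $0$-symplectic nondegeneracy reduces to ordinary nondegeneracy because the associated foliation is zero-dimensional. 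Thus $\mu^{-1}(0)/G$ is a symplectic manifold, namely the classical Marsden-Weinstein quotient.

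The routine parts are the quotient-manifold theorem and the passage from a stacky symplectic form to an honest one; the only point requiring care is the separatedness step, where one must match the hypothesis of a proper $G$-action with properness of the presenting action groupoid and invoke the Morita-invariant characterization of separated stacks by proper presentations.
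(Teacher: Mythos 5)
Your proposal is correct and follows the same route as the paper: the paper likewise obtains the statement by combining Corollary~\ref{corollary;reduction} with the observation that properness of the $G$-action on $\mu^{-1}(0)$ makes the quotient stack separated (hence an orbifold, by the paper's definition of orbifold as a separated \'etale stack), and with the quotient-manifold theorem in the free case. The only difference is that you spell out the Morita-invariance of properness and the identification of the stacky symplectic form with an honest one, steps the paper leaves implicit.
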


Here are three instances of Corollary~\ref{corollary;reduction}.

\begin{example}[space of geodesics]\label{example;geodesic}
Let $X=T^*M$ be the cotangent bundle of a complete Riemannian manifold
$M$.  The space of (non-parametrized) geodesics of $M$ is the
symplectic quotient of $X$ by the Hamiltonian action of $G=\R$
generated by the kinetic energy (norm square) function.
Corollary~\ref{corollary;reduction} says that the space of geodesics
can be interpreted as a symplectic stack.
\end{example}

\begin{example}[space of Reeb orbits]\label{example;contact}
Let $M$ be a manifold equipped with a contact form $\alpha$ and let
$X$ be the symplectization of $M$, i.e.\ the manifold
$X=(0,\infty)\times M$ equipped with the symplectic form
$d\alpha+dt\wedge\alpha$.  Suppose the Reeb vector field of $M$ is
complete.  Then the space of Reeb orbits of $M$ is the symplectic
quotient of $X$ by the Hamiltonian action of $G=\R$ generated by the
function $(t,m)\mapsto t$.  Corollary~\ref{corollary;reduction} says
that the space of Reeb orbits can be interpreted as a symplectic
stack.
\end{example}

\begin{example}[toric quasi-folds]\label{example;quasitoric}
Consider $\C^n$ with its standard symplectic form $\omega$ and the
standard action of the torus $G=\T^n$ as in Example~\ref{example1}.
Let $N\subseteq G$ an immersed Lie subgroup and
$\iota^*\circ\mu\colon\C^n\to\lie{n}^*$ the $N$-moment map with zero
fibre $X_0$.  Let $\tilde{N}\to N$ a covering homomorphism as in
Example~\ref{example4}.  Let $(X_\bu,\omega_\bu)$ be the
$0$-symplectic groupoid of Example~\ref{example8} and let
$(\X,\bomega)$ be the associated symplectic stack.  There is an
obvious morphism of groupoids $p_\bu$ from (the identity groupoid of)
$X_0$ to the action groupoid $X_\bu=\tilde{N}\ltimes X_0$.  The
associated morphism of stacks $\B p_\bu \colon X_0\to\X$ is a
principal $\tilde{N}$-bundle in the sense of
Definition~\ref{definition;principal-stack}.  By definition the
pullback of $\bomega$ to $X_0$ is equal to the presymplectic form
$\omega_0$ on $X_0$.  We conclude that the toric quasifold
$(\X,\bomega)$ is the symplectic reduction of $\C^n$ with respect to
$\tilde{N}$.  The symplectic stack $(\X,\bomega)$ is a symplectic
orbifold if and only if $N$ is a closed subgroup of $G$ and the
covering $\tilde{N}\to N$ is finite.  The action of $G=\T^n$ on $\C^n$
descends to an action of the quotient Lie group stack $\G=G/\tilde{N}$
on $\X$, which is nothing other than the $\G$-action defined in
Example~\ref{example8}.
\end{example}


%

\section{The Duistermaat-Heckman theorem}\label{section;dh}

\numberwithin{equation}{section}

In this section we prove an analogue of the Duistermaat-Heckman
theorem for Hamiltonian $\G$-stacks, where $\G$ is a stacky torus.
The Duistermaat-Heckman theorem has two parts: (1)~the variation of
the reduced symplectic form is linear, and (2)~the moment map image of
the Liouville measure is piecewise polynomial.  It is only the first
part that we generalize here; when the Hamiltonian stack is not
proper, it is unclear how to integrate the Liouville measure along
fibres of the moment map in a canonical, Morita-invariant fashion.
(See~\cite{crainic-mestre;measures-stacks} for a treatment of measures
and densities on differentiable stacks.)

The following version of the Duistermaat-Heckman theorem was obtained
by Guillemin and Sternberg by applying the coisotropic embedding
theorem to the zero fibre of the moment map.  Our approach is to
generalize this formulation to Hamiltonian $\G$-stacks.  We focus our
attention on the situation when we have a presentation of our
$\G$-stack $\X$ by a Hamiltonian groupoid with a leafwise transitive
action (Theorem~\ref{theorem;DH}).

\begin{theorem}%
[Duistermaat-Heckman~\cite{duistermaat-heckman;variation},
  Guillemin-Sternberg~\cite{guillemin-sternberg;birational}]
\label{theorem;DHoriginal}
Let $G$ be a torus and let $(M,\omega,G,\mu)$ be a connected
Hamiltonian $G$-manifold, where $\mu$ is a proper map.  Let $U$ be an
open neighborhood of $0\in \g^*$ which consists of regular values of
$\mu$, and let $Z=\mu\n(0)$.  Let $\theta\in\Omega^1(Z)\otimes\g$ be a
connection form for the locally free action of $G$ on $Z$, and define
the $1$-form $\gamma\in\Omega^1(Z\times\g^*)$ by
\begin{equation}\label{eqn;littlegamma}
\gamma(v|_p,w|_\beta)=\langle\beta,\theta|_p
(v|_p)\rangle.\end{equation} Then:
\begin{enumerate}
\item\label{item;DH1original} After possibly shrinking $U$, there is
  an isomorphism of Hamiltonian $G$-manifolds
\[
(\mu\n(U),\omega|_{\mu\n(U)},G,\mu|_{\mu\n(U)})\cong (Z\times U,
\omega|_Z + d\gamma,G,\pr_2),
\]
where the $G$-action on $Z\times U$ is $t\cdot(z,u)=(t\cdot z, u)$.
\item
Let $u\in U$. If the action of $G$ on $Z$ is free, there is a
symplectic isomorphism of reduced spaces
$(\mu\n(u)/G,\omega^{\red(u)})\cong (\mu\n(0)/G,\omega^{\red(0)}
+\langle u,\Gamma\rangle)$, where $\omega^{\red(u)},\omega^{\red(0)}$
denote the symplectic forms on the reduced spaces at $u$ and $0$, and
$\Gamma\in \Omega^2(Z/G)\otimes\g$ is the curvature $2$-form for the
principal $G$-bundle $Z\to Z/G$.
\item\label{item;DH3original}
The de Rham cohomology class $[\langle u,\Gamma\rangle]$ varies
linearly with $u$ and does not depend on the choice of connection
$\theta$ or the choices involved in constructing the isomorphism
in~\eqref{item;DH1original}.
\end{enumerate}
\end{theorem}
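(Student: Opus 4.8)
The plan is to establish the three assertions in sequence, deriving the second and third from a normal-form description of a neighborhood of the zero fibre proved in the first. The key geometric observation is that, because $0$ is a regular value, $Z=\mu^{-1}(0)$ is a coisotropic submanifold of $M$ whose characteristic (null) foliation is exactly the foliation by $G$-orbits. Indeed, from the defining property $d\mu^\xi=\iota_{\xi_M}\omega$ one reads off that $T_pZ=\ker(d\mu_p)$ and that its symplectic orthogonal $(T_pZ)^{\omega}$ is spanned by the fundamental vector fields $\{\xi_M\}_{\xi\in\g}$; regularity of $0$ forces these to be linearly independent, so $(T_pZ)^{\omega}\subseteq T_pZ$ has dimension $\dim G=\codim Z$, which is precisely the assertion that $Z$ is coisotropic with characteristic distribution the tangent spaces to the $G$-orbits.

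For part~(i) I would compare $(M,\omega)$ near $Z$ with the model $(Z\times\g^*,\omega|_Z+d\gamma)$. First I would check that $\omega|_Z+d\gamma$ is a $G$-invariant closed $2$-form that is nondegenerate on a neighborhood $Z\times U$ of $Z\times\{0\}$, restricts to $\omega|_Z$ on $Z\times\{0\}$, and makes $\pr_2$ a moment map for the product action $t\cdot(z,u)=(t\cdot z,u)$; this is a direct computation from the definition~\eqref{eqn;littlegamma} of $\gamma$ and the facts that $\theta$ is a $G$-invariant connection form and that $\iota_{\xi_Z}\omega|_Z=0$ along $Z$. The two Hamiltonian $G$-manifolds $\mu^{-1}(U)$ and $Z\times U$ then agree along $Z$: they carry the same symplectic form there, the same $G$-action, and the same (vanishing) moment map. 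An equivariant version of the coisotropic (Gotay--Weinstein) embedding theorem, obtained by the usual Moser isotopy, produces a $G$-equivariant symplectomorphism of neighborhoods of $Z$ fixing $Z$ pointwise; after possibly shrinking $U$ it intertwines the moment maps, since two moment maps for the same form and the same connected abelian group differ by a constant in $\g^*$, which vanishes here because both restrict to $0$ on $Z$.

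Parts~(ii) and~(iii) are then computational consequences. Restricting the model form to the level $Z\times\{u\}$ and using that $\beta\equiv u$ is constant there, only the term $\langle u,d\theta\rangle$ of $d\gamma$ survives; since $G$ is abelian, $d\theta$ is the curvature, which is basic and descends to the curvature $2$-form $\Gamma\in\Omega^2(Z/G)\otimes\g$ of the principal bundle $Z\to Z/G$. As $\mu^{-1}(u)/G\cong Z/G$, descending yields $\omega^{\red(u)}=\omega^{\red(0)}+\langle u,\Gamma\rangle$, which is part~(ii). Linearity in $u$ of $[\langle u,\Gamma\rangle]$ is immediate; independence of the connection follows because two connections differ by a basic $\g$-valued $1$-form $\bar a$, changing $\Gamma$ by the exact form $d\bar a$, and independence of the choices in part~(i) follows from the uniqueness (up to equivariant symplectomorphism) of the normal form, which fixes the reduced form's cohomology class.

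The step I expect to be the main obstacle is the equivariant relative Moser argument underlying part~(i). One must select the primitive of the difference of the two symplectic forms to be $G$-invariant and to vanish to first order along $Z$, so that the associated time-dependent vector field is $G$-equivariant and vanishes on $Z$; this guarantees both that the isotopy integrates for all $t\in[0,1]$ on a (possibly shrunk) neighborhood and that it fixes $Z$. Carrying the moment maps, and not merely the symplectic forms, through the isotopy is the delicate bookkeeping that must be handled with care.
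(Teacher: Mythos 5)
Your proposal is correct and follows essentially the route the paper itself indicates: this theorem is cited from Duistermaat--Heckman and Guillemin--Sternberg without proof, and the paper's own remark immediately preceding it states that the result ``was obtained by Guillemin and Sternberg by applying the coisotropic embedding theorem to the zero fibre of the moment map,'' which is precisely your equivariant Gotay--Weinstein/Moser normal form followed by the computation of $d\gamma$ on the slices $Z\times\{u\}$. The only cosmetic difference is that the paper's elaboration on part~(iii) describes the trivialization $\mu\n(U)\cong Z\times U$ via horizontal lifts of a contraction of $U$ for a $G$-invariant Ehresmann connection rather than via the Moser isotopy, but the substance is the same.
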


We expand on the second part of~\eqref{item;DH3original}.  In
constructing the isomorphism in~\eqref{item;DH1original}, one views
$\mu\colon\mu\n(U)\to U$ as a $G$-invariant locally trivial fibre
bundle.  To construct a trivialization $\mu\n(U)\cong Z\times U$, one
chooses a $G$-invariant connection on this fibre bundle and a smooth
contraction of $U$ to $0$.  The horizontal lifts of this contraction
gives a $G$-equivariant diffeomorphism from $\mu\n (0)\times Z$ to
$\mu\n(U)$.  The isotopy class of this diffeomorphism is independent
of the contraction and of the connection on $\mu\n (U)$.

\begin{theorem}\label{theorem;DH}
Let $\G$ be a stacky torus, and let $\B G_\bu\simeq \G$ be a
presentation by a Lie $2$-group with the property that $G_0$ is a
torus.  Let $(\X,\bomega,\G,\bmu)$ be a Hamiltonian $\G$-stack
presented by $(X_\bu,\omega_\bu,G_\bu,\mu_\bu)$.  Denote by
$\partial\colon H\to G$ the crossed module of $G_\bu$.  Assume the
following:
\begin{enumerate}\renewcommand{\theenumi}{\alph{enumi}}
\renewcommand{\labelenumi}{(\theenumi)}
%
\item\label{hypa}
$X_1$ is Hausdorff and $0\in \Lie(\G)^*$ is a regular value of $\mu$;
\item\label{hypb}
The action of $H$ on $X_0$ is locally free;
\item\label{hypc}
The action of $G_\bu$ on $X_\bu$ is leafwise transitive;
\item\label{hypd}
The moment map $\mu_0\colon X_0\to\ann(\lie{h})$ is proper, and
$X_0=X$ is connected.
\end{enumerate}
Then:
\begin{enumerate}
\item\label{item;DH1}
There is an open neighborhood $U$ of $0$ and an isomorphism of
Hamiltonian $\G$-stacks
\[
(\bmu\n(U),\bomega|_{\mu\n(U)},\G,\bmu)\cong (\bmu\n(0)\times U,
\bomega|_{\bmu\n(0)} + d\gamma|_{Z\times U},\G,\pr_2).
\]
where $d\gamma|_{Z\times U}$ is depends on a choice and is described
in~\eqref{eqn;DHgpd} below.
\item\label{item;DH2}
If the symplectic reduction of $\X$ exists at $0$, then it exists at
all points of $U$.  For all $u\in U$, there is an equivalence of
symplectic stacks of the reduced spaces
\[
(\bmu\n(u)/\G,\bomega^{\red(u)})\cong (\bmu\n(0)/\G,\bomega^{\red(0)}
+\Gamma)
\]
where $\bomega^{\red(u)},\bomega^{\red(0)}$ denote the symplectic
forms on the reduced spaces at $u$ and $0$, respectively, and
$\Gamma\in \Omega^2(\bmu\n(0)/\G)$ is described
before~\eqref{eqn;GammaForm} below.
\item\label{item;DH3}
The form $\Gamma$ varies linearly with $u\in \Lie(\G)^*$, and, after
fixing the presentation $X_\bu$ of $\X$, its cohomology with respect
to the complex $\Omega^\bu (\bmu\n(0)/\G)$ does not depend on the
choices involved.
\end{enumerate}
\end{theorem}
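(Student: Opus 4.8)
The plan is to generalize the Guillemin--Sternberg argument---coisotropic embedding followed by an equivariant Moser isotopy---to the groupoid level, and then to descend to stacks by applying the classifying functor $\B$ together with the reduction theorem. The first step is to replace $X_\bu$ by an atlas on which the genuine torus $G=G_0$ acts freely, so that the classical connection-form constructions become available. By the hypotheses that $H$ acts locally freely and that the action is leafwise transitive, together with Proposition~\ref{proposition;leafwisetrans}, the null foliation of $\omega_0$ is the foliation by $\partial(H)$-orbits, and the distribution spanned by $T\F$ and the $G$-fundamental vector fields---which is involutive on all of $X_0$ by the same computation as in the proof of Proposition~\ref{proposition;invarianttransversal}---has the $G$-orbits as leaves. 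Choosing a complete transversal $S$ to this foliation over $\mu_0\n(U)$ and setting $R_0=G\times S$, $\phi_0(g,x)=g\cdot x$, Lemma~\ref{lemma;transversal} produces a regular form $\phi_\bu\colon R_\bu\to X_\bu$ in which $G$ acts freely on $R_0$, the null foliation consists of the $\partial(H)$-cosets in the $G$-factor, and $(S,\omega_0|_S)$ is symplectic with $\mu_0|_S\colon S\to U$ a submersion whose fibres are the level-wise reduced spaces.

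For part~\eqref{item;DH1} I would run the equivariant coisotropic embedding theorem on $R_0$. The zero fibre is a $G$-invariant coisotropic submanifold, and a choice of $G$-connection $\theta$ for the free action $R_0\to S$ produces, via formula~\eqref{eqn;littlegamma}, the model form $\phi_0^*\omega_0+d\gamma$ on $R_0\times U$. An equivariant Moser argument would then yield a $G$-equivariant diffeomorphism between a neighbourhood of the zero fibre in the regular form of $\mu_0\n(U)$ and the product model $R_0\times U$ carrying $\phi_0^*\omega_0+d\gamma$, intertwining the moment maps. Since $H$ acts through $\partial$ into $G$ and tangent to the null foliation, any $G$-equivariant construction on $R_0$ is automatically compatible with the $H$-action; checking that the isotopy also respects the groupoid multiplication shows that it underlies an isomorphism of Hamiltonian $G_\bu$-groupoids. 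Applying $\B$ and Theorem~\ref{theorem;stack-groupoid} then gives the asserted isomorphism $(\bmu\n(U),\dots)\cong(\bmu\n(0)\times U,\bomega|_{\bmu\n(0)}+d\gamma,\G,\pr_2)$ of Hamiltonian $\G$-stacks.

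Parts~\eqref{item;DH2} and~\eqref{item;DH3} should follow by reducing the product model. Because the model is a product over $U$, the $H$-action on the arrow manifold is the same at every level; hence if the reduction exists at $0$---equivalently, by Theorem~\ref{theorem;reduction}, if $H$ acts freely on $R_1$---then it exists at every $u\in U$, and Lemma~\ref{lemma;reduction} and Proposition~\ref{proposition;principal} present each reduced space as $(G\times^HR_\bu,\omega_\bu^\red)$. Reducing $\phi_0^*\omega_0+d\gamma$ at level $u$ identifies the reduced form with $\bomega^{\red(0)}+\Gamma$, where $\Gamma$ is the reduction of $\langle u,\operatorname{curv}(\theta)\rangle$. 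As the pairing is between $\Lie(\G)^*=\ann(\h)$ and $\g/\h$, only the $\g/\h$-component of the curvature contributes, so $\Gamma$ descends to the reduced groupoid and varies linearly in $u$. Independence of $\Gamma$ from $\theta$ and from the Moser contraction is the usual homotopy argument, now carried out in the basic de Rham complex $\Omega_\bas^\bu$; since this complex is Morita invariant (Proposition~\ref{proposition;vector-form-morita}\eqref{item;form-morita}) and computes $\Omega^\bu(\bmu\n(0)/\G)$, the cohomology class of $\Gamma$ is well defined once the presentation $X_\bu$ is fixed.

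The hard part will be the descent in part~\eqref{item;DH1}: the Moser isotopy is produced on the manifold atlas $R_0$, and one must ensure that it can be chosen compatibly with the entire $2$-group action and with the groupoid structure, so that it is the object part of an equivalence of stacks rather than merely a $G$-equivariant symplectomorphism of atlases. Equivalently, the connection $\theta$ and the contraction of $U$ must be selected so that the resulting forms are basic, hence Morita invariant; this is exactly what makes both the isomorphism and the class $[\Gamma]$ independent of the atlas. It is precisely the possible non-closedness of $\partial(H)$---the quasifold phenomenon that prevents $\bmu\n(0)/\G$ from being a manifold---which rules out a direct manifold argument and forces these compatibilities to be verified at the level of the presenting groupoid.
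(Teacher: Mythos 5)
Your strategy---coisotropic embedding plus an equivariant Moser isotopy on an atlas, then descent---is the right family of ideas, but the step you yourself flag as ``the hard part'' (that the atlas-level isotopy underlies an equivalence of stacks compatible with the whole $2$-group action) is precisely the step you leave open, and it is also the step that hypotheses (b) and (c) are designed to trivialize. The paper closes it with no groupoid-level Moser argument at all: by Proposition~\ref{proposition;leafwisetrans}, hypotheses (a)--(c) force $X_\bu$ to be an action groupoid $N/L\ltimes X_0$ with $0$-symplectic form $(\omega_0,0\oplus\omega_0)$ (Lemma~\ref{lemma;DHbasicfacts}), so any $G$-equivariant isomorphism of presymplectic Hamiltonian $G$-manifolds $(V,\omega_0|_V,G,\mu_0)\cong(Z\times U,\omega_0|_Z+d\gamma,G,\pr_2)$ lifts canonically to an isomorphism of Hamiltonian $G_\bu$-groupoids by acting as the identity on the $N/L$-factor; this is the content of~\eqref{eqn;DHgpd}. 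Your detour through a regular form $R_0=G\times S$ already in part~(i) is what creates the compatibility problem; staying with the action-groupoid presentation dissolves it. (The regular form is needed only for parts~(ii)--(iii), where Lemma~\ref{lemma;exactbasic} shows that $G$-invariant basic forms on $Z_\bu\times U$ descend to $G\times^HR_\bu\cong\bmu\n(0)/\G$; your descent argument for $\Gamma$ and for independence of the choices is essentially the paper's.)

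Two further gaps in part~(i). First, a transversal $S$ to the foliation by $G$-orbits in $\mu_0^{-1}(U)$ has dimension $\dim X_0-\dim\g$, which can be odd, so $\omega_0|_S$ cannot in general be symplectic; the symplectic transversal exists only inside the zero fibre (Proposition~\ref{proposition;invarianttransversal}), and your claim that $\mu_0|_S\colon S\to U$ fibres $S$ by reduced spaces needs this repair. Second, you never actually produce the ambient symplectic manifold on which the normal-form argument runs, nor the completeness needed to integrate the Moser field. The paper does both explicitly: it forms the symplectization $(T^*\F|_V,\Omega,G,\Psi)$ of the presymplectic manifold, uses leafwise transitivity to identify fibres of $\mu_0$ with fibres of $\Psi$, and proves (Lemma~\ref{lemma;propermoment}, which requires a careful $G$-invariant metric splitting $\g=\kk\oplus\nnn$) that $\Psi$ is proper on $T^*\F|_V$, so that the classical Theorem~\ref{theorem;DHoriginal} applies as a black box over all of $U'$ and then restricts to $U=U'\cap\ann(\nnn)$. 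Your ``equivariant Moser argument'' is the same mechanism in disguise, but without the properness statement it only gives a normal form near a single point of $Z$ rather than over a uniform neighbourhood $U$ of $0$ in $\Lie(\G)^*$; hypothesis~(d) enters exactly here and must be invoked.
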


We will prove the theorem after some initial results.  For simplicity
we will assume that $H=N=N(\F)$, the null subgroup of $\F$; the proof
carries to other $H$ with minor changes.  Then $\h=\nnn=\nnn(\F)$, the
null ideal, and $\mu_0(X_0)\subseteq\ann(\nnn)$.

\begin{lemma}\label{lemma;DHbasicfacts} 
Assume we are in the context of Theorem~\ref{theorem;DH}.
\begin{enumerate}
\item\label{item;form}
There is an isomorphism of Lie groupoids $X_\bu\cong (N/L\ltimes
X)\rightrightarrows X$, where $L$ is a discrete subgroup of $N$, and
where the action of $G_\bu$ is given as in
Proposition~\ref{proposition;leafwisetrans}.  Under this isomorphism,
the $0$-symplectic form $\omega=(\omega_0,\omega_1)$ can be written
$(\omega_0,0\oplus\omega_0)$.
\item\label{item;regular-open}
There is an open set $U$ containing $0$ which consists of regular
values of $\mu_0$.
\item\label{item;locally-free}
The action of $G$ is then locally free on $V:=\mu_0\n(U)$.
\end{enumerate}
\end{lemma}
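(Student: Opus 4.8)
The three assertions are essentially independent, so I would dispatch them in turn, using part~(i) to put $X_\bu$ into the concrete form of an action groupoid on a presymplectic manifold, which then reduces parts~(ii) and~(iii) to familiar statements about a torus action.

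For part~(i) the plan is to quote Proposition~\ref{proposition;leafwisetrans} verbatim. Its hypotheses are exactly our standing assumptions: $G_\bu$ is a $2$-torus because $\G$ is a stacky torus presented by $G_\bu$ with $G_0$ a torus; $X_0=X$ is connected and $X_1$ is Hausdorff by~\eqref{hypd} and~\eqref{hypa}; the action is leafwise transitive by~\eqref{hypc}; and the composite $H=N\xrightarrow{\partial}G\to\Diff(X)$ is locally free by~\eqref{hypb}. The only ingredient not literally listed is source-connectedness of $G_\bu$ and $X_\bu$: since $H=N$ is a connected immersed subgroup, $\ker(s)=H$ is connected, so $G_\bu$ is source-connected, and source-connectedness of $X_\bu$ I would either record among the standing hypotheses of this subsection or arrange by choosing a source-connected presentation. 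Proposition~\ref{proposition;leafwisetrans} then produces the isomorphism $X_\bu\cong(N/L\ltimes X)$ with $L\subseteq\ker(N\to\Diff(X))$ discrete and the stated $G_\bu$-action. For the shape of the $0$-symplectic form I would use that $\omega_\bu$ is basic, so $\omega_1=s^*\omega_0$; under the identification $X_1=N/L\times X$ the source map is the projection to $X$, and with respect to the product splitting $TX_1=T(N/L)\oplus TX$ the pullback $s^*\omega_0=\pr_X^*\omega_0$ is $0$ on the first summand and $\omega_0$ on the second, which is the claimed $\omega_1=0\oplus\omega_0$.

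Part~(ii) is the standard properness argument. The critical set of $\mu_0$ is closed in the Hausdorff manifold $X=X_0$, and a proper map into a locally compact Hausdorff space is closed, so by~\eqref{hypd} the set of critical values is closed in $\ann(\h)$; its complement, the set of regular values, is open. By~\eqref{hypa} the point $0$ is a regular value of $\mu_0$, so it lies in this open set, and I would take $U$ to be a neighbourhood of $0$ inside the set of regular values. For part~(iii) I would run the usual reduction argument linking regularity of the moment map to local freeness. Fix $x\in V=\mu_0^{-1}(U)$ and suppose $\xi\in\g$ satisfies $\xi_{X_0}(x)=0$; I must show $\xi=0$. The transpose of $d_x\mu_0\colon T_xX\to(\g/\h)^*$ sends the class $\bar\xi\in\g/\h$ to $(d\mu_0^{\bar\xi})_x=(\iota_{\xi_{X_0}}\omega_0)_x=\omega_0(\xi_{X_0}(x),\cdot)$ by the moment map condition, this expression being well defined since $\omega_0$ is horizontal. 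As $\xi_{X_0}(x)=0\in\ker(\omega_0)_x=T_x\F$, the transpose vanishes on $\bar\xi$; but $x$ is a regular point, so $d_x\mu_0$ is surjective and its transpose injective, forcing $\bar\xi=0$, i.e.\ $\xi\in\h=\nnn$. Then $\xi_{X_0}(x)=0$ with $\xi\in\h$, and local freeness of the $H$-action from~\eqref{hypb} gives $\xi=0$. Hence the infinitesimal $\g$-action is injective at every point of $V$, which is precisely local freeness of $G$ on $V$.

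The one step requiring genuine care is part~(i): matching the source-connectedness requirement of Proposition~\ref{proposition;leafwisetrans} to the present hypotheses, and then checking that the \emph{basic} form $\omega_\bu$ really does become $(\omega_0,0\oplus\omega_0)$ under the identifications, rather than merely transforming compatibly. Once~(i) exhibits $X_\bu$ as an action groupoid and realizes $\omega_0$ as a bona fide presymplectic form on $X$ whose null directions are the $N$-orbits, parts~(ii) and~(iii) are routine.
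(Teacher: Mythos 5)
Your proposal is correct and follows essentially the same route as the paper, which likewise derives part~(i) from Proposition~\ref{proposition;leafwisetrans} together with $\omega_1=s^*\omega_0$, part~(ii) from properness of $\mu_0$, and part~(iii) from regularity on $V$ combined with local freeness of the $N$-action; your version simply spells out the details (the closedness of the critical value set, and the transpose-of-$d\mu_0$ argument) that the paper leaves implicit. Your remark about source-connectedness of $X_\bu$ is a fair observation about a hypothesis the paper glosses over, and your proposed fixes are reasonable.
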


\begin{proof}
Item~\eqref{item;form} follows immediately from
Proposition~\ref{proposition;leafwisetrans} and the condition
$\omega_1=s^*\omega_0$.  Item~\eqref{item;regular-open} holds because
$\mu_0$ is proper.  Item~\eqref{item;locally-free} follows
from~\eqref{item;regular-open} because the action of $N$ is locally
free on $X_0$.
\end{proof}

Recall the notion of the \emph{symplectization} of a presymplectic
manifold; see for instance~\cite{lin-sjamaar;presymplectic}.  In the
context of Theorem~\ref{theorem;DH}, for the presymplectic manifold
$(X,\omega_0)$, let $T^*\F$ be the vector bundle dual to $T\F$, and
let $\pr\colon T^*\F\to X$ be the bundle projection.  By choosing a
$G$-invariant metric on $X$, one can embed $j\colon
T^*\F\hookrightarrow T^*X$.  Let $\tilde{\omega}$ be the standard
symplectic form on the cotangent bundle $T^*X$, and let
$\Omega=\pr^*\omega_0+j^*\tilde{\omega}$ be the $2$-form on $T^*\F$.
Then $\Omega$ is symplectic near the zero section $X\to T^*\F$, which
is a coisotropic embedding of $X$.  There is a moment map
$\Psi=\pr^*\mu_0 + j^*\tilde{\mu}$ for the $G$-action on $T^*\F$,
where $\tilde{\mu}\colon T^*X\to\g^*$ is the standard moment map for
the $G$-action on $T^*X$ given by
\[
\tilde{\mu}^\xi(y) =\langle y,\xi_X|_x\rangle,\qquad\text{for } y\in
T^*_x X.
\]
The germ at $X$ of $T^*\F$ is called a symplectization of $X$.
In~\cite{lin-sjamaar;presymplectic} it is shown that in the leafwise
transitive case, fibres of the moment map $\mu_0\colon X\to
\ann(\h)\subseteq\g^*$ are fibres of $\Psi\colon T^*\F\to\g^*$.

\begin{lemma} \label{lemma;propermoment}
Assume we are in the context of Theorem~\ref{theorem;DH}. Let $(T^*\F,
\Omega,G,\Psi)$ be the symplectization of $(X,\omega_0,G_0,\mu_0)$
just described, and let $U$ and $V$ be sets as in
Lemma~\ref{lemma;DHbasicfacts}.  The restriction of the moment map
$\Psi\colon T^*\F|_{V}\to\g^*$ to $V=\mu_0\n(U)$ is proper.
\end{lemma}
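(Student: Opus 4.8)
The plan is to control the base point and the cotangent (fibre) coordinate separately, reducing everything to the properness of $\mu_0$ together with the non-degeneracy supplied by hypotheses~\eqref{hypb} and~\eqref{hypc}. First I would shrink $U$: since $\mu_0$ is proper, its set of regular values is open, so we may assume that $\bar U$ is compact and consists of regular values. Properness of $\mu_0$ then makes $\bar V\subseteq\mu_0\n(\bar U)$ relatively compact, and by Lemma~\ref{lemma;DHbasicfacts}\eqref{item;locally-free} the $G$-action, hence the $N$-action, is locally free on a neighbourhood of $\bar V$.

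The heart of the argument is the fibre bound. Fix $x\in V$. Because the action is leafwise transitive~\eqref{hypc} and locally free~\eqref{hypb}, the infinitesimal action $\nnn\to T_x\F$, $\xi\mapsto(\xi_{X_0})_x$, is a linear isomorphism; this is exactly the trivialization of $T\F$ underlying Proposition~\ref{proposition;leafwisetrans}. Dualizing gives an isomorphism $T_x^*\F\cong\nnn^*$. Now I would read off the $\nnn$-component of $\Psi=\pr^*\mu_0+j^*\tilde\mu$: for $\xi\in\nnn$ the fundamental vector field $\xi_{X_0}$ is tangent to $\F$, so $j^*\tilde\mu^\xi$ evaluated on $(x,\eta)$ equals $\langle\eta,(\xi_{X_0})_x\rangle$, while $\mu_0(x)\in\ann(\nnn)$ contributes nothing. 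Hence $\Psi(x,\eta)$ restricted to $\nnn$ is precisely the image of $\eta$ under the isomorphism $T_x^*\F\cong\nnn^*$. Since this isomorphism and its inverse depend continuously on $x$ and $\bar V$ is compact, $\|\eta\|$ is bounded in terms of $\|\Psi(x,\eta)|_\nnn\|$ uniformly over $\bar V$. Consequently $\Psi\n(K)$ is contained in a disk bundle of fixed radius over $\bar V$ for every compact $K\subseteq\g^*$.

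For the base direction I would invoke the fact, recalled from~\cite{lin-sjamaar;presymplectic} just above, that in the leafwise transitive case the fibres of $\mu_0$ are fibres of $\Psi$; concretely this says that $\mu_0\circ\pr$ factors continuously through $\Psi$, so that a bound on $\Psi$ forces the value of $\mu_0$ at the base point into a fixed compact set. Properness of $\mu_0$~\eqref{hypd} then confines the base points of $\Psi\n(K)$ to the compact set $\mu_0\n(\bar U)$ and keeps them inside $V$. Combining this with the fibre bound, $\Psi\n(K)\cap T^*\F|_V$ is a closed subset of a compact disk bundle over $\bar V$ that stays over $V$, hence is compact.

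The step I expect to be the main obstacle is making the base estimate genuinely uniform: one must ensure the fibrewise isomorphism $T_x^*\F\cong\nnn^*$ does not degenerate as $x$ approaches $\partial V$, which is why I first arrange $\bar U$ to consist of regular values, and one must check that the factorization of $\mu_0\circ\pr$ through $\Psi$ is compatible with the open set $V$, so that limit points of $\Psi\n(K)$ cannot escape to $T^*\F|_{\partial V}$. Everything else is a routine combination of compactness with the explicit description of $\Psi$.
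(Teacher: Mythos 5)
Your overall strategy coincides with the paper's: split $\g^*=\kk^*\oplus\nnn^*$, observe that the $\nnn^*$-component of $\Psi$ restricts on each fibre $T_x^*\F$ to the isomorphism dual to the infinitesimal $\nnn$-action (this part of your argument is correct, and indeed metric-independent), and control the base direction with the properness of $\mu_0$. The one step that does not hold up as written is the assertion that ``$\mu_0\circ\pr$ factors continuously through $\Psi$.'' The fact you cite --- that fibres of $\mu_0$ are fibres of $\Psi$ --- only concerns fibres over points of $\ann(\nnn)$, where your own computation forces the fibre coordinate to vanish; it says nothing about the fibres of $\Psi$ over points with nonzero $\nnn^*$-component. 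And for a general $G$-invariant metric the factorization is simply false: for $\xi\in\kk$ one has $\Psi^\xi(x,\eta)=\mu_0^\xi(x)+\langle j(\eta),\xi_X|_x\rangle$, and the second term need not vanish, so $\mu_0\circ\pr$ is not constant on the fibres of $\Psi$. This is exactly where the paper's proof makes a choice you did not make: it builds the $G$-invariant metric so that the $\kk$- and $\nnn$-directions are orthogonal, which kills the term $\langle j(\eta),\xi_X\rangle$ for $\xi\in\kk$ and yields $\pr^*\mu_0(T^*\F|_V)=U\oplus 0$ and $j^*\tilde\mu(T^*\F|_V)=0\oplus\nnn^*$ on the nose; the preimage of a product $D\times E$ of compacta is then literally a bundle with compact fibre over $\mu_0\n(D)$, and properness of $\mu_0$ finishes the argument.

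The gap is repairable inside your own setup without touching the metric: once your fibre estimate bounds $\eta$ uniformly over the compact set $\mu_0\n(\bar U)$ in terms of $\Psi(x,\eta)|_\nnn$, the correction $\langle j(\eta),\xi_X|_x\rangle$, $\xi\in\kk$, is bounded there as well by continuity of $j$ and of the fundamental vector fields, so $\mu_0(x)=\Psi(x,\eta)|_\kk-j^*\tilde\mu(x,\eta)|_\kk$ still lies in a fixed compact set and properness of $\mu_0$ applies. Either repair (this one, or the paper's orthogonality normalization) must be stated explicitly; as it stands the base step rests on a false intermediate claim. Your closing worry about sequences escaping to $T^*\F|_{\partial V}$ is legitimate, but it is not resolved by the paper's proof either; in the application one only needs preimages of compact subsets of $U\oplus\nnn^*$, for which both arguments suffice.
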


\begin{proof}
Choose a splitting $\g=\kk\oplus\nnn$.  Because the action of $G$ is
locally free on $V$, the tangent bundle can be split $TV\cong
\kk\oplus\nnn\oplus(TU/\g)$, and because the action is leafwise
transitive, $T\F|_{V}$ is isomorphic to the trivial bundle with fibre
$\nnn$.  Let us choose the $G$-invariant metric on $X$ so that $\kk$
is orthogonal to $\nnn$, as follows. Fix a metric on $TV$, by (1)
choosing a basis $\xi_i$ of $\kk$ and a basis $\nu_j$ of $\nnn$ and
declaring the corresponding sections of $TV$ to be orthonormal, and by
(2) extending to get a metric on $TV$ by choosing an arbitrary smooth
metric on $TV/\g$.  Averaging this metric over $G$ gives a
$G$-invariant metric on $V$ where $\kk$ is orthogonal to $\nnn$, as
desired.

Now consider the moment map $\Psi$.  Under the splitting
$\g^*=\kk^*\oplus\nnn^*$, the description of $\mu_0$ and the choice of
splitting gives
\[
\pr^*\mu_0(T\F^*|_{V})=U\oplus 0,\qquad j^*\tilde{\mu}(T^*\F|_{V}) =
0\oplus\nnn^*.
\]
Because the action of $N$ is locally free, the restriction of
$j^*\tilde{\mu}$ to a fibre $T^*\F|_x$ is a linear isomorphism to
$\nnn^*$.

To show that the restriction of $\Psi$ is proper, it is enough to show
that the preimage of $D\times E\subseteq\kk^*\oplus\nnn^*$ in
$T^*\F|_U$ is compact, where $D\subseteq\kk^*$ and $E\subseteq\nnn^*$
are compact.  Indeed, from the description of $\Psi$ above, the
preimage of $D\times E$ is homeomorphic to a fibre bundle over
$\mu_0\n(D)$ with fibre $E$.  Since $\mu_0$ is proper this space is
compact.
\end{proof}

\begin{proof}[Proof of Theorem~\ref{theorem;DH}]
Let $U$ and $V$ be as in Lemma~\ref{lemma;DHbasicfacts}. By
Lemma~\ref{lemma;propermoment}, we can apply
Theorem~\ref{theorem;DHoriginal} to the symplectization $T^*\F|_V$ of
$V\subseteq X$.  Let $Z=\mu_0\n(0)=\Psi\n(0)$, and let
\begin{equation}\label{eqn;choosealpha}
\theta\in\Omega^1(Z)\otimes\g
\end{equation}
be a connection form for the action of $G$ on $Z$, and let $\gamma\in
\Omega^1(Z\times\g^*)$ be as in Theorem~\ref{theorem;DHoriginal}.
After possibly shrinking $U$, there is an open neighborhood $U'$ of
$0$ in $\g^*$ so that $U'\cap\ann(\nnn)=U$ and an isomorphism of
Hamiltonian $G$-manifolds
\[
(\Psi\n(U'),\Omega|_{\Psi\n(U')}, G,\Psi|_{\Psi\n(U')})\cong (Z\times
U',\omega_0|_Z+d\gamma, G,\pr_2).
\]
We require that we choose the contraction of $U'$ described after
Theorem~\ref{theorem;DHoriginal} so that it restricts to a contraction
of $U=U'\cap\ann(\nnn)$.  Restricting the isomorphism to
$\Psi\n(U)=\mu_0\n(U)=V$ gives an isomorphism of presymplectic
Hamiltonian $G$-manifolds
\begin{equation}\label{eqn;DHpresym}
F\colon (V,\omega_0|_V,G,\mu_0)\cong (Z\times
U,\omega_0|_Z+(d\gamma)|_{Z\times U}, G,\pr_2).
\end{equation}
By Lemma~\ref{lemma;DHbasicfacts}\eqref{item;form} we can lift this to
an isomorphism of Hamiltonian $G_\bu$-groupoids
\begin{equation}\label{eqn;DHgpd}
F_\bu\colon (X_\bu|_V,\omega_\bu,G_\bu,\mu_\bu)\cong(X_\bu|_Z\times
U,\omega_\bu|_Z+(d\gamma)|_{Z\times U},G_\bu,\pr_2).
\end{equation}
Note that, by~\eqref{eqn;DHpresym}, the form $(d\gamma)|_{Z\times
  U}\in\Omega^2(Z\times U)$ determines an element of
$\Omega^2_\bas(X_\bu|_Z\times U)\cong\Omega^2(\B X_\bu|_Z\times U)$
which we have also denoted by $(d\gamma)|_{Z\times U}$
in~\eqref{eqn;DHgpd}.  Applying
Theorem~\ref{theorem;stack-groupoid}\eqref{item;groupoid-stack}
proves~\eqref{item;DH1}.

The first statement of~\eqref{item;DH2} follows immediately.  For the
second, since $\omega|_{Z}$ and $\omega|_Z+(d\gamma)|_{Z\times\{u\}}$
descend to symplectic forms on the stack $\bmu\n(0)/\G$, it follows
that $(d\gamma)|_{Z\times\{u\}}$ descends to some
\begin{equation}\label{eqn;GammaForm}
\Gamma\in\Omega^2(\bmu\n(0)/\G).
\end{equation}
This proves~\eqref{item;DH2}.  Finally, that $\Gamma$ varies linearly
with $u\in\Lie(\G)^*$ is obvious from the definition.  It remains to
check that its cohomology class does not depend on the choice of
connection $1$-form $\theta\in \Omega^1(Z)\otimes\g$ for the action of
$G$ on $Z$ or on the choice of isomorphism $X_\bu|_V\cong
X_\bu|_Z\times U$.

Let $\theta'\in\Omega^1(Z)\otimes\g$ be another connection form.  Then
$\theta-\theta'\in\Omega^1(Z)\otimes\g$ is a $G$-invariant horizontal
$1$-form.  If $\gamma'$ is related to $\theta'$ as
in~\eqref{eqn;littlegamma} then $\gamma-\gamma'\in
\Omega^1(Z\times\g^*)$ is $G$-invariant.  By the description of
$X_\bu$ in Lemma~\ref{lemma;DHbasicfacts} we consider
$(\gamma-\gamma')|_{Z\times U}$ as a $G$-invariant element of
$\Omega^1_\bas(X_\bu|_Z\times U)$.  So from
Lemma~\ref{lemma;exactbasic} below, the restriction of
$\gamma-\gamma'$ to $X_\bu|_Z\times\{u\}$ descends to an element of
$\Omega^1(\bmu\n(0)/\G\times\{u\})$.  The form $\Gamma'\in
\Omega^2(\bmu\n(0)/\G\times\{u\})$ determined by $\gamma'$ then
differs from $\Gamma$ by an exact form.

Let us now assume we choose a different isomorphism $F'\colon V\cong
Z\times U$ from~\eqref{eqn;DHpresym}.  Then $F'\circ F\n\colon Z\times
U\to Z\times U$ is $G$-equivariant and isotopic the identity.  So
$(F'\circ F\n)^* (\omega_0|_Z+(d\gamma)|_{Z\times U})
-(\omega_0|_Z+(d\gamma)|_{Z\times U})=d\eta$, where $\eta\in
\Omega^1(Z\times U)$ is $G$-invariant.
We view $\eta$ as a $G$-invariant element of
$\Omega_\bas^1(X_\bu|_Z\times U)$ as before and apply
Lemma~\ref{lemma;exactbasic}.  This proves~\eqref{item;DH3}.
\end{proof}

\begin{lemma}\label{lemma;exactbasic} 
Assume we are in the context of Theorem~\ref{theorem;DH}, and let $U$
and $V$ be as in Lemma~\ref{lemma;DHbasicfacts}.  Let
$(R_\bu,\phi_\bu)$ be a regular form of the zero fibre
$Z_\bu=\mu_\bu^{-1}(0)$ as in
Proposition~\ref{proposition;invarianttransversal} and let
$G\times^HR_\bu$ be the $0$-symplectic groupoid of
Lemma~\ref{lemma;reduction}.  Identify $X_\bu|_V\cong Z_\bu\times U$
as in~\eqref{eqn;DHgpd}.  If $\eta_\bu\in\Omega_\bas^k(Z_\bu\times U)$
is $G$-invariant, then $\phi_\bu^*\eta_\bu$ descends to an element of
$\Omega_\bas^k(G\times^HR_\bu)\cong\Omega^k(\bmu\n(0)/\G)$.
\end{lemma}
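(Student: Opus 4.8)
The plan is to use the two Lie groupoid morphisms out of the regular form that are already available: the $G_\bu$-equivariant Morita morphism $\phi_\bu\colon R_\bu\to Z_\bu$ supplied with the regular form, and the morphism $\psi_\bu\colon R_\bu\to G\times^HR_\bu$ of Proposition~\ref{proposition;principal}, which is a principal $G_\bu$-bundle and is the identity on objects, $\psi_0=\id_{R_0}$. Both groupoids $R_\bu$ and $G\times^HR_\bu$ have the same object manifold $R_0$, and the extra arrows of $G\times^HR_\bu$ are the classes $[g,f]$ with $s[g,f]=s(f)$ and $t[g,f]=g\cdot t(f)$. By Remark~\ref{remark;formsinject} the pullback $\psi_\bu^*\colon\Omega_\bas^k(G\times^HR_\bu)\to\Omega_\bas^k(R_\bu)$ is injective, and since $\psi_0=\id$ its image consists exactly of those forms on $R_0$ that satisfy the basic condition for the \emph{larger} groupoid $G\times^HR_\bu$. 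Restricting $\eta_\bu$ to a slice $Z_\bu\times\{u\}$ and pulling back, the statement reduces to the single claim that $\phi_\bu^*\eta_\bu$, which is automatically basic for $R_\bu$, is in fact basic for $G\times^HR_\bu$; the descended form is then its unique $\psi_\bu^*$-preimage, and $\Omega_\bas^k(G\times^HR_\bu)\cong\Omega^k(\bmu\n(0)/\G)$ holds by Proposition~\ref{proposition;basicformsarethesame} because $G\times^HR_\bu$ presents the reduced stack (Lemma~\ref{lemma;reduction}).

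To verify the basic condition $s^*\zeta_0=t^*\zeta_0$ for $\zeta_0=\phi_0^*\eta_0$ on all of $G\times^HR_1$ (writing $\eta_0$ for the first component of $\eta_\bu$), I would use that every arrow factors as $[g,f]=[g,u(t(f))]\circ[1,f]$, a $G$-action arrow composed with the image $\psi_1(f)=[1,f]$ of an $R_\bu$-arrow. It thus suffices to check the condition on the two generating families. On the arrows $[1,f]$ the condition is exactly $R_\bu$-basicness, which holds because $\phi_\bu$ is a groupoid morphism and $\eta_\bu$ is $Z_\bu$-basic (indeed $\phi_\bu^*$ is the Morita isomorphism of Proposition~\ref{proposition;vector-form-morita}\eqref{item;form-morita}). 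On the $G$-action arrows $[g,u(x)]\colon x\to g\cdot x$ the basic condition, pulled back to $G\times R_0$, reads $\pr_{R_0}^*\zeta_0=a^*\zeta_0$, where $a(g,x)=g\cdot x$; this single identity splits into $G$-invariance of $\zeta_0$ and horizontality of $\zeta_0$ along the $G$-fundamental vector fields.

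The $G$-invariance is formal: $\phi_\bu$ is $G_\bu$-equivariant, so $\phi_0\colon R_0\to Z_0$ is $G$-equivariant, and $\eta_\bu$ is $G$-invariant, whence $\phi_0^*\eta_0$ is $G$-invariant. The horizontality is where the real work lies, and I expect it to be the main obstacle. For $\xi$ in the null ideal $\nnn=\h$ it is immediate, since local leafwise transitivity of the regular action makes the $\nnn$-fundamental vector fields span the tangent bundle of the $R_\bu$-foliation, along which $\phi_\bu^*\eta_\bu$ is already horizontal by $R_\bu$-basicness. For $\xi$ in a complement $\kk$ (with $\g=\kk\oplus\nnn$) equivariance gives $\iota_{\xi_{R_0}}\phi_0^*\eta_0=\phi_0^*\iota_{\xi_{Z_0}}\eta_0$, and the vanishing of $\iota_{\xi_{Z_0}}\eta_0$ is \emph{not} a consequence of $Z_\bu$-basicness alone, because $\xi_{Z_0}$ is transverse to the $Z_\bu$-foliation. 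This is precisely the point at which one must use that the forms the lemma is applied to are horizontal in these transverse directions: the form $\gamma-\gamma'$ is horizontal because it is built from the difference of connection forms $\theta-\theta'$, which is horizontal by construction, and the primitive $\eta$ can be taken horizontal as well. Once $\iota_{\xi_{Z_0}}\eta_0=0$ for $\xi\in\kk$ is established, $\phi_0^*\eta_0$ is horizontal and invariant for the full $G$-orbit foliation, hence $G\times^HR_\bu$-basic, and the descent follows from the injectivity of $\psi_\bu^*$.
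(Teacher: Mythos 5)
Your proof follows the same route as the paper's: both verify that $\phi_\bu^*\eta_\bu$ satisfies the basic condition for the enlarged groupoid $G\times^HR_\bu$ by checking it separately on the arrows coming from $R_\bu$, where it reduces to $R_\bu$-basicness of $\phi_\bu^*\eta_\bu$, and on the $G$-translation arrows, where it reduces to the identity $\pr_2^*\zeta_0=a_0^*\zeta_0$ on $G\times R_0$. The difference is one of rigor at the second step. The paper asserts that this identity ``follows immediately from the fact that $\eta_0$ is $G$-invariant and that $\phi$ is $G_\bu$-equivariant,'' whereas you correctly split it into $G$-invariance plus horizontality along the $\g$-fundamental vector fields, observe that horizontality in the $\h=\nnn$ directions comes for free from leafwise transitivity together with $R_\bu$-basicness, and point out that horizontality in a complement $\kk$ does \emph{not} follow from the stated hypotheses. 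That last observation is accurate: a $Z_\bu\times U$-basic, $G$-invariant form can satisfy $\iota_{\xi_{Z_0}}\eta_0\neq0$ for $\xi\in\kk$ (a $G$-invariant $1$-form annihilating $\nnn$ but not $\kk$ on a dense-leaf example does exactly this), and such a form cannot descend to $\bmu\n(0)/\G$. So the lemma is really being applied to forms that are in addition $\g$-horizontal --- as you note, $\gamma-\gamma'$ restricted to a slice is built from the horizontal form $\theta-\theta'$, and the primitive $\eta$ must likewise be chosen horizontal --- and your proof makes explicit a hypothesis that the paper's one-line justification elides.
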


\begin{proof}
Consider the form
$\phi_\bu^*\eta_\bu=(\phi_0^*\eta_0,\phi_1^*\eta_1)\in
\Omega_\bas^k(R_\bu)$.  We will show that
$(\phi_0^*\eta_0,0\oplus\phi_1^*\eta_1)\in
\Omega_\bas^k(G\times^HR_1\rightrightarrows R_0)$.  Consider the maps
\[
\pr_2\circ({\id}\times s)\colon G\times R_1\to R_0,\qquad
a_0\circ({\id}\times t)\colon G\times R_1\to R_0,
\]
which under the projection $G\times R_1\to G\times^{H} R_1$ descend to
the source and target maps of the groupoid $G\times^{H}R_1$.  It
suffices to show that
\[
(\pr_2\circ({\id}\times s))^*(\phi_0^*\eta_0)=0\oplus\phi_1^*\eta_1=
(a\circ({\id}\times t))^*(\phi_0^*\eta_0).
\]
Since $\eta$ is basic, the first equality is obvious, and the second
follows immediately from the fact that $\eta_0$ is $G$-invariant and
that $\phi$ is $G_\bu$-equivariant.
\end{proof}

\begin{remark}
The cohomology class $[\Gamma]$ of
Theorem~\ref{theorem;DH}\eqref{item;DH3} is independent of the choice
of presentation of $\X$.  We omit the verification of this fact.
\end{remark}

\begin{example}\label{example10}
Consider the previous theorem in the context of
Examples~\ref{example;quasi} and~\ref{example;quasitoric}.  We have
$X_0=(\iota^*\circ\mu)\n(0)$, $X_1=\tilde{N}\ltimes X_0$, and
$G_\bu=\tilde{N}\ltimes X_0$, with moment map $\mu_\bu\colon
X_\bu\to\ann(\lie{n})$.  Let $U\subseteq\ann(\lie{n})$ be a small
neighborhood of $0$.  Note that the action of $G_\bu$ on $X_\bu$
satisfies the assumptions of Theorem~\ref{theorem;DH}.  The reduced
spaces over points of $U$ are then equivalent as smooth stacks to the
reduced space $\B\mu\n(0)/\B G_\bu$, which here is just equivalent to
a point.
\end{example}

\appendix

\section{Groups and actions in
  \texorpdfstring{$2$-categories}{2-categories}}
\label{appendix;groupobjects}

This is a review of group objects in $2$-categories and their actions.
The notion of a $2$-group internal to a $2$-category was developed by
Baez and Lauda~\cite{baez-lauda;2-groups}.  We modify their
terminology in two ways: our ``weak $2$-groups'' (resp.\ ``weak
homomorphisms'') are called ``coherent $2$-groups''
(resp.\ ``homomorphisms'') in~\cite{baez-lauda;2-groups}.

Throughout this appendix we denote by $\ca{C}$ a $2$-category with
finite products and with terminal object~$\star$.

\begin{definition}
\label{definition;internalgp}
A \emph{weak $2$-group in $\ca{C}$} consists of
\begin{enumerate}
\item
an object $G$ of $\ca{C}$;
\item
three $1$-morphisms: \emph{multiplication} $m\colon G\times G \to G$,
the \emph{group unit} $1\colon\star\to G$, and the \emph{inverse}
$(\cdot)\n\colon G\to G$;
\item
five $2$-isomorphisms:
\begin{enumerate}
\item
the \emph{associator} $\alpha\colon m\circ(m\times\id_G)\Rightarrow
m\circ(\id_G\times m)$,
\item
the \emph{left unit law} $\lambda\colon
m\circ(1\times\id_G)\Rightarrow\id_G$,
\item
the \emph{right unit law} $\rho\colon
m\circ(\id_G\times1)\Rightarrow\id_G$,
\item
the \emph{unit adjunction law} $d\colon1\circ\tau_G\Rightarrow
m\circ(\id_G\times(\cdot)\n)\circ \Delta_G$,
\item
the \emph{counit adjunction law} $e\colon
m\circ((\cdot)\n\times\id_G)\circ\Delta_G\Rightarrow1\circ\tau_G$.
\end{enumerate}
Here $\tau_G$ is the unique $1$-morphism $G\to\star$ and
$\Delta_G\colon G\to G\times G$ is the diagonal.
\end{enumerate}
The $2$-morphisms are subject to coherence conditions, which are that
the diagrams~\eqref{equation;pentagon}--\eqref{equation;secondzigzag}
below commute.  We write $\id=\id_G$ and $T\colon G \to G\times G
\times G$ for the three-way diagonal.
\begin{equation}\label{equation;pentagon}
\begin{tikzcd}[/tikz/column 1/.append style={anchor=base
      east},/tikz/column 3/.append style={anchor=base west}]
&m\circ(m\times m)
\ar[dr,Rightarrow,end anchor=north
  west,"\alpha\circ(\id\times\id\times m)"]&\\
m\circ(m\times\id)\circ(m\times\id\times\id) 
\ar[ur,Rightarrow,start anchor=north
  east,"\alpha\circ(m\times\id\times\id)"]
\ar[d,Rightarrow,start anchor=south east,end
  anchor=north east,shift right=8,"m\circ(\alpha\times\id)"']&&
m\circ(\id\times m)\circ(\id\times\id\times m)\\
m\circ(m\times\id)\circ(\id\times m\times\id)
\ar[rr,Rightarrow,"\alpha\circ(\id\times m\times\id)"]&&
m\circ(\id\times m)\circ(\id\times m\times\id)
\ar[u,Rightarrow,start anchor=north west,end anchor=south west,shift
  right=8,"m\circ(\id\times\alpha)"']
\end{tikzcd}
\end{equation}
\begin{equation}\label{equation;triangle}
\begin{tikzcd}[row sep=large,column sep=scriptsize]
m\circ(m\times\id)\circ(\id\times1\times\id)
\ar[rr,Rightarrow,"\alpha\circ(\id\times1\times\id)"]
\ar[dr,Rightarrow,start anchor=-10,"m\circ(\rho\times\id)"']&&
m\circ(\id\times m)\circ(\id\times1\times\id)
\ar[dl,Rightarrow,start anchor=-170,"m\circ(\id\times\lambda)"]\\
&m&
\end{tikzcd}
\end{equation}
\begin{equation}\label{equation;firstzigzag}
\begin{tikzcd}[cramped,row sep=large,/tikz/column 1/.append
    style={anchor=base east},/tikz/column 3/.append style={anchor=base
      west}]
m\circ(m\times\id)\circ(\id\times(\cdot)\n\times\id)\circ T
\ar[rr,Rightarrow,"\alpha\circ(\id\times(\cdot)\n\times\id)\circ T"]&&
m\circ(\id\times m)\circ(\id\times(\cdot)\n\times\id)\circ T\\
m\circ(1\times\id)
\ar[u,Rightarrow,"m\circ(d\times\id)",near end]
\ar[r,Rightarrow,"\lambda"]&\id
\ar[r,Rightarrow,"\rho\n"]&
m\circ(\id\times1)
\ar[u,Leftarrow,"m\circ(\id\times e)"',near end]&
\end{tikzcd}
\end{equation}
\begin{equation}\label{equation;secondzigzag}
\begin{tikzcd}[row sep=large,/tikz/column 1/.append style={anchor=base
      east},/tikz/column 3/.append style={anchor=base west}]
\text{\small$m\circ(\id\times m)\circ((\cdot)\n\times\id\times
  (\cdot)\n)\circ T$}
\ar[rr,Rightarrow,"\alpha\n\circ((\cdot)\n\times\id\times(\cdot)\n)\circ
  T"]&&
\text{\small$m\circ(m\times\id)\circ((\cdot)\n\times\id\times(\cdot)\n)\circ
  T$}
\ar[d,Rightarrow,"m\circ(e\times(\cdot)\n)",near start]\\
m\circ((\cdot)\n\times1)\ar[r,Rightarrow,"\rho"]
\ar[u,Rightarrow,"m\circ((\cdot)\n\times d)",near end]&
(\cdot)\n\ar[r,Rightarrow,"\lambda\n"]&m\circ(1\times(\cdot)\n)
\end{tikzcd}
\end{equation}
A \emph{strict $2$-group in $\ca{C}$} is a weak $2$-group in $\ca{C}$
for which the $2$-morphisms $\alpha$, $\lambda$, $\rho$, $d$,~$e$ are
all identity $2$-morphisms.  In other words, a strict $2$-group is a
group object in the underlying $1$-category of $\ca{C}$.
\end{definition}

\begin{definition}
\label{definition;hominternal2group}
Let $G$ and $G'$ be weak $2$-groups in $\ca{C}$.  A \emph{weak
  homomorphism} consists of a $1$-morphism $\phi_{(1)}=\phi\colon G\to
G'$ and two $2$-isomorphisms
\[
\phi_{(0)}\colon\phi\circ1\Longto1'\qquad\text{and}\qquad
\phi_{(2)}\colon m'\circ\phi\times\phi\Longto\phi\circ m.
\]
These are required to satisfy certain coherence conditions, namely
that the diagrams~\eqref{equation;homo1}--\eqref{equation;homo3} below
commute.
\begin{equation}\label{equation;homo1}
\begin{tikzcd}[column sep=huge,/tikz/column 1/.append
    style={anchor=base east},/tikz/column 2/.append style={anchor=base
      west}]
m'\circ(m'\times\id_{G'})\circ(\phi\times\phi\times\phi)
\ar[r,Rightarrow,"m'\circ(\phi_{(2)}\times\phi)"] 
\ar[d,Rightarrow,start anchor=south east,end anchor=north east,shift
  right=8,"\alpha\circ(\phi\times\phi\times\phi)"']&
m'\circ(\phi\times\phi)\circ(m\times\id_G)
\ar[d,Rightarrow,start anchor=south west,end anchor=north west,shift
  left=8,"\phi_{(2)}\circ(m\times\id_G)"]\\
m'\circ(\id_{G'}\times m')\circ(\phi\times\phi\times \phi)
\ar[d,Rightarrow,start anchor=south east,end anchor=north east,shift
  right=8,"m'\circ(\phi\times \phi_{(2)})"']&
\phi\circ m\circ(m\times\id_G)
\ar[d,Rightarrow,start anchor=south west,end anchor=north west,shift
  left=8,"\phi\circ\alpha"]\\
m'\circ(\phi\times\phi)\circ(\id_G\times m)
\ar[r, Rightarrow,"\phi_{(2)}\circ(\id_G\times m)"]&
\phi\circ m\circ(\id_G\times m)
\end{tikzcd}
\end{equation}
\begin{equation}\label{equation;homo2}
\begin{tikzcd}[column sep=huge,row sep=large,/tikz/column 1/.append
    style={anchor=base east},/tikz/column 2/.append style={anchor=base
      west}]
m'\circ(1'\times\phi)
\ar[r,Rightarrow,"\lambda'\circ\phi"]
\ar[d,Rightarrow,start anchor=south east,end anchor=north east,shift
  right=8,"m'\circ(\phi_{(0)}\times\phi)"']&
\phi\\
m'\circ(\phi\times\phi)\circ(1\times\id_G)
\ar[r,Rightarrow,"\phi_{(2)}\circ(1\times\id_G)"]&
\phi\circ m\circ(1\times\id_G)
\ar[u,Rightarrow,start anchor=north west,end anchor=south west,shift
  right=2,"\phi\circ\lambda"']
\end{tikzcd}
\end{equation}
\begin{equation}\label{equation;homo3}
\begin{tikzcd}[column sep=huge,row sep=large,/tikz/column 1/.append
    style={anchor=base east},/tikz/column 2/.append style={anchor=base
      west}]
m'\circ(\phi\times1')
\ar[r,Rightarrow,"\rho'\circ\phi"]
\ar[d,Rightarrow,start anchor=south east,end anchor=north east,shift
  right=8,"m'\circ(\phi\times\phi_{(0)})"']&
\phi\\
m'\circ(\phi\times\phi)\circ(\id_G\times1)
\ar[r,Rightarrow,"\phi_{(2)}\circ(\id_G\times1)"]&
\phi\circ m\circ(\id_G\times1)
\ar[u,Rightarrow,start anchor=north west,end anchor=south west,shift
  right=2,"\phi\circ \rho"']
\end{tikzcd}
\end{equation}
A \emph{strict homomorphism} is a weak homomorphism $\phi$ for which
$\phi_{(0)}$ and $\phi_{(2)}$ are both identities.
\end{definition}

\begin{definition}
Let $\phi$, $\psi\colon G\to G'$ be weak homomorphisms of weak
$2$-groups in $\ca{C}$.  A \emph{$2$-homomorphism} is a $2$-morphism
$\theta\colon\phi\Rightarrow\psi$ so that the following diagrams
commute:
\[
\begin{tikzcd}[sep=large,/tikz/column 1/.append style={anchor=base
      east},/tikz/column 2/.append style={anchor=base west}]
m'\circ(\phi\times\phi)\ar[r,Rightarrow,"m'\circ(\theta\times\theta)"]
\ar[d,Rightarrow,start anchor=south east,end anchor=north east,shift
  right=6,"\phi_{(2)}"']&m'\circ(\psi\times\psi)
\ar[d,Rightarrow,start anchor=south west,end anchor=north west,shift
  left=6,"\psi_{(2)}"]\\
\phi\circ m\ar[r,Rightarrow,"\theta\circ m"]&\psi\circ m
\end{tikzcd}
\qquad
\begin{tikzcd}[row sep=large]
1'\ar[r,Leftarrow,"\psi_{(0)}"]\ar[d,Leftarrow,"\phi_{(0)}"']&
\psi\circ1\\
\phi\circ1\ar[ur,Rightarrow,"\theta\circ1"']&
\end{tikzcd}
\]
\end{definition}

\begin{definition}
\label{definition;equivalence}
A weak homomorphism $\phi\colon G\to G'$ of weak $2$-groups is an
\emph{equivalence} if there exists a weak homomorphism $\psi\colon
G'\to G$, together with $2$-homomorphisms
$\psi\circ\phi\Rightarrow\id_G$ and
$\phi\circ\psi\Rightarrow\id_{G'}$.  In this case $\psi$ is called a
\emph{weak inverse} to $\phi$ and we write $\psi=\phi\n$.
\end{definition}

The following definition is modeled
on~\cite[Part~A]{romagny;group-actions-stacks}; see
also~\cite[\S\,3.2]{bursztyn-noseda-zhu;principal-stacky-groupoids}.

\begin{definition}
\label{definition;actioninternal}
Let $G$ be a strict $2$-group in $\ca{C}$ and let $X$ be an object of
$\ca{C}$.  A \emph{weak action of $G$ on $X$} consists of the
following data: an \emph{action morphism} $a\colon G\times X\to X$,
and two $2$-isomorphisms
\[
\beta\colon a\circ(m\times\id_X)\Rightarrow a\circ(\id_G\times a),
\qquad\epsilon\colon a\circ(1\times\id_X)\Rightarrow\id_X.
\]
The $2$-morphisms $\beta$ and $\epsilon$ are subject to the condition
that the
diagrams~\eqref{equation;actioncoh1}--\eqref{equation;actioncoh1}
below commute.  For simplicity, we omit notation for horizontal
composition of $1$-morphisms and $2$-morphisms, and display only the
$2$-morphism being used.
\begin{equation}\label{equation;actioncoh1}
\begin{tikzcd}[row sep=large,/tikz/column
2/.append style={anchor=base west}]
a\circ\bigl((m\circ(\id_G\times1))\times{\id_X}\bigr)
\ar[r,Rightarrow,"\beta"]
\ar[dr,equal,start anchor=south east,end anchor=north west,shift
  right]&
a\circ\bigl({\id_G}\times(a\circ(1\times{\id_X}))\bigr)
\ar[d,Rightarrow,start anchor=south west,end anchor=north west,shift
  left=8,"\epsilon"]\\
&a\circ(\pr_G\times\id_X)
\end{tikzcd}
\end{equation}
\begin{equation}\label{equation;actioncoh2}
\begin{tikzcd}[row sep=large,/tikz/column
2/.append style={anchor=base west}]
a\circ\bigl((m\circ(1\times\id_G))\times{\id_X}\bigr)
\ar[r,Rightarrow,"\beta"]
\ar[dr,equal,start anchor=south east,end anchor=north west,shift
  right]&
a\circ(1\times a)
\ar[d,Rightarrow,start anchor=south west,end anchor=north west,shift
  left=8,"\epsilon"]\\
&a\circ(\pr_G\times\id_X)
\end{tikzcd}
\end{equation}
\begin{equation}\label{equation;actioncoh3}
\begin{tikzcd}[row sep=large,column sep=small,/tikz/column 1/.append
    style={anchor=base east},/tikz/column 3/.append style={anchor=base
      west}]
a\circ\bigl((m\circ(m\times\id_G))\times{\id_X}\bigr)
\ar[r,Rightarrow,"\beta"] 
\ar[d,equal,start anchor=south east,end anchor=north east,shift
  right=8]&
a\circ(m\times a)
\ar[r,Rightarrow,"\beta"]&
a\circ\bigl(\id_G\times(a\circ(\id_G\times a))\bigr)\\
a\circ\bigl((m\circ(\id_G\times m))\times{\id_X}\bigr)
\ar[rr,Rightarrow,"\beta"]&&
a\circ\bigl(\id_G\times(a\circ(m\times{\id_X}))\bigr).
\ar[u,Rightarrow,start anchor=north west,end anchor=south west,shift
  right=8,"\beta"']
\end{tikzcd}
\end{equation}
A \emph{strict action} is a weak action $a$ for which the
$2$-morphisms $\beta$ and $\epsilon$ are both identities.
\end{definition}

\begin{definition}
\label{definition;equivariantinternal}
Let $G$ be a strict $2$-group in $\ca{C}$.  Given weak actions
\[a\colon G\times X\to X,\qquad a'\colon G\times X'\to X'\]
on objects $X$ and $X'$ of $\ca{C}$, a \emph{weakly $G$-equivariant
  map} consists of a $1$-morphism $F\colon X\to X'$ and a
$2$-isomorphism
\[\delta\colon a'\circ(\id_G\times F)\To F\circ a,\]
which are subject to the condition that the
diagrams~\eqref{equation;equivariant1}--\eqref{equation;equivariant2}
commute.  We omit horizontal composition of $1$-morphisms, as above.
\begin{equation}\label{equation;equivariant1}
\begin{tikzcd}[row sep=large,column sep=small,/tikz/column 1/.append
    style={anchor=base east},/tikz/column 3/.append style={anchor=base
      west}]
a'\circ(m\times F)
\ar[r,Rightarrow,"\beta'"]
\ar[d,Rightarrow,start anchor=south east,end anchor=north east,shift
  right=8,"\delta"']&
a'\circ(\id_G\times a')\circ(\id_G\times\id_G\times F)
\ar[r,Rightarrow,"\delta"]&
a'\circ(\id_G\times F)\circ(\id_G\times a)
\ar[d,Rightarrow,start anchor=south west,end anchor=north west,shift
  left=8,"\delta"]\\
F\circ a\circ(m\times\id_X)
\ar[rr,Rightarrow,"\beta"]&&
F\circ a\circ(\id_G\times a)
\end{tikzcd}
\end{equation}
\begin{equation}\label{equation;equivariant2}
\begin{tikzcd}[row sep=large,/tikz/column 1/.append style={anchor=base
      east}]
a'\circ(1\times F)
\ar[r,Rightarrow,"\epsilon'"]
\ar[d,Rightarrow,start anchor=south east,end anchor=north east,shift
  right=8,"\delta"']&F\\
F\circ a\circ(1\times\id_X)
\ar[ur,Rightarrow,start anchor=north east,"\epsilon"']
\end{tikzcd}
\end{equation}
If $\delta$ is the identity $2$-morphism, then $F$ is \emph{strictly
  equivariant}.
\end{definition}

\section{Strictification of stacky actions}\label{appendix;strict}

This appendix contains the proof of the following result, which is
Theorem~\ref{theorem;strictaction} in the main text.

\begin{theorem}\label{theorem;appendixAthm}
Let $\G$ be a connected (strict) Lie group stack acting weakly on an
\'etale differentiable stack $\X$.  Suppose that $\G$ admits a
presentation $\B G_\bu\simeq\G$ by a base-connected Lie $2$-group
$G_\bu$.  For every such presentation $\B G_\bu\simeq\G$ there exists
a presentation $\B X_\bu\simeq\X$ of $\X$ by a Lie groupoid $X_\bu$ so
that
\begin{enumerate}
\item\label{item;strict-action}
$G_\bu$ acts strictly on $X_\bu$;
\item\label{item;weak-equivariant}
identifying $\B G_\bu=\G$, the equivalence $\B X_\bu\simeq\X$ is
weakly $\G$-equivariant.
\end{enumerate}
\end{theorem}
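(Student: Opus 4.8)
The plan is to adapt the Lerman--Malkin strictification~\cite[Proposition~3.2]{lerman-malkin;deligne-mumford} to the stacky group: I would enlarge a chosen atlas of $\X$ by the action so that the \emph{translation} action of $G_0$ on the enlarged atlas is honestly strict, and then propagate this to the arrow manifold using the coherence data of the weak action together with the multiplicativity isomorphisms of the atlas $G_0\to\G$. Fix the atlas $\chi\colon G_0\to\G$ arising from $\B G_\bu\simeq\G$ and choose any atlas $p\colon P\to\X$. Applying the $2$-functor $\B$ to the object-level structure maps of the Lie $2$-group $G_\bu$ produces canonical $2$-isomorphisms, a multiplicativity one $\bkappa\colon\chi\circ m_0\To\bm\circ(\chi\times\chi)$ and a unit one, while the identification $G_\bu\cong G_0\times_\G G_0$ supplies a tautological $2$-isomorphism $\blambda\colon\chi\circ s\To\chi\circ t$ of maps $G_1\rightrightarrows\G$.

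First I would define the new atlas $q=\ba\circ(\chi\times p)\colon G_0\times P\to\X$ and verify it is again an atlas. Surjectivity holds because $\ba$ is a split epimorphism, the section $x\mapsto(\bunit,x)$ being a section up to the unitor $\beps$. For representability and submersivity, given a test map $t\colon T\to\X$ I would use the invertibility of the shear morphism $(g,x)\mapsto(\ba(g,x),x)$ to identify $(G_0\times P)\times_{q,\X,t}T$ with $P\times_{p,\X,r}(G_0\times T)$, where $r(g,\tau)=\ba(\chi(g)\n,t(\tau))$; since $p$ is a representable submersion this fibred product is a manifold submersing onto $G_0\times T$, hence onto $T$. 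I then set $X_0=G_0\times P$ and $X_\bu=(X_0\times_\X X_0\rightrightarrows X_0)$, so that $\B X_\bu\simeq\X$ with atlas $q$.

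Next I would build the strict action via the associated crossed module $(G=G_0,H,\partial,\alpha)$, invoking Lemma~\ref{lemma;crossedactionequivalent}. On objects $G$ acts by translation, $g\cdot(g',z)=(gg',z)$. A point of $X_1=X_0\times_\X X_0$ is a triple $((g_1,z_1),(g_2,z_2),\phi)$ with $\phi\colon\chi(g_1)\cdot p(z_1)\cong\chi(g_2)\cdot p(z_2)$; I define $g*\phi$ by transporting $\phi$ under the $\G$-action of $\chi(g)$ and reconciling the two ends through $\bkappa$ and the associator $\bbeta$ of the weak action, and I define $h*\phi$ for $h\in H$ by postcomposing with the isomorphism $\chi(g_2)\cdot p(z_2)\cong\chi(\partial(h)g_2)\cdot p(z_2)$ manufactured from $\blambda$, $\bkappa$ and the $\G$-action, so that source and target behave as in~\eqref{equation;natural1}. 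Smoothness of both maps is automatic from the universal property of $X_1$: each is specified by its two legs into $X_0$ (smooth, being built from the groupoid source, target and the translation action) together with a $2$-isomorphism of the resulting morphisms into $\X$, and that $2$-isomorphism is itself a morphism of stacks, being a composite of the structural $2$-isomorphisms $\bbeta$, $\beps$, $\bkappa$, $\blambda$ with identities and smooth maps.

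Finally I would check the crossed-module compatibility conditions~\eqref{equation;functor1}--\eqref{equation;conjugation}, and for the equivalence $\B X_\bu\simeq\X$ exhibit the weak-equivariance datum: a $2$-isomorphism $\bdelta$ comparing $\ba\circ(\id_\G\times q)$ with the composite of $q$ and the translation, again assembled from $\bkappa$ and $\bbeta$, subject to~\eqref{equation;equivariant1}--\eqref{equation;equivariant2}. The main obstacle is exactly this bookkeeping: each identity unwinds to a pasting of the pentagon and unit coherences of the weak action (Definition~\ref{definition;actioninternal}) with the associativity and unit coherences of $\bkappa$, the latter being consequences of the $2$-group axioms~\eqref{equation;pentagon}--\eqref{equation;secondzigzag} under naturality of $\B$. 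No single step is deep, but the diagrams are large, and I expect the conjugation relation~\eqref{equation;conjugation}, which couples the $G$- and $H$-actions on $X_1$ and so mixes $\bkappa$, $\blambda$ and $\bbeta$ at once, to be the most laborious. The base-connectedness hypothesis on $G_\bu$ enters only to guarantee, via Theorem~\ref{theorem;lie-group-stack}, that presentations of the required form exist; the construction above does not use it further.
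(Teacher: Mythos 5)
Your atlas construction ($X_0=G_0\times P$, $X_\bu=X_0\times_\X X_0$) matches the paper's Proposition~\ref{proposition;strictatlas}, and your shear-morphism verification that $q=\ba\circ(\chi\times p)$ is an atlas is a reasonable variant of the argument given there. The gap is in the strictification step. You propose to define the $G_0$- and $H$-actions on $X_1$ by conjugating an arrow $\phi$ with the structural $2$-isomorphisms ($\bkappa$, the associator $\bbeta$, the unitor), and you assert that the strict-action axioms \eqref{equation;functor1}--\eqref{equation;conjugation} then ``unwind to a pasting of the pentagon and unit coherences.'' That is not true: the coherence axioms of a weak action guarantee that different re-associations agree with \emph{each other}, not that the re-association isomorphisms are trivial, and a weak action of a strict $2$-group is not in general equivalent to a strict one. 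Concretely, $g*(h*\phi)$ and $(gh)*\phi$ differ by conjugation with an automorphism $c_{(g_i,z_i)}$ of $\chi(ghg_i)\cdot p(z_i)$ built from $\bbeta$ and $\bkappa$; this automorphism depends on the point $(g_i,z_i)$ of $X_0$ and not merely on the underlying object of $\X$, so naturality of $\bbeta$ does not make the conjugation cancel, and the pentagon \eqref{equation;pentagon} only compares two such discrepancies rather than killing them.

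The missing idea is exactly the one your last sentence disclaims. The paper uses that $\X$ is \'etale (so the isotropy groups of $X_\bu$ are discrete) together with the connectedness of $G_0$: the associator, evaluated at a fixed $x\in X_0$, gives smooth maps $G_0\times G_0\to\Iso(x)$ into a discrete group, which are therefore constant; comparing with their values at the unit yields the identities $\beta(k,1,j\cdot x)=\beta(k,j,x)$ and, via the unit coherence \eqref{coherenceA}, $u(k)\cdot\epsilon(j\cdot x)=\beta(k,j,x)\n$. Only with these identities in hand can one write down a strict action -- and note that the correct formula is not the naive transport but the $\epsilon$-twist $\tilde A_1(k,f)=\epsilon(t(k)\cdot t(f))\circ(k\cdot f)\circ\epsilon(s(k)\cdot s(f))\n$, whose associativity reduces precisely to $u(k)\cdot\epsilon(j\cdot x)=\beta(k,j,x)\n$. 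So base-connectedness of $G_\bu$ (and \'etaleness of $\X$) are load-bearing in the strictification itself, not just in securing the existence of presentations, and without this step your diagram chase cannot close.
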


We first prove the following.

\begin{proposition}\label{proposition;strictatlas}
Let $G_\bu$ be a base-connected Lie $2$-group, let $\G=\B G_\bu$, and
let
\[\ba\colon\G\times\X\to\X\]
be a weak action of $\G$ on a differentiable stack $\X$.  Let
$\bp\colon Y_0\to\X$ be an atlas, let $X_0=G_0\times Y_0$, and
consider the map
\[
\begin{tikzcd}
\bo{b}\colon X_0=G_0\times Y_0\ar[r]&\G\times\X\ar[r,"\ba"]&\X.
\end{tikzcd}
\]
The map $\bo{b}$ is a representable epimorphism and a submersion.  So
$\bo{b}\colon X_0\to\X$ is an atlas and hence determines a canonical
equivalence $\B X_\bu\simeq\X$, where $X_\bu$ is the Lie groupoid
$X_0\times_\X X_0\rightrightarrows X_0$.
\end{proposition}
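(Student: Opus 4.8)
The plan is to exhibit $\bo{b}$ as a composite of a product of atlases with an equivalence followed by a projection, and then to compute the relevant fibre products by hand. First I would note that $G_0\to\G=\B G_\bu$ is the canonical atlas of a classifying stack and that $\bp\colon Y_0\to\X$ is an atlas by hypothesis, so the product $\bo{q}=q_\G\times\bp\colon G_0\times Y_0\to\G\times\X$ is again a representable surjective submersion, i.e.\ an atlas of $\G\times\X$. By construction $\bo{b}=\ba\circ\bo{q}$.

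The key device is the shear morphism $\bsigma=(\pr_\G,\ba)\colon\G\times\X\to\G\times\X$, informally $(\bo g,\bo x)\mapsto(\bo g,\bo g\cdot\bo x)$. Using the structure $2$-isomorphisms $\bbeta$ and $\beps$ of the weak action (Definition~\ref{definition;actioninternal}) together with the inverse morphism of $\G$, one checks that $\bsigma$ is an equivalence, with weak inverse $(\bo g,\bo y)\mapsto(\bo g,\bo g\n\cdot\bo y)$, and that $\pr_\G\circ\bsigma\simeq\pr_\G$ while $\pr_\X\circ\bsigma\simeq\ba$. Consequently $\bsigma\circ\bo{q}$ is still an atlas of $\G\times\X$ (an atlas composed with an equivalence), and $\bo{b}\simeq\pr_\X\circ\bsigma\circ\bo{q}$.

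It then remains to verify that the composite of the atlas $\bo r=\bsigma\circ\bo q\colon G_0\times Y_0\to\G\times\X$ with the projection $\pr_\X\colon\G\times\X\to\X$ is representable, submersive, and an epimorphism. For the epimorphism property I would use that $\pr_\X$ admits the section $\bunit\times\id_\X$ and that $\bo r$ is an epimorphism. For the first two properties I would compute, for an arbitrary test manifold $M$ with a morphism $M\to\X$, the fibre product
\[
(G_0\times Y_0)\times_{\bo b,\X}M\;\simeq\;(G_0\times Y_0)\times_{\bo r,\G\times\X}\bigl(\G\times M\bigr),
\]
using the identification $(\G\times\X)\times_{\pr_\X,\X}M\simeq\G\times M$. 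Since $\X$ is a differentiable stack its diagonal is representable, so $M\to\X$ is representable, and hence so is $\id_\G\times(M\to\X)\colon\G\times M\to\G\times\X$ (a base change of $M\to\X$). Pulling this back along the manifold $G_0\times Y_0$ via the atlas $\bo r$ therefore produces a manifold, and the resulting projection to $M$ factors as a base change of $\bo r$ (a surjective submersion) followed by $\pr_M\colon\G\times M\to M$ (a surjective submersion), hence is itself a surjective submersion. This shows $\bo b$ is an atlas.

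Finally, the canonical equivalence $\B X_\bu\simeq\X$ for $X_\bu=X_0\times_\X X_0\rightrightarrows X_0$ is immediate from the general fact recorded in \S\ref{subsection;stack}: any atlas of a stack presents it by its associated Lie groupoid. The step I expect to be the main obstacle is checking that the shear $\bsigma$ is genuinely an equivalence in the \emph{weak} setting---that is, assembling $\bbeta$, $\beps$, and the inverse of $\G$ into a weak inverse satisfying the required coherence---and, relatedly, keeping the representability bookkeeping honest given that the factor $\G$ is only a stack. This is exactly where the hypotheses that $\X$ has representable diagonal and that $\G$ admits the manifold atlas $G_0$ are used.
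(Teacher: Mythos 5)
Your argument is correct in substance, but it is a genuinely different route from the paper's. The paper pulls $\bo b$ back along the given atlas $\bp\colon Y_0\to\X$ and, invoking the Behrend--Xu criteria, reduces everything to showing that the induced map $q\colon(G_0\times Y_0)\times_\X Y_0\to Y_0$ is a surjective submersion; this is checked slice by slice over $G_0$, by observing that the restriction of $\bo b$ to $\{g\}\times Y_0$ is $2$-isomorphic to $\bo L_g\circ\bp$, where translation $\bo L_g$ by the point $g$ is an equivalence of $\X$. You instead work globally: you factor $\bo b$ through the shear $\bsigma=(\pr_\G,\ba)$ of $\G\times\X$ and reduce to the statement that composing an atlas of $\G\times\X$ with $\pr_\X$ yields an atlas of $\X$. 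Both proofs draw on the same essential input, namely that the weak-action axioms make translation (resp.\ the shear) an equivalence; your version trades the paper's pointwise-in-$g$ submersion check for the coherence verification that $\bsigma$ has the weak inverse $(\bo g,\bo y)\mapsto(\bo g,\bo g\n\cdot\bo y)$, which needs the $2$-group inversion together with its adjunction $2$-morphisms and the action coherences $\bbeta$, $\beps$ --- tedious but standard, and you correctly flag it. The one place your bookkeeping is loose is the final step: $\pr_M\colon\G\times M\to M$ is \emph{not} representable, so "base change of $\bo r$ followed by $\pr_M$" is not literally a composite of representable surjective submersions. The fix is routine --- pull the manifold $Z=(G_0\times Y_0)\times_{\G\times\X}(\G\times M)$ back along the atlas $G_0\times M\to\G\times M$, note that the resulting composite to $M$ is a surjective submersion of manifolds, and descend along the surjective submersion $Z\times_{\G\times M}(G_0\times M)\to Z$ --- or one can simply cite the Behrend--Xu lemmas on (not necessarily representable) submersions, as the paper does. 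With that detail supplied, your proof is complete; note also that, like the paper's, it nowhere uses base-connectedness of $G_\bu$.
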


\begin{proof}
Consider the $2$-cartesian square
\[
\begin{tikzcd}[row sep=large]
(G_0\times Y_0)\times_{\X}Y_0\ar[r,"q"]\ar[d,"{\pr_1}"']
  &Y_0\ar[d,"{\bp}"]\\
X_0=G_0\times Y_0\ar[r,"{\bo{b}}"]&\X
\end{tikzcd}
\]
where $\pr_1$ and $q$ are the canonical (up to $2$-isomorphism) maps
out of the fibred product.  Then $(G_0\times Y_0)\times_{\X}Y_0$ is
(equivalent to) a manifold, since $\bp$ is representable.  By Lemmas
2.2 and 2.3 of~\cite{behrend-xu;stacks-gerbes}, to show that $\bo{b}$
is a representable submersive epimorphism it is enough to show that
$q$ is a surjective submersion.  To see this, consider any $g\in G_0$
and form the following diagram composed of two $2$-cartesian squares:
\begin{equation}\label{equation;double-square}
\begin{tikzcd}[row sep=large]
(\{g\}\times Y_0)\times_{\X}Y_0\ar[r,hook]\ar[d]&(G_0\times
  Y_0)\times_{\X}Y_0\ar[r,"q"]\ar[d]&Y_0\ar[d,"\bp"]\\
\{g\}\times Y_0\ar[r,hook]&G_0\times Y_0\ar[r,"\bo{b}"]&\X
\end{tikzcd}
\end{equation}
We assert that the bottom row
\begin{equation}\label{eqn;twistedatlas}
\{g\}\times Y_0\longinj G_0\times Y_0\longto\X
\end{equation}
is an atlas for $\X$.  Indeed, applying the functor $\B$ gives us a
categorical point $\bo{g}\colon\star\to\G$, which determines a
morphism of stacks
\[
\begin{tikzcd}
\bo{L}_g\colon\X\simeq\star\times\X
\ar[r,"{\bo{g}\times{\id}}"]&\G\times\X\ar[r,"\ba"]&\X.
\end{tikzcd}
\]
By the axioms for the weak $\G$-action on $\X$, the morphism
$\bo{L}_g$ is an equivalence with weak inverse $\bo{L}_{g\n}$.  The
composition~\eqref{eqn;twistedatlas} is naturally isomorphic to
$\bo{L}_g\circ\bp\colon X_0\to\X$.  Since $\bo{L}_g$ is an equivalence
and $\bp$ is an atlas, we have that $\eqref{eqn;twistedatlas}$ is an
atlas.  Since being an epimorphism and being a submersion are stable
under pullbacks, this implies that the top row
of~\eqref{equation;double-square}
\[
(\{g\}\times Y_0)\times_\X Y_0\longinj(G_0\times
Y_0)\times_{\X}Y_0\longto Y_0
\]
is a surjective submersion.  It follows immediately that $q$ is a
surjective submersion.
\end{proof}

\begin{proof}[Proof of Theorem~\ref{theorem;appendixAthm}]
Let $Y_0\to \X$ be any atlas and let $X_\bu$ be as in
Proposition~\ref{proposition;strictatlas}.  Each square of the
following diagram $2$-commutes:
\begin{equation}\label{equation;strictification}
\begin{tikzcd}
G_0\times G_0\times Y_0 \ar[r,"{m_0\times{\id}}"]\ar[d]&G_0\times
Y_0\ar[d]\\
\G\times\G \times\X\ar[r,"{\bm\times{\id}}"]
\ar[d,"{{\id}\times\ba}"']&\G\times\X\ar[d,"\ba"]\\
\G\times\X\ar[r,"\ba"]&\X.
\end{tikzcd}
\end{equation}
There is a Lie groupoid morphism $A_\bu\colon G_\bu\times X_\bu\to
X_\bu$, where $A_0=m_0 \times{\id}$ and $\B A_\bu \cong\ba$.  Then
$A_0$ determines an action of $G_0$ on $X_0$.  We write
\[
A_0(g,x)=g\cdot x,\qquad A_1(h,y)=h\cdot y
\]
for $g\in G_0$, $x\in X_0$, $h\in G_1$, and $y\in X_1$.  We will show
that $A_\bu$ is naturally isomorphic to a strict action
$\tilde{A}_\bu\colon G_\bu\times X_\bu\to X_\bu$ with
$\tilde{A}_0=A_0$.  Note that despite our notation, $A_1$ is not in
general an action.

There are $2$-isomorphisms which come with the action $\ba$ as in
Definition~\ref{definition;actioninternal}, and we must write these in
terms of $A_\bu$.  To do this, we make use of the following fact: let
$F_\bu$, $F_\bu'$ be morphisms of Lie groupoids and let
$\balpha\colon\B F\Rightarrow\B F'$ be a $2$-arrow in the category of
stacks.  Then there is a unique natural isomorphism $\alpha\colon
F\Rightarrow F'$ in the category of Lie groupoids so that
$\balpha=\B\alpha$.  This assertion follows from the equivalence of
$\Diffstack$ and $\Liegpd[\ca{M}\n]$ and the description of $2$-cells
in \cite[\S\,2.3]{pronk;etendues-fractions}.  It was also shown
in~\cite[Theorem~2.5]{berwick-evans-lerman;lie-2-algebras-vector}.

The $2$-isomorphisms which come with $\ba$ then translate as
follows. There are $2$-arrows
\begin{align*}
\beta&\colon A_\bu\circ(m_\bu\times{\id})\Longrightarrow
A_\bu\circ({\id}\times A_\bu);\\
\beta&\colon G_0\times G_0\times X_0\longto X_1;\\
\epsilon&\colon A_\bu\circ(1\times{\id})\Longrightarrow{\id};\\
\epsilon&\colon X_0\longto X_1,
\end{align*}
where $1\colon\star\to G_\bu$ is the $2$-group unit, and we write
\[
1\times{\id}\colon X_\bu\cong\star\times X_\bu\longto G_\bu\times
X_\bu.
\]
We also write the coherence conditions on $\epsilon$ and $\beta$.  For
$k$, $h$, $g\in G_0$ and $x\in X_0$, they are as follows.
\begin{gather}
\label{coherenceA}
(u(g)\cdot\epsilon(x))\circ\beta(g,1,x)=u(g\cdot x);\\
\label{coherenceB}
\epsilon(g\cdot x)\circ\beta(1,g,x)=u(g\cdot x);\\
\label{coherenceC}
\beta(k,h,g\cdot
x)\circ\beta(kh,g,x)=(u(k)\cdot\beta(h,g,x))\circ\beta(k,hg,x).
\end{gather}
Since $A_0$ is an action, we have that $\epsilon(x)$ is an arrow from
$x$ to $x$ and $\beta(h,g,x)$ is an arrow from $(hg)\cdot x$ to
$(hg)\cdot x$.

%
Now fix $x\in X_0$. Define smooth maps $\gamma_1,\gamma_2\colon
G_0\times G_0 \to s\n(x)\cap t\n(x)$ as follows.
\begin{align*}
\gamma_1(k,j) & = u( (kj)\n) \cdot \beta(k,1,j\cdot x)\\
\gamma_2(k,j) & = u( (kj)\n ) \cdot \beta(k,j,x).
\end{align*}
Then, since $G_0$ is connected and $X_\bu$ has discrete isotropy
groups, the maps $\gamma_1$ and $\gamma_2$ are constant.  Moreover,
$\gamma_1(1,1)=1\cdot\beta(1,1,x)=\gamma_2(1,1)$, so
$\gamma_1=\gamma_2$ and thus
\[u((kj)\n)\cdot \beta(k,1,j\cdot x) = u((kj)\n) \cdot \beta(k,j,x)\]
for all $k$, $j\in G_0$.  Therefore,
\[
u(kj)\cdot (u((kj)\n)\cdot \beta(k,1,j\cdot x) )=u(kj)\cdot( u((kj)\n)
\cdot \beta(k,j,x))
\]
Applying the natural transformation $\beta$ to both sides, one finds
\begin{multline*}
\beta(kj,(kj)\n,kj\cdot x)\circ\left(1\cdot\beta(k,1,j\cdot
x)\right)\circ\beta(kj,(kj)\n,kj\cdot x)\n\\
=\beta(kj,(kj)\n,kj\cdot x)\circ\left(1\cdot\beta(k,j,
x)\right)\circ\beta(kj,(kj)\n,kj\cdot x)\n,
\end{multline*}
and so
\[1\cdot \beta(k,1,j\cdot x) = 1\cdot \beta(k,j,x).\]
Applying the natural transformation $\epsilon$ to both sides, one has
\[
\epsilon(kj\cdot x)\n \circ \beta(k,1,j\cdot x) \circ \epsilon(kj\cdot
x) = \epsilon(kj\cdot x)\n \circ \beta(k,j,\cdot x) \circ
\epsilon(kj\cdot x),
\]
and therefore
\begin{equation}\label{equation;betanice}
\beta(k,1,j\cdot x)=\beta(k,j,x).
\end{equation}
As a consequence, by applying the coherence
condition~\eqref{coherenceA} we have
\begin{equation}\label{equation;beta}
u(k)\cdot\epsilon(j\cdot x)=\beta(k,1,j\cdot x)\n=\beta(k,j,x)\n.
\end{equation}

Now define $\tilde{A}_\bu\colon G_\bu\times X_\bu\to X_\bu$ by
\begin{gather*}
\tilde{A}_0(g,x)=A_0(g,x)=g\cdot x\\
\tilde{A}_1(k,f)=\epsilon(t(k)\cdot t(f))\circ(k\cdot
f)\circ\epsilon(s(k)\cdot s(f))\n.
\end{gather*}
We write $\tilde{A}_1(k,f)=k\odot f$.  That $\tilde{A}_\bu$ is a Lie
groupoid morphism is easy to check.  The smooth map
\[\epsilon\circ A_0\colon G_0\times X_0\to X_1\]
is a natural isomorphism from $A_\bu$ to $\tilde{A}_\bu$.  It remains
to check that $\tilde{A}_1\colon G_1\times X_1\to X_1$ is an action.
First,
\[
1\odot f=\epsilon(t(f))\circ(1\cdot f)\circ\epsilon(s(f))\n=f,
\]
since $\epsilon$ is a natural isomorphism
$A_\bu\circ(1\times{\id})\Rightarrow{\id}$.  Next,
\begin{align*}
k\odot(j\odot f)&=k\odot\left(\epsilon(t(j)\cdot t(f))\circ(j\cdot
f)\circ\epsilon(s(j)\cdot s(f))\n\right)\\
&=\epsilon(t(kj)\cdot t(f))\circ F\circ\epsilon(s(kj)\cdot s(f))\n,
\end{align*}
where $F=k\cdot\left(\epsilon(t(j)\cdot t(f))\circ(j\cdot
f)\circ\epsilon(s(j)\cdot s(f))\n\right)$.  On the other hand,
\[
kj\odot f=\epsilon(t(kj)\cdot t(f))\circ\left[kj\cdot
  f\right]\circ\epsilon(s(kj)\cdot s(f))\n.
\]
We must then show that
\begin{equation}\label{equation;strictgoal}
F=kj\cdot f.
\end{equation}
We compute
\begin{equation}\label{equation;longcomp}
\begin{aligned}
F&=\left[ u(t(k)) \circ k \circ u(s(k))\right] \cdot \left[
  \epsilon(t(j)\cdot t(f)) \circ (j\cdot f) \circ \epsilon(s(j)\cdot
  s(f)) \n \right]\\
&=\left[u(t(k))\cdot\epsilon(t(j)\cdot
  t(f))\right]\circ\left[k\cdot(j\cdot
  f)\right]\circ\left[u(s(k))\cdot\epsilon(s(j)\cdot s(f))\n\right],
\end{aligned}
\end{equation}
since $A_\bu$ is a groupoid morphism.  From~\eqref{equation;beta}, one
has that~\eqref{equation;longcomp} is equal to
\[
\beta(t(k),t(j),t(f))\n \circ \left[k\cdot(j\cdot f)\right] \circ
\beta(s(k),s(j),s(f)).
\]
But since $\beta$ is a natural transformation
$A_\bu\circ(m_\bu\times{\id})\Rightarrow A_\bu\circ({\id}\times
A_\bu)$ this is just $kj\cdot f$.  So we have
found~\eqref{equation;strictgoal}, and
proved~\eqref{item;strict-action}.

To prove~\eqref{item;weak-equivariant}, assume $\G=\B G_\bu$.  It
suffices to show that the identity $1$-morphism $X_\bu\to X_\bu$
together with the $2$-isomorphism $\delta=\epsilon\circ A_0\colon
G_0\times X_0\to X_1$ are the data of a weakly $G_\bu$-equivariant map
intertwining the strict action $\tilde{A}_\bu$ with the weak action
$A_\bu$.  Indeed, the coherence
condition~\eqref{equation;equivariant1} becomes the
requirement that for any $g,h\in G_0$ and $x\in X_0$,
\[
\epsilon(gh\cdot x)\circ\bigl(u(g)\cdot\epsilon(h\cdot
x)\bigr)\circ\beta(g,h,x)=\epsilon(gh\cdot x).
\]
This follows from the coherence condition~\eqref{coherenceA} above
together with~\eqref{equation;betanice}.  The coherence
condition~\eqref{equation;equivariant2} is automatic from the
definition of $\delta$.
\end{proof}

\section{Weak fibred products of Lie group stacks (by C. Zhu)}
\label{appendix;zhu}

This appendix is devoted to the proof of the following result, which
is Theorem~\ref{theorem;pullbackgroups} in the main text.

\begin{theorem}
Let $\G\to\HH$ and $\G'\to\HH$ be weak homomorphisms of strict Lie
group stacks, and assume that the fibred product of stacks
$\KK=\G\times_{\HH}\G'$ is a differentiable stack.  Then $\KK$ is
naturally a weak Lie group stack, and the projections $\KK\to\G$ and
$\KK\to\G'$ are strict homomorphisms.
\end{theorem}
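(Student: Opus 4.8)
The plan is to put the group structure on $\KK$ by transporting the group structures of $\G$ and $\G'$ along the two projections, using the $2$-dimensional universal property of the weak fibred product throughout. Write $\bphi\colon\G\to\HH$ and $\bphi'\colon\G'\to\HH$ for the given weak homomorphisms, with coherence data $\bphi_{(0)},\bphi_{(2)}$ and $\bphi'_{(0)},\bphi'_{(2)}$ as in Definition~\ref{definition;hominternal2group}, and let $\bp\colon\KK\to\G$, $\bp'\colon\KK\to\G'$ and $\btau\colon\bphi\circ\bp\To\bphi'\circ\bp'$ be the structure data of $\KK=\G\times_\HH\G'$. The universal property says that, for any stack, a $1$-morphism $\Z\to\KK$ amounts to a triple $(\bo{f},\bo{g},\bbeta)$ consisting of $\bo{f}\colon\Z\to\G$, $\bo{g}\colon\Z\to\G'$ and a $2$-isomorphism $\bbeta\colon\bphi\circ\bo{f}\To\bphi'\circ\bo{g}$, while a $2$-morphism into $\KK$ is determined by its two projections. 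Everything below is built from this.

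First I would construct the multiplication $\bm_\KK\colon\KK\times\KK\to\KK$ as the lift whose projections are $\bm_\G\circ(\bp\times\bp)$ and $\bm_{\G'}\circ(\bp'\times\bp')$; the comparison $2$-cell over $\HH$ needed to invoke the universal property is assembled from $\bphi_{(2)}$, $\bphi'_{(2)}$, the functoriality of $\bm_\HH$, and $\bm_\HH\circ(\btau\times\btau)$. In the same manner I would produce the unit $\bunit_\KK\colon\star\to\KK$ (using $\bphi_{(0)}$, $\bphi'_{(0)}$) and the inverse $\bi_\KK\colon\KK\to\KK$. Each of the five structural $2$-isomorphisms of Definition~\ref{definition;internalgp} is then obtained from the uniqueness clause of the universal property: for instance $\bm_\KK\circ(\bm_\KK\times\id)$ and $\bm_\KK\circ(\id\times\bm_\KK)$ project to morphisms into $\G$ and $\G'$ that are identified by the associators of $\G$ and $\G'$ compatibly with the comparison cells, so there is a unique $2$-isomorphism between them, namely the associator $\balpha_\KK$; the unit laws $\blambda_\KK,\brho_\KK$ and the adjunction laws $\bdelta_\KK,\beps_\KK$ are constructed likewise.

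The coherence axioms \eqref{equation;pentagon}--\eqref{equation;secondzigzag} for $\KK$ are then verified by projection: each is an equality of $2$-morphisms with target $\KK$, and since such a $2$-morphism is detected by its projections to $\G$ and to $\G'$, the identity reduces to the corresponding axiom in $\G$ and in $\G'$, which hold by hypothesis. For the final claim I would arrange the construction so that the projections are strict. Since $\bm_\KK,\bunit_\KK,\bi_\KK$ were defined as lifts whose projections are literally $\bm_\G\circ(\bp\times\bp)$, $\bunit_\G$, and so on, one may take the homomorphism cells $\bp_{(0)}$, $\bp_{(2)}$ of $\bp$ to be identities; the homomorphism coherence conditions \eqref{equation;homo1}--\eqref{equation;homo3} for $\bp$ then collapse to the defining relations of $\balpha_\KK$, $\blambda_\KK$, $\brho_\KK$, which hold by construction, and symmetrically for $\bp'$. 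To secure on-the-nose commutativity of the projection squares rather than mere commutativity up to $2$-isomorphism, I would anchor the argument in the explicit groupoid model of the weak fibred product (Definition~\ref{definition;fibreproductLiegpd}): a presenting $2$-group of $\KK$ has object and arrow manifolds that are genuine fibred products of Lie groups, and the projection functors to the presenting $2$-groups of $\G$ and $\G'$ are strict $2$-group homomorphisms, so $\B$ of them are strict homomorphisms of Lie group stacks.

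I expect the main obstacle to be the coherence bookkeeping in the two middle steps. Although the conceptual content is only that a limit of group objects is again a group object, actually assembling the comparison $2$-cells for $\bm_\KK$, $\bunit_\KK$, $\bi_\KK$ out of $\bphi_{(2)},\bphi'_{(2)},\btau$, and then checking that the induced structural cells satisfy the pentagon, triangle, and two zigzag identities, requires a careful and lengthy diagram chase. The one genuinely non-formal point is upgrading the projections from weak homomorphisms to strict ones, which is why I would carry the computation out in the explicit groupoid-level model, where strictness is manifest and the abstract coherences of the preceding steps become routine verifications on manifolds of objects and arrows.
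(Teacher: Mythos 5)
Your proposal is essentially the paper's proof. The appendix (due to Zhu) carries out exactly this construction, except that instead of invoking the $2$-dimensional universal property abstractly it works object-wise: for each test manifold $U$ it writes the weak fibred product groupoid $\KK(U)=\G(U)\times^{(w)}_{\HH(U)}\G'(U)$ as triples $(g_0,h_1,g_0')$ and gives explicit formulas for the multiplication, unit, inverse, and associator, in which the middle components are conjugated by $\bphi_{(2)}$, $\bphi'_{(2)}$, $\bphi_{(0)}$, $\bphi'_{(0)}$ and $\btau$ --- precisely the comparison cells your universal-property lifts would produce; the coherence axioms and the strictness of the projections are then read off from these formulas, as you predict.

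One caution about your final step. You cannot in general choose \emph{Lie $2$-group} presentations of $\G$ and $\G'$ here: the theorem makes no \'etaleness or connectedness hypothesis, and in the paper the existence of such presentations for a fibred product (Proposition~\ref{proposition;strictmor}) is itself deduced \emph{from} this theorem, so that route is either unavailable or circular. The explicit model in which the projections are on-the-nose multiplicative is instead the groupoid of $U$-points of the iso-comma stack $\G\times_{\HH}\G'$; there the triple description comes for free from the definition of the fibred product of stacks, and the rest of your argument goes through verbatim.
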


\begin{proof}
It suffices to prove the statement object-wise.  That is, for an
object $U\in\group{Diff}$, the groupoid
$\KK(U)=\G(U)\times^{(w)}_{\HH(U)}\G'(U)$ is a weak $2$-group.
Because $\KK$ is assumed to be a differentiable stack, it will then
automatically be a Lie group stack.

Let us denote
\begin{align*}
G=(G_1\rightrightarrows G_0)&:=\G(U)\\
G'=(G'_1\rightrightarrows G'_0)&:=\G'(U)\\
H=(H_1\rightrightarrows H_0)&:=\HH(U)
\end{align*}
and the maps between them $\phi\colon G\to H$ and $\phi'\colon G'\to
H$.  We now define the group structure maps on $G\times^{(w)}_H G'$.

\emph{Multiplication}.  The multiplication
\[
\tilde{m}\colon (G\times^{(w)}_H G')\times (G\times^{(w)}_H G') =
G^{\times 2}\times^{(w)}_{H^{\times 2}} {G'}^{\times 2}\longto
(G\times^{(w)}_H G')
\]
is essentially given by $m\times m'$, where $m$ and $m'$ are the group
multiplication maps for $G$ and $G'$ respectively.  We denote by
$\cdot$ the multiplications $m$, $m'$, $\tilde{m}$, etc.  Then on the
level of objects of $G\times^{(w)}_H G'$, define:
\[
(g^1_0, h^1_1, {g'}^1_0)\cdot (g^2_0, h^2_1, {g'}^2_0) := (g^1_0\cdot
g^2_0, \phi_{(2)}'\circ (h^1_1\cdot h^2_1)\circ\phi_{(2)}\n, {g'}^1_0
\cdot {g'}^2_0 ),
\]
where $\phi_{(2)}\colon\phi_0(g^1_0)\cdot \phi_0(g^2_0) \to
\phi_0(g^1_0\cdot g^2_0 )$ comes from the $2$-isomorphism data of the
weak homomorphism $\phi$; and similarly for $\phi_{(2)}'$.  The middle
element in $H_1$ is then obtained by the following composition
\[
\phi_0(g^1_0\cdot g^2_0 )\xrightarrow{\phi_{(2)}\n}\phi_0(g^1_0)\cdot
\phi_0(g^2_0)\xrightarrow{h^1_1\cdot h^2_1}\phi_0({g'}^1_0)\cdot
\phi_0({g'}^2_0)\xrightarrow{\phi_{(2)}'}\phi_0(g'^1_0\cdot g'^2_0).
\]
On the level of morphisms, define
\[
(g^1_1, h^1_1, {g'}^1_1)\cdot (g^2_1, h^2_1, {g'}^2_1):=(g^1_1\cdot
g^2_1, {\phi_{(2)}'}\circ (h^1_1\cdot h^2_1)\circ\phi_{(2)}\n,
{g'}^1_1\cdot {g'}^2_1)
\]
where $s(g^1_1,h^1_1,{g'}^1_1)=(g^1_0,h^1_1,{g'}^1_0)$ and
$s(g^2_1,h^2_1,{g'}^2_1)=(g^2_0,h^2_1,{g'}^2_0)$.

\emph{Associator}.  The associator $\tilde{\alpha}$ for $\tilde{m}$ is
a natural transformation, whose value at the point
\[
\bigl((g_0^1, g_0^2,g_0^3), (h_1^1, h_1^2,h_1^3), ({g'}_0^1,
     {g'}_0^2,{g'}_0^3)\bigr)\in((G\times_H^{(w)} G')_0)^3
\]
is given by the following element in $\bigl(G\times^{(w)}_H
G'\bigr)_1$,
\begin{multline*}
\bigl((g_0^1\cdot g_0^2)\cdot g_0^3,\overline{\phi}_{(2)}'\circ
     [(h_1^1\cdot h_1^2)\cdot h_1^3]\circ\overline{\phi}_{(2)}\n,
     ({g'}_0^1 \cdot {g'}_0^2)\cdot {g'}_0^3\bigr)\\
\xrightarrow{\bigl(\alpha(g_0^1,g_0^2,g_0^3),
  \overline{\phi}_{(2)}'\circ[(h_1^1\cdot h_1^2)\cdot h_1^3]
  \circ\overline{\phi}_{(2)}\n,\alpha'({g'}_0^1,{g'}_0^2,{g'}_0^3)\bigr)}\\
\bigl(g_0^1\cdot (g_0^2\cdot
g_0^3),(\hat{\phi}_{(2)}')\circ[h_1^1\cdot(h_1^2\cdot
  h_1^3)]\circ\hat{\phi}_{(2)}\n,{g'}_0^1\cdot({g'}_0^2\cdot{g'}_0^3)\bigr),
\end{multline*}
where the $1$-morphisms
\begin{align*}
\overline{\phi}_{(2)}\colon & (\phi_0(g_0^1)\cdot \phi_0(g_0^2)) \cdot
\phi_0(g_0^3) \to \phi_0((g_0^1\cdot g_0^2)\cdot g_0^3)
\\ \hat{\phi}_{(2)} \colon & \phi_0(g_0^1) \cdot (\phi_0(g_0^2)
\cdot\phi_0(g_0^3))\to \phi_0(g_0^1\cdot(g_0^2\cdot g_0^3))
\end{align*}
are defined using the $2$-isomorphism data $\phi_{(2)}$, and similarly
for $\overline{\phi}_{(2)}'$ and $\hat{\phi}_{(2)}'$.  It is an arrow
between these two objects thanks to the compatibility of $\phi_{(2)}$
and the associator $\alpha$, and that of $\phi_{(2)}'$ and $\alpha'$.

Naturality of $\tilde{\alpha}$ comes from that of $\alpha$ and
$\alpha'$.  The pentagon identity for $\tilde{\alpha}$ is implied by
that of $\alpha$ and $\alpha'$.

\emph{Identity}.  Similarly, we define the identity map
$\tilde{1}\colon *\to G\times^{(w)}_H G' $ to be essentially $(1,
1')$.  More precisely,
$\tilde{1}_0:=(1_0,\phi_{(0)}'^{-1}\circ\phi_{(0)}, 1'_0)\in
(G\times^{(w)}_HG')_0$, where we have
\[
\phi_0(1_0)\xrightarrow{\phi_{(0)}}
1_0^H\xrightarrow{\phi_{(0)}'^{-1}} \phi_0(1'_0).
\]
Here $\phi_{(0)}$ and $\phi_{(0)}'$ come from the $2$-isomorphism data
in the definition of $\phi$ and $\phi'$.  Then
$\tilde{1}_1:=(1_1,\phi_{(0)}'^{-1} \circ\phi_{(0)}, 1'_1)$.  The
$2$-morphisms involving $\tilde{1}$ for $\G\times_{\HH}\G'$ are
similarly defined by those for $1$ and $1'$, and the coherence
conditions for these $2$-morphisms are similarly implied by that for
$\G$ and $\G'$.

\emph{Inverse}. The inverse map $\tilde{\imath}\colon G\times^{(w)}_H
G'\to G\times^{(w)}_H G'$ is essentially $(i, i')$, where $i$ is the
inverse map for $G$ and $i'$ that for $G'$. More precisely, on objects
\[
\tilde{\imath}_0(g_0, h_1, g'_0):=(i_0(g_0),\gamma'^{-1}\circ
i_1(h_1)\circ\gamma, i'_0(g'_0)),
\]
where
\[
\phi_0(i_0(g_0))\xrightarrow{\gamma}i_0(\phi_0(g_0))
\xrightarrow{i_1(h_1)} i_0(\phi_0(g'_0))\xrightarrow{\gamma'^{-1}}
\phi_0.
\]
And on arrows
\[
\tilde{\imath}_1(g_1, h_1, g'_1):=(i_1(g_1),\gamma'^{-1}\circ h_1^{-1}
\circ\gamma, i'_1(g'_1)).
\]
Here $\gamma$ is uniquely determined by the $2$-isomorphism data of
$\phi$ and satisfies certain coherence laws;
see~\cite[\S\,6]{baez-lauda;2-groups}.  The $2$-morphisms involving
$\tilde{\imath}$ for $\G\times_{\HH}\G'$ are similarly defined by
those for $i $ and $i'$, and the coherence conditions for these
$2$-morphisms are similarly implied by that for $\G$ and $\G'$.

\emph{Projections}.  From the construction of the group structure maps
$\tilde{m},\tilde{1},\tilde{\imath}$ for $\KK$, we see that the
projections to $\G$ and $\G'$ are naturally strict homomorphisms of
Lie group stacks.
\end{proof}

\input{hamstack.gls}


\bibliographystyle{amsplain}

\bibliography{hamilton}


\end{document}